\newcommand{\squeeze}{\textstyle}
\newcommand{\newalg}{\algname{BPM}}
\definecolor{bgcolor}{rgb}{0.76,0.88,0.50}
\definecolor{bgcolor0}{rgb}{0.93,0.99,1}
\definecolor{bgcolor1}{rgb}{0.8,1,1}
\definecolor{bgcolor2}{rgb}{0.8,1,0.8}
\definecolor{bgcolor3}{rgb}{0.50,0.90,0.50}
\newcommand{\norm}[1]{\left\| #1 \right\|}
\newcommand{\inp}[2]{\left\langle#1,#2\right\rangle} % inner product
\newcommand{\parens}[1]{\left( #1 \right)}
\newcommand{\brac}[1]{\left\{ #1 \right\}}
\newcommand{\ceil}[1]{\left\lceil #1\right\rceil} % inner product
\newcommand{\R}{\mathbb{R}} % reals
\newcommand{\Exp}[1]{{\rm \mathbb{E}}\left[#1\right]}
\newcommand{\ExpCond}[2]{{\mathbb{E}}\left[\left.#1\right\vert#2\right]}
\newcommand{\cB}{\mathcal{B}}
\newcommand{\cC}{\mathcal{C}}
\newcommand{\cH}{\mathcal{H}}
\newcommand{\cN}{\mathcal{N}}
\newcommand{\cO}{\mathcal{O}}
\newcommand{\cX}{\mathcal{X}}
\newcommand{\eqdef}{:=} 
\newcommand{\vast}{\bBigg@{4}}
\newcommand{\rbrac}[1]{\left(#1\right)}
\newcommand{\cbrac}[1]{\left\{#1\right\}}
\newcommand{\BProxSub}[3]{\textnormal{{brox}}^{#1}_{#2}(#3)}
\newcommand{\BBProxSub}[4]{\textnormal{{brox}}^{#1}_{#2, #3}\rbrac{#4}}
\newcommand{\ProxSub}[2]{\textnormal{{prox}}_{#1}(#2)}
\newcommand{\ProxPSub}[2]{\textnormal{{prox}}^p_{#1}(#2)}
\newcommand{\BMoreauSub}[3]{{N}^{#1}_{#2}\rbrac{#3}}
\newcommand{\inner}[2]{\left\langle #1, #2 \right\rangle}
\newcommand{\dist}[2]{\textnormal{dist}\rbrac{#1, #2}}
\newcommand{\breg}[3]{D_{#1}\rbrac{#2, #3}}
\newcommand{\cmax}[1]{c^{\max}_t\rbrac{x_{#1}}}
\newcommand{\SBProjProxSub}[3]{\Pi\rbrac{#3, \BProxSub{#1}{#2}{#3}}}
\DeclareMathOperator*{\argmin}{arg\,min}
\definecolor{mydarkgreen}{RGB}{39,130,67}
\definecolor{mydarkred}{RGB}{192,47,25}
\definecolor{mydarkorange}{RGB}{39,130,67}
\newcommand{\green}{\color{mydarkgreen}}
\newcommand{\red}{\color{mydarkred}}
\newcommand{\cmark}{\green\ding{51}}%
\newcommand{\xmark}{\red\ding{55}}%
\newcommand{\algname}[1]{{\green \small \sf #1}}
\DeclareMathOperator{\ri}{ri}           % relative interior
\DeclareMathOperator{\interior}{int}    % interior
\DeclareMathOperator{\bdry}{bdry}       % boundary
\newtheorem{example}{Example}
\theoremstyle{plain}
\newtheorem{theorem}{Theorem}[section]
\newtheorem{proposition}[theorem]{Proposition}
\newtheorem{lemma}[theorem]{Lemma}
\newtheorem*{lemma*}{Lemma}        % Unnumbered lemma
\newtheorem{corollary}[theorem]{Corollary}
\newtheorem{fact}[theorem]{Fact}
\theoremstyle{definition}
\newtheorem{definition}[theorem]{Definition}
\newtheorem{assumption}[theorem]{Assumption}
\theoremstyle{remark}
\newtheorem{remark}[theorem]{Remark}
\icmltitlerunning{The Ball-Proximal (=``Broximal'') Point Method}
\begin{document}

\twocolumn[
\icmltitle{The Ball-Proximal (=``Broximal'') Point Method: \\ a New Algorithm, Convergence Theory, and Applications}

% It is OKAY to include author information, even for blind
% submissions: the style file will automatically remove it for you
% unless you've provided the [accepted] option to the icml2025
% package.

% List of affiliations: The first argument should be a (short)
% identifier you will use later to specify author affiliations
% Academic affiliations should list Department, University, City, Region, Country
% Industry affiliations should list Company, City, Region, Country

% You can specify symbols, otherwise they are numbered in order.
% Ideally, you should not use this facility. Affiliations will be numbered
% in order of appearance and this is the preferred way.
\icmlsetsymbol{equal}{*}

\begin{icmlauthorlist}
\icmlauthor{Kaja Gruntkowska}{yyy}
\icmlauthor{Hanmin Li}{yyy}
\icmlauthor{Aadi Rane$^{\star}$}{yyy,sch}
\icmlauthor{Peter Richt\'{a}rik}{yyy}
\end{icmlauthorlist}

\icmlaffiliation{yyy}{King Abdullah University of Science and Technology, Thuwal, Saudi Arabia}
\icmlaffiliation{sch}{University of California, Berkeley, USA}

% \icmlcorrespondingauthor{Firstname1 Lastname1}{first1.last1@xxx.edu}

% You may provide any keywords that you
% find helpful for describing your paper; these are used to populate
% the "keywords" metadata in the PDF but will not be shown in the document
\icmlkeywords{Machine Learning, ICML}

\vskip 0.3in
]

% this must go after the closing bracket ] following \twocolumn[ ...

% This command actually creates the footnote in the first column
% listing the affiliations and the copyright notice.
% The command takes one argument, which is text to display at the start of the footnote.
% The \icmlEqualContribution command is standard text for equal contribution.
% Remove it (just {}) if you do not need this facility.

\printAffiliationsAndNotice{}  % leave blank if no need to mention equal contribution
% \printAffiliationsAndNotice{\icmlEqualContribution} % otherwise use the standard text.

\begin{abstract}
    Non-smooth and nonconvex global optimization poses significant challenges across various applications, where standard gradient-based methods often struggle.
%%%%%%      
    We propose the {\em Ball-Proximal Point Method}, {\em Broximal Point Method}, or {\em Ball Point Method} (\newalg) for short -- a novel algorithmic framework inspired by the classical Proximal Point Method (\algname{PPM}) \citep{rockafellar1976monotone}, which, as we show, sheds new light on several foundational optimization paradigms and phenomena, including nonconvex and non-smooth optimization, acceleration, smoothing, adaptive stepsize selection, and trust-region methods.    %%%%%%  
    At the core of \newalg\ lies the {\em ball-proximal (``broximal'')} operator, which arises from the classical proximal operator by replacing the quadratic distance penalty by a ball constraint. Surprisingly, and in sharp contrast with the sublinear rate of \algname{PPM} in the nonsmooth convex regime, we prove that \newalg\  converges {\em linearly} and in a {\em finite} number of steps in the same regime.  
%%%%%%    
     Furthermore, by introducing the concept of ball-convexity, we prove that \newalg\ retains the same global convergence guarantees under weaker assumptions, making it a powerful tool for a broader class of potentially nonconvex optimization problems.
%%%%%%    
Just like \algname{PPM} plays the role of a conceptual method inspiring the development of practically efficient algorithms and algorithmic elements, e.g., gradient descent, adaptive step sizes, acceleration~\citep{anh2020understanding}, and ``W'' in \algname{AdamW} \citep{Orabona-AdamW}, we believe that \newalg\ should  be understood in the same manner: as  a blueprint and inspiration for further development.
\end{abstract}

\section{Introduction}
\label{sec:intro}
The minimization of nonconvex functions is a fundamental challenge across many fields, including machine learning, optimization, applied mathematics, signal processing and operations research. 
Solving such problems is integral to most machine learning algorithms arising in both training and inference, where nonconvex objectives or constraints are often necessary to capture complex prediction tasks. 

\subsection{Global nonconvex optimization}
In this paper, we propose a new meta-algorithm (see \Cref{sec:bpm}) capable of finding \emph{global} minimizers for a specific (new) class of nonconvex functions.
In particular, we introduce an algorithmic framework designed to solve the (potentially nonconvex) optimization problem
\begin{align}
    \label{eq:main-prob}
    \min_{x \in \R^d} f(x),
\end{align}
where $f: \R^d \mapsto \R \cup \{+\infty\}$ is assumed to be proper (which means that the set ${\rm dom} f \eqdef \{x \in \R^d : f(x)<+\infty\}$ is nonempty), closed, and have at least one minimizer.
We let $\cX_f$ be the set of all minimizers of $f$, and $f_\star \eqdef \min_x f(x)$.

\subsection{The ball-proximal operator} \label{sec:broximal}

A key inspiration for our method stems from the well-known Proximal Point Method (\algname{PPM}) \citep{rockafellar1976monotone}, which iteratively adds a quadratic penalty term to the objective (see Section~\ref{sec:nonsmooth} for more details) and solves a modified subproblem  at each step.
Building on this idea, we introduce the \emph{ball-proximal (``broximal'') operator}:
\begin{definition}[Ball-Proximal Operator]
    \label{def:ballprox}
    The \emph{ball-proximal (``broximal'') operator} with radius $t > 0$ associated with a function $f: \R^d \mapsto \R \cup \{+\infty\}$ is given by 
    \begin{align}
        \label{eq:def:ballprox}
        \squeeze \BProxSub{t}{f}{x} \eqdef \argmin \limits_{z \in B_t(x)} f(z),
    \end{align}
    where $B_t(x) \eqdef \brac{z \in \R^d : \norm{z - x} \leq t}$ and $\norm{\cdot}$ is the standard Euclidean norm.
\end{definition}
According to this definition, for a given input point $x\in\R^d$, $\BProxSub{t}{f}{x}$ returns the minimizer(s) of~$f$ within the ball of radius~$t$ centered at~$x$.

\subsection{The Ball-Proximal Point Method}
\label{sec:bpm}
With the above definition in place, we turn to introducing our basic method, aiming to solve problem \eqref{eq:main-prob}, which we refer to as the {\em Ball-Proximal Point Method}, {\em Broximal Point Method}, or simply {\em Ball Point Method} (\newalg):
\begin{align}
    \label{eq:bppm}
    \squeeze \boxed{x_{k+1} \in \BProxSub{t_k}{f}{x_k}} \tag{\newalg}
\end{align}
Similarly to the classical \algname{PPM}, at each iteration \newalg\ solves an auxiliary optimization problem -- in this case, minimizing~$f$ over a ball centered at $x_k$ of radius~$t_k>0$.
The use of ``$\in$'' instead of ``$=$'' reflects the fact that $\BProxSub{t}{f}{x_k}$ may not in general be a singleton (unless further assumptions are made). 
In such cases, the algorithm allows the flexibility of selecting from the set of minimizers.
Notably, when this radius is large enough, i.e., $t_0 \geq \norm{x_0-x_{\star}}$ (where $x_0$ is a starting point and $x_{\star}$ is an optimal point), then $x_{\star} \in \BProxSub{t_0}{f}{x_0}$ and the algorithm finds a global solution in $1$ step.

\begin{table}[t]
\caption{Summary of equivalent formulations of \newalg\ and the corresponding assumptions under which they hold.}
% \centering
\begin{adjustbox}{width=\columnwidth,center}
\begin{threeparttable}
\renewcommand{\arraystretch}{1.2}
    \begin{tabular}{c c c}
        \toprule
        \textbf{Variant} & \textbf{Expression} & \textbf{Assumptions} \\ \midrule
        \begin{tabular}{c} \newalg \\ \Cref{sec:bpm} \end{tabular}
        & $\begin{array} {lcl} x_{k+1} & \in & \BProxSub{t_k}{f}{x_k} \end{array}$
        & --- \\
%%%%%%%%            
\hline       
%%%%%%%%            
        \begin{tabular}{c} \algname{$\|$PPM$\|$} \\ \Cref{lemma:brox_prox} \end{tabular}
        & $\begin{array} {lcl} x_{k+1} & = & \ProxSub{\frac{t_k}{\norm{\nabla f(x_{k+1})}} f}{x_k} \\ & = & x_k - \frac{t_k}{\norm{\nabla f(x_{k+1})}} \nabla f(x_{k+1})\end{array}$        
        & \begin{tabular}{c} convexity \\ differentiability \end{tabular} \\
%%%%%%%%            
\hline       
%%%%%%%%        
        \begin{tabular}{c} \algname{$\|$GD$\|$} \\ \scriptsize{on ball envelope}\tnote{1} \\ \Cref{thm:bppm-gd-relation} \end{tabular}
        & $\begin{array} {lcl} x_{k+1} & = & x_k - \frac{t_k}{\norm{\nabla N^{t_k}_f(x_k)}} \nabla \BMoreauSub{t_k}{f}{x_k} \end{array}$
        & \begin{tabular}{c} convexity \\ differentiability, \\ $L$--smoothness \end{tabular} \\ \bottomrule
    \end{tabular}
    \begin{tablenotes}
    \item[1] $\BMoreauSub{t}{f}{x}$ is the \emph{ball envelope} of $f$:  $\BMoreauSub{t}{f}{x} \eqdef \min \limits_{z \in B_t(x)} f(z)$; see \Cref{def:ball-envelope}.
    \end{tablenotes}
\end{threeparttable}
\end{adjustbox}
\label{table:reformulations}
\end{table}

\subsection{Summary of contributions} 

Our key contributions are summarized as follows:
\begin{enumerate}
%%%%%%%%%%
    \item {\bf New oracle: broximal operator.} Inspired by the classical proximal operator, we introduce the \emph{ball-proximal (=broximal) operator}\footnote{After the first version of this paper appeared online, we became aware of closely related prior work. We discuss the connections and differences in \Cref{sec:ball_oracles}.} (\Cref{sec:broximal}) mapping input points to minimizers of the objective function within a localized region (a ball) centered at the input. The broximal operator enjoys several useful properties.
    In certain scenarios (e.g., when~$f$ is convex or satisfies Assumption~\ref{as:defining_rel}), the operator is single-valued on $\{x: \BProxSub{t}{f}{x}\not\subseteq \cX_f\}$ (\Cref{prop:single_val}), and the minimizer is guaranteed to lie on the boundary of the ball (\Cref{thm:1stbroxthm} and~\Cref{prop:t_step}).
    For a full discussion of the relevant properties, see Appendices~\ref{sec:ap_convex} and~\ref{sec:ball-convex}. We relegate these and other auxiliary results related to the broximal operator to the appendix since in the main body of the paper we decided to focus on more high-level results, such as convergence and connections to existing works, phenomena and fields.
%%%%%%%%%%
    \item {\bf New abstract method: Broximal Point Method.}
    We propose the \emph{Broximal Point Method} (\newalg), a novel abstract yet immensely powerful algorithmic framework (\Cref{sec:bpm}). \newalg\ is inherently linked to several existing methods, and admits multiple reformulations, summarized in \Cref{table:reformulations}, with detailed discussions in the subsequent sections. If $f$ is convex and differentiable, \newalg\  can be interpreted as a {\em normalized}  variant of \algname{PPM} (to the best of our knowledge, this is a new method). If, in addition, $f$ is smooth,  \newalg\  can be interpreted as normalized gradient descent performed on the {\em ball envelope} (a new concept) of $f$; which is analogous to \algname{PPM} being equivalent to gradient descent on the Moreau envelope of $f$. Given the importance of gradient normalization in modern deep learning, we believe that these observations alone make \newalg\ an interesting object of study.
%%%%%%%%%%    
    \item {\bf Connections to important phenomena and fields.}
    Surprisingly, \newalg\ is of relevance to and shares numerous connections with several important phenomena, works and sub-fields of optimization and machine learning, including  {\em nonconvex optimization} (\Cref{sec:nonconvex}), {\em non-smooth optimization} (\Cref{sec:nonsmooth} -- we interpret \algname{BPM} as a {\em normalized} variant of \algname{PPM}, and explain normalized gradient descent as an approximation of  \algname{BPM} via iterative linearization of $f$), {\em acceleration} (\Cref{sec:acceleration}), {\em smoothing} (\Cref{sec:moreau} -- \algname{BPM} can be seen as normalized gradient descent on the newly introduced {\em ball envelope} of $f$), {\em adaptive step size selection} (\Cref{sec:adaptive_stepsizes}), and {\em trust-region methods}  (\Cref{sec:trm} -- we interpret \algname{BPM} as an idealized trust-region method). We dedicate a considerable portion of the paper to explaining these connections since we believe this is what most readers will derive most insight from.    
    Our theoretical results  are supported by dedicated sections in the appendix, which provide the corresponding proofs (Appendices \ref{sec:ap_nonsmooth},  \ref{sec:ap_accel} and  \ref{sec:ap_envelope_gd}).
%%%%%%%%%%    
    \item {\bf Powerful convergence theory: convex case.} We establish a \emph{linear} convergence rate for \newalg\ in the non-smooth convex setting (\Cref{sec:convex}), eliminating the reliance on \emph{strong convexity} required by \algname{PPM} for similar performance.
    Moreover, while  \algname{PPM}  with a finite step size can find an approximate solution only, \newalg\ reaches the \emph{exact global minimum} in a \emph{finite number of iterations} using \emph{finite radii} -- see \Cref{table:comparison}.
%%%%%%%%%%    
    \item {\bf Powerful convergence theory: nonconvex case.}  We extend the analysis beyond convexity by introducing the concept of \emph{ball-convexity} and proving that \newalg\ can find the global minimum even under this weaker assumption (\Cref{sec:ball_convex}). In particular, \newalg\ retains the same theoretical guarantees as in the convex setting, bridging the gap between convex and nonconvex optimization. 
%%%%%%%%%%    
    \item {\bf Experiments.} We perform several  toy yet enlightening numerical experiments (see \Cref{fig:bppm} in \Cref{sec:nonconvex}; and \Cref{sec:experiments}), showing the potential of \newalg\ as a method for solving nonconvex optimization problems. 
%%%%%%%%%%    
    \item {\bf Extensions.} Finally, we extend \newalg\ to the distributed optimization setting (\Cref{sec:ap_stochastic}), and further introduce a generalization based on Bregman functions (\Cref{sec:bregman}), providing rigorous convergence results for both.
\end{enumerate}

\begin{table}[t]
    \caption{\textbf{Comparison of \newalg\ and \algname{PPM}.}  
    ``ND+NS'' = Non-Differentiable \& Non-Smooth (i.e., results do not require differentiability nor smoothness);
    ``Lin Cvx'' = linear convergence in the convex setting (without assuming strong convexity);
    ``$K < \infty$'' = finds the exact global minimizer in a finite \# of iterations;
    ``$\gamma_k, t_k < \infty$'' = finds exact global minimizer with a finite step size ($\gamma_k$ for \algname{PPM} and $t_k$ for \algname{BPM}); $d_k \eqdef \norm{x_k - x_{\star}}$; $h_k \eqdef f(x_k) - f_{\star}$. 
    }
    \centering 
    % \scriptsize
    \begin{adjustbox}{width=\columnwidth,center}
    \begin{threeparttable}
    \renewcommand{\arraystretch}{1.5}
    \begin{tabular}{cccccc}
        \toprule
        \bf Method & \bf $\mathbf{1}$-step decrease & \rotatebox[origin=c]{90}{\,\,NS+ND} & \rotatebox[origin=c]{90}{Lin Cvx} & \rotatebox[origin=c]{90}{\,$K<\infty$} & \rotatebox[origin=c]{90}{\,\,$\gamma_k, t_k<\infty$} \\
        \toprule
        \makecell{\algname{PPM} \\ \scriptsize{\citet{guler1991convergence}}}
        & \small{$d_{k+1}^2 \leq \parens{1+\gamma_k\mu}^{-1} d_k^2$} \tnote{{\color{blue}(a)}}
        & \cmark & \xmark & \xmark & \xmark \\
         \midrule
        \makecell{\algname{BPM} \\
        \scriptsize{\Cref{thm:conv-bppm-lin}}}
        & \makecell{\small{$\begin{aligned}&\squeeze h_{k+1} \leq \parens{1 + \frac{t_k}{d_{k+1}}}^{-1} h_k \\ &d_{k+1}^2 \leq d_k^2 - t_k^2\end{aligned}$}}
        & \cmark & \cmark & \cmark & \cmark \\
        \bottomrule
    \end{tabular}
    \begin{tablenotes}
    \scriptsize
    \item [{\color{blue}(a)}] $\mu > 0$ is the \emph{strong convexity} parameter, i.e., a constant such that $f(x) - \nicefrac{\mu}{2} \norm{x}^2$ is convex.
    \end{tablenotes}          
    \end{threeparttable}
    \end{adjustbox}
    \label{table:comparison}
\end{table}

A complete list of notations used in the paper can be found in \Cref{table:unbalanced}.

\subsection{Comments on practical utility}

Since each step of \newalg\ is itself an optimization task, it can be very challenging. 
The method is therefore best understood as an \emph{abstract procedural framework} under the broximal operator oracle, offering a foundation for a class of algorithms with elegant \emph{global convergence guarantees} under weak assumptions.  
While the \ref{eq:bppm} scheme may not be directly implementable, it functions as a conceptual ``master'' method, providing a high-level algorithmic structure that should guide the development of practical variants aiming to approximate the idealized trajectory of \ref{eq:bppm}'s iterates.

For example, one may  {\em approximate} the broximal operator of $f$ by 
(i) the broximal operator of a suitably chosen {\em approximation} (e.g., linearization) of $f$ (see \Cref{sec:ngd} and \Cref{sec:acceleration}), or by  (ii)  {\em approximate} minimization of $f$ over the ball by running some iterative subroutine (e.g., sampling), or
(iii) both.

This paradigm mirrors the approach used in various fields.
The simplest example is the already mentioned classical proximal operator, which is expensive to evaluate \citep{beck2012smoothing}, yet inspires the development of practical methods \citep{parikh2014proximal}. Another instance is Stochastic Differential Equations, where exact solutions are often unavailable, necessitating the use of numerical approximations for practical implementation \citep{kloeden1992numerical}.

\section{BPM and nonconvex Optimization} \label{sec:nonconvex}

The study of nonconvex optimization has a rich history, with recent advances driven largely by its role in training deep neural networks.
Unlike convex problems, where global minimizers can be efficiently found \citep{nemirovski1983problem, nemirovskii1985optimal, nesterov2003introductory, bubeck2015convex}, solving nonconvex problems is generally NP-hard \citep{murty1987someNP}.
This difficulty arises from the complex landscape of nonconvex functions, which can have many local minima and saddle points that can trap optimization algorithms \citep{dauphin2014identifying, jin2021nonconvex}.
Consequently, much of the research in this area has shifted focus from global optimization to more attainable goals, such as finding stationary points or local minima. %, which sometimes suffice in practical applications \citep{}.
However, local minima can often be far from optimal when compared to global solutions \citep{kleinberg2018alternative}.

One of the key strategies to address these challenges is incorporating stochasticity into gradient-based methods \citep{kleinberg2018alternative, zhou2019toward, jin2021nonconvex}, with algorithms like Stochastic Gradient Descent (\algname{SGD}) and its variants being particularly popular for this application. 

Some nonconvex problems allow global optimization by exploiting structural properties of the objective function.
Examples include one-layer neural networks, where all local minima are guaranteed to be global \citep{feizi2017porcupine, haeffele2017global}.
It is also known that under additional assumptions, \algname{SGD} can converge to a global minimum for linear networks \citep{danilova2022recent, shin2022effects} and sufficiently wide over-parameterized networks \citep{allen2019convergence}.
Beyond neural networks, classes of nonconvex functions, such as weakly-quasi-convex functions and those satisfying the Polyak-Łojasiewicz condition, enjoy global convergence guarantees -- with sublinear and linear rates, respectively \citep{hinder2019near, garrigos2023handbook}.
Local minima are also globally optimal for certain nonconvex low-rank matrix problems, including matrix sensing, matrix completion, and robust PCA \citep{ge2017no}. 

\textbf{New approach to nonconvex optimization.}
Recent research has made significant progress in nonconvex optimization, providing theoretical guarantees for convergence to stationary points and, in some cases, even global minima.
However, the field remains relatively under-explored, with substantial room for innovation.
To extend this line of research, we consider a new approach that goes beyond merely finding stationary points, focusing on methods capable of escaping local minima.
By design, \newalg\ demonstrates this ability if the radius $t_k$ is chosen large enough, as illustrated in a simple experiment (\Cref{fig:bppm}).

\begin{figure}[t]
    \centering
    \subfigure[$t=1$]{
        \includegraphics[width=0.45\columnwidth]{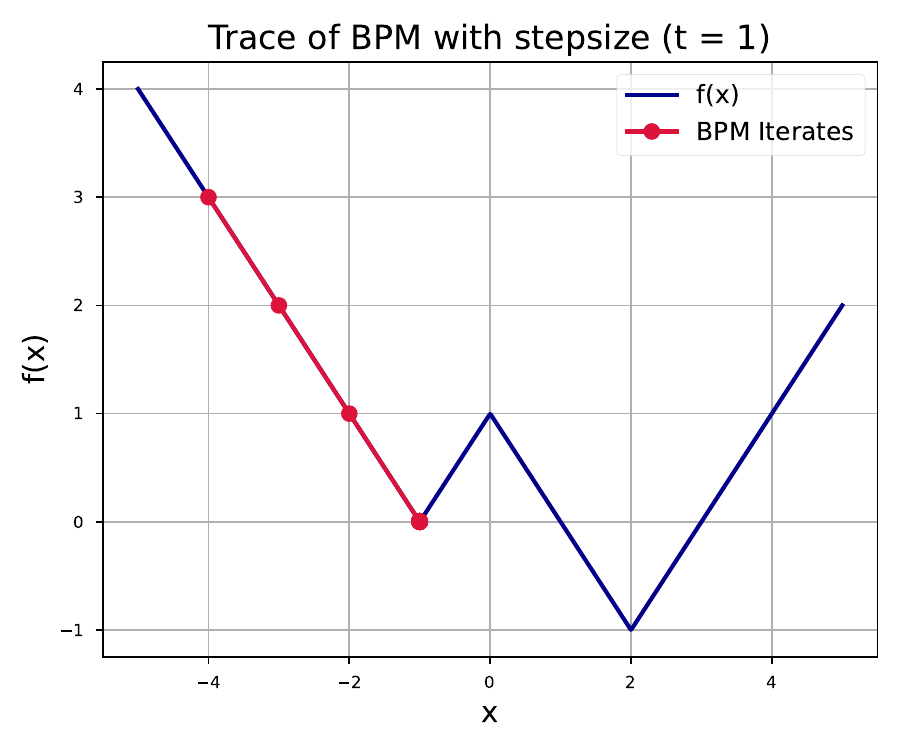}
        \label{fig:first}
    }
    \hfill
    \subfigure[$t=2$]{
        \includegraphics[width=0.45\columnwidth]{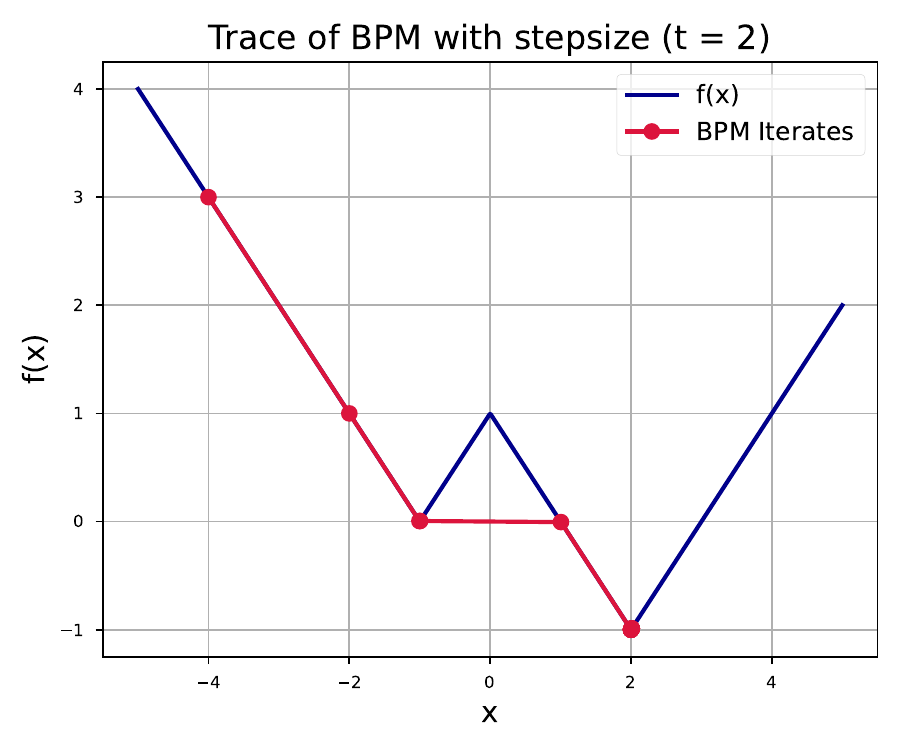}
        \label{fig:second}
    }
    \\
    \subfigure[$t=2.5$]{
        \includegraphics[width=0.45\columnwidth]{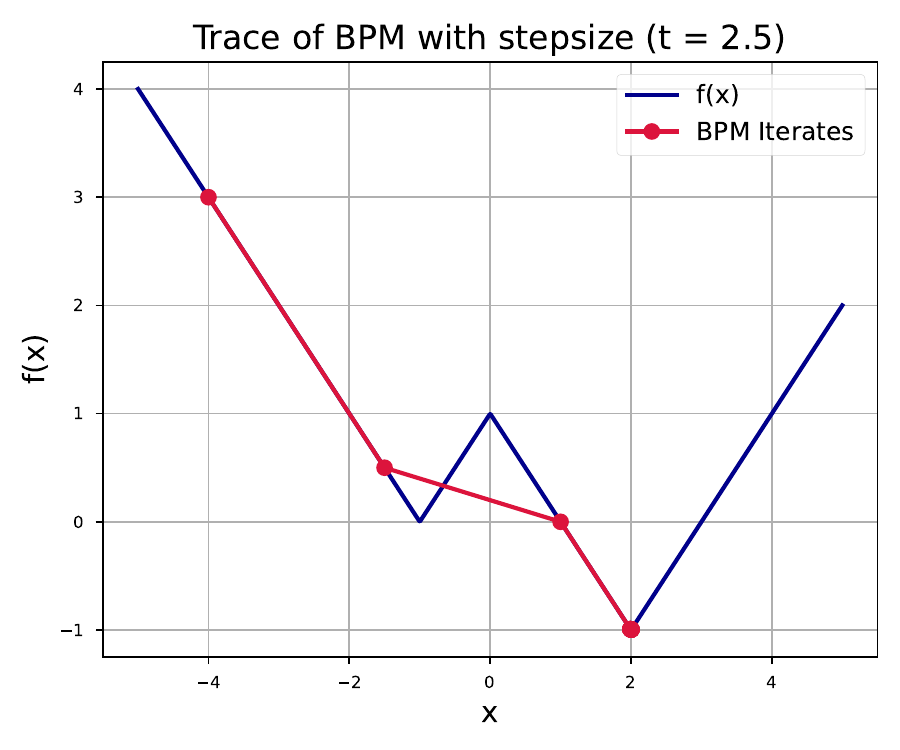}
        \label{fig:third}
    }
    \hfill
    \subfigure[$t=3$]{
        \includegraphics[width=0.45\columnwidth]{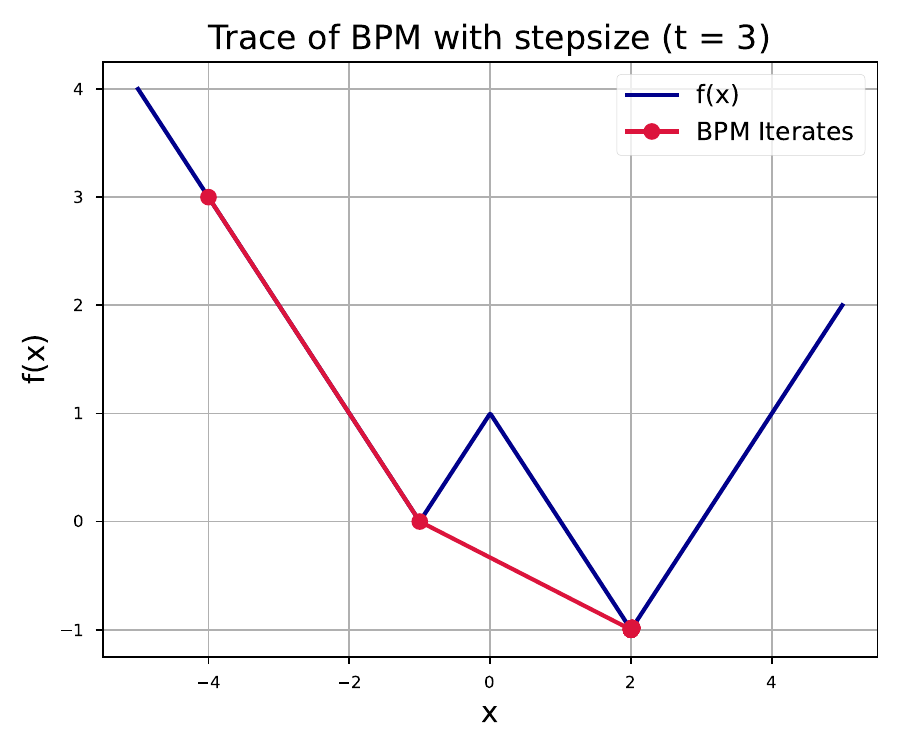}
        \label{fig:fourth}
    }
    \caption{Behavior of \newalg\ on a piecewise linear nonconvex function. The dark blue line represents the function~$f$, while the crimson line illustrates the iterates of \newalg.
    The algorithm is tested for $t \in \cbrac{1, 2, 2.5, 3}$, starting at $x_0=-4$.}
    \label{fig:bppm}
\end{figure}
As the radius $t_k\equiv t$ (kept constant across iterations) increases, the broximal operator gains stronger ability to escape local minima, allowing \newalg\ to converge to the global minimizer (in this example, it is clear that choosing $t_k \equiv t > 2$ is sufficient for the algorithm to achieve this for any initialization). Theoretical analysis confirms that \newalg\ converges to a global minimizer for a specific class of nonconvex functions (\Cref{sec:ball_convex}).
Additionally, numerical experiments (\Cref{sec:experiments}) show that this property holds for a broader range of functions.

\textbf{Sharpness-aware minimization.}
``Idealized'' sharpness-aware minimization (\algname{SAM}) performs the iteration 
\begin{equation} \label{eq:sam}x_{k+1} = x_k -\eta_k \nabla f(z_{k+1}),\end{equation}
where $z_{k+1}\eqdef \arg\max_{z\in B_{\rho}(x_k)} f(z)$, which  is typically approximated through linearization by $ \tilde{z}_{k+1} \eqdef \arg\max_{z\in B_{\rho}(x_k)} \left\{f(x_k) + \langle \nabla f(x_k), z-x_k \rangle \right\}  = x_k + \rho \tfrac{\nabla f(x_k)}{\|\nabla f(x_k)\|}$ \citep{SAM2021}. This leads to the practical \algname{SAM} method
\[x_{k+1} =  x_k -\eta_k \nabla f\left(x_k + \rho \tfrac{\nabla f(x_k)}{\|\nabla f(x_k)\|}\right).\]

Note that \eqref{eq:sam} is structurally similar to the \algname{$\|$PPM$\|$} reformulation of \algname{BPM} (see \Cref{table:reformulations}); the key difference being that $z_{k+1}$ in \algname{SAM} arises from {\em maximizing} $f$ over a ball, while \algname{BPM} employs {\em minimization}. We believe that exploring these similarities may lead to novel insights about \algname{SAM}.

\section{BPM and Non-smooth Optimization}\label{sec:nonsmooth}

Standard gradient-based methods heavily depend on the smoothness of the objective function. However, many real-world problems lack this property, making non-smooth optimization a critical challenge arising in a wide range of applications, such as sparse learning, robust regression, and deep learning \citep{shamir2013stochastic, zhang2019gradient}. A classic example is the support vector machine problem, where using the standard hinge loss makes the objective function non-smooth.
From a practical perspective, even when the objective is smooth, its smoothness constant -- commonly used to determine hyperparameters like the step size -- is often unknown. 
From a theoretical standpoint, the analysis of non-smooth problems typically differs significantly from the smooth case, requiring different tools and techniques.
To effectively address non-smooth problems, several strategies have been developed.
Two notable approaches in this domain are proximal-type updates and normalized gradient methods, both of which reveal a profound connection to the framework we propose.

\subsection{Proximal Point Method.}

Proximal algorithms \citep{rockafellar1976monotone} are a cornerstone of optimization.
They are powerful since their convergence does not rely on the smoothness of the objective function \citep{richtarik2024unified}. 
This property makes them especially attractive for deep learning applications, where loss functions often lack smoothness \citep{zhang2019gradient}.
Central to these methods is the \emph{proximal operator}, defined as
\begin{align*}
    \squeeze \ProxSub{ f}{x} \eqdef \underset{z\in\R^d}{\arg\min} \brac{f(z) + \frac{1}{2} \cdot \norm{z-x}^2},
\end{align*}
where $f : \R^d \to \R \cup \{+\infty\}$ is an extended real-valued function.
It is known that if $f$ is a proper, closed and convex function, then $\ProxSub{f}{x}$ is a singleton for all $x\in\R^d$ \citep{bauschke2011convex, beck2017first}. Furthermore, if $f$ is differentiable, then for any $\gamma>0$ the proximal operator satisfies the equivalence
\begin{align}\label{eq:prox_impl}
    \squeeze z = \ProxSub{\gamma f}{x} \quad\iff\quad z + \gamma\nabla f(z) = x.
\end{align}
The simplest proximal algorithm is the Proximal Point Method (\algname{PPM}) \citep{moreau1965proximite, martinet1970regularisation}.
Originally introduced to address problems involving variational inequalities, \algname{PPM} has since been adapted to stochastic settings to address the challenges of large-scale optimization. This led to the development of Stochastic Proximal Point Methods (\algname{SPPM}), which have been extensively studied and refined over time \cite{bertsekas2011incremental, khaled2022faster, anyszka2024tighter, li2024convergence}.

In its simplest form, the update rule of \algname{PPM} is
\begin{align}\label{eq:ppm_update}
    \squeeze x_{k+1} = \ProxSub{\gamma_k f}{x_k} \tag{\algname{PPM}}
\end{align}
for some step size $\gamma_k>0$.
Using the equivalence given in~\eqref{eq:prox_impl}, the above expression can be rewritten as
\begin{align}\label{eq:prox_implicit}
    \squeeze x_{k+1} = x_k - \gamma_k \nabla f (x_{k+1}).
\end{align}
While similar in form to \algname{GD}, a key distinction lies in the implicit nature of the update: the gradient is evaluated at the \emph{new} iterate 
$x_{k+1}$. 
This implicitness provides enhanced stability in practice \citep{ryu2014stochastic}.

As shown in the following lemma, \newalg\ and \algname{PPM} share a deep connection that goes beyond their similar names.
\begin{restatable}{lemma}{LEMMABROXPROX}
    \label{lemma:brox_prox}
    Let $f:\R^d \to \R$ be a differentiable convex function, and let $x_{k+1} = \BProxSub{t_k}{f}{x_k}$ be the iterates of \newalg. Provided that $x_{k+1}$ is not optimal,
    \begin{eqnarray*}
        \squeeze \boxed{x_{k+1} =  \ProxSub{\frac{t_k}{\norm{\nabla f(x_{k+1})}} f}{x_k}}
    \end{eqnarray*}
\end{restatable}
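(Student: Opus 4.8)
The plan is to show that the broximal iterate $x_{k+1} = \BProxSub{t_k}{f}{x_k}$ satisfies exactly the optimality condition characterizing $\ProxSub{\gamma f}{x_k}$ with the specific choice $\gamma = t_k / \norm{\nabla f(x_{k+1})}$. Recall from \eqref{eq:prox_impl} that for a differentiable convex $g$, the point $z$ equals $\ProxSub{\gamma g}{x}$ if and only if $z + \gamma \nabla g(z) = x$, i.e. $z = x - \gamma \nabla g(z)$. So it suffices to verify that
\begin{align*}
    x_{k+1} = x_k - \frac{t_k}{\norm{\nabla f(x_{k+1})}} \nabla f(x_{k+1}).
\end{align*}

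First I would invoke the first-order optimality conditions for the constrained problem $\min_{z \in B_{t_k}(x_k)} f(z)$. Since $x_{k+1}$ is not optimal for $f$ on $\R^d$, it cannot be an unconstrained minimizer, so $\nabla f(x_{k+1}) \neq 0$ and the ball constraint must be active; here I would appeal to \Cref{thm:1stbroxthm}/\Cref{prop:t_step} (cited earlier as guaranteeing the minimizer lies on the boundary), giving $\norm{x_{k+1} - x_k} = t_k$. The KKT conditions for the smooth constrained problem with the single constraint $\norm{z - x_k}^2 \le t_k^2$ then yield a multiplier $\lambda \ge 0$ with $\nabla f(x_{k+1}) + \lambda (x_{k+1} - x_k) = 0$, hence $x_{k+1} - x_k = -\tfrac{1}{\lambda}\nabla f(x_{k+1})$ (and $\lambda > 0$ since $\nabla f(x_{k+1}) \neq 0$). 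Taking norms of both sides gives $t_k = \norm{x_{k+1}-x_k} = \tfrac{1}{\lambda}\norm{\nabla f(x_{k+1})}$, so $\tfrac{1}{\lambda} = t_k / \norm{\nabla f(x_{k+1})}$, which is exactly the claimed relation.

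Then I would close the loop: substituting this back, $x_{k+1} = x_k - \tfrac{t_k}{\norm{\nabla f(x_{k+1})}}\nabla f(x_{k+1})$, which by the equivalence \eqref{eq:prox_impl} (applied with $\gamma = t_k/\norm{\nabla f(x_{k+1})} > 0$ and $g = f$) means precisely $x_{k+1} = \ProxSub{\frac{t_k}{\norm{\nabla f(x_{k+1})}} f}{x_k}$. The only subtlety worth spelling out is the direction of \eqref{eq:prox_impl} I am using: the fixed-point identity $z + \gamma\nabla f(z) = x$ is \emph{equivalent} to $z = \ProxSub{\gamma f}{x}$, so having verified the identity I get the prox characterization for free — no minimization argument is needed on the prox side.

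The main obstacle is making the stationarity argument fully rigorous without assuming a constraint qualification by hand: I need to be sure the multiplier exists and is strictly positive. This is where convexity and the earlier structural results pull their weight — convexity of $f$ makes the boundary-activeness clean (any interior minimizer of the restricted problem would be a global minimizer, contradicting non-optimality of $x_{k+1}$), $\nabla f(x_{k+1}) \neq 0$ follows for the same reason, and the constraint $\norm{z-x_k}^2 \le t_k^2$ has nonvanishing gradient $2(z - x_k) \neq 0$ on the active boundary, so LICQ holds automatically and the KKT multiplier is well-defined. Everything else is a one-line norm computation.
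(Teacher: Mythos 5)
Your proof is correct and follows essentially the same route as the paper: derive the explicit update $x_{k+1} = x_k - \tfrac{t_k}{\|\nabla f(x_{k+1})\|}\nabla f(x_{k+1})$ from the active-boundary optimality condition, then identify it with $\ProxSub{\gamma f}{x_k}$ via the fixed-point characterization \eqref{eq:prox_impl}. The only cosmetic difference is that the paper delegates the KKT/normal-cone step to \Cref{thm:alg_update_diff} (which itself rests on \Cref{thm:3}, \Cref{prop:t_step} and \Cref{lemma:ck_diff}) and closes by introducing an auxiliary prox iterate $z_{k+1}$ and invoking uniqueness of the strongly convex subproblem, whereas you inline the KKT derivation and apply the biconditional in \eqref{eq:prox_impl} directly -- same substance, different packaging.
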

Consequently, for differentiable objectives, the update rule of \ref{eq:bppm} becomes
\begin{align*}
    \squeeze \boxed{x_{k+1} = x_k - \tfrac{t_k}{\norm{\nabla f(x_{k+1})}} \cdot \nabla f(x_{k+1})}
\end{align*}
(see \Cref{thm:alg_update_diff}), which can be interpreted as \emph{normalized} \algname{PPM} (\algname{$\|$PPM$\|$}).
Hence, computing broximal operator is equivalent to evaluating the proximal operator with a carefully chosen, adaptive step size.
Alternatively, computing the proximal operator of a convex function can be viewed as finding the point on the sphere of radius $t = \gamma\norm{\nabla f(\ProxSub{\gamma f}{x})}$ centered at $x$ that minimizes the function value (Lemma \ref{cor:prox_brox}).
Building on this reformulation, the \ref{eq:ppm_update} update rule can be expressed as
\begin{align*}
    \squeeze x_{k+1} = \ProxSub{\gamma_k f}{x_k}
    = \underset{z \in B_{t_k}(x_k)}{\arg\min} f(z),
\end{align*}
where $t_k = \gamma_k \norm{\nabla f(\ProxSub{\gamma_k f}{x_k})}$.
This highlights the motivation for \newalg, which originates from using alternative choices for the radius sequence~$\cbrac{t_k}_{k\geq0}$.

\subsection{Normalized gradient descent} \label{sec:ngd}
Normalized Gradient Descent (\algname{$\|$GD$\|$}) is another popular approach for non-smooth optimization, particularly useful when gradient norms provide little information about the appropriate choice of the step size. 
Originally introduced by \citet{nesterov1984minimization} for differentiable quasi-convex objectives, it was later extended by \citet{kiwiel2001convergence} to include upper semi-continuous quasi-convex functions, and analyzed in the stochastic setting by \citet{hazan2015beyond}.
To illustrate the intuition behind it, consider the simple example $f(x)=|x|$. 
In this case, the gradient norm is $1$ everywhere except the optimum, offering no guidance on the right choice of the step size. 
Normalization addresses this issue by removing the influence of the gradient norm, while preserving the descent direction (negative gradient). 
Beyond handling non-smoothness, \algname{$\|$GD$\|$} has demonstrated superior performance in nonconvex settings, escaping saddle points more effectively than standard \algname{GD} \cite{murray2019revisiting}.

Although normalization is intuitively justified, a rigorous explanation for its use has been missing.
It is well-established that applying \algname{PPM} to the linear approximation of $f$ at the current iterate yields the update rule of \algname{GD}.
However, an analogous result connecting \algname{$\|$GD$\|$} to a principled framework has yet to be established. 
Interestingly, such a result can be derived by adopting a similar approach, replacing \algname{PPM} with \newalg.
\begin{restatable}{theorem}{THMLINEARPROX}
    \label{thm:linearized-bppm}
    Define $f_k(z) \eqdef f(x_k) + \inp{\nabla f(x_k)}{z - x_k}$ and let $x_{k+1} = \BProxSub{t_k}{f_k}{x_k}$ be the iterates of \newalg\ applied to the first-order approximation of $f$ at the current iterate.
    Then, the update rule is equivalent to
    \begin{align}\label{eq:lin_approx_update}
        \squeeze x_{k+1} = x_k - \frac{t_k}{\norm{\nabla f(x_k)}} \cdot \nabla f(x_k).
    \end{align}
\end{restatable}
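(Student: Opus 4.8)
The plan is to compute the broximal operator of the affine function $f_k$ directly from its definition, since $f_k$ is linear and the feasible set is a ball. Recall that $\BProxSub{t_k}{f_k}{x_k} = \argmin_{z \in B_{t_k}(x_k)} \{ f(x_k) + \inp{\nabla f(x_k)}{z - x_k} \}$. The constant $f(x_k)$ is irrelevant to the argmin, so the problem reduces to minimizing the linear functional $z \mapsto \inp{\nabla f(x_k)}{z - x_k}$ over $z \in B_{t_k}(x_k)$. Writing $u \eqdef z - x_k$, this is $\min_{\norm{u} \le t_k} \inp{\nabla f(x_k)}{u}$.

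First I would dispose of the trivial case $\nabla f(x_k) = 0$, in which every point of the ball is a minimizer and in particular $x_{k+1} = x_k$ is a valid choice, consistent with \eqref{eq:lin_approx_update} read with the convention that the normalized step is zero (or simply note that this degenerate case can be excluded, as the iterate is already stationary). Next, for $\nabla f(x_k) \ne 0$, I would invoke the Cauchy–Schwarz inequality: $\inp{\nabla f(x_k)}{u} \ge -\norm{\nabla f(x_k)}\norm{u} \ge -t_k \norm{\nabla f(x_k)}$, with equality in the first bound precisely when $u$ is a nonpositive multiple of $\nabla f(x_k)$, and equality in the second bound when $\norm{u} = t_k$. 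Hence the unique minimizer is $u^\star = -t_k \frac{\nabla f(x_k)}{\norm{\nabla f(x_k)}}$, which gives $x_{k+1} = x_k + u^\star = x_k - \frac{t_k}{\norm{\nabla f(x_k)}}\nabla f(x_k)$, exactly \eqref{eq:lin_approx_update}.

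There is essentially no hard part here: the statement is an immediate consequence of the fact that a linear functional over a Euclidean ball attains its minimum at the antipodal boundary point in the direction opposite to its gradient. The only points requiring a sentence of care are (i) noting that the additive constant $f(x_k)$ does not affect the argmin, and (ii) the degenerate case $\nabla f(x_k) = 0$; one might also remark, as a sanity check, that this recovers the general principle (see \Cref{thm:1stbroxthm} / \Cref{prop:t_step} referenced earlier) that the broximal minimizer lies on the boundary sphere whenever the point is not already optimal, which for an affine $f_k$ with nonzero gradient is always the case. I would keep the proof to three or four lines, perhaps displayed as a short chain

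\begin{align*}
    \BProxSub{t_k}{f_k}{x_k}
    &= \argmin_{z \in B_{t_k}(x_k)} \inp{\nabla f(x_k)}{z - x_k} \\
    &= x_k - \frac{t_k}{\norm{\nabla f(x_k)}} \nabla f(x_k),
\end{align*}

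with the Cauchy–Schwarz justification in the surrounding text, and then conclude that $x_{k+1}$ satisfies \eqref{eq:lin_approx_update}.
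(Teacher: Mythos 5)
Your proof is correct and follows essentially the same route as the paper: minimize the linear functional $z\mapsto\inp{\nabla f(x_k)}{z-x_k}$ over $B_{t_k}(x_k)$ and observe that the minimum sits at the antipodal boundary point. The only cosmetic difference is that the paper first cites \Cref{thm:1stbroxthm}~\ref{pt:singleton_bdry} to place the minimizer on the boundary and then says ``obviously'', whereas you skip the lemma and justify the final step directly via Cauchy--Schwarz (and also explicitly dispose of the degenerate case $\nabla f(x_k)=0$), which is slightly more self-contained.
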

Just as \algname{GD} naturally arises from \algname{PPM}, we show that \algname{$\|$GD$\|$} follows directly from the mechanics of \newalg. 
This establishes normalization as an intrinsic property of the broximal operator, rather than a mere heuristic, providing a robust theoretical foundation for \algname{$\|$GD$\|$} and validating \newalg's design and applications.

\section{BPM and Acceleration}\label{sec:acceleration}

When the objective $f$ belongs to a certain function class (including convex functions, and defined in Section \ref{sec:ball_convex}), the result in \Cref{lemma:brox_prox} still holds, allowing for the derivation of an explicit update rule for \newalg.
\begin{restatable}{theorem}{THMALGUPDIFF}\label{thm:alg_update_diff}
    Let $f:\R^d\to \R$ be a differentiable function satisfying \Cref{as:defining_rel}.  Let $x_{k+1} = \BProxSub{t_k}{f}{x_k}$ be the iterates of \newalg. Provided that $x_{k+1}$ is not optimal,
    \begin{align}\label{eq:alg_update_diff}
        \squeeze x_{k+1} = x_k - \frac{t_k}{\norm{\nabla f(x_{k+1})}} \cdot \nabla f(x_{k+1}).
    \end{align}
\end{restatable}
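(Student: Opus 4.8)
The plan is to follow the same route as the proof of \Cref{lemma:brox_prox}, replacing the two places where convexity is used by consequences of \Cref{as:defining_rel}. The first ingredient is that the active iterate sits on the sphere: since $x_{k+1}\in\BProxSub{t_k}{f}{x_k}$ is not optimal, \Cref{thm:1stbroxthm} together with \Cref{prop:t_step} gives $\norm{x_{k+1}-x_k}=t_k$, so $x_{k+1}$ solves $\min\{f(z)\;:\;\tfrac12\norm{z-x_k}^2\le\tfrac12 t_k^2\}$ with the ball constraint active. The constraint function is smooth with gradient $z-x_k$, which is nonzero on the sphere because $t_k>0$; hence the standard linear-independence constraint qualification holds and the first-order (KKT) conditions are necessary at $x_{k+1}$, requiring no regularity of $f$ beyond differentiability.

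Next I would write the stationarity condition: there exists $\lambda\ge 0$ with $\nabla f(x_{k+1})+\lambda\,(x_{k+1}-x_k)=0$, so that $x_{k+1}-x_k$ is a nonpositive scalar multiple of $\nabla f(x_{k+1})$. To turn this into the claimed normalized step I need $\nabla f(x_{k+1})\ne 0$; this is exactly the non-degeneracy that \Cref{as:defining_rel} guarantees for non-optimal points (the same property behind the boundary conclusion of \Cref{thm:1stbroxthm}/\Cref{prop:t_step}), and it forces $\lambda>0$. Dividing by $\lambda$ gives $x_{k+1}-x_k=-\tfrac1\lambda\nabla f(x_{k+1})$; taking Euclidean norms and using $\norm{x_{k+1}-x_k}=t_k$ yields $\tfrac1\lambda=t_k/\norm{\nabla f(x_{k+1})}$, and substituting back produces $x_{k+1}=x_k-\tfrac{t_k}{\norm{\nabla f(x_{k+1})}}\nabla f(x_{k+1})$. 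Equivalently, one can phrase this through \eqref{eq:prox_impl} as in \Cref{lemma:brox_prox}: the identity says precisely $x_{k+1}=\ProxSub{\gamma f}{x_k}$ with $\gamma=t_k/\norm{\nabla f(x_{k+1})}$.

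I expect the only genuine obstacle to be the bookkeeping around \Cref{as:defining_rel}: I must make sure that (i) a non-optimal broximal iterate lies on the boundary of the ball, and (ii) has nonzero gradient there — both of which should already be isolated in \Cref{thm:1stbroxthm} and \Cref{prop:t_step}, so the task is to cite them at the right granularity rather than to reprove them. Everything else — the applicability of the Lagrange multiplier theorem to a smooth single-inequality constraint with non-vanishing gradient, and the one-line norm computation fixing the multiplier — is routine. It is worth noting explicitly that differentiability of $f$ is used only to write $\nabla f(x_{k+1})$ and the KKT stationarity equation; no smoothness or convexity of $f$ enters beyond \Cref{as:defining_rel}.
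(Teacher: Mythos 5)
Your proposal is correct and lands on the same three-step skeleton as the paper's argument (boundary via \Cref{prop:t_step}, a stationarity identity of the form $\nabla f(x_{k+1})=\lambda(x_k-x_{k+1})$ with $\lambda\ge 0$, then a norm computation to pin down $\lambda=\norm{\nabla f(x_{k+1})}/t_k$); the only substantive difference is how the stationarity identity is obtained. The paper gets it directly from the defining inequality \eqref{eq:defining_rel}: since $f$ is differentiable, taking $y=x_{k+1}+sv$ and letting $s\downarrow 0$ identifies $c_{t_k}(x_k)(x_k-x_{k+1})$ with $\nabla f(x_{k+1})$ (this is the content of \Cref{lemma:ck_diff}), so the multiplier is literally the $c_{t_k}(x_k)$ of \Cref{as:defining_rel}. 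You instead invoke KKT under LICQ for the single ball constraint, which is valid and arguably cleaner for producing the direction, but it leaves you with an external multiplier whose strict positivity you must still extract from the assumption. On that point your attribution is slightly off: the non-vanishing of $\nabla f(x_{k+1})$ at a non-optimal iterate is not ``the property behind the boundary conclusion'' of \Cref{prop:t_step}; it follows from \Cref{thm:c_global_min}\ref{pt:c0_min} (non-optimality of $x_{k+1}$ forces $c_{t_k}(x_k)>0$) combined with the gradient identification $\nabla f(x_{k+1})=c_{t_k}(x_k)(x_k-x_{k+1})$ and $x_k\neq x_{k+1}$ --- i.e., you end up needing the paper's direct argument anyway to close the KKT route. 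With that citation corrected, the proof is complete; the final aside about \eqref{eq:prox_impl} reverses the logical order of the paper (where \Cref{lemma:brox_prox} is deduced \emph{from} this theorem) but is harmless.
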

The first observation is the similarity between \eqref{eq:lin_approx_update} and \eqref{eq:alg_update_diff}.
However, the above update rule is \emph{doubly implicit}, as both the gradient and the effective step size depend on the next iterate~$x_{k+1}$. 
A similar doubly implicit structure arises in $p$-th order proximal point methods \citep{nesterov2023inexact}. 
In particular, the $p$-th order proximal operator is defined as
\begin{align*}
    \squeeze \ProxPSub{\gamma f}{x} \eqdef \underset{z\in\R^d}{\arg\min} \brac{\gamma f(z) + \frac{1}{(p+1)} \cdot \norm{z - x}^{p+1}}.
\end{align*}
The corresponding $p$-th order Proximal Point Method (\algname{PPM}${\green \sf ^p}$) iterates
\begin{align}\label{eq:ppmp_update}
    \squeeze x_{k+1} =\ProxPSub{\gamma f}{x_k}, \tag{\algname{PPM}${\green \sf ^p}$}
\end{align}
which can be reformulated (see \Cref{thm:ppmpimplicit}) as
\begin{align}\label{eq:proxp}
    \squeeze x_{k+1} = x_k - \parens{\frac{\gamma}{\norm{\nabla f(x_{k+1})}^{p-1}}}^{\nicefrac{1}{p}} \cdot \nabla f(x_{k+1}).
\end{align}
A notable feature of higher-order proximal methods is their accelerated convergence rate of $\cO(\nicefrac{1}{K^p})$ for convex objective functions.
\newalg\ can achieve the same accelerated rate by carefully selecting the radius $t_k$ of the ball. 
Specifically, choosing $t_k = \parens{\gamma \norm{\nabla f(x_{k+1})}}^{1/p}$ leads to the update rule
\begin{align*}
    \squeeze x_{k+1} = \BProxSub{t_k}{f}{x} \overset{\eqref{lemma:brox_prox}}{=} x_k - \frac{t_k}{\norm{\nabla f(x_{k+1})}} \cdot \nabla f(x_{k+1}),
\end{align*}
which aligns with that of \algname{PPM}${\green \sf ^p}$ in \eqref{eq:proxp}, enabling \newalg\ to inherit the favorable convergence properties of higher-order proximal methods (\Cref{thm:bppmp_rate}).

\newalg\ can also achieve acceleration in the classical Nesterov sense, drawing on the work of \citet{anh2020understanding}, who interpret the Accelerated Gradient Method (\algname{AGM}) as an approximation of \algname{PPM}.
Specifically, let $\{y_k\}_{k\geq0}$ be an auxiliary sequence, define $l_{y}(x) \eqdef f(y) + \inp{\nabla f(y)}{x-y}$, $u_{y}(x) \eqdef f(y) + \inp{\nabla f(y)}{x-y} + \frac{L}{2} \norm{x-y}^2$, and fix $x_0=y_0\in\R^d$. 
Now, consider the algorithm
\begin{align}\label{eq:agm_bprox_update}
    \begin{split}
        \squeeze x_{k+1} &= \BProxSub{t^x_{k+1}}{l_{y_k}}{x_k}, \\
        \squeeze y_{k+1} &= \BProxSub{t^y_{k+1}}{u_{y_k}}{x_{k+1}},
    \end{split}
    \tag{\algname{A-BPM}}
\end{align}
where
\begin{align*}
    t^x_{k+1} &\eqdef \squeeze \frac{k+1}{2L} \norm{\nabla l_{y_k}(\ProxSub{\parens{\nicefrac{k+1}{2L}} l_{y_k}}{x_k})}, \\
    t^y_{k+1} &\eqdef \squeeze \frac{k+1}{2L} \norm{\nabla u_{y_k}(\ProxSub{\parens{\nicefrac{k+1}{2L}} u_{y_k}}{x_{k+1}})}.
\end{align*}
The convergence guarantee of the method is characterized in the theorem below.
\begin{restatable}{theorem}{THMAGMBRPOX}\label{thm:agm_bprox}
    Let $f:\R^d \to \R$ be  convex and $L$--smooth. Then the iterates of \ref{eq:agm_bprox_update} satisfy $f(x_K) - f_\star \leq \frac{2 L d_0^2}{K(K+1)}$.
\end{restatable}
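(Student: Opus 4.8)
The plan is to remove the broximal packaging from \ref{eq:agm_bprox_update}, reduce it to an explicit two-sequence scheme that coincides with (a form of) Nesterov's accelerated gradient method, and then close the argument with the classical estimate-sequence / potential-function telescoping. This mirrors the \algname{PPM}-to-\algname{AGM} route of \citet{anh2020understanding}; the only \newalg-specific ingredient is the first translation step.

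\textbf{Step 1 (broximal $\Rightarrow$ proximal).} First I would invoke the converse of \Cref{lemma:brox_prox} (i.e.\ \Cref{cor:prox_brox}): for differentiable convex $g$ and $\gamma>0$, $\ProxSub{\gamma g}{x}=\BProxSub{t}{g}{x}$ with $t=\gamma\norm{\nabla g(\ProxSub{\gamma g}{x})}$. Both $l_{y_k}$ (affine, with $\nabla l_{y_k}\equiv\nabla f(y_k)$) and $u_{y_k}$ (a convex quadratic) are differentiable and convex since $f$ is convex and $L$-smooth, and by construction the radii $t^x_{k+1},t^y_{k+1}$ are exactly the ones this identity produces, so \ref{eq:agm_bprox_update} is equivalent to
\begin{align*}
    x_{k+1} &= \ProxSub{\tfrac{k+1}{2L}\, l_{y_k}}{x_k}, \\
    y_{k+1} &= \ProxSub{\tfrac{k+1}{2L}\, u_{y_k}}{x_{k+1}}.
\end{align*}
(If $\nabla f(y_k)=0$ then $y_k\in\cX_f$; this degenerate case needs only a tie-breaking convention for the broximal operator and does not affect the bound, so I would dispatch it at the outset.)

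\textbf{Step 2 (explicit updates).} Because $l_{y_k}$ is affine and $u_{y_k}$ a simple quadratic, both proximal subproblems are solved by their first-order conditions. Writing $\gamma_k\eqdef\tfrac{k+1}{2L}$,
\begin{align*}
    x_{k+1} &= x_k - \gamma_k\nabla f(y_k), \\
    y_{k+1} &= \frac{x_{k+1} + \gamma_k L\, y_k - \gamma_k\nabla f(y_k)}{1 + \gamma_k L}.
\end{align*}
Since $\gamma_kL=\tfrac{k+1}{2}$, the second line is the convex combination $y_{k+1}=\tfrac{2}{k+3}x_{k+1}+\tfrac{k+1}{k+3}\bigl(y_k-\tfrac1L\nabla f(y_k)\bigr)$: one sequence performs an aggressive ``mirror'' step with stepsize $\gamma_k$, $y_k-\tfrac1L\nabla f(y_k)$ is an ordinary gradient step, and $y_k$ is the extrapolated query point — precisely Nesterov's template, with aggregation weights $A_k\eqdef\tfrac{k(k+1)}{4L}$ (so that $A_{k+1}-A_k=\gamma_k$).

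\textbf{Step 3 (Lyapunov telescoping and conclusion).} I would then track a potential of the form $\Phi_k\eqdef A_k\bigl(f(x_k)-f_\star\bigr)+\tfrac12\norm{z_k-x_\star}^2$, with $z_k$ the momentum iterate dictated by the scheme (an explicit affine combination of $x_k$ and $y_k$), and prove $\Phi_{k+1}\le\Phi_k$. The per-step inequality is assembled from: (i) the strongly convex optimality of the $l_{y_k}$-subproblem — for all $w$, $\gamma_kl_{y_k}(x_{k+1})+\tfrac12\norm{x_{k+1}-x_k}^2\le\gamma_kl_{y_k}(w)+\tfrac12\norm{w-x_k}^2-\tfrac12\norm{w-x_{k+1}}^2$ — applied at $w=x_\star$ together with $l_{y_k}(x_\star)\le f_\star$ (convexity) and $l_{y_k}(y_k)=f(y_k)$; and (ii) the descent estimate from $L$-smoothness, $f(\,\cdot\,)\le u_{y_k}(\,\cdot\,)$, which bounds the function value of the gradient-step iterate by $f(y_k)-\tfrac1{2L}\norm{\nabla f(y_k)}^2$. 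The choice $\gamma_k=A_{k+1}-A_k$ is exactly what makes the $\inp{\nabla f(y_k)}{\cdot}$ cross-terms cancel after combining (i) and (ii) with weight $A_{k+1}$. Telescoping from $0$ to $K$ and using $A_0=0$ (so $\Phi_0=\tfrac12 d_0^2$) yields $A_K\bigl(f(x_K)-f_\star\bigr)\le\Phi_K\le\Phi_0=\tfrac12 d_0^2$, i.e.\ $f(x_K)-f_\star\le\tfrac{d_0^2}{2A_K}=\tfrac{2Ld_0^2}{K(K+1)}$.

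\textbf{Main obstacle.} Steps 1--2 are routine manipulation of affine and quadratic proximal maps. The real work is Step 3: correctly identifying the momentum sequence $z_k$ and the aggregation weights $A_k$, pinning down which of the two iterates the telescoped bound actually controls, and verifying $\Phi_{k+1}\le\Phi_k$ with all gradient cross-terms canceling. This is the heart of every acceleration proof and essentially amounts to reproducing Nesterov's estimate-sequence calculation inside the \citet{anh2020understanding} framework, now fed by the explicit updates extracted in Step 2.
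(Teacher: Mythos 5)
Your Step 1 is exactly the paper's proof: both reduce the broximal updates to proximal updates via \Cref{lemma:brox_prox} (equivalently \Cref{cor:prox_brox}), identifying $x_{k+1} = \ProxSub{\gamma_{k+1}l_{y_k}}{x_k}$ and $y_{k+1} = \ProxSub{\gamma_{k+1}u_{y_k}}{x_{k+1}}$ with $\gamma_{k+1} = \nicefrac{(k+1)}{2L}$. Where the paths diverge is immediately afterward. The paper's proof is deliberately short at this point: having recognized the two prox steps as the \algname{PPM}-approximation-of-\algname{AGM} construction from \citet{anh2020understanding} (Section 4.2), it simply cites that analysis as yielding the $\cO(\nicefrac{L d_0^2}{K^2})$ rate, rather than rederiving it. You, instead, push through to the explicit updates (your Step 2 computations $x_{k+1} = x_k - \gamma_k\nabla f(y_k)$ and $y_{k+1} = \tfrac{2}{k+3}x_{k+1} + \tfrac{k+1}{k+3}(y_k - \tfrac1L\nabla f(y_k))$ are both correct, and your weights $A_k = \tfrac{k(k+1)}{4L}$ do satisfy $A_{k+1}-A_k=\gamma_k$ with $\nicefrac{d_0^2}{2A_K}$ matching the target bound) and then sketch a self-contained Nesterov-potential telescoping.

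The one caveat is exactly the one you yourself flag under ``Main obstacle'': Step 3 is a plan, not a verification. The potential $\Phi_k = A_k(f(\cdot)-f_\star) + \tfrac12\|z_k-x_\star\|^2$ is the right shape, and your ingredients (i) the three-point strong-convexity inequality for the $l_{y_k}$ prox subproblem and (ii) the upper-model inequality $f\le u_{y_k}$ are the right ones. But you have not pinned down $z_k$ nor confirmed that the telescoped quantity is $f(x_K)$ rather than $f(y_K)$ or a gradient-step iterate — a real subtlety, since $x_k$ is the ``mirror'' sequence taking increasingly large steps $\gamma_k\nabla f(y_k)$, and it is not a priori the sequence whose function value one expects to control. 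The paper sidesteps this entirely by attributing the calculation to \citet{anh2020understanding}; to actually replace that citation with your Step 3, you would need to carry the per-step inequality $\Phi_{k+1}\le\Phi_k$ to completion and confirm the claim is indeed on $f(x_K)$. Until then, your argument is an outline of a genuinely more self-contained route rather than a finished proof.
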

Consequently, we recover the well-known $\cO\parens{\nicefrac{1}{K^2}}$ accelerated convergence rate of \algname{AGM}.

\section{BPM and Smoothing}\label{sec:moreau}

The proximal operator is closely related to the Moreau envelope \citep{moreau1965proximite}, also known as the Moreau-Yosida regularization.
It is well-established that running proximal algorithms on the original objective $f$ is equivalent to applying gradient methods to its Moreau envelope \cite{ryu2014stochastic}.
A key observation is that the algorithm's effectiveness is preserved, as the minima of the original objective and the Moreau envelope coincide \cite{planiden2016strongly,planiden2019proximal,li2024power}.
The Moreau envelope has applications beyond proximal minimization algorithms, finding use in areas such as personalized federated learning \cite{t2020personalized} and meta-learning \cite{mishchenko2023convergence}.

\subsection{Ball envelope and normalized gradient descent}

Analogously, we define a concept of an envelope function associated with the ball-proximal operator.
\begin{definition}[Ball envelope]
    \label{def:ball-envelope}
    The \emph{ball envelope} with radius $t > 0$ of  $f: \R^d \mapsto \R \cup \{+\infty\}$ is given by 
    \begin{align}
        \label{eq:def:ballenve}
        \squeeze \BMoreauSub{t}{f}{x} \eqdef \min \limits_{z \in B_t(x)} f(z).
    \end{align}
\end{definition}
The ball envelope has several interesting properties and enables theoretical insights into the behavior of the broximal operator and its applications.
One noteworthy observation is the relationship between the sets of minimizers of $f$ and its ball envelope.
Specifically, it turns out that $\cX_{N} = \cbrac{x: \dist{x}{\cX_f} \leq t} = \cX_f + B_t(0)$, where $\cX_f$ and $\cX_N$ are the sets of minimizers of $f$ and $N_f^t$, respectively (further details can be found in \Cref{sec:ball-envelope:app}).
This key observation enables us to interpret \newalg\ applied to $f$ as \algname{GD} on the ball envelope $N^t_f$, analogous to the interpretation of \algname{PPM} on $f$ as \algname{GD} on the Moreau envelope (which is known for its ``smoothing'' properties).

As in the standard proximal setting, the result requires a smoothness assumption.
Recall that  a differentiable function~$f:\R^d \mapsto \R$ is $L$--smooth if
    \begin{align*}
        \squeeze f(x) - f(y) - \inner{\nabla f(y)}{x - y} \leq \frac{L}{2}\norm{x - y}^2\, \forall x, y \in \R^d.
    \end{align*}
With the assumptions set, we can state the equivalence result.
\begin{restatable}{theorem}{THMBPPMGD}\label{thm:bppm-gd-relation}
    Let $f:\R^d\to \R$ be convex and $L$--smooth,  and let $x_{k+1} = \BProxSub{t_k}{f}{x_k}$ be the iterates of \newalg. Provided that $x_{k+1}$ is not optimal,
    \begin{align}\label{eq:moreau_gd}
        \squeeze \boxed{x_{k+1} = x_k - \tfrac{t_k}{\|\nabla N^{t_k}_f(x_k)\|} \cdot \nabla \BMoreauSub{t_k}{f}{x_k}}
    \end{align}
\end{restatable}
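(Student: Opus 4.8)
The plan is to combine the explicit update rule for \newalg\ from \Cref{thm:alg_update_diff} with a Danskin-type formula for the gradient of the ball envelope. Since a convex function satisfies \Cref{as:defining_rel}, \Cref{thm:alg_update_diff} applies and, because $x_{k+1}$ is assumed non-optimal, gives
\[
    x_{k+1} = x_k - \frac{t_k}{\norm{\nabla f(x_{k+1})}}\,\nabla f(x_{k+1}).
\]
Comparing with \eqref{eq:moreau_gd}, it therefore suffices to prove the single identity $\nabla \BMoreauSub{t_k}{f}{x_k} = \nabla f(x_{k+1})$ -- the ball-envelope analogue of the classical Moreau-envelope formula $\nabla M_{\gamma f}(x) = \nabla f(\ProxSub{\gamma f}{x})$. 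Substituting this identity into the display above immediately yields \eqref{eq:moreau_gd}.

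To establish the identity I would first reparametrize the ball constraint so that the feasible region becomes independent of the base point: setting $z = x + t u$,
\[
    \BMoreauSub{t}{f}{x} \;=\; \min_{z \in B_t(x)} f(z) \;=\; \min_{u \in B_1(0)} f(x + t u) \;\eqdef\; \phi(x).
\]
Thus $\phi$ is the pointwise minimum, over the \emph{fixed} compact set $B_1(0)$, of $g(x,u) \eqdef f(x + t u)$, which is jointly continuous and, since $f$ is (convex and) $L$--smooth, continuously differentiable in $x$ with $\nabla_x g(x,u) = \nabla f(x + t u)$. Because $x_{k+1}$ is non-optimal we have $\BProxSub{t_k}{f}{x_k} \not\subseteq \cX_f$, so \Cref{prop:single_val} guarantees that the inner minimizer at $x_k$ is the \emph{unique} point $u^\star = (x_{k+1} - x_k)/t_k$. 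Danskin's theorem then applies and gives that $\phi$ is differentiable at $x_k$ with $\nabla \phi(x_k) = \nabla_x g(x_k, u^\star) = \nabla f(x_k + t_k u^\star) = \nabla f(x_{k+1})$, which is the desired identity. As a consistency check, combining it with \eqref{eq:alg_update_diff} and the fact that $x_{k+1}$ lies on the boundary sphere of $B_{t_k}(x_k)$ when non-optimal (\Cref{thm:1stbroxthm}, \Cref{prop:t_step}) recovers the first-order optimality relation $\nabla f(x_{k+1}) = \tfrac{\norm{\nabla f(x_{k+1})}}{t_k}(x_k - x_{k+1})$.

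The crux of the argument is the envelope/Danskin step, and there are two points to handle carefully. First, the directional-derivative formula of Danskin collapses to a genuine gradient only when the inner argmin is a singleton; this is exactly what the non-optimality hypothesis buys us via \Cref{prop:single_val}. Second, it is essential to perform the substitution $z = x + t u$ \emph{before} applying Danskin: afterwards the constraint set $B_1(0)$ no longer depends on the base point, so the value's dependence on $x$ is ``clean'' and one need not analyze whether the ball constraint is active or track a moving feasible set. Everything else is bookkeeping: continuity of $\nabla_x g$ follows from smoothness, and the quantities in \eqref{eq:moreau_gd} are well defined because $x_{k+1} \notin \cX_f$ forces $\nabla f(x_{k+1}) = \nabla \BMoreauSub{t_k}{f}{x_k} \neq 0$, so no division by zero occurs.
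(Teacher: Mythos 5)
Your argument is correct, and it shares the paper's high-level structure: reduce the claim to the single identity $\nabla \BMoreauSub{t_k}{f}{x_k} = \nabla f(x_{k+1})$ and then substitute into the normalized update from \Cref{thm:alg_update_diff}. Where you diverge is in \emph{how} this identity is proved. The paper obtains it from \Cref{lemma:N3}, which recognizes the ball envelope as the infimal convolution $N^{t}_f = \delta_{B_t(0)}\,\square\, f$ and invokes Theorem 5.30 of \citet{beck2017first}; that route additionally yields global $L$--smoothness of $N^t_f$ and the gradient formula at \emph{every} $x$, even where the inner minimizer is not unique. Your route instead reparametrizes to a fixed compact constraint set $B_1(0)$ and applies a Danskin-type envelope theorem, using the non-optimality of $x_{k+1}$ (via \Cref{thm:1stbroxthm} and \Cref{prop:single_val}) to ensure the inner argmin is a singleton and hence that the directional derivative collapses to a genuine gradient. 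This is more self-contained and avoids appealing to infimal-convolution machinery, but it is strictly local -- it establishes differentiability of $N^{t_k}_f$ only at $x_k$, which suffices for this theorem. Two small points worth noting: (i) Danskin's theorem is more often stated for pointwise maxima, but the min version you need holds by the standard two-sided estimate (the upper bound from plugging in $u^\star$, the lower bound from compactness and stability of minimizers), so no issue; (ii) Gateaux differentiability of $\phi$ at $x_k$ upgrades to Fréchet differentiability since $\phi = N^{t_k}_f$ is convex by \Cref{lemma:N2}, which silently justifies writing $\nabla N^{t_k}_f(x_k)$. Both are routine, and your proof is a valid alternative to the paper's.
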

Therefore, \newalg\ can be viewed as normalized gradient descent (\algname{$\|$GD$\|$}) on the ball envelope.

\section{BPM and Adaptive Step Sizes} \label{sec:adaptive_stepsizes}

In gradient-based methods, selecting an appropriate step size is a notoriously challenging problem: choosing a step size that is too small results in slow convergence, while a step size that is too large risks divergence. 
This challenge has driven significant research into the development of adaptive methods that adjust the learning rate dynamically based on algorithm history \citep{polyak1987introduction, bach2019universal, malitsky2019adaptive, horvath2022adaptive, yang2023adaptive}. However, all of these algorithms come with inherent limitations and trade-offs.

A similar dilemma arises in trust-region methods \citep{conn2000trust, nocedal2006numerical}. 
If the trust-region radius is too small, progress is slow; if it is too large, the model function may fail to approximate the objective accurately, potentially resulting in an increase in the function value at the next iterate $x_{k+1}$. 
To address this, trust-region methods typically rely on heuristic rules to modify the radius, adjusting it up or down based on predefined criteria.

Proximal methods offer a different approach. They can, in theory, achieve convergence in a single step if the step size~$\gamma$ is sufficiently large \citep{guler1991convergence}. However, this advantage hinges on the assumption that the proximal operator is computationally easy to evaluate. 
In practice, each proximal step involves solving a nested optimization problem, which is often computationally expensive, making proximal methods more conceptual than practical for many applications.

\newalg\ preserves the desirable properties of proximal methods while facing similar computational challenges.  
Similar to \algname{PPM} -- which, as shown in Lemma \ref{lemma:brox_prox}, is a special case of \newalg\ -- it retains the ability to converge in a single step provided that a sufficiently large step size $t$ is used (\Cref{sec:convex}). However, just like \algname{PPM}, it inherits the drawback that solving the local optimization subproblem can be computationally challenging in general.

To gain a clearer insight into the step size sequence generated by \newalg, let us examine the setting where $f$ is differentiable. As demonstrated in \Cref{thm:alg_update_diff}, the algorithm can then be interpreted as normalized \algname{PPM} with the step size~$\frac{t_k}{\norm{\nabla f(x_{k+1})}}$. In the smooth setting, \newalg\ can be further expressed as \algname{GD} on the ball envelope with the adaptive step size given by
\begin{align}\label{eq:ada_step_bpm}
    \squeeze \gamma_{k,t}^E \eqdef \frac{t_k}{\norm{\nabla N^t_f\rbrac{x_k}}}
    \overset{\eqref{lemma:N3}}{=} \frac{t_k}{\norm{\nabla f(x_{k+1})}}
\end{align}
(\Cref{thm:bppm-gd-relation}). 
Unlike traditional methods, where step sizes decrease over time \citep{robbins1951stochastic},  the step size sequence of \newalg\ is \emph{increasing} even if the radius~$t_k$ is fixed across iterations (i.e., $t_k \equiv t>0$; see \Cref{thm:conv-bppm-lin}(v)).
To better understand the implications, we compare the step size~$\gamma_{k,t}^E$ in \eqref{eq:ada_step_bpm} with the classic Polyak step size \citep{polyak1987introduction}. The Polyak step size for \algname{GD} applied to the ball envelope is defined as
\begin{align}\label{eq:polyak_step}
    \squeeze \gamma_k^{P} \eqdef \frac{\BMoreauSub{t_k}{f}{x_k} - \BMoreauSub{t_k}{f}{x_{\star}}}{\norm{\nabla \BMoreauSub{t_k}{f}{x_k}}^2}
    \overset{\eqref{lemma:N3}, \eqref{lemma:N4}}{=} \frac{f(x_{k+1}) - f_{\star}}{\norm{\nabla f(x_{k+1})}^2},
\end{align}
which corresponds to the Polyak step size for the original objective $f$ evaluated at the next iterate.

Comparing \eqref{eq:ada_step_bpm} and \eqref{eq:polyak_step}, we see that the relationship between~$\gamma_{k,t}^E$ and~$\gamma_k^{P}$ is determined by the interplay between $t_k$ and $\nicefrac{(f(x_{k+1}) - f_{\star})}{\norm{\nabla f(x_{k+1})}}$. In particular, choosing $t_k=\nicefrac{(f(x_{k+1}) - f_{\star})}{\norm{\nabla f(x_{k+1})}}$ recovers the Polyak step size exactly.
Furthermore, when $f$ is $L$--smooth and $\mu$--strongly convex, the Polyak step size is uniformly bounded above and below by
\begin{align*}
    \squeeze \frac{1}{2L} \leq \frac{f(x_{k+1}) - f_{\star}}{\norm{\nabla f(x_{k+1})}^2} \leq \frac{1}{2\mu}.
\end{align*}
Under the same conditions, we also have
\begin{align*}
    \squeeze \frac{t_k}{L \norm{x_{k+1} - x_{\star}}} \leq \frac{t_k}{\norm{\nabla f(x_{k+1})}} \leq \frac{t_k}{\mu \norm{x_{k+1} - x_{\star}}}.
\end{align*}
These bounds coincide for~$t_k = \nicefrac{\norm{x_{k+1} - x_{\star}}}{2}$.
On the other hand, if the step size~$t_k \equiv t$ is kept constant, \newalg\ can initially take smaller steps, but as the iterates approach the solution, the lower bound on~$\gamma_{k,t}^E$ increases and can eventually surpass the upper bound associated with the Polyak step size, resulting in \newalg\ taking larger steps.

\section{BPM and Trust Region Methods}
\label{sec:trm}

Trust region methods represent another complementary connection to our work. 
These methods trace their origins to \citet{levenberg1944method}, who introduced a modified Gauss-Newton method to solve nonlinear least squares problems.
Their widespread recognition followed the influential work of \citet{marquardt1963algorithm}.
At their core, trust region methods minimize the function~$f$ by iteratively approximating it within a neighborhood around the current iterate, referred to as the \emph{trust region}, using a model $m_k(x)$ (e.g., a quadratic approximation) \citep{conn2000trust}. 
The trust region is typically defined as $B_{t_k}(x_k) \eqdef \brac{z\in\R^d: \norm{z-x_k} \leq t_k}$, where $t_k$ is the \emph{trust-region radius}.\footnote{Alternatively, more complex and problem-specific trust regions, like ellipsoidal or box-shaped ones, could be used.} 
The next iterate is determined by solving the constrained optimization problem
\begin{align*}
    \squeeze x_{k+1} = \underset{z\in B_{t_k}(x_k)}{\arg\min} m_k(x_k),
\end{align*}
after which the radius is adjusted, and the process is repeated.
While intuitive and reasonable, this approach is not conceptually aligned with the abstraction level of \algname{GD}, which can be interpreted as minimization of a quadratic upper bound on $f$, applied without constraints. 
Why, then, should such constraints arise in trust region methods?

\newalg\ provides a new perspective, elevating trust region methods to a more principled framework.
It functions as the \algname{PPM} of trust region methods, naturally explaining the emergence of neighborhoods in the optimization process.
Traditional trust region methods rely on \emph{approximate} models and introduce constraints to compensate for their limitations -- since local approximations of~$f$ become less reliable farther from~$x_k$, the trust-region radius must be continuously adjusted to maintain accuracy. Within this framework,~$t_k$ acts as a control mechanism for approximation quality.
In contrast, \newalg\ can be considered a ``supercharged'' version of these methods, where the model is assumed to be perfect, eliminating the need for radius adjustment when optimizing directly on~$f$.
By interpreting \newalg\ as a ``master'' trust region method, we can unify the two approaches at a higher level of abstraction.
In this context, trust regions emerge naturally -- not as a heuristic, but as an inherent component of the optimization process.

Although this paper does not focus on trust region methods explicitly, \newalg\ serves as a conceptual bridge to that field. 
By presenting a globally convergent framework without approximation, our approach lays the groundwork for advancing the theory and practice of trust region methods.

\section{Convergence Theory: Convex Case}\label{sec:convex}

We first analyze the algorithm assuming that the objective function is convex.\footnote{Extension to the nonconvex regime is presented in \Cref{sec:ball_convex}.}
In this setting, the broximal operator has several favorable properties. 
For example, $\BProxSub{t}{f}{x}$ is always a singleton and lies on the boundary of $B_t(x)$ unless $\BProxSub{t}{f}{x} \subseteq \cX_f$, meaning that the algorithm has reached the set of global minimizers.
In other words, at each iteration, \newalg\ moves from $x_k$ to a new point $x_{k+1}$ located on the boundary of $B_{t_k}(x_k)$, effectively traveling a distance of~$t_k$ at each step (possibly except for the very last iteration). Thus, we sometimes refer to the radius $t_k$ as the \emph{step size}. Let $d_k \eqdef \norm{x_k - x_{\star}}$.

The following theorem presents the main results.
\begin{restatable}{theorem}{THMBPPMCONVLIN}
    \label{thm:conv-bppm-lin}
    Assume that $f: \R^d \mapsto \R \cup \{+\infty\}$ is proper, closed and convex, and let $\{x_k\}_{k\geq0}$ be the iterates of \newalg\ run with any sequence of positive radii $\{t_k\}_{k\geq0}$, where $x_0\in {\rm dom} f$.  Then 
    \begin{enumerate}[label=(\roman*)]
        \item If $\cX_f\cap B_{t_k}(x_k)\neq\emptyset$, then $x_{k+1}$ is optimal.\label{pt:opt}
        
        \item \label{pt:dist_decr_bpm}
        If $\cX_f\cap B_{t_k}(x_k)=\emptyset$, then $\norm{x_{k+1} - x_k} = t_k$. Moreover, for any $x_{\star}\in\cX_f$, we have
        \begin{align*}
            \norm{x_{k+1} - x_{\star}}^2 \leq \norm{x_k - x_{\star}}^2 - t_k^2,
        \end{align*}
        \begin{align*}
            \textnormal{dist}^2(x_{k+1}, \cX_f) \leq \textnormal{dist}^2(x_k, \cX_f) - t_k^2.
        \end{align*}
        
        \item \label{pt:dist_bpm_lin}
        If $\sum_{k=0}^{K-1} t_k^2 \geq \textnormal{dist}^2(x_0, \cX_f)$, then $x_K\in\cX_f$.
        
        \item \label{pt:1step_bpm_lin}
        For any $k\geq 0$, 
        \begin{eqnarray*}%\label{eq:bppm_lin_rate_1step}
            \squeeze f(x_{k+1}) - f_{\star} \leq \parens{1 + \frac{t_k}{\norm{x_{k+1} - x_{\star}}}}^{-1} \parens{f(x_k) - f_{\star}}.
        \end{eqnarray*}

        \item If $f$ is differentiable, then $\norm{\nabla f(x_{k+1})} \leq \norm{\nabla f(x_k)}$ for all $k\geq 0$, and 
        \begin{eqnarray*}
            \squeeze \sum_{k=0}^{K-1} \parens{\frac{t_k}{\sum_{k=0}^{K-1} t_k} \norm{\nabla f(x_{k+1})}}
            \leq \frac{f(x_0) - f_{\star}}{\sum_{k=0}^{K-1} t_k}.
        \end{eqnarray*}
    \end{enumerate}
\end{restatable}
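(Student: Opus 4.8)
The plan is to prove the five items in order, with item (ii) carrying essentially all the geometric content and everything else following from it by convexity and telescoping. \emph{Core estimate (ii).} Fix $k$ with $\cX_f\cap B_{t_k}(x_k)=\emptyset$. Then $x_{k+1}$ cannot lie in the interior of $B_{t_k}(x_k)$, since an interior minimizer of $f$ over the ball would be an unconstrained local — hence, by convexity, global — minimizer of $f$ sitting inside the ball, contradicting the hypothesis (this is also the content of \Cref{prop:t_step}/\Cref{thm:1stbroxthm}); hence $\norm{x_{k+1}-x_k}=t_k$. The crux is the obtuse-angle inequality $\inner{x_{k+1}-x_k}{x_\star-x_{k+1}}\ge 0$ for every $x_\star\in\cX_f$. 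To obtain it without differentiability, note $f_\star<f(x_{k+1})$ and consider the chord $w(\lambda)\eqdef(1-\lambda)x_{k+1}+\lambda x_\star$: convexity gives $f(w(\lambda))\le(1-\lambda)f(x_{k+1})+\lambda f_\star<f(x_{k+1})$ for $\lambda\in(0,1]$, so $w(\lambda)\notin B_{t_k}(x_k)$ (else $x_{k+1}$ would not minimize $f$ over the ball); thus $\lambda\mapsto\norm{w(\lambda)-x_k}^2$ equals $t_k^2$ at $\lambda=0$ and is strictly larger just after, forcing its derivative at $0$, namely $2\inner{x_{k+1}-x_k}{x_\star-x_{k+1}}$, to be $\ge 0$. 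Expanding $\norm{x_k-x_\star}^2$ about $x_{k+1}$ and substituting $\norm{x_k-x_{k+1}}^2=t_k^2$ and this inequality gives $\norm{x_{k+1}-x_\star}^2\le\norm{x_k-x_\star}^2-t_k^2$; taking $x_\star=\Pi_{\cX_f}(x_k)$ (well defined since $\cX_f$ is nonempty, closed, convex) and using $\textnormal{dist}(x_{k+1},\cX_f)\le\norm{x_{k+1}-x_\star}$ yields the distance version.

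\emph{Items (i) and (iii).} For (i): if $x_\star\in\cX_f\cap B_{t_k}(x_k)$, then $f(x_{k+1})\le f(x_\star)=f_\star$, so $x_{k+1}\in\cX_f$. For (iii): if $\cX_f\cap B_{t_j}(x_j)=\emptyset$ for all $j<K$, telescoping the distance inequality of (ii) gives $\textnormal{dist}^2(x_K,\cX_f)\le\textnormal{dist}^2(x_0,\cX_f)-\sum_{j=0}^{K-1}t_j^2\le 0$, so $x_K\in\cX_f$ by closedness; otherwise, at the first index $j_0<K$ with $\cX_f\cap B_{t_{j_0}}(x_{j_0})\neq\emptyset$, item (i) gives $x_{j_0+1}\in\cX_f$, which then lies in $\cX_f\cap B_{t_{j_0+1}}(x_{j_0+1})$, so by induction $x_j\in\cX_f$ for all $j\ge j_0+1$, in particular for $j=K$.

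\emph{Items (iv) and (v).} Write $h_k\eqdef f(x_k)-f_\star$ and $d_k\eqdef\norm{x_k-x_\star}$. For (iv): if $x_{k+1}$ is optimal the bound is trivial; otherwise $\cX_f\cap B_{t_k}(x_k)=\emptyset$, whence $d_k>t_k$, and the boundary point $y\eqdef x_k+\tfrac{t_k}{d_k}(x_\star-x_k)$, which lies in $B_{t_k}(x_k)$, satisfies $f(x_{k+1})\le f(y)\le f(x_k)-\tfrac{t_k}{d_k}h_k$ by convexity, so $h_{k+1}\le(1-t_k/d_k)h_k$; combining $d_k\le\norm{x_k-x_{k+1}}+\norm{x_{k+1}-x_\star}=t_k+d_{k+1}$ with monotonicity of $s\mapsto 1-t_k/s$ upgrades this to $h_{k+1}\le(1+t_k/d_{k+1})^{-1}h_k$. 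For (v): in the nontrivial case the \Cref{lemma:brox_prox} reformulation gives $\inner{\nabla f(x_{k+1})}{x_{k+1}-x_k}=-t_k\norm{\nabla f(x_{k+1})}$ (trivially $0$ otherwise), so monotonicity of $\nabla f$ and Cauchy–Schwarz chain as $-t_k\norm{\nabla f(x_{k+1})}=\inner{\nabla f(x_{k+1})}{x_{k+1}-x_k}\ge\inner{\nabla f(x_k)}{x_{k+1}-x_k}\ge -t_k\norm{\nabla f(x_k)}$, giving $\norm{\nabla f(x_{k+1})}\le\norm{\nabla f(x_k)}$; and convexity gives $f(x_k)-f(x_{k+1})\ge\inner{\nabla f(x_{k+1})}{x_k-x_{k+1}}=t_k\norm{\nabla f(x_{k+1})}$, which telescopes to $\sum_{k=0}^{K-1}t_k\norm{\nabla f(x_{k+1})}\le f(x_0)-f_\star$; dividing by $\sum_{k=0}^{K-1}t_k$ is exactly the stated inequality.

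\emph{Main obstacle.} The whole theorem funnels into item (ii), and inside it into the obtuse-angle inequality in the \emph{non-differentiable} regime, where the KKT/gradient argument is unavailable; the ``the chord leaves the ball'' perturbation argument above is the one genuinely non-mechanical idea, and it relies essentially on both $x_{k+1}\in\partial B_{t_k}(x_k)$ and $f_\star<f(x_{k+1})$ — both consequences of $\cX_f\cap B_{t_k}(x_k)=\emptyset$. A minor secondary point is that the natural convexity estimate in (iv) only produces a $d_k$-based contraction, so one must spot the triangle-inequality trick to recast it in the advertised $d_{k+1}$-based form.
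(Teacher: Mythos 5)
Your proof is correct, and it reaches the same conclusions by a genuinely more elementary route than the paper's. The paper funnels items (ii) and (iv) through the ``Second Brox Theorem'' (\Cref{thm:3}), which is a Fermat-condition/normal-cone statement: it produces a scalar $c_{t_k}(x_k)\ge 0$ with $c_{t_k}(x_k)(x_k-x_{k+1})\in\partial f(x_{k+1})$ and the accompanying subgradient inequality, from which the obtuse-angle inequality in (ii) follows after dividing by $c_{t_k}(x_k)>0$, and (iv) follows by combining the two instances $y=x_k$ and $y=x_\star$ of that inequality with Cauchy--Schwarz. You obtain both of those estimates by direct perturbation arguments that never touch $\partial f$: in (ii) the chord $w(\lambda)=(1-\lambda)x_{k+1}+\lambda x_\star$ must leave $B_{t_k}(x_k)$ because $f$ strictly decreases along it, and differentiating $\norm{w(\lambda)-x_k}^2$ at $\lambda=0$ gives $\inner{x_{k+1}-x_k}{x_\star-x_{k+1}}\ge 0$; in (iv) the barycentric boundary point $y=x_k+\tfrac{t_k}{d_k}(x_\star-x_k)\in B_{t_k}(x_k)$ yields $h_{k+1}\le(1-t_k/d_k)h_k$ by convexity of $f$ alone, and the triangle inequality $d_k\le t_k+d_{k+1}$ then converts this $d_k$-contraction into the advertised $d_{k+1}$-form. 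Items (i), (iii), and (v) follow the paper essentially verbatim (the paper phrases the gradient-monotonicity step in (v) as two convexity inequalities whereas you invoke monotonicity of $\nabla f$, but these are the same computation). What your route buys is self-containedness: you avoid the normal-cone/subdifferential machinery (\Cref{fact:2:sum-rule-subdiff}, \Cref{lemma:subfiff_id}, \Cref{lemma:subdiff_id_ball}, \Cref{thm:3}) entirely for this theorem. What the paper's route buys is reuse: the quantity $c_{t_k}(x_k)$ and the inequality of \Cref{thm:3} become the axiom of ball-convexity (\Cref{as:defining_rel}), which is exactly how the authors later extend the theorem beyond the convex regime in \Cref{thm:bppm-ballconvex}; your chord argument is tied to convexity of $f$ and does not generalize in that direction.
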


\begin{proof}[Proof sketch] The complete proof of \Cref{thm:conv-bppm-lin} is presented in \Cref{sec:ap_convex}. Here, we provide a brief sketch of its final part to emphasize the main ideas underlying the argument.
    Let us consider some iteration $k$ such that $x_{k+1} \not\in \cX_f$ (otherwise, the problem is solved in $1$ step).
    We start the proof by invoking \Cref{thm:3}, stating that
    \begin{align}\label{eq:qeobnre}
        f(y) - f(u) \geq c_t(x)\inner{x - u}{y - u}
    \end{align}
    for some $c_t(x)\geq 0$, $u \in \BProxSub{t}{f}{x}$ and all $y \in \R^d$.
    Substituting $y = x = x_k$ and $t=t_k$, we can bound $f(x_{k+1}) - f_{\star}$ by
$
  f(x_k) - f_{\star} - c_{t_k}(x_k) \norm{x_{k+1}-x_k}^2. 
$
    Next, applying the same inequality with $x = x_k$, $y = x_\star \in \cX_f$, and using the Cauchy-Schwarz inequality, we obtain
    \begin{eqnarray*}
        f(x_{k+1}) - f_{\star} \leq c_{t_k}(x_k) \norm{x_k-x_{k+1}} \norm{x_{k+1} - x_{\star}}.
    \end{eqnarray*}
    Since $x_{k+1} \not\in \cX_f$, it follows that 
    \begin{eqnarray*}
        \squeeze \parens{f(x_{k+1}) - f_{\star}} \frac{\norm{x_k-x_{k+1}}}{\norm{x_{k+1} - x_{\star}}} \leq c_{t_k}(x_k) \norm{x_k-x_{k+1}}^2.
    \end{eqnarray*}
    Applying this bound  and using the fact that $\norm{x_k - x_{k+1}} = t_k$, we rearrange the terms to obtain (iv).
\end{proof}

\begin{restatable}{corollary}{CORBPPMCONVLIN}\label{cor:conv-bppm-lin}
    Let the assumptions of \Cref{thm:conv-bppm-lin} hold. Then, for any $K\geq 1$, the iterates of \newalg\ satisfy
    \begin{align}\label{eq:bppm_lin_rate}
        \squeeze f(x_K) - f_{\star} \leq \prod \limits_{k=0}^{K-1} \parens{1 + \frac{t_k}{d_0}}^{-1} \parens{f(x_0) - f_{\star}}.
    \end{align}

\end{restatable}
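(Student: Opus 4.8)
The plan is to bootstrap the corollary from two parts of \Cref{thm:conv-bppm-lin}: the one-step functional contraction in \ref{pt:1step_bpm_lin} and the distance monotonicity in \ref{pt:dist_decr_bpm}, and then simply multiply the per-iteration inequalities. Fix an arbitrary minimizer $x_\star \in \cX_f$ and abbreviate $d_k \eqdef \norm{x_k - x_\star}$ and $h_k \eqdef f(x_k) - f_\star \geq 0$; note $h_0 = f(x_0)-f_\star \geq 0$, so the right-hand side of \eqref{eq:bppm_lin_rate} is nonnegative.

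First I would dispose of the degenerate case in which the solution set is reached within the first $K$ iterations. Suppose $\cX_f \cap B_{t_j}(x_j) \neq \emptyset$ for some $j \in \{0,\dots,K-1\}$, and take the smallest such $j$. By \ref{pt:opt}, $x_{j+1}$ is optimal. I would then record the elementary fact that optimality is absorbing: if $x_k \in \cX_f$, then $f_\star = f(x_k) = \min_{z \in B_{t_k}(x_k)} f(z)$ because $x_k$ is a global minimizer lying in the ball $B_{t_k}(x_k)$, hence every $x_{k+1} \in \BProxSub{t_k}{f}{x_k}$ also satisfies $f(x_{k+1}) = f_\star$. Iterating this from index $j+1 \leq K$ gives $h_K = 0$, and the claimed bound holds trivially.

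In the complementary case, $\cX_f \cap B_{t_k}(x_k) = \emptyset$ for every $k \in \{0,\dots,K-1\}$; in particular $x_{k+1} \notin \cX_f$ for all such $k$, so $d_{k+1} > 0$ and \ref{pt:1step_bpm_lin} may be applied safely. By \ref{pt:dist_decr_bpm}, $d_{k+1}^2 \leq d_k^2 - t_k^2 \leq d_k^2$, so $d_{k+1}\leq d_k$, and hence by induction $d_{k+1} \leq d_0$ for every $k<K$. Since $s \mapsto (1+s)^{-1}$ is decreasing on $(0,\infty)$ and $h_k \geq 0$, combining this with \ref{pt:1step_bpm_lin} yields, for each $k \in \{0,\dots,K-1\}$,
\begin{align*}
    h_{k+1} \leq \parens{1 + \tfrac{t_k}{d_{k+1}}}^{-1} h_k \leq \parens{1 + \tfrac{t_k}{d_0}}^{-1} h_k .
\end{align*}

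Finally, since every factor $\parens{1 + t_k/d_0}^{-1}$ lies in $(0,1]$ and every $h_k$ is nonnegative, I would chain these $K$ inequalities for $k = 0,1,\dots,K-1$ to conclude $h_K \leq \prod_{k=0}^{K-1}\parens{1 + t_k/d_0}^{-1} h_0$, which is exactly \eqref{eq:bppm_lin_rate}. The only real subtlety — the ``main obstacle,'' though it is minor — is the case bookkeeping: one must check both that hitting $\cX_f$ makes all subsequent iterates optimal (so the telescoped estimate still holds when the algorithm terminates early) and that off $\cX_f$ one has $d_{k+1}>0$ so that \ref{pt:1step_bpm_lin} applies; both are immediate from \ref{pt:opt}–\ref{pt:dist_decr_bpm}.
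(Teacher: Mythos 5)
Your proposal is correct and follows essentially the same route as the paper: apply the one-step contraction from \Cref{thm:conv-bppm-lin}~\ref{pt:1step_bpm_lin}, use the monotonicity of $d_k$ from \ref{pt:dist_decr_bpm} to replace $d_{k+1}$ by $d_0$, and multiply over $k$. The only difference is that you explicitly handle the early-termination case, which the paper's one-line proof leaves implicit.
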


Several important observations are in order:

\phantom{X} $\bullet$ \textbf{Large step sizes travel far.} Note that \Cref{thm:conv-bppm-lin} holds without any upper bound on the radii. Therefore, \newalg\ converges even after a single iteration provided that the radius~$t_0$ is large enough: $t_0 \geq \textnormal{dist}(x_0, \cX_f)$. \\
\phantom{X} $\bullet$  \textbf{Finite convergence.} If we fix the radii sequence to be constant, i.e., if $t_k \equiv t > 0$, then \ref{pt:dist_bpm_lin} implies convergence to the \emph{exact optimum} in a \emph{finite number of steps}. Indeed, $Kt^2 = \sum_{k=0}^{K-1} t^2 \geq \textnormal{dist}^2(x_0, \cX_f)$ holds for $K = \ceil{\nicefrac{\textnormal{dist}^2(x_0, \cX_f)}{t^2}}.$ This is in stark contrast to proximal methods, which never reach the exact solution. \\
\phantom{X} $\bullet$  \textbf{Linear convergence without smoothness nor strong convexity}. Surprisingly, inequality \eqref{eq:bppm_lin_rate} posits linear convergence of \newalg\ without assuming smoothness nor strong convexity (or any relaxation thereof, such as the P{\L} condition, which normally leads to linear convergence \citep{karimi2016linear}).

\begin{remark}\label{rem:sublinear_conv}
    Under the assumptions of \Cref{thm:conv-bppm-lin}, the iterates of \newalg\ with a fixed step size $t_k \equiv t > 0$ satisfy
    \begin{align*}
        \squeeze f(x_K) - f_\star \leq \frac{2d_0}{2d_0 + t} \cdot\frac{d_0^2}{2Kt^2} \parens{f(x_0) - f_{\star}}
    \end{align*}
    (see \Cref{thm:conv-bppm-convex}).
    This bound outperforms~\eqref{eq:bppm_lin_rate} when~$K$ is small and $t$ is large ($K\in\{1,2,3\}$ and $t\approx \norm{x_0 - x_{\star}}$).
    However, such a choice of $t$ is impractical, as the initial ``local'' search space essentially contains a global solution.
\end{remark}

\section{Ball oracles in the literature}\label{sec:ball_oracles}

Several prior works have leveraged the ability to minimize a convex function over a ball constraint. \citet{carmon2020acceleration} developed accelerated algorithms within this framework. The works of \citet{carmon2021thinking} and \citet{asi2021stochastic} applied it to minimizing the maximum loss, while \citet{carmon2023resqueing} and \citet{jambulapati2024closing} used it to design parallel optimization methods. Subsequent efforts have refined these approaches by improving logarithmic factors \citep{carmon2022optimal} and generalizing to non-Euclidean geometries \citep{adil2024convex}. Moreover, \citet{weigand2024adversarial} proposed a method that can be interpreted as a continuous-time gradient flow of the \algname{BPM}.

However, our motivation departs significantly from this line of work. Existing approaches largely treat the ball minimization oracle as a mechanism for implementing MS oracles \citep{monteiro2013accelerated}, relying on differentiability and convexity of the objective function, as well as additional regularity conditions such as Lipschitz continuity, smoothness, H{\"o}lder continuity, or stability properties of the Hessian. In contrast, our starting point was to reframe the penalty in the proximal operator as a hard constraint, with the goal of designing a method capable of effectively navigating nonconvex loss landscapes. This led us to formulate an abstract meta-algorithm and investigate its theoretical properties---initially in the convex setting, and then extending to more general, possibly nonconvex, objectives.
Unlike prior work, our focus is on understanding the ball-proximal operator itself, rather than using it as a means to an end.

\section*{Acknowledgements}

The research reported in this publication was supported by funding from King Abdullah University of Science and Technology (KAUST): i) KAUST Baseline Research Scheme, ii) Center of Excellence for Generative AI, under award number 5940, iii) SDAIA-KAUST Center of Excellence in Artificial Intelligence and Data Science.

\section*{Impact Statement}
This paper presents work whose goal is to advance the field of 
Machine Learning. There are many potential societal consequences 
of our work, none which we feel must be specifically highlighted here.

% \newpage

% In the unusual situation where you want a paper to appear in the
% references without citing it in the main text, use \nocite
% \nocite{langley00}

\bibliography{main}
\bibliographystyle{icml2025}

%%%%%%%%%%%%%%%%%%%%%%%%%%%%%%%%%%%%%%%%%%%%%%%%%%%%%%%%%%%%%%%%%%%%%%%%%%%%%%%
%%%%%%%%%%%%%%%%%%%%%%%%%%%%%%%%%%%%%%%%%%%%%%%%%%%%%%%%%%%%%%%%%%%%%%%%%%%%%%%
% APPENDIX
%%%%%%%%%%%%%%%%%%%%%%%%%%%%%%%%%%%%%%%%%%%%%%%%%%%%%%%%%%%%%%%%%%%%%%%%%%%%%%%
%%%%%%%%%%%%%%%%%%%%%%%%%%%%%%%%%%%%%%%%%%%%%%%%%%%%%%%%%%%%%%%%%%%%%%%%%%%%%%%
\newpage
\appendix

\onecolumn

\part*{Appendix}

\tableofcontents

\clearpage

\section{Convergence Theory: Beyond Convexity}\label{sec:ball_convex}

Convex geometry offers valuable insights into the properties of the objective function, enabling the design of efficient algorithms for finding globally optimal solutions. Consequently, many optimization methods rely on the convexity assumption to provide theoretical guarantees. However, many problems of practical interest involve functions that fail to be convex, while still retaining certain structural similarities to convex functions \citep{kleinberg2018alternative, hardt2018gradient, zhou2019sgd}. This motivates the search for broader function classes for which theoretical convergence results can still be provided.

In this section, we introduce \emph{ball-convexity}, a relaxed notion that extends standard convexity while maintaining sufficient structure to enable theoretical analysis. We demonstrate how this property preserves key inequalities used in our proofs, allowing us to extend the convergence guarantees of \newalg\ beyond the convex regime.

The proof of \Cref{thm:conv-bppm-lin} heavily relies on inequality~\eqref{eq:qeobnre}. It turns out that such an inequality holds beyond the convex case, and the method can be analyzed based exclusively on this weaker condition. Motivated by this, we introduce the following assumption:

\begin{restatable}[$B_t$--convexity]{assumption}{ASBALLCONV}\label{as:defining_rel}
    A proper function $f:\R^d\to\R \cup \{+\infty\}$ is said to be \emph{$B_t$--convex} if there exists a function $c_t:\R^d\to\R_{\geq 0}$ such that for all $x \in {\rm dom} f$,    \begin{align}\label{eq:defining_rel}
        f(y) \geq f(u) + c_t(x) \inp{x-u}{y-u}
    \end{align}
    for any $u \in \BProxSub{t}{f}{x}$ and for all $y \in \R^d$.
\end{restatable}

When referencing \Cref{as:defining_rel} without specifying a particular value of $t$, we refer to it as \emph{ball-convexity}.
While inequality~\eqref{eq:defining_rel} always holds for convex $f$ (see \Cref{thm:3}), ball-convexity extends beyond traditional convexity, as demonstrated in the following example.
\begin{example}\label{ex:not_conn}
    Consider the function $f:\R\to\R$ defined via
    \begin{align*}
        f(x) =
        \begin{cases}
            -x-1 & x \leq -1 \\
            x+1 & -1 < x \leq 0 \\
            -x+1 & 0 < x \leq1 \\
            x-1 & x > 1.
        \end{cases}
    \end{align*}
    The function is clearly nonconvex.
    However, taking $t=1$ for simplicity, one can show that
    \begin{align*}
        \BProxSub{t}{f}{x} =
        \begin{cases}
            x+1 & x < -2 \\
            \{-1\} & -2 \leq x < 0 \\
            \{-1,1\} & x = 0 \\
            \{1\} & 0 < x \leq 2 \\
            x-1 & x > 2
        \end{cases}
    \end{align*}
    and Assumption \ref{as:defining_rel} holds with
    \begin{align*}
        c_t(x) =
        \begin{cases}
            1 & |x| > 2, \\
            0 & |x| \leq 2.
        \end{cases}
    \end{align*}
    Furthermore, the example illustrates that for functions satisfying \Cref{as:defining_rel}, the set of global minima may not be a singleton, and it need not be connected. Additionally, the mapping $x \mapsto \BProxSub{t}{f}{x}$ is not necessarily single-valued on the set $\{x: \BProxSub{t}{f}{x} \subseteq \cX_f\}$. 
\end{example}

The class of ball-convex functions is broader than that of convex functions, yet the broximal operator preserves all its desirable properties for this extended family of objectives. Notably, the convergence guarantees in \Cref{thm:conv-bppm-lin} and \Cref{rem:sublinear_conv} hold unchanged.
Interestingly, \newalg\ retains the linear convergence rate under even weaker assumptions (see \Cref{sec:bpm_lin_conv_weak}). However, this comes at the cost of the broximal operator losing some of its favorable properties.

The formal statements and proofs of these results can be found in \Cref{sec:ball-convex}, where we provide more information about the function class defined by \Cref{as:defining_rel}, analyze the properties of the broximal operator, and establish the convergence guarantees.

\newpage

\section{Numerical Experiments}\label{sec:experiments}
\begin{figure}[t]
    \centering
    \subfigure{
        \includegraphics[width=0.6\textwidth]{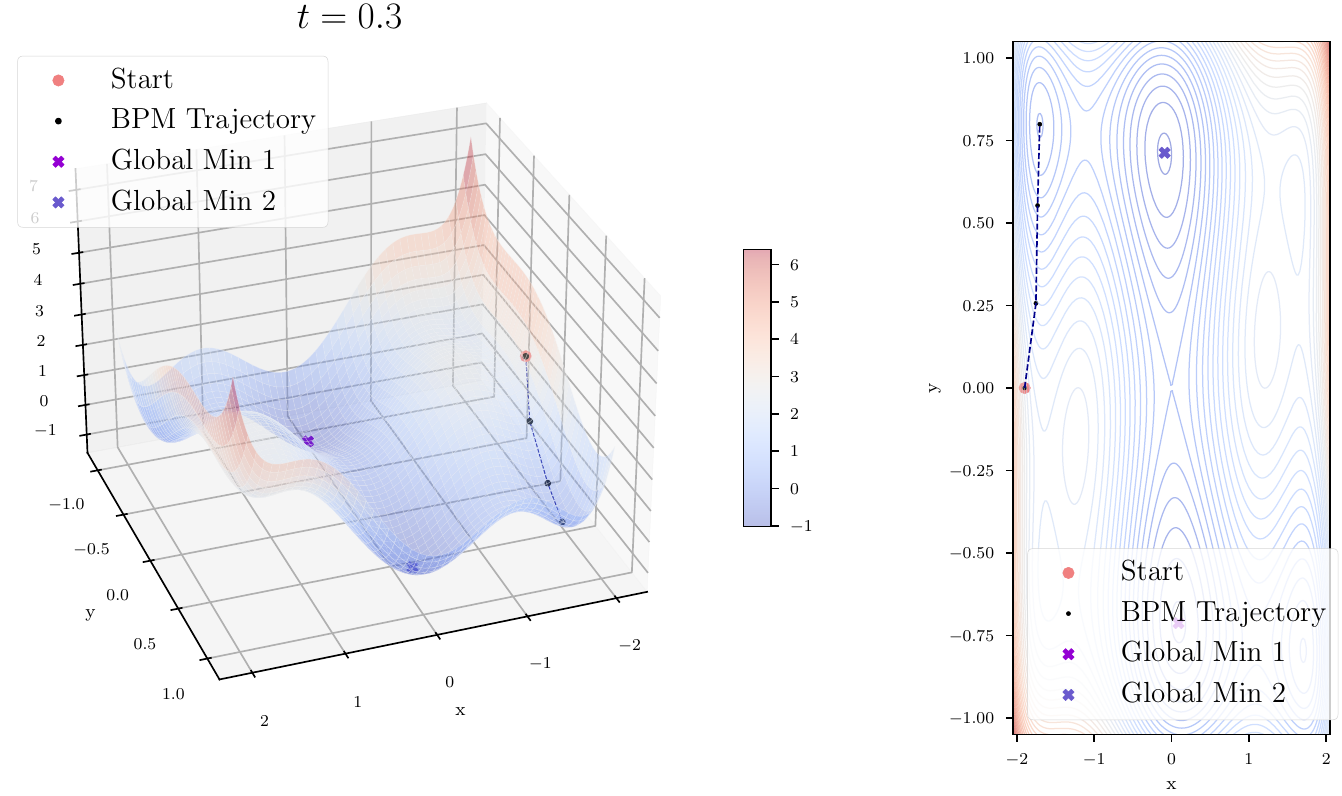}
        \label{fig:first1}
    }
    \\
    \subfigure{
        \includegraphics[width=0.6\textwidth]{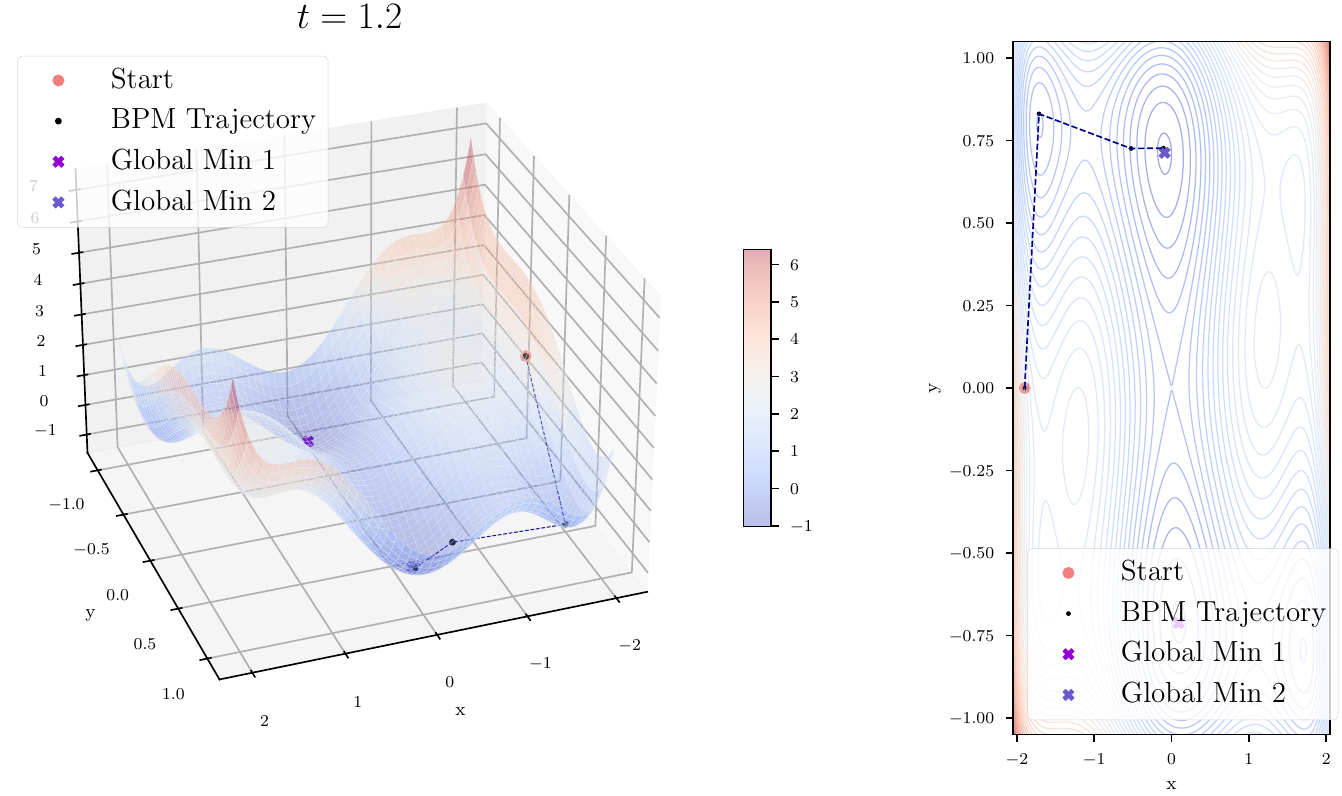}
        \label{fig:third3}
    }
    \caption{Visualization of \newalg\ applied to the Six-Hump Camel function, starting from the initial point $(-1.9, 0)$, with step sizes $t\in\{0.3, 1.2\}$.}
    \label{fig:bppm-3d}
\end{figure}
To validate the theoretical findings and further illustrate the mechanism of \newalg, we conduct numerical experiments on a simple optimization problem.
Specifically, we consider the minimization of the well-known Six-Hump Camel function \cite{molga2005test}, a classic benchmark for optimization algorithms, defined as
\begin{align*}
    \squeeze f(x, y) = \rbrac{4 - 2.1x^2 + \frac{x^4}{3}}x^2 + xy + \rbrac{-4 + 4y^2}y^2.
\end{align*}
This function is characterized by multiple local minima and two symmetric global minima located approximately at $(0.0898, -0.7126)$ and $(-0.0898, 0.7126)$, with global minimum value of $f_\star = -1.0316$.

As illustrated in \Cref{fig:bppm-3d}, the choice of step size $t$ plays a critical role in the algorithm's performance. 
A sufficiently large step size enables \newalg\ to bypass local minima and converge to a global minimum.
To further illustrate the impact of $t$ on the behavior of \newalg, we uniformly sample points within the ball $x^2 + y^2 \leq 16$ and evaluate the success rate of \newalg\ in reaching the global minimum for varying step sizes.
\Cref{fig:histo} highlights the relationship between $t$ and the algorithm's effectiveness: as  expected, larger values of $t$ improve \newalg's ability to converge to the global minimum.
\begin{figure}[t]
    \centering
    \subfigure{
        \includegraphics[width=0.6\textwidth]{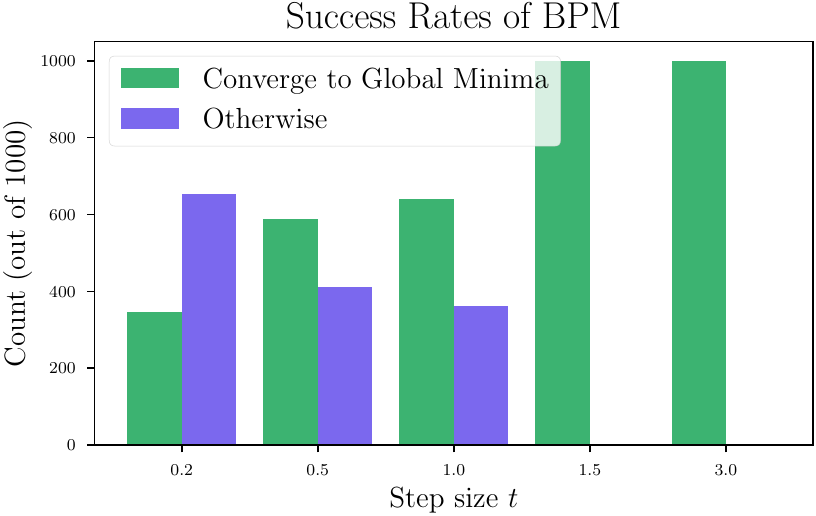}
        \label{fig:histo1}
    }
    \caption{Number of runs of \newalg\ (out of $1000$) that reached a global minimum for $t\in\cbrac{0.2, 0.5, 1, 1.5, 2}$. }
    \label{fig:histo}
\end{figure}

\clearpage

\section{Basic Facts}
\begin{fact}
    \label{fact:1:3pointidentity}
    For any $x, y, z \in \R^d$, we have 
    \begin{align}
        \label{eq:fact:1:3pointidentity}
        \inner{x - z}{y - z} = \frac{1}{2}\norm{x - z}^2 - \frac{1}{2}\norm{x - y}^2 + \frac{1}{2}\norm{z - y}^2.
    \end{align}
\end{fact}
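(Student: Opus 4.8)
The plan is to reduce the identity to the single vector decomposition $x - y = (x - z) - (y - z)$ together with bilinearity of the Euclidean inner product. First I would expand the squared norm of this difference:
\[
    \sqnorm{x - y} = \norm{(x-z) - (y-z)}^2 = \sqnorm{x - z} - 2\inner{x - z}{y - z} + \sqnorm{y - z},
\]
which uses only $\norm{v}^2 = \inner{v}{v}$ and the bilinearity/symmetry of $\inner{\cdot}{\cdot}$. Solving this scalar equation for the cross term gives
\[
    \inner{x - z}{y - z} = \tfrac{1}{2}\sqnorm{x - z} - \tfrac{1}{2}\sqnorm{x - y} + \tfrac{1}{2}\sqnorm{y - z},
\]
and since $\norm{y - z} = \norm{z - y}$, this is exactly \eqref{eq:fact:1:3pointidentity}.

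Alternatively, one could expand the right-hand side directly: writing $\sqnorm{x-z} = \sqnorm{x} - 2\inner{x}{z} + \sqnorm{z}$ and similarly for the other two terms, all the pure $\sqnorm{x}$, $\sqnorm{y}$, $\sqnorm{z}$ contributions cancel in the combination $\tfrac12\sqnorm{x-z} - \tfrac12\sqnorm{x-y} + \tfrac12\sqnorm{z-y}$, leaving $-\inner{x}{z} + \inner{x}{y} + \inner{z}{z} - \inner{y}{z} = \inner{x-z}{y-z}$. Either route is a one-line computation.

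There is no real obstacle here: the statement is a purely algebraic consequence of the inner product axioms (the law of cosines in disguise) and holds in any real inner product space; the only point requiring minor care is the sign bookkeeping when expanding the norm of a difference. I would present the first route, as it is the shortest and makes transparent why the identity is useful in the convergence arguments — it lets one trade an inner product $\inner{x-z}{y-z}$ for a combination of squared distances, which is precisely the manipulation needed when $x$, $z = x_{k+1}$, and $y = x_\star$ are substituted later.
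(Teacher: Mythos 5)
Your argument is correct: the identity follows immediately from expanding $\sqnorm{x-y} = \norm{(x-z)-(y-z)}^2$ by bilinearity and rearranging. The paper states this three-point identity as a basic Fact without any proof (as is customary for such elementary polarization/law-of-cosines identities), so there is no authorial proof to compare against; your first route is the standard one-line derivation and is exactly what a reader would be expected to supply.
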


\begin{fact}[Theorem 3.40 of \citet{beck2017first}]
    \label{fact:2:sum-rule-subdiff}
    Let $f_i: \R^d \mapsto \R \cup \cbrac{+\infty}$, $i \in [n]$, be proper convex functions such that $\cap_{i=1}^n \ri({\rm dom} f) \neq \emptyset$.
    Then
    \begin{align*}
        \partial \rbrac{\sum_{i=1}^{n} f_i}(x) = \sum_{i=1}^{n}\partial f_i(x)
    \end{align*}
    for any $x \in \R^d$.
\end{fact}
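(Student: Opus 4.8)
The statement is the classical Moreau--Rockafellar sum rule, and the cleanest reference is indeed \citet[Theorem~3.40]{beck2017first}, so in the paper I would invoke it directly; here is how I would reconstruct it. The plan is to prove the two inclusions separately, since only one of them carries content. For $\sum_{i=1}^{n}\partial f_i(x) \subseteq \partial\rbrac{\sum_{i=1}^{n} f_i}(x)$: pick $g_i \in \partial f_i(x)$, use the defining inequality $f_i(y) \geq f_i(x) + \inner{g_i}{y-x}$ for every $y\in\R^d$, and sum over $i$ to obtain $\rbrac{\sum_i f_i}(y) \geq \rbrac{\sum_i f_i}(x) + \inner{\sum_i g_i}{y-x}$; hence $\sum_i g_i \in \partial\rbrac{\sum_i f_i}(x)$. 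This direction needs neither convexity nor any qualification.

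The reverse inclusion is the substantive one, and it is where a constraint qualification is (implicitly) used: in every application of this fact in the paper at least one summand -- a squared Euclidean norm, an affine term, or the indicator of a full-dimensional ball -- is such that $\bigcap_i \operatorname{ri}(\operatorname{dom} f_i) \neq \emptyset$ holds automatically, and I would make this hypothesis explicit. By induction on $n$ it suffices to treat $n=2$. Given $g \in \partial(f_1+f_2)(x)$, replace $f_1$ by $f_1 - \inner{g}{\cdot}$ so that, without loss of generality, $g=0$ and $x$ is a global minimizer of $f_1+f_2$. I would then invoke Fenchel--Rockafellar duality: under the relative-interior qualification the conjugate of the sum equals the infimal convolution of the conjugates, and this infimal convolution is \emph{exact}, so $(f_1+f_2)^*(0) = f_1^*(g_1) + f_2^*(-g_1)$ for some $g_1\in\R^d$. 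Writing the Fenchel--Young equality for $f_1+f_2$ at $(x,0)$ and substituting, one gets $\bigl(f_1(x)+f_1^*(g_1)\bigr) + \bigl(f_2(x)+f_2^*(-g_1)\bigr) = \inner{g_1}{x}+\inner{-g_1}{x} = 0$; each bracket is nonnegative by Fenchel--Young, so both vanish, i.e. $g_1\in\partial f_1(x)$ and $-g_1\in\partial f_2(x)$, which gives the decomposition $0 = g_1 + (-g_1)$.

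The main obstacle is precisely this exactness/qualification step -- establishing that the infimal convolution of the conjugates is attained, equivalently that two convex sets in $\R^{d+1}$ admit a \emph{proper} separating hyperplane -- since it is the only ingredient that genuinely fails for arbitrary proper convex summands. A self-contained route avoiding conjugates would be: with $g=0$ and $x$ minimizing $f_1+f_2$, set $C_1 = \{(y,s) : s \geq f_1(x+y)-f_1(x)\}$ and $C_2 = \{(y,s) : s \leq -(f_2(x+y)-f_2(x))\}$ in $\R^d\times\R$; these are convex and have disjoint relative interiors (a common relative-interior point would force $f_1+f_2 < f_1(x)+f_2(x)$ at a nearby point, contradicting optimality), so by the separating-hyperplane theorem there is a hyperplane separating them, and the relative-interior qualification rules out a vertical one; reading off the normalized normal vector of this hyperplane produces $g_1 \in \partial f_1(x)$ with $-g_1 \in \partial f_2(x)$, completing the $n=2$ case and hence the induction.
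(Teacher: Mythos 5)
The paper does not prove this statement; it is imported as a Fact by citing \citet[Theorem 3.40]{beck2017first}, so there is no in-paper argument to compare against. Your reconstruction is the standard Moreau--Rockafellar sum rule: the easy inclusion by summing subgradient inequalities, reduction to $n=2$, translation so that $g=0$ and $x$ is a minimizer of $f_1+f_2$, and then either exact Fenchel duality (conjugate of a sum equals exact infimal convolution of conjugates under a qualification) or separation of the two epigraph-type convex sets in $\R^{d+1}$. Both routes are correct, and the Fenchel--Young bookkeeping you carry out to conclude $g_1\in\partial f_1(x)$, $-g_1\in\partial f_2(x)$ is exactly right.

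Your one substantive observation is also correct and worth keeping: as written in the paper, the Fact omits the constraint qualification (e.g.\ $\bigcap_i \operatorname{ri}(\operatorname{dom} f_i)\neq\emptyset$, or all-but-one summand real-valued) that Beck's theorem actually requires, and without it only the inclusion $\sum_i\partial f_i(x)\subseteq\partial\bigl(\sum_i f_i\bigr)(x)$ holds in general; a standard counterexample is $f_1(x)=-\sqrt{x}+\delta_{[0,\infty)}(x)$ and $f_2=\delta_{(-\infty,0]}$ on $\R$, where $\partial(f_1+f_2)(0)=\R$ but $\partial f_1(0)=\emptyset$. The omission is harmless in the paper because every invocation (e.g.\ in Theorems~\ref{thm:3} and \ref{cor:gradient-k+1}) pairs a real-valued convex $f$ with the indicator of a full-dimensional set, so the qualification holds automatically, but it would be cleaner to state the hypothesis.
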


\begin{fact}
    \label{lemma:four-point-identity}
    Suppose that a proper, closed and strictly convex function $\phi: \R^d \mapsto \R \cup \brac{+\infty}$ is finite at $a, b, c, d \in \R^d$ and is differentiable at $a, b$.
    Then
    \begin{align*}
        \inner{\nabla \phi(a) - \nabla \phi(b)}{c - d} = \breg{\phi}{c}{b} + \breg{\phi}{d}{a} - \breg{\phi}{c}{a} - \breg{\phi}{d}{b}.
    \end{align*}
\end{fact}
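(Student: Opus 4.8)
The plan is to prove this identity by direct substitution: replace each of the four Bregman divergences on the right-hand side by its definition $\breg{\phi}{u}{v} = \phi(u) - \phi(v) - \inner{\nabla\phi(v)}{u-v}$, and then collect terms. This is legitimate under the stated hypotheses: $\phi$ is finite at $a,b,c,d$, and in every divergence appearing on the right-hand side ($\breg{\phi}{c}{b}$, $\breg{\phi}{d}{a}$, $\breg{\phi}{c}{a}$, $\breg{\phi}{d}{b}$) the base (second) argument lies in $\{a,b\}$, so the gradients $\nabla\phi(a)$ and $\nabla\phi(b)$ — which are exactly the ones appearing on the left — are well-defined by differentiability at $a$ and $b$.

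Carrying out the substitution, the contributions involving only function values are $\phi(c)-\phi(b)$, $\phi(d)-\phi(a)$, $-(\phi(c)-\phi(a))$, and $-(\phi(d)-\phi(b))$; summing these, each of $\phi(a),\phi(b),\phi(c),\phi(d)$ occurs once with a $+$ sign and once with a $-$ sign, so all function values cancel. What survives is the sum of four inner products
\[
-\inner{\nabla\phi(b)}{c-b} - \inner{\nabla\phi(a)}{d-a} + \inner{\nabla\phi(a)}{c-a} + \inner{\nabla\phi(b)}{d-b}.
\]
Collecting the two terms carrying $\nabla\phi(a)$ gives $\inner{\nabla\phi(a)}{(c-a)-(d-a)} = \inner{\nabla\phi(a)}{c-d}$, and collecting the two carrying $\nabla\phi(b)$ gives $\inner{\nabla\phi(b)}{(d-b)-(c-b)} = -\inner{\nabla\phi(b)}{c-d}$. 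Adding these produces $\inner{\nabla\phi(a)-\nabla\phi(b)}{c-d}$, which is exactly the asserted equality.

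I do not anticipate any genuine obstacle: the statement is a purely algebraic identity, and strict convexity and closedness of $\phi$ are in fact not used in the proof — they presumably appear in the hypotheses only because $\phi$ serves as a Bregman potential throughout the surrounding development, where those properties guarantee that the relevant gradients exist. The only assumptions the argument actually consumes are finiteness of $\phi$ at the four points and differentiability at $a$ and $b$. As an alternative to this brute-force expansion, one could instead assemble the equality from two applications of the three-point identity for Bregman divergences followed by a short simplification, but the direct expansion above is the cleanest route.
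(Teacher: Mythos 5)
Your proof is correct, and the algebra checks out: after expanding each $\breg{\phi}{u}{v} = \phi(u) - \phi(v) - \inner{\nabla\phi(v)}{u-v}$, the eight function-value terms cancel in pairs, and the four inner-product terms regroup to $\inner{\nabla\phi(a)}{c-d} - \inner{\nabla\phi(b)}{c-d}$. The paper states this as a Fact and provides no proof of its own, so there is nothing to compare against — a direct expansion like yours is exactly what one would supply. You are also right that strict convexity and closedness are not consumed by the argument; only finiteness of $\phi$ at the four points and differentiability at the two base points $a$ and $b$ are needed, the stronger hypotheses being carried along because $\phi$ plays the role of a Bregman potential in the surrounding development.
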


\begin{lemma}[Subdifferential of indicator function]\label{lemma:subfiff_id}
    The subdifferential of an indicator function of a set $\cX\neq \emptyset$ at a point $y \in \cX$ is
    \begin{align*}
        \partial \delta_{\cX}(y) = \cN_{\cX}(y) \eqdef \brac{g\in\R^d : \inp{g}{z - y} \leq 0 \,\forall z \in \cX},
    \end{align*}
    where $\cN_{\cX}(y)$ is the normal cone of $\cX$ at $y$.
\end{lemma}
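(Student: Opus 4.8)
The plan is to unpack the definitions directly; no machinery beyond the defining inequality of the convex subdifferential is needed. Recall that the indicator function $\delta_{\cX}$ takes the value $0$ on $\cX$ and $+\infty$ off $\cX$, and that for a point $y$ with $\delta_{\cX}(y) < +\infty$ (here $y \in \cX$ by hypothesis), a vector $g \in \R^d$ belongs to $\partial \delta_{\cX}(y)$ precisely when $\delta_{\cX}(z) \geq \delta_{\cX}(y) + \inp{g}{z - y}$ holds for every $z \in \R^d$. Since $\delta_{\cX}(y) = 0$, this is the requirement that $\delta_{\cX}(z) \geq \inp{g}{z-y}$ for all $z \in \R^d$.

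Next I would split this universal condition into two cases according to whether $z$ lies in $\cX$. If $z \notin \cX$, then $\delta_{\cX}(z) = +\infty$, so the inequality holds automatically and imposes no constraint on $g$. If $z \in \cX$, then $\delta_{\cX}(z) = 0$, and the inequality reduces to $\inp{g}{z - y} \leq 0$. Consequently, $g \in \partial \delta_{\cX}(y)$ if and only if $\inp{g}{z - y} \leq 0$ for every $z \in \cX$, which is exactly the definition of $\cN_{\cX}(y)$ given in the statement. This yields the claimed equality $\partial \delta_{\cX}(y) = \cN_{\cX}(y)$.

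There is no substantive obstacle here: the whole argument is a one-line case analysis on the value of the indicator function, and it does not even require $\cX$ to be convex (convexity of $\cX$, equivalently of $\delta_{\cX}$, would only matter if one wanted $\partial\delta_{\cX}(y)$ to be guaranteed nonempty at boundary points, or to coincide with other notions of subdifferential — neither is asserted here). The only point worth stating carefully is that the ``for all $z \in \R^d$'' quantifier in the subgradient inequality can be harmlessly truncated to ``for all $z \in \cX$'' precisely because the left-hand term is $+\infty$ outside $\cX$.
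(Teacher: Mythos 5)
Your proof is correct and follows essentially the same route as the paper's: unpack the definition of the subdifferential at $y\in\cX$, use $\delta_{\cX}(y)=0$, and observe that the inequality is vacuous for $z\notin\cX$ so only $z\in\cX$ constrains $g$. If anything you are slightly more careful than the paper, which writes the subgradient inequality quantified only over $z\in\cX$ without explicitly remarking that the $z\notin\cX$ case is automatic; your explicit case split closes that small gap, and your side remark that convexity of $\cX$ is not needed is also accurate.
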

\begin{proof}
    For $y \notin \cX$, $\partial \delta_{\cX}(y) = \emptyset$. When $y \in \cX$, by definition of subdifferential, $g \in \partial \delta_{\cX}(y)$ if and only if
    \begin{align*}
        \delta_{\cX}(z) \geq \delta_{\cX}(y) + \inp{g}{z - y} \qquad\forall z \in \cX,
    \end{align*}
    which is equivalent to $\inp{g}{z - y} \leq 0$ for all $z \in \cX$.
\end{proof}

\begin{lemma}[Normal cone of the indicator function of a ball]\label{lemma:subdiff_id_ball}
    The normal cone of a ball $B_t(x)$ is
    \begin{align*}
        \cN_{B_t(x)}(y) = 
        \begin{cases}
            \R_{\geq0}(y-x) & \norm{x-y} = t, \\
            \{0\} & \norm{x-y} < t, \\
            \emptyset & \norm{x-y} > t.
        \end{cases}
    \end{align*}
\end{lemma}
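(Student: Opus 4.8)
The statement to prove is \Cref{lemma:subdiff_id_ball}, which computes the normal cone of a Euclidean ball $B_t(x)$ at a point $y$.

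\medskip

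\textbf{Plan.} The approach is to combine \Cref{lemma:subfiff_id}, which identifies $\partial\delta_{B_t(x)}(y)$ with the normal cone $\cN_{B_t(x)}(y) = \{g : \inp{g}{z-y} \le 0 \ \forall z \in B_t(x)\}$, with a direct case analysis on the position of $y$ relative to the ball. I would split into the three cases $\norm{x-y} > t$, $\norm{x-y} < t$, and $\norm{x-y} = t$.

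\medskip

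\textbf{Case $\norm{x-y} > t$.} Here $y \notin B_t(x)$, so by \Cref{lemma:subfiff_id} the subdifferential (equivalently the normal cone as defined there) is empty. This is immediate.

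\textbf{Case $\norm{x-y} < t$ (interior point).} I claim $\cN_{B_t(x)}(y) = \{0\}$. Clearly $0$ satisfies the defining inequality. Conversely, suppose $g \neq 0$ and $\inp{g}{z-y} \le 0$ for all $z \in B_t(x)$. Since $y$ is interior, for small $\varepsilon > 0$ the point $z = y + \varepsilon g$ lies in $B_t(x)$, giving $\inp{g}{z-y} = \varepsilon\norm{g}^2 > 0$, a contradiction. Hence $g = 0$.

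\textbf{Case $\norm{x-y} = t$ (boundary point).} I claim $\cN_{B_t(x)}(y) = \R_{\ge 0}(y-x)$. For the inclusion $\supseteq$: take $g = \lambda(y-x)$ with $\lambda \ge 0$. For any $z \in B_t(x)$, write $\inp{g}{z-y} = \lambda\inp{y-x}{z-y} = \lambda\inp{y-x}{(z-x)-(y-x)} = \lambda(\inp{y-x}{z-x} - \norm{y-x}^2)$. By Cauchy--Schwarz, $\inp{y-x}{z-x} \le \norm{y-x}\norm{z-x} \le t^2 = \norm{y-x}^2$, so the expression is $\le 0$. For the inclusion $\subseteq$: suppose $g \in \cN_{B_t(x)}(y)$. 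Decompose $g = \alpha(y-x) + w$ with $w \perp (y-x)$; I want to show $w = 0$ and $\alpha \ge 0$. To get $w = 0$, consider the tangent direction: if $w \neq 0$, the point $z = x + t\cdot\frac{(y-x) + \varepsilon w}{\norm{(y-x)+\varepsilon w}}$ lies on the sphere (so in $B_t(x)$); expanding $\inp{g}{z - y}$ in $\varepsilon$ to first order, the leading term is proportional to $+\varepsilon\norm{w}^2 > 0$, contradicting $\inp{g}{z-y}\le 0$. (Alternatively, take $z = y - \varepsilon w + \text{correction}$ projected back onto the ball.) Hence $g = \alpha(y-x)$, and testing against $z = x$ (which is in $B_t(x)$) gives $\inp{g}{x - y} = -\alpha\norm{y-x}^2 \le 0$, so $\alpha \ge 0$.

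\medskip

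\textbf{Main obstacle.} The only nontrivial part is the $\subseteq$ direction in the boundary case — specifically showing that any normal vector has no tangential component. The clean way is the first-order perturbation argument along the sphere sketched above; one must be slightly careful that the perturbed point genuinely lies in (or on) the ball and that the first-order expansion of $\inp{g}{z-y}$ is carried out correctly so that the tangential term dominates. An alternative that avoids the expansion: parametrize $z(\varepsilon)$ as the projection of $y + \varepsilon w$ onto $B_t(x)$ and use that projections move points closer, but the explicit sphere parametrization is probably the most transparent. Everything else is routine Cauchy--Schwarz and elementary arguments.
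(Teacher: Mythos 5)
Your proof is correct, but it takes a genuinely different route from the paper's. The paper establishes the exact characterization
\begin{align*}
g \in \cN_{B_t(x)}(y) \iff \sup_{\norm{w}\leq 1}\inp{g}{w} \leq \inp{g}{\tfrac{y-x}{t}} \iff \norm{g} \leq \frac{\inp{g}{y-x}}{t},
\end{align*}
i.e., it evaluates the support function of the unit ball, and then combines this with the Cauchy--Schwarz bound $\inp{g}{y-x}\leq\norm{g}\norm{y-x}$ to read off $\norm{g}(\norm{y-x}-t)\geq 0$. This forces $g=0$ in the interior; on the boundary both inequalities become equalities, and the equality case of Cauchy--Schwarz plus $\inp{g}{y-x}\geq 0$ gives $g\in\R_{\geq 0}(y-x)$ in one stroke. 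Your argument instead treats each case by hand, and for the boundary case decomposes $g=\alpha(y-x)+w$ with $w\perp(y-x)$, killing the tangential part $w$ by a first-order perturbation along the sphere (the expansion $\inp{g}{z(\varepsilon)-y}=\varepsilon\norm{w}^2+O(\varepsilon^2)$ is correct, using $\norm{(y-x)+\varepsilon w}=t+O(\varepsilon^2)$ because $w\perp(y-x)$) and getting $\alpha\geq 0$ by testing against $z=x$. Both proofs are complete and sound. The paper's version is more compact --- the single inequality $\norm{g}\leq\inp{g}{y-x}/t$ simultaneously handles necessity and sufficiency and both cases $\norm{y-x}<t$ and $\norm{y-x}=t$ --- while yours is more geometric and makes the ``no tangential component'' intuition explicit, at the cost of a Taylor expansion that must be done carefully (which you do). One small remark: your $\supseteq$ argument uses Cauchy--Schwarz directly, which is exactly the same observation that underlies the paper's support-function computation; so the two proofs are less far apart on that half than they first appear, and the real divergence is in how each rules out tangential directions in the $\subseteq$ half.
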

\begin{proof}
    For $y \notin B_t(x)$, $\cN_{B_t(x)}(y) = \emptyset$.
    Now, let $y \in B_t(x)$. Then
    \begin{align*}
        g \in \partial \delta_{B_t(x)}(y) \qquad&\overset{\eqref{lemma:subfiff_id}}{\iff}\qquad
        \inp{g}{z - y} \leq 0 \quad\forall z \in B_t(x) \\\qquad&\iff\qquad
        \inp{g}{z} \leq \inp{g}{y} \quad\forall z \in B_t(x) \\\qquad&\iff\qquad
        \sup_{z: \norm{z-x} \leq t} \inp{g}{z} \leq \inp{g}{y} \\\qquad&\iff\qquad
        \sup_{z: \norm{\frac{z - x}{t}} \leq 1} \inp{g}{\frac{z - x}{t}} \leq \inp{g}{\frac{y - x}{t}} \\\qquad&\iff\qquad
        \sup_{z: \norm{w} \leq 1} \inp{g}{w} \leq \inp{g}{\frac{y - x}{t}} \\\qquad&\iff\qquad
        \norm{g} \leq \frac{\inp{g}{y - x}}{t}.
    \end{align*}
    On the other hand, Cauchy-Schwarz inequality gives
    \begin{align}\label{eq:ajvnbaow}
        t \norm{g} \leq \inp{g}{y - x} \leq \norm{g} \norm{y-x},
    \end{align}
    meaning that
    \begin{align*}
        0 \leq \norm{g} \parens{\norm{y-x} - t}.
    \end{align*}
    Since $y \in B_t(x)$, $\norm{y-x} - t \leq 0$, and hence we must have $\norm{y-x}=t$ or $\norm{g}=0$. In the former case, when $y$ lies on the boundary of $B_t(x)$, \eqref{eq:ajvnbaow} says that the Cauchy-Schwarz inequality is an equality, implying that $g$ and $y-x$ are linearly dependent.
    Otherwise, when $\norm{y-x}<t$, we get $g=0$, which finishes the proof.
\end{proof}

\newpage

\section{Convergence Theory: Convex Case}\label{sec:ap_convex}

\begin{algorithm}[t]
    \caption{Ball-Proximal Point Method (\newalg)}
    \begin{algorithmic}[1]\label{alg:bpm}
    \STATE \textbf{Input:} radii $t_k > 0$ for $k \geq 0$, starting point $x_0 \in {\rm dom} f$
    \FOR{$k = 0,1,2,\dots$}
    \STATE $x_{k+1} \in \BProxSub{t_k}{f}{x_k}$
    \ENDFOR
    \end{algorithmic}
\end{algorithm}

Before proving the convergence guarantee, we first introduce some useful preliminary results. The first result establishes that the proximity operator of a proper, closed and convex function is well-defined.

\begin{theorem}[First brox theorem]\label{thm:1stbroxthm}
    Let $f: \R^d \mapsto \R \cup \{+\infty\}$ is proper, closed and convex and choose $x \in {\rm dom} f$. Then
    \begin{enumerate}[label=(\roman*)]
        \item $\BProxSub{t}{f}{x} \neq \emptyset$. Moreover, if $B_t(x) \cap \cX_f \neq \emptyset$, then $\BProxSub{t}{f}{x}$ is a nonempty subset of $\cX_f$. \label{pt:nonempty}
        \item If $B_t(x) \cap \cX_f = \emptyset$, then $\BProxSub{t}{f}{x}$ is a singleton lying on the boundary of $B_t(x)$. \label{pt:singleton_bdry}
    \end{enumerate}
\end{theorem}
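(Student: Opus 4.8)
The plan is to prove the two claims of Theorem~\ref{thm:1stbroxthm} more or less directly from the definition of $\BProxSub{t}{f}{x}$ as the minimizer set of $f$ over the compact convex set $B_t(x)$.

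\textbf{Part \ref{pt:nonempty}: nonemptiness.} Since $f$ is proper, closed (lower semicontinuous) and $x \in {\rm dom} f$, the restriction of $f$ to the nonempty compact set $B_t(x)$ is a lower semicontinuous function on a compact set, hence attains its infimum by the Weierstrass extreme value theorem. This gives $\BProxSub{t}{f}{x} \neq \emptyset$. For the ``moreover'' part: suppose $B_t(x) \cap \cX_f \neq \emptyset$, and pick $z^\star \in B_t(x) \cap \cX_f$. Then for any $u \in \BProxSub{t}{f}{x}$ we have $f(u) \le f(z^\star) = f_\star$ since $u$ minimizes $f$ over $B_t(x)$ and $z^\star$ is a feasible competitor; but also $f(u) \ge f_\star$ always, so $f(u) = f_\star$, i.e. $u \in \cX_f$. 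Hence $\BProxSub{t}{f}{x} \subseteq \cX_f$, and it is nonempty by the first part. (Convexity is not even needed here.)

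\textbf{Part \ref{pt:singleton_bdry}: boundary and uniqueness.} Assume $B_t(x) \cap \cX_f = \emptyset$ and let $u \in \BProxSub{t}{f}{x}$. First, I argue $u$ lies on the boundary, i.e. $\norm{u - x} = t$. Suppose not: $\norm{u-x} < t$, so $u \in \interior B_t(x)$. Since $u \notin \cX_f$ (as $B_t(x)$ misses $\cX_f$), there is $w \in \R^d$ with $f(w) < f(u)$. Consider the segment $z_\lambda = (1-\lambda) u + \lambda w$ for small $\lambda \in (0,1]$. By convexity of $f$, $f(z_\lambda) \le (1-\lambda) f(u) + \lambda f(w) < f(u)$, and for $\lambda$ small enough $z_\lambda \in B_t(x)$ since $u$ is interior. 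This contradicts $u$ being a minimizer over $B_t(x)$; hence $\norm{u - x} = t$. For uniqueness: suppose $u_1, u_2 \in \BProxSub{t}{f}{x}$ with $u_1 \ne u_2$; both have $f(u_1) = f(u_2) = \BMoreauSub{t}{f}{x}$ and both lie on the sphere $\norm{\cdot - x} = t$ by the boundary argument. The midpoint $m = \tfrac12(u_1 + u_2)$ lies in $B_t(x)$ by convexity of the ball, and strict convexity of the Euclidean norm gives $\norm{m - x} < t$, so $m \in \interior B_t(x)$. Convexity of $f$ gives $f(m) \le \tfrac12 f(u_1) + \tfrac12 f(u_2) = \BMoreauSub{t}{f}{x}$, so $m$ is also a minimizer, with $m$ interior. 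But then the boundary argument above applied to $m$ yields a contradiction (an interior minimizer can be strictly improved since $m \notin \cX_f$). Hence $u_1 = u_2$, and $\BProxSub{t}{f}{x}$ is a singleton on $\bdry B_t(x)$.

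\textbf{Main obstacle.} The only delicate point is the ``interior minimizer is impossible'' argument, which needs $u \notin \cX_f$ so that a strictly better point $w$ exists, and then a short line-segment/convexity argument to push into $B_t(x)$ while decreasing $f$; the uniqueness step then reduces to this by producing an interior minimizer via strict convexity of the norm. Everything else is Weierstrass plus feasibility comparisons. One should double-check the edge case where $f$ could be $+\infty$ near $u$ — but $f(u) < +\infty$ since $u$ minimizes over $B_t(x) \ni x \in {\rm dom} f$, and convexity along the segment to $w \in {\rm dom} f$ keeps $f$ finite, so the argument goes through.
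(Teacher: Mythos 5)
Your proof is correct and follows essentially the same route as the paper: Weierstrass for nonemptiness, a convexity-along-a-segment argument to rule out interior minimizers, and a midpoint/segment argument for uniqueness. The only cosmetic difference is that in the boundary step you move a small amount toward an arbitrary strictly better point $w$, whereas the paper moves along the segment to a global minimizer $x_\star$ and evaluates at its intersection with $\bdry B_t(x)$; both are valid and interchangeable here.
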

\begin{proof}
    \begin{enumerate}[label=(\roman*)]
        \item The broximal operator is minimizing a proper, closed and convex function over a closed set $B_t(x)$.
        Hence, by the Weierstrass Theorem, $f$ is lower bounded on $B_t(x)$ and attains its minimal value, proving that $\BProxSub{t}{f}{x} \neq \emptyset$.
        It follows that if $B_t(x) \cap \cX_f \neq \emptyset$, then $\BProxSub{t}{f}{x} \subseteq \cX_f$ is nonempty.
        
        \item Let $z_\star \in \BProxSub{t}{f}{x}$. 
        Then $z_\star$ is a minimizer of the function
        \begin{align*}
            A^t_x(z) \eqdef f(z) + \delta_{B_t(x)}(z),
        \end{align*}
        where $$\delta_{\cX}(z) \eqdef \begin{cases} 0 & z\in \cX \\
        +\infty & z\notin \cX\end{cases}$$ is the indicator function of the set $\cX \subseteq \R^d$.
        Suppose that $z_\star \in \interior B_t(x)$ and consider the line segment connecting~$z_\star$ and any global minimizer $x_\star \notin B_t(x)$ of $f$. 
        Obviously it intersects with $\bdry B_t(x)$ at some point $z_\lambda = \lambda z_\star + \rbrac{1 - \lambda}x_\star$, where $\lambda \in (0, 1)$. 
        Using convexity of $f$, we know that 
        \begin{equation}\label{eq:h08yfd08yhfd}
           f(z_\lambda) \leq \rbrac{1 - \lambda}f(x_\star) + \lambda f(z_\star) < f(z_\star),
        \end{equation}
        where the last inequality holds because $f(x_\star)<f(z_\star)$, which is true because $x_\star \in \cX_f$, $z_\star \in B_t(x)$ and $\cX_f\cap B_t(x) = \emptyset$.
        \Cref{eq:h08yfd08yhfd} clearly contradicts the assumption that $z_\star$ is a minimizer of $A^t_x(z)$, as $A^t_x(z_\lambda) < A^t_x(z_\star)$.
        Thus, we must have $z_\star \in \bdry B_t(x)$.
        
        Now, assume that $\BProxSub{t}{f}{x}$ is not a singleton and there exist $z_{\star, 1}, z_{\star, 2} \in \BProxSub{t}{f}{x}$. Then, by the argument above, $z_{\star, 1}, z_{\star, 2} \in \bdry B_t(x)$, and due to the convexity of $f$, all points on the line segment connecting $z_{\star, 1}$ and $z_{\star, 2}$ are also minimizers of $A^t_x(z)$.
        However, this contradicts the fact that no minimizers of $A^t_x(z)$ lie within $\interior B_t(x)$. 
        Therefore, $\BProxSub{t}{f}{x}$ must be a singleton.
    \end{enumerate}
\end{proof}

The second part of \Cref{thm:1stbroxthm} demonstrates that as long as $\BProxSub{t}{f}{x} \not\subseteq \cX_f$, the broximal operator is uniquely defined.
Furthermore, it shows that \newalg\ (\Cref{alg:bpm}) progresses with steps of length $t_k$, moving from the center to the surface of the ball until it reaches a global minimum.

\begin{theorem}[Second brox theorem]
    \label{thm:3}
    Let $f: \R^d \mapsto \R \cup \{+\infty\}$ be proper, closed and convex. Choose $x \in {\rm dom} f$ and $u \in \BProxSub{t}{f}{x}$ for some $t > 0$.
    Then, there exists $c_t(x) \geq 0$ such that
    \begin{enumerate}[label=(\roman*)]
        \item $c_t(x)(x - u) \in \partial f(u)$,
        \item $f(y) - f(u) \geq c_t(x)\inner{x - u}{y - u}$ for all $y \in \R^d$.
    \end{enumerate}
\end{theorem}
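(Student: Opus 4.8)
The plan is to reduce both claims to a single first-order optimality condition. Since $u \in \BProxSub{t}{f}{x}$, the point $u$ minimizes over $\R^d$ the sum
$A^t_x(z) \eqdef f(z) + \delta_{B_t(x)}(z)$ (the function introduced in the proof of \Cref{thm:1stbroxthm}). Fermat's rule gives $0 \in \partial A^t_x(u)$, and I want to split this as $0 \in \partial f(u) + \partial \delta_{B_t(x)}(u)$ via the sum rule \Cref{fact:2:sum-rule-subdiff}. A small qualification check comes first: ${\rm dom} f$ is convex, so $\overline{\operatorname{ri}({\rm dom} f)} = \overline{{\rm dom} f} \ni x$, and since $x \in \interior B_t(x) = \operatorname{ri} B_t(x)$ (the ball is full-dimensional), every neighborhood of $x$ meets $\operatorname{ri}({\rm dom} f)$; hence $\operatorname{ri}({\rm dom} f) \cap \operatorname{ri} B_t(x) \neq \emptyset$ and the sum rule is licit. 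Combining this with \Cref{lemma:subfiff_id}, which identifies $\partial \delta_{B_t(x)}(u)$ with the normal cone $\cN_{B_t(x)}(u)$, we get $0 \in \partial f(u) + \cN_{B_t(x)}(u)$, i.e. there exists $g \in \cN_{B_t(x)}(u)$ with $-g \in \partial f(u)$.

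\textbf{Extracting $c_t(x)$.} Next I would invoke \Cref{lemma:subdiff_id_ball} to describe $\cN_{B_t(x)}(u)$ explicitly. Since $u \in B_t(x)$, the case $\norm{x-u} > t$ (which would give $\cN_{B_t(x)}(u) = \emptyset$) cannot occur, leaving two possibilities. If $\norm{x-u} = t$, then $\cN_{B_t(x)}(u) = \R_{\geq 0}(u-x)$, so $g = \lambda(u-x)$ for some $\lambda \geq 0$, and $-g = \lambda(x-u) \in \partial f(u)$; setting $c_t(x) \eqdef \lambda \geq 0$ yields (i). If $\norm{x-u} < t$, then $\cN_{B_t(x)}(u) = \{0\}$, so $g = 0$ and $0 \in \partial f(u)$ (equivalently $u \in \cX_f$); here (i) holds with $c_t(x) \eqdef 0$. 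In both cases $c_t(x) \geq 0$ and $c_t(x)(x-u) \in \partial f(u)$, which is exactly (i). (Note $c_t(x) = 0$ whenever $u \in \cX_f$, so the choice is consistent even when the operator is multivalued.)

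\textbf{From (i) to (ii).} Part (ii) is then immediate: by the definition of the subdifferential of a convex function, $c_t(x)(x-u) \in \partial f(u)$ means precisely that $f(y) \geq f(u) + \inner{c_t(x)(x-u)}{y-u}$ for every $y \in \R^d$, which rearranges to $f(y) - f(u) \geq c_t(x)\inner{x-u}{y-u}$, as claimed.

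\textbf{Main obstacle.} The only genuinely delicate point is the invocation of the subdifferential sum rule: if $x$ (hence possibly $u$) lies on the boundary of ${\rm dom} f$, one must make sure no constraint qualification fails. The density argument above — the relative interior of a convex set is dense in its closure, and the ball is full-dimensional — resolves this cleanly; everything else is routine bookkeeping with the normal-cone formula of \Cref{lemma:subdiff_id_ball} and the definition of the subgradient. As a sanity check, one could instead take $c_t(x)$ to be the KKT multiplier for the convex program $\min_z\{f(z): \norm{z-x}^2 \le t^2\}$, but the subdifferential route keeps the argument aligned with the lemmas already in hand.
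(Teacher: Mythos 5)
Your proof is correct and follows essentially the same route as the paper: Fermat's rule for $f + \delta_{B_t(x)}$, the subdifferential sum rule, and the normal-cone description from \Cref{lemma:subdiff_id_ball}, with part (ii) then read off from the definition of a subgradient. Where the paper first splits into the cases $B_t(x)\cap\cX_f\neq\emptyset$ (handled directly with $c_t(x)=0$) and $B_t(x)\cap\cX_f=\emptyset$ (where it invokes \Cref{thm:1stbroxthm}~\ref{pt:singleton_bdry} to conclude $\|x-u\|=t$ before reducing the normal cone to a ray), you run the normal-cone argument uniformly and let \Cref{lemma:subdiff_id_ball} discriminate the boundary and interior subcases — a marginally cleaner organization. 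You also verify the constraint qualification $\operatorname{ri}(\mathrm{dom}\, f)\cap\operatorname{ri} B_t(x)\neq\emptyset$ needed for the sum rule, a hypothesis that the paper's statement of \Cref{fact:2:sum-rule-subdiff} omits; your density argument for it (relative interior of a convex set is dense in its closure, and the ball is full-dimensional) is correct and fills a small gap that the paper glosses over. One caveat: your parenthetical remark that $c_t(x)=0$ whenever $u\in\cX_f$ is not forced by the sum-rule decomposition as you have set it up (a $\lambda>0$ could in principle arise from the decomposition even when $u$ is optimal); one gets $c_t(x)=0$ only by deliberately selecting the trivial decomposition $0 = 0 + 0$. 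Since the theorem only asserts existence of some $c_t(x)\geq 0$, this imprecision does not affect the validity of the proof.
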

\begin{proof}
    Let us consider two cases:
    \paragraph{Case 1: $B_t(x) \cap \cX_f \neq \emptyset$:}
    Since $u \in \BProxSub{t}{f}{x}$, according to \Cref{thm:1stbroxthm}~\ref{pt:nonempty}, $u$ must be a global minimizer of $f$.
    In this case, it is evident that $0 \in \partial f(u)$, and statement $(i)$ holds with $c_t(x) = 0$.
    Furthermore, since $u$ is a global minimizer, we have $f(y) \geq f(u)$ for all $y \in \R^d$, so statement $(ii)$ also holds.

    \paragraph{Case 2: $B_t(x) \cap \cX_f = \emptyset$:}
    In this case, $u \in \BProxSub{t}{f}{x}$ indicates that $u$ is a minimizer of the function
    \begin{align*}
        A^t_x(z) \eqdef f(z) + \delta_{B_t(x)}(z).
    \end{align*}
    Since both $f$ and $\delta_{B_t(x)}$ are convex, $A^t_x$ is convex as well.
    Now, let us demonstrate that $\ri(\cB(x, t)) \cap \ri({\rm dom}(f)) \neq \emptyset$. This clearly holds when $x \in \ri({\rm dom}(f))$. If $x \not\in \ri({\rm dom}(f))$, then it must lie in its closure, since $\overline{\ri({\rm dom}(f))} = \overline{{\rm dom}(f)} \ni x$. As a result, there exists a sequence $\{z_k\}_{k\geq0} \subset \ri({\rm dom}(f))$ such that $z_k \to x$ as $k\to\infty$. Now, since $x \in \cB(x, t)$, there exists~$K\geq 0$ such that $z_k \in \ri(\cB(x, t))$ for all $k \geq K$, and hence we can conclude that $\ri(\cB(x, t)) \cap \ri({\rm dom}(f)) \neq \emptyset$.
    Therefore, by Fermat's optimality condition and \Cref{fact:2:sum-rule-subdiff}, we have 
    \begin{align*}
        0 \in \partial A^t_x(u) = \partial \rbrac{f + \delta_{B_t(x)}}(u) = \partial f(u) + \partial \delta_{B_t(x)}(u).
    \end{align*} 
    Using \Cref{lemma:subfiff_id} and the observation that  in view of \Cref{thm:1stbroxthm}~\ref{pt:singleton_bdry}, we have $\norm{x - u} = t$, the above identity can be further rewritten in terms of the normal cone, 
    \begin{align*}
        0 \in \partial f(u) + \cN_{B_t(x)}(u)
        \overset{\eqref{lemma:subdiff_id_ball}}{=} \partial f(u) + \R_{\geq0}(u-x),
    \end{align*}
    where $\R_{\geq0}(z)\eqdef \{\lambda z \;:\; \lambda\geq 0\}.$      % where 
    Hence, there exists some $c_t(x) \geq 0$ such that $c_t(x)(x - u) \in \partial f(u)$.
    Lastly, using the definition of a subgradient, we obtain 
    \begin{align*}
        f(y) - f(u) \geq c_t(x)\inner{x - u}{y - u}
    \end{align*}
    for all $y \in \R^d$.
\end{proof}

\Cref{thm:3} is central to demonstrating the convergence of the \newalg\ algorithm.
Building on these results, additional properties can be derived. 
However, we postpone their discussion to \Cref{sec:ball-convex}, as they apply to a more general class of functions.

Instead, we proceed directly to the convergence result.
\THMBPPMCONVLIN*

\begin{proof}
    \begin{enumerate}[label=(\roman*)]
        \item This follows from \Cref{thm:1stbroxthm}~\ref{pt:nonempty}.
        
        \item The first part of the statement is a direct consequence of \Cref{thm:1stbroxthm}~\ref{pt:singleton_bdry}. 
        To prove the second claim, note that \Cref{thm:3} with $x = x_k$ and $y = x_\star \in \cX_f$ gives
        \begin{eqnarray*}
            f(x_{k+1}) - f_{\star} \leq c_{t_k}(x_k) \inp{x_k-x_{k+1}}{x_{k+1} - x_{\star}},
        \end{eqnarray*}
        where $c_{t_k}(x_k)\geq 0$. We argue that $c_{t_k}(x_k)> 0$. Indeed, if $c_{t_k}(x_k)$ was equal to $0$, then $f(x_{k+1}) - f_{\star} \leq 0$, which means that $x_{k+1}$ is optimal, contradicting the assumption that $\cX_f\cap B_{t_k}(x_k)=\emptyset$. Hence, dividing both sides of the inequality by~$c_{t_k}(x_k)$, we get
        \begin{eqnarray*}
            0 &\leq& \frac{f(x_{k+1}) - f_{\star}}{c_{t_k}(x_k)} \leq \inp{x_k-x_{k+1}}{x_{k+1} - x_{\star}} \\
            &\overset{\eqref{fact:1:3pointidentity}}{=}& \frac{1}{2} \parens{\norm{x_k-x_{\star}}^2 - \norm{x_{k+1} - x_{\star}}^2 - \norm{x_k-x_{k+1}}^2} \\
            &\overset{\eqref{thm:conv-bppm-lin}}{=}& \frac{1}{2} \parens{\norm{x_k-x_{\star}}^2 - \norm{x_{k+1} - x_{\star}}^2 - t_k^2},
        \end{eqnarray*}
        and hence
        \begin{eqnarray*}
            \norm{x_{k+1} - x_{\star}}^2 \leq \norm{x_k-x_{\star}}^2 - t_k^2,
        \end{eqnarray*}
        proving the first inequality.
        The above holds for any $x_{\star}\in\cX_f$, and hence it holds for the optimal point closest to $x_k$, too. Therefore, the last inequality can be obtained using the fact that $\textnormal{dist}^2(x_{k+1}, \cX_f) \leq \norm{x_{k+1} - x_{\star}}^2$.
        
        \item This follows directly from parts \ref{pt:opt} and \ref{pt:dist_decr_bpm}.
        
        \item Let us consider some iteration $k$ such that $x_{k+1} \not\in \cX_f$ (otherwise, the problem is solved in $1$ step).
        Using \Cref{thm:3} with $y = x = x_k$, we have 
        \begin{align}\label{eq:rauntbsvya}
            f(x_{k+1}) - f_{\star} \leq f(x_k) - f_{\star} - c_{t_k}(x_k) \norm{x_{k+1}-x_k}^2
            \overset{\eqref{thm:conv-bppm-lin}}{=} f(x_k) - f_{\star} - c_{t_k}(x_k) t_k^2.
        \end{align}
        Next, \Cref{thm:3} with $x = x_k$ and $y = x_\star \in \cX_f$ and Cauchy-Schwarz inequality give
        \begin{eqnarray*}
            f(x_{k+1}) - f_{\star} &\leq& c_{t_k}(x_k) \inp{x_k-x_{k+1}}{x_{k+1} - x_{\star}} \\
            &\leq& c_{t_k}(x_k) \norm{x_k-x_{k+1}} \norm{x_{k+1} - x_{\star}} \\
            &\overset{\eqref{thm:conv-bppm-lin}}{=}& c_{t_k}(x_k) t_k \norm{x_{k+1} - x_{\star}}.
        \end{eqnarray*}
        Since $x_{k+1} \not\in \cX_f$, we can divide both sides by $\norm{x_{k+1} - x_{\star}}$ and multiply by $t_k$, obtaining
        \begin{eqnarray}\label{eq:oaiasbfrf}
            \parens{f(x_{k+1}) - f_{\star}} \frac{t_k}{\norm{x_{k+1} - x_{\star}}} \leq c_{t_k}(x_k) t_k^2.
        \end{eqnarray}
        Applying the bound \eqref{eq:oaiasbfrf} in \eqref{eq:rauntbsvya} gives
        \begin{eqnarray}\label{eq:oiqwnvroan}
            f(x_{k+1}) - f_{\star} &\leq& f(x_k) - f_{\star} - \parens{f(x_{k+1}) - f_{\star}} \frac{t_k}{\norm{x_{k+1} - x_{\star}}}.
        \end{eqnarray}
        Rearranging the terms, we obtain the result.

        \item The claim obviously holds when $\norm{\nabla f(x_{k+1})} = 0$, so suppose that $\norm{\nabla f(x_{k+1})} \neq 0$. In the differentiable case, \Cref{lemma:brox_prox} states that the update rule of \newalg\ is
        \begin{align}\label{eq:cqecqdqwd}
            x_{k+1} = x_k - \tfrac{t_k}{\norm{\nabla f(x_{k+1})}} \nabla f(x_{k+1}).
        \end{align}
        Now, convexity and Cauchy-Schwarz inequality give
        \begin{align*}
            f(x_{k+1}) - f(x_k) \geq \inp{\nabla f(x_k)}{x_{k+1} - x_k}
            \geq - \norm{\nabla f(x_k)} \norm{x_{k+1} - x_k}.
        \end{align*}
        Rearranging the terms and using convexity again, we obtain
        \begin{eqnarray*}
            \norm{\nabla f(x_k)} \norm{x_{k+1} - x_k} &\geq& f(x_k) - f(x_{k+1})
            \geq \inp{\nabla f(x_{k+1})}{x_k - x_{k+1}} \\
            &\overset{\eqref{eq:cqecqdqwd}}{=}& \inp{\nabla f(x_{k+1})}{\frac{t_k}{\norm{\nabla f(x_{k+1})}} \nabla f(x_{k+1})}
            = t_k \norm{\nabla f(x_{k+1})}.
        \end{eqnarray*}
        The result follows from the fact that $\norm{x_{k+1} - x_k} = t_k$ (see part \ref{pt:dist_decr_bpm}).

        To prove the convergence result, we again start with convexity, obtaining
        \begin{eqnarray*}
            f(x_{k+1}) &\leq& f(x_k) - \inp{\nabla f(x_{k+1})}{x_k - x_{k+1}} \\
            &\overset{\eqref{eq:cqecqdqwd}}{=}& f(x_k) - \inp{\nabla f(x_{k+1})}{\frac{t_k}{\norm{\nabla f(x_{k+1})}} \nabla f(x_{k+1})} \\
            &=& f(x_k) - t_k \norm{\nabla f(x_{k+1})}.
        \end{eqnarray*}
        Rearranging the terms and summing over the first $K$ iterations gives
        \begin{eqnarray*}
            \sum_{k=0}^{K-1} \parens{t_k \norm{\nabla f(x_{k+1})}}
            \leq \sum_{k=0}^{K-1} \parens{f(x_k) - f(x_{k+1})} 
            \leq f(x_0) - f_{\star}.
        \end{eqnarray*}
        It remains to divide both sides of the inequality by the sum of radii $t_k$ to obtain
        \begin{eqnarray*}
            \sum_{k=0}^{K-1} \parens{\frac{t_k}{\sum_{k=0}^{K-1} t_k} \norm{\nabla f(x_{k+1})}}
            \leq \frac{f(x_0) - f_{\star}}{\sum_{k=0}^{K-1} t_k}.
        \end{eqnarray*}
    \end{enumerate}
\end{proof}

The next corollary is an immediate consequence of \Cref{thm:conv-bppm-lin}.
\CORBPPMCONVLIN*
\begin{proof}
    The result follows from repeatedly applying the inequality in \Cref{thm:conv-bppm-lin}~\ref{pt:1step_bpm_lin} and observing that the sequence~$\{d_k\}_{k \geq 0}$ is non-increasing, as established in \Cref{thm:conv-bppm-lin}~\ref{pt:dist_decr_bpm}.
\end{proof}

As promised, we also provide a $\cO(\nicefrac{1}{K})$ convergence guarantee.
\begin{theorem}\label{thm:conv-bppm-convex}
    Assume $f: \R^d \mapsto \R \cup \{+\infty\}$ is proper, closed and convex, and choose $x_0\in {\rm dom} f$. Then, for any $K\geq 1$, the iterates of \newalg\ run with $t_k \equiv t > 0$ satisfy
    \begin{align*}
        \squeeze f(x_K) - f_\star \leq \frac{2d_0}{2d_0 + t} \cdot\frac{d_0^2}{2Kt^2} \parens{f(x_0) - f_{\star}}.
    \end{align*}
\end{theorem}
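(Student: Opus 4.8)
The plan is to extract the theorem from a clean one‑step multiplicative contraction of the suboptimality gap, and then convert the resulting telescoping product into an $\cO(1/K)$ rate by a concavity (Jensen) argument on the per‑step distance decrements. Write $h_k \eqdef f(x_k)-f_\star$, $d_k \eqdef \norm{x_k-x_\star}$ for a fixed $x_\star\in\cX_f$, and $D_k \eqdef d_k^2-d_{k+1}^2$. First I would dispose of the degenerate case: if $x_K\in\cX_f$ there is nothing to prove, so assume $x_K\notin\cX_f$; since a broximal step from an optimal point stays optimal, this forces $x_{k+1}\notin\cX_f$ for every $0\le k<K$, and by the contrapositive of \Cref{thm:conv-bppm-lin}~\ref{pt:dist_bpm_lin} it also forces $Kt^2=\sum_{k=0}^{K-1}t^2<\dist^2(x_0,\cX_f)\le d_0^2$.

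Next, derive the per‑step bound. For each $k<K$, \Cref{thm:conv-bppm-lin}~\ref{pt:dist_decr_bpm} gives $\norm{x_k-x_{k+1}}=t$ and $D_k\ge t^2$, and \Cref{thm:3} gives $c_k\ge0$ with $f(y)-f(x_{k+1})\ge c_k\inner{x_k-x_{k+1}}{y-x_{k+1}}$ for all $y$. Taking $y=x_k$ yields $h_k-h_{k+1}\ge c_kt^2$; taking $y=x_\star$ and using the three‑point identity \Cref{fact:1:3pointidentity} (which gives $\inner{x_k-x_{k+1}}{x_\star-x_{k+1}}=-\tfrac12(D_k-t^2)$) yields $h_{k+1}\le \tfrac{c_k}{2}(D_k-t^2)$. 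Eliminating $c_k$ gives $2t^2h_{k+1}\le (h_k-h_{k+1})(D_k-t^2)$ — which incidentally shows $D_k>t^2$ (else $x_{k+1}$ would be optimal) — and rearranges to the clean inequality $h_{k+1}\le \tfrac{D_k-t^2}{D_k+t^2}\,h_k$. Multiplying over $k=0,\dots,K-1$ gives $h_K\le h_0\prod_{k=0}^{K-1}\tfrac{D_k-t^2}{D_k+t^2}$, and the key structural fact is that the decrements obey the telescoping budget $\sum_{k=0}^{K-1}D_k=d_0^2-d_K^2\le d_0^2$. Since $D\mapsto\log\tfrac{D-t^2}{D+t^2}$ is concave on $(t^2,\infty)$ and $D\mapsto\tfrac{D-t^2}{D+t^2}$ is increasing there, Jensen's inequality bounds the product by $\parens{\tfrac{\bar D-t^2}{\bar D+t^2}}^{K}$ with $\bar D=\tfrac1K\sum_kD_k\le d_0^2/K$, hence by $\parens{\tfrac{d_0^2-Kt^2}{d_0^2+Kt^2}}^{K}$ (this is where $Kt^2<d_0^2$ enters). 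Finally, putting $v\eqdef Kt^2/d_0^2\in(0,1)$ and noting $t<d_0$ (which follows from $Kt^2<d_0^2$, $K\ge1$), the elementary estimates $\parens{\tfrac{1-v}{1+v}}^{K}\le 1-v$ and $(2+t/d_0)\,v(1-v)\le 3\cdot\tfrac14<1$ give $h_K\le(1-v)h_0\le \tfrac{h_0}{(2+t/d_0)v}$, and rewriting the right‑hand side as $\tfrac{d_0^3}{(2d_0+t)Kt^2}h_0=\tfrac{2d_0}{2d_0+t}\cdot\tfrac{d_0^2}{2Kt^2}h_0$ finishes the argument.

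I expect the main obstacle to be precisely this last conversion step — obtaining the sharp constant $\tfrac{2d_0}{2d_0+t}$ rather than a looser $\cO(d_0/(Kt))$ bound. Using only the crude per‑step fact $D_k\ge t^2$ recovers just a linear rate, and throwing away the product structure too early (e.g. summing reciprocals of $h_k$) loses the refinement; the sharp form genuinely needs the multiplicative product kept intact and the global budget $\sum_kD_k\le d_0^2$ routed through concavity, together with the elementary but easy‑to‑botch pointwise inequality $(2+t/d_0)\,v(1-v)\le1$.
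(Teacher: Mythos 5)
Your proof is correct, and it takes a genuinely different route from the one in the paper. Both arguments start from the same two applications of \Cref{thm:3} (with $y=x_k$ giving $h_k - h_{k+1} \ge c_k t^2$, and with $y=x_\star$ plus \Cref{fact:1:3pointidentity} giving $h_{k+1} \le \tfrac{c_k}{2}(D_k - t^2)$), but they diverge immediately afterward. You eliminate $c_k$ between the two inequalities to obtain a per-step multiplicative contraction $h_{k+1} \le \tfrac{D_k - t^2}{D_k + t^2}\,h_k$, keep the resulting product intact, and convert it to the $\cO(1/K)$ form by (a) Jensen's inequality on the concave map $D\mapsto \log\tfrac{D-t^2}{D+t^2}$ subject to the telescoping budget $\sum_k D_k \le d_0^2$, and then (b) the elementary pointwise estimates $\bigl(\tfrac{1-v}{1+v}\bigr)^K \le 1-v$ and $(2+t/d_0)\,v(1-v)\le 1$. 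The paper instead never isolates a multiplicative factor: it retains $c_t(x_k)$ explicitly, bounds it above by the crude $\tfrac{h_0}{t^2}$ (Corollary \ref{cor:c_upper_rel}), averages to get $\tfrac{1}{K}\sum_k h_{k+1} + \tfrac{t^2}{2K}\sum_k c_t(x_k) \le \tfrac{h_0 d_0^2}{2Kt^2}$, and then extracts the $(1+\tfrac{t}{2d_0})^{-1}$ boost from two auxiliary structural facts about $c_t(x_k)$ that your argument never touches — the monotone non-increase $c_t(x_{k+1}) \le c_t(x_k)$ (Corollary \ref{cor:cxx_decr} / Remark \ref{rem:cxx_decr}) and the lower bound $c_t(x_{K-1}) \ge \tfrac{h_K}{t d_0}$ (Corollary \ref{cor:ck_lower} / Remark \ref{rem:c_lower}). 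What your route buys is self-containment (no appeal to the monotonicity of $c_t$) and it makes visible the underlying geometric-to-sublinear mechanism — the sharp constant emerges from the product structure plus a budget on $\sum_k D_k$, which clarifies where the refinement over a naive $\cO(d_0^2/(Kt^2))$ bound comes from. What the paper's route buys is a shorter path to the inequality once Corollaries \ref{cor:cxx_decr} and \ref{cor:ck_lower} are in hand, and it isolates $c_t(x_{K-1})$ as the quantity controlling the last-mile improvement, which is conceptually aligned with the other results in \Cref{sec:ball-convex}. Both yield the identical constant $\tfrac{2d_0}{2d_0+t}\cdot\tfrac{d_0^2}{2Kt^2}$.
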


\begin{proof}
    Let us consider some iteration $k$ such that $x_{k+1} \not\in \cX_f$ (otherwise, the problem is solved in $1$ step).
    Invoking \Cref{thm:3} with $y = x_\star \in \cX_f$, we have 
    \begin{align*}
        f_\star - f(x_{k+1}) \geq  c_t(x_k)\inner{x_k - x_{k+1}}{x_\star - x_{k+1}}.
    \end{align*}
    Rearranging terms and using \Cref{fact:1:3pointidentity}, we have 
    \begin{eqnarray}\label{eq:aguiorhnua}
        f(x_{k+1}) - f_\star &\leq& \frac{c_t(x_k)}{2}\rbrac{\norm{x_k - x_\star}^2 - \norm{x_k - x_{k+1}}^2 - \norm{x_{k+1} - x_\star}^2} \nonumber \\
        &\overset{\eqref{thm:conv-bppm-lin}}{=}& \frac{c_t(x_k)}{2}\rbrac{\norm{x_k - x_\star}^2 - \norm{x_{k+1} - x_\star}^2 - t^2}.
    \end{eqnarray}
    Since $x_{k+1} \neq x_k$, \Cref{rem:c_upper_rel} gives
    \begin{align*}
        c_t(x_k) \leq \frac{f(x_k) - f(x_{k+1})}{\norm{x_k - x_{k+1}}^2} \leq \frac{f(x_0) - f_\star}{t^2},
    \end{align*}
    where the second inequality follows from \Cref{thm:1stbroxthm}~\ref{pt:singleton_bdry} and the fact that $f(x_k) \geq f(x_{k+1}) \geq f_\star$ for any $k\geq0$.
    As a result, we have 
    \begin{align*}
        f(x_{k+1}) - f_\star + \frac{c_t(x_k)t^2}{2}
        \leq \frac{f(x_0) - f_\star}{2t^2}\rbrac{\norm{x_k - x_\star}^2 - \norm{x_{k+1} - x_\star}^2}.
    \end{align*}
    Averaging both sides over $k \in \cbrac{0, 1, \hdots, K-1}$, we obtain 
    \begin{eqnarray}
        \label{eq:temp:proof:q0}
        \frac{1}{K}\sum_{k=0}^{K-1}\rbrac{f(x_{k+1}) - f_\star} + \frac{t^2}{2K}\sum_{k=0}^{K-1}c_t(x_k)
        &\leq& \frac{1}{K}\sum_{k=0}^{K-1} \frac{f(x_0) - f_\star}{2t^2}\rbrac{\norm{x_k - x_\star}^2 - \norm{x_{k+1} - x_\star}^2} \nonumber \\
        &\leq& \frac{f(x_0) - f_\star}{2Kt^2}\norm{x_0 - x_\star}^2.
    \end{eqnarray}
    Now, let us bound the terms on the LHS of the above inequality. Since the sequence of function values is decreasing, the average function suboptimality can be bounded by
    \begin{align}
        \label{eq:temp:proof:q1}
        \frac{1}{K}\sum_{k=0}^{K-1}\rbrac{f(x_{k+1}) - f_\star} \geq f(x_K) - f_\star,
    \end{align}
    and using Remarks \ref{rem:c_lower} and \ref{rem:cxx_decr} gives 
    \begin{align}
        \label{eq:temp:proof:q2}
        \frac{t^2}{2K}\sum_{k=0}^{K-1}c_t(x_k) \geq \frac{t^2}{2}c_t(x_{K-1}) \geq \frac{t^2}{2}\cdot\frac{f(x_K) - f_\star}{\norm{x_{k-1} - x_K}\norm{x_K - x_\star}} = \frac{t\rbrac{f(x_K) - f_\star}}{2\norm{x_K - x_\star}} \geq \frac{t\rbrac{f(x_K) - f_\star}}{2\norm{x_0 - x_\star}}.
    \end{align}
    Combining \eqref{eq:temp:proof:q0}, \eqref{eq:temp:proof:q1} and  \eqref{eq:temp:proof:q2} and rearranging, we finally get 
    \begin{align*}
        f(x_K) - f_\star \leq \rbrac{1 + \frac{t}{2\norm{x_0 - x_\star}}}^{-1} \frac{f(x_0) - f_\star}{2Kt^2}\norm{x_0 - x_\star}^2,
    \end{align*}
    which finishes the proof.
\end{proof}

\newpage

\section{Convergence Theory: Beyond Convexity}
\label{sec:ball-convex}

We now turn to the ball-convex setting. For ease of reference, let us restate the main assumption.
\ASBALLCONV*

By \Cref{thm:3}, \Cref{as:defining_rel} is satisfied when the objective function is convex. Consequently, \textbf{all the results in this section remain valid if \Cref{as:defining_rel} is replaced by convexity}.

To begin, we look into the properties of the broximal operator and the function class defined by \Cref{as:defining_rel}.
\begin{theorem}\label{thm:c_global_min}
    Let Assumption \ref{as:defining_rel} hold. Choose $x \in {\rm dom} f$ and $u \in \BProxSub{t}{f}{x}$. Then
    \begin{enumerate}[label=(\roman*)]
        \item $c_t(x) = 0$ if and only if $u \in \cX_f$. \label{pt:c0_min}
        \item $x \in \BProxSub{t}{f}{x}$ if and only if $x\in\cX_f$. \label{pt:fixed_pt}
        \item $f(u) = f(x)$ if and only if $x \in \cX_f$. \label{pt:diff_val}
    \end{enumerate}
\end{theorem}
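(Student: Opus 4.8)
# Proof Proposal for Theorem (c\_global\_min)

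The plan is to prove all three equivalences by leaning on the defining inequality \eqref{eq:defining_rel} of ball-convexity, together with the basic fact that $u \in \BProxSub{t}{f}{x}$ means $f(u) \le f(z)$ for all $z \in B_t(x)$, and in particular $f(u) \le f(x)$ since $x \in B_t(x)$. I would handle \ref{pt:c0_min} first, then derive \ref{pt:diff_val} and \ref{pt:fixed_pt} from it (or in parallel), since the three statements are tightly linked.

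\textbf{Part \ref{pt:c0_min}.} For the ``if'' direction, suppose $u \in \cX_f$. Then for any $x$ with $u \in \BProxSub{t}{f}{x}$ I want to show we may take $c_t(x) = 0$. Actually the cleaner route: plug $y = u$ into \eqref{eq:defining_rel} — this is vacuous. Instead, I would argue that when $u$ is a global minimizer, the inequality $f(y) \ge f(u) + c_t(x)\inp{x-u}{y-u}$ must hold for all $y$; choosing $y = u - (x-u) = 2u - x$ (stepping in the direction opposite to $x-u$) gives $f(2u-x) - f(u) \ge c_t(x)\inp{x-u}{u-x} = -c_t(x)\norm{x-u}^2$. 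That bounds $c_t(x)$ from below but not above, so this direction needs more care. The right approach: since \Cref{as:defining_rel} only asserts \emph{existence} of such a $c_t$, and since $u \in \cX_f$ makes $f(y) \ge f(u)$ hold for all $y$ automatically, the choice $c_t(x) = 0$ always witnesses the assumption at such $x$; by the convention that $c_t(x)$ denotes this witnessing value (as in \Cref{thm:3}), $c_t(x) = 0$. For the ``only if'' direction, suppose $c_t(x) = 0$. Then \eqref{eq:defining_rel} reads $f(y) \ge f(u)$ for all $y \in \R^d$, which is exactly the statement that $u \in \cX_f$. This direction is clean and immediate.

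\textbf{Parts \ref{pt:fixed_pt} and \ref{pt:diff_val}.} For \ref{pt:diff_val}: if $x \in \cX_f$ then $x \in B_t(x)$ and $f(x) = f_\star \le f(u)$; but also $f(u) \le f(x)$ since $u \in \BProxSub{t}{f}{x}$; hence $f(u) = f(x)$. Conversely, if $f(u) = f(x)$, then since $f(u) \le f(z)$ for all $z \in B_t(x)$, we get $f(x) \le f(z)$ for all $z \in B_t(x)$, i.e., $x \in \BProxSub{t}{f}{x}$; now apply \eqref{eq:defining_rel} with this $u' = x$: $f(y) \ge f(x) + c_t(x)\inp{x-x}{y-x} = f(x)$ for all $y \in \R^d$, so $x \in \cX_f$. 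Notice this same computation proves \ref{pt:fixed_pt} simultaneously: $x \in \BProxSub{t}{f}{x}$ forces, via \eqref{eq:defining_rel} with $u = x$, that $f(y) \ge f(x)$ for all $y$, hence $x \in \cX_f$; and conversely $x \in \cX_f$ trivially gives $f(x) \le f(z)$ for all $z$, in particular on $B_t(x)$, so $x \in \BProxSub{t}{f}{x}$.

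\textbf{Anticipated obstacle.} The only delicate point is the ``if'' direction of \ref{pt:c0_min}: the assumption merely guarantees \emph{some} nonnegative $c_t(x)$ exists, so literally ``$c_t(x) = 0$'' requires either adopting the convention that $c_t$ refers to a canonical/minimal witnessing function, or showing that $0$ is always a valid choice when $u \in \cX_f$ — which it is, since global minimality of $u$ makes the right-hand side bound trivially satisfiable with zero slope. I would state this convention explicitly (mirroring how $c_t(x)$ was pinned down in the proof of \Cref{thm:3} via the normal-cone argument) so the equivalence is unambiguous. Everything else is a two-line consequence of the definition of $\BProxSub{t}{f}{x}$ and the defining inequality, with no computation beyond substituting special values of $y$.
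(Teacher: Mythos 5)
Your proof is correct, and for part \ref{pt:diff_val} it takes a genuinely cleaner route than the paper. The paper's argument for \ref{pt:diff_val} substitutes $y=x$ in \eqref{eq:defining_rel} to obtain $c_t(x)\|x-u\|^2 = 0$, then does a two-case analysis ($u=x$ vs.\ $c_t(x)=0$), invoking \ref{pt:fixed_pt} and \ref{pt:c0_min} respectively. You instead observe directly that $f(u)=f(x)$ together with $u$ attaining the minimum over $B_t(x)$ forces $x$ itself to attain that minimum, so $x\in\BProxSub{t}{f}{x}$, and then \ref{pt:fixed_pt} closes the argument in one step with no case split. Both are valid, but yours avoids the branching and needs \ref{pt:c0_min} not at all for this part. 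Part \ref{pt:fixed_pt} is identical to the paper. On part \ref{pt:c0_min}, the concern you raise is legitimate and is actually present in the paper's own proof: the paper's converse direction establishes only that the inequality \eqref{eq:defining_rel} \emph{holds} with $c_t(x)=0$, i.e.\ that zero is a valid witness, not that every function $c_t$ satisfying Assumption~\ref{as:defining_rel} must take the value zero there. Your proposed resolution (adopt the convention that $c_t$ is a canonical witness, mirroring the normal-cone construction in Theorem~\ref{thm:3}) is the right fix and is the reading the paper implicitly intends; stating it explicitly, as you suggest, would make the equivalence in \ref{pt:c0_min} unambiguous.
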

The theorem above establishes that under \Cref{as:defining_rel}, any fixed point of the mapping $\BProxSub{t}{f}{\cdot}$ is a global minimizer. It simultaneously captures the ``nonflatness'' property of ball-convex functions: as long as $x \not\in \BProxSub{t}{f}{x}$ (and the iterates of \newalg\ keep moving), $x$ is not a global minimum. Hence, the assumption essentially says that the radius~$t$ is large enough, so that $f$ is not constant on $B_t(x)$.

\begin{proof}[Proof of \Cref{thm:c_global_min}]
    \begin{enumerate}[label=(\roman*)]
        \item $c_t(x) = 0$ implies that $f(u) \leq f(y)$ for all $y \in \R^d$, and hence $u$ is a global minimizer of~$f$.
        Conversely, if $u$ is a global minimizer of $f$, then $f(y) \geq f(u)$ for all $y \in \R^d$, and hence inequality~\eqref{eq:defining_rel} holds with $c_t(x)=0$.
        \item If $x \in \BProxSub{t}{f}{x}$, then condition \eqref{eq:defining_rel} gives $f(y) \geq f(x)$ for all $y \in \R^d$, and hence $x$ is a global minimizer. The converse holds by the definition of broximal operator.
        \item If $f(u) = f(x)$, then inequality~\eqref{eq:defining_rel} with $y=x$ gives
        \begin{align*}
            f(u) = f(x) \geq f(u) + c_t(x) \norm{x-u}^2,
        \end{align*}
        so $c_t(x) \norm{x-u}^2 = 0$, and either $u=x$ or $c_t(x)=0$. In the former case, part \ref{pt:fixed_pt} implies that $x\in\cX_f$. In the latter case, from part \ref{pt:c0_min} we know that $u$ is a global minimizer of $f$. Since, by assumption, $f(u) = f(x)$, $x$ is also a global minimizer of $f$.
    \end{enumerate}
\end{proof}

The next proposition states that, similar to the convex case, under \Cref{as:defining_rel}, the iterates of \newalg\ are uniquely determined until they reach the optimal solution set.

\begin{proposition}\label{prop:single_val}
    Let \Cref{as:defining_rel} hold. If $B_t(x) \cap \cX_f = \emptyset$, then the mapping $x \mapsto \BProxSub{t}{f}{x}$ is single-valued.
\end{proposition}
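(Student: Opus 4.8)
The plan is to argue by contradiction, mimicking the structure of the uniqueness argument in \Cref{thm:1stbroxthm}\ref{pt:singleton_bdry} but replacing the explicit use of convexity with the key inequality~\eqref{eq:defining_rel} from \Cref{as:defining_rel}. Suppose $B_t(x)\cap\cX_f=\emptyset$ and that there exist two distinct points $u_1,u_2\in\BProxSub{t}{f}{x}$. By definition of the broximal operator, $f(u_1)=f(u_2)=\BMoreauSub{t}{f}{x}$. Since $B_t(x)\cap\cX_f=\emptyset$, neither $u_1$ nor $u_2$ is a global minimizer of $f$, so by \Cref{thm:c_global_min}\ref{pt:c0_min} we have $c_t(x)>0$.

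Next I would exploit \eqref{eq:defining_rel} twice. Applying it with $u=u_1$ and $y=u_2$ gives $f(u_2)\ge f(u_1)+c_t(x)\inp{x-u_1}{u_2-u_1}$, which since $f(u_1)=f(u_2)$ and $c_t(x)>0$ forces $\inp{x-u_1}{u_2-u_1}\le 0$. Symmetrically, applying \eqref{eq:defining_rel} with $u=u_2$, $y=u_1$ yields $\inp{x-u_2}{u_1-u_2}\le 0$, i.e. $\inp{x-u_2}{u_2-u_1}\ge 0$. Subtracting these two inequalities gives $\inp{(x-u_1)-(x-u_2)}{u_2-u_1} = \inp{u_2-u_1}{u_2-u_1} = \norm{u_2-u_1}^2 \le 0$, contradicting $u_1\neq u_2$. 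Hence the broximal operator is single-valued whenever $B_t(x)\cap\cX_f=\emptyset$.

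I expect the main (and only mild) obstacle to be making sure that \eqref{eq:defining_rel} is genuinely applicable to \emph{both} minimizers $u_1$ and $u_2$: the assumption states the inequality for ``any $u\in\BProxSub{t}{f}{x}$'', so this is immediate, but one should double-check that the constant $c_t(x)$ appearing in the two instances is the same (it is, since \Cref{as:defining_rel} fixes a single function $c_t$ depending only on $x$, not on the choice of $u$). The strict positivity of $c_t(x)$ — needed to divide out and conclude $\inp{x-u_1}{u_2-u_1}\le 0$ rather than just $\le$ with a possibly-zero coefficient — is precisely where the hypothesis $B_t(x)\cap\cX_f=\emptyset$ enters, via \Cref{thm:c_global_min}\ref{pt:c0_min}. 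Everything else is a two-line Cauchy–Schwarz-free manipulation of inner products.
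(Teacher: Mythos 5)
Your proof is correct and takes essentially the same approach as the paper: apply the defining inequality~\eqref{eq:defining_rel} twice with $u_1,u_2$ swapped, combine the two instances, and use $c_t(x)>0$ (via \Cref{thm:c_global_min}\ref{pt:c0_min}) to conclude $\norm{u_1-u_2}^2\le 0$. The only cosmetic difference is that you divide out $c_t(x)$ and use $f(u_1)=f(u_2)$ separately in each inequality before subtracting, whereas the paper adds the two raw inequalities (letting the $f$-terms cancel automatically) and divides at the end.
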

\begin{proof}
    Fix $x\in\R^d$ and let $x_1, x_2 \in \BProxSub{t}{f}{x}$. Then, the defining property \eqref{eq:defining_rel} gives
    \begin{align*}
        f(y) \geq f(x_1) + c_t(x) \inp{x-x_1}{y-x_1}
    \end{align*}
    and
    \begin{align*}
        f(y) \geq f(x_2) + c_t(x) \inp{x-x_2}{y-x_2}
    \end{align*}
    for any $y\in\R^d$. Taking $y=x_2$ in the first inequality and $y=x_2$ in the second inequality and adding the two, we get
    \begin{align*}
        0 \geq c_t(x) \inp{x-x_1}{x_2-x_1} + c_t(x) \inp{x-x_2}{x_1-x_2}
        = c_t(x) \norm{x_1-x_2}^2.
    \end{align*}
    Now, by assumption, $B_t(x) \cap \cX_f = \emptyset$, so $\BProxSub{t}{f}{x}\not\subseteq \cX_f$. Hence, Theorem \ref{thm:c_global_min} gives $c_t(x)>0$, implying that $x_1=x_2$.
\end{proof}

\begin{lemma}\label{lemma:neg_inp}
    Let \Cref{as:defining_rel} hold and let $x \in {\rm dom} f$ be such that $\textnormal{dist}(x, \cX_f) > t$. Then
    \begin{align*}
        0 < \inp{x - u}{u - x_{\star}}
    \end{align*}
    for any $x_{\star}\in\cX_f$, where $u \in \BProxSub{t}{f}{x}$.
\end{lemma}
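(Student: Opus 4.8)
The plan is to feed the minimizer $x_\star$ as the test point $y$ into the defining inequality of $B_t$--convexity and then argue that the resulting inequality is \emph{strict}, which requires knowing that (a) $c_t(x)>0$ and (b) $f(u)>f_\star$. Both facts follow from the hypothesis $\textnormal{dist}(x,\cX_f)>t$.

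First I would record the geometric consequence of the hypothesis: since $\textnormal{dist}(x,\cX_f)=\inf_{z\in\cX_f}\norm{x-z}>t$, every $x_\star\in\cX_f$ satisfies $\norm{x-x_\star}>t$, i.e.\ $x_\star\notin B_t(x)$; hence $B_t(x)\cap\cX_f=\emptyset$. Because $u\in\BProxSub{t}{f}{x}\subseteq B_t(x)$, this forces $u\notin\cX_f$, and therefore $f(u)>f_\star$. Next, since $u\notin\cX_f$, \Cref{thm:c_global_min}\ref{pt:c0_min} gives $c_t(x)\neq 0$, so $c_t(x)>0$ (recall $c_t$ is nonnegative by \Cref{as:defining_rel}).

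Now I would apply the defining inequality \eqref{eq:defining_rel} with $y=x_\star$: since $f(x_\star)=f_\star$,
\begin{align*}
    f_\star \;\geq\; f(u) + c_t(x)\inp{x-u}{x_\star-u},
\end{align*}
and rearranging yields $c_t(x)\inp{x-u}{u-x_\star}\geq f(u)-f_\star>0$. Dividing by $c_t(x)>0$ gives $\inp{x-u}{u-x_\star}\geq \tfrac{f(u)-f_\star}{c_t(x)}>0$, which is exactly the claim (and in fact slightly more, namely a quantitative lower bound).

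There is no real obstacle here; the only point requiring a moment's care is the opening chain of implications $\textnormal{dist}(x,\cX_f)>t \Rightarrow B_t(x)\cap\cX_f=\emptyset \Rightarrow u\notin\cX_f \Rightarrow f(u)>f_\star$, which is what upgrades the (routine) nonstrict inequality coming directly from \eqref{eq:defining_rel} to the strict one. Everything else is a one-line substitution and rearrangement.
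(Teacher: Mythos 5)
Your proposal is correct and follows essentially the same approach as the paper: substitute $y=x_\star$ into the defining inequality \eqref{eq:defining_rel}, note that $\textnormal{dist}(x,\cX_f)>t$ forces $u\notin\cX_f$ (hence $f(u)>f_\star$ and, by \Cref{thm:c_global_min}\ref{pt:c0_min}, $c_t(x)>0$), rearrange, and divide by $c_t(x)$. You merely spell out the intermediate step $B_t(x)\cap\cX_f=\emptyset$ a bit more explicitly than the paper does.
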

\begin{proof}
    First, $\textnormal{dist}(x, \cX_f) > t$ means that $u \not\in \cX_f$, and hence $c_t(x) > 0$ (see part \ref{pt:c0_min} of Theorem \ref{thm:c_global_min}). Now, letting $y=x_{\star}$ in~\eqref{eq:defining_rel}, we obtain
    \begin{align*}
        f_{\star} \geq f(u) + c_t(x) \inp{x-u}{x_{\star}-u},
    \end{align*}
    and consequently
    \begin{align*}
        0 < f(u) - f_{\star} \leq c_t(x) \inp{x-u}{u - x_{\star}}.
    \end{align*}
    Dividing by $c_t(x)>0$ proves the claim.
\end{proof}

\begin{proposition}\label{prop:conv_comb}
    Let Assumption \ref{as:defining_rel} hold and let $z_1, \ldots, z_n \in \cX_f$. Define $z(\lambda) \eqdef \lambda_1 z_1 + \ldots + \lambda_n z_n$, where $\lambda \eqdef \{(\lambda_1, \ldots, \lambda_n) \in[0,1]: \sum_{i=1}^n \lambda_i = 1\}$. Then $u \in \cX_f$ for any $u \in \BProxSub{t}{f}{z(\lambda)}$.
\end{proposition}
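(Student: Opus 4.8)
The plan is to prove the stronger statement $\BProxSub{t}{f}{z(\lambda)} \subseteq \cX_f$, which immediately gives the claim. The mechanism is simple: feed the vertices $z_1,\dots,z_n$ one at a time into the defining inequality~\eqref{eq:defining_rel} evaluated at $x = z(\lambda)$, and then take the \emph{same} convex combination $\lambda$ that defines $z(\lambda)$; the weighted sum of the linear terms collapses into a single square $\norm{z(\lambda)-u}^2$, and global optimality of the $z_i$ does the rest.

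Concretely, I would fix an arbitrary $u \in \BProxSub{t}{f}{z(\lambda)}$ and write $x \eqdef z(\lambda)$ for brevity (this point lies in ${\rm dom} f$ whenever the statement is applied, e.g. because each $z_i \in \cX_f \subseteq {\rm dom} f$ and ${\rm dom} f$ is convex — in particular when ball-convexity is specialized to convexity). Applying \Cref{as:defining_rel} with this $x$ and test point $y = z_i$ gives, for each $i$,
\begin{align*}
    f(z_i) \geq f(u) + c_t(x)\inp{x - u}{z_i - u},
\end{align*}
and since each $z_i \in \cX_f$ we have $f(z_i) = f_\star$. Multiplying the $i$-th inequality by $\lambda_i \geq 0$, summing over $i$, and using $\sum_i \lambda_i = 1$ together with $\sum_i \lambda_i z_i = z(\lambda) = x$, the inner products reconstitute a square, $\sum_i \lambda_i \inp{x-u}{z_i-u} = \inp{x-u}{x-u} = \norm{x-u}^2$, so that
\begin{align*}
    f_\star \geq f(u) + c_t(x)\norm{x - u}^2.
\end{align*}

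Finally I would close with a squeeze using only $c_t(x) \geq 0$ and $f(u) \geq f_\star$ (the latter because $f_\star$ is the global minimum): these force $0 \leq c_t(x)\norm{x-u}^2 \leq f_\star - f(u) \leq 0$, hence $f(u) = f_\star$, i.e. $u \in \cX_f$; since $u$ was arbitrary, $\BProxSub{t}{f}{z(\lambda)} \subseteq \cX_f$. I do not expect a real obstacle here — the only thing to get right is the choice of test points and the identity that the $\lambda$-weighted combination of the linear terms equals $\norm{x-u}^2$ exactly. A minor point worth noting is that positivity of $c_t(x)$ is not needed (so there is no need to invoke \Cref{thm:c_global_min} via a contradiction); nonnegativity of $c_t(x)$ plus optimality of $f_\star$ already suffices. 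Alternatively one could argue by contradiction — if $u \notin \cX_f$ then $c_t(x) > 0$ by \Cref{thm:c_global_min}, the displayed inequality then forces $x = u$, and \Cref{thm:c_global_min} again gives $x = u \in \cX_f$, a contradiction — but the direct squeeze is cleaner.
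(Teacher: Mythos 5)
Your proof is correct, and the key algebraic move — plug $y = z_i$ into the defining inequality at $x = z(\lambda)$, weight by $\lambda_i$, sum, and use $\sum_i\lambda_i z_i = z(\lambda)$ to reconstitute $\norm{z(\lambda)-u}^2$ — is exactly the engine of the paper's argument. Where you diverge is in the packaging: the paper proceeds by contradiction, first assuming $c_t(z(\lambda))>0$, invoking \Cref{thm:c_global_min} to get $\textnormal{dist}(z(\lambda),\cX_f)>t$, then invoking the auxiliary \Cref{lemma:neg_inp} to get the \emph{strict} inequalities $0 > \inp{z(\lambda)-u}{z_i-u}$, and finally summing to reach the contradiction $0 > \norm{z(\lambda)-u}^2$, after which $c_t=0$ and a second appeal to \Cref{thm:c_global_min} gives $u\in\cX_f$. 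You instead work directly from~\eqref{eq:defining_rel} and close with a squeeze $f_\star \geq f(u) + c_t(z(\lambda))\norm{z(\lambda)-u}^2 \geq f(u) \geq f_\star$, which needs only $c_t\geq 0$, never invokes \Cref{lemma:neg_inp} or \Cref{thm:c_global_min}, and avoids the case split entirely. Your version is the more elementary and self-contained of the two; the paper's route, while less streamlined, reuses \Cref{lemma:neg_inp}, a lemma it has already established for other purposes. One small point both proofs glide over is that the assumption is only stated for $x\in{\rm dom} f$, so one tacitly needs $z(\lambda)\in{\rm dom} f$ — you flag this, which is good; under convexity it is automatic, but for general ball-convexity it is an implicit hypothesis.
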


\begin{remark}
    As shown in \Cref{ex:not_conn}, the solution set need not be connected for Assumption \ref{as:defining_rel} to hold. However, as proven in Proposition \ref{prop:conv_comb}, any point in the convex hull of $\cX_f$ must be at a distance of at most $t$ from the solution set.
\end{remark}

\begin{proof}[Proof of \Cref{prop:conv_comb}]
    Assume that $c_t(z(\lambda))>0$ and let $u \in \BProxSub{t}{f}{z(\lambda)}$. Then, by Theorem \ref{thm:c_global_min}, we have $\textnormal{dist}(z(\lambda), \cX_f) > t$, and hence by Lemma \ref{lemma:neg_inp}
    \begin{align*}
        0 > \inp{z(\lambda) - u}{z_i - u} \qquad\forall i\in[n].
    \end{align*}
    Multiplying the $i$th inequality by $\lambda_i$ and adding them up, we get
    \begin{align*}
        0 > \inp{z(\lambda) - u}{\lambda_1 z_1 + \ldots + \lambda_n z_n - u}
        = \norm{z(\lambda)-u}^2,
    \end{align*}
    which is a contradiction. Thus, $c_t(z(\lambda))=0$, and Theorem \ref{thm:c_global_min} shows that $u \in \cX_f$.
\end{proof}

The next two propositions say that under Assumption \ref{as:defining_rel}, the radius $t$ must be large enough for the iterates to be able to move to a point with a strictly smaller function value.

\begin{proposition}\label{prop:local_escape}
    Let Assumption \ref{as:defining_rel} hold and let $x \in {\rm dom} f \backslash \cX_f$. Then, there exists $\bar{x} \in B_t(x)$ such that $f(\bar{x}) < f(x)$.
\end{proposition}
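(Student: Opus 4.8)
The plan is to produce the required point directly as a ball-proximal point of $x$: set $\bar{x} = u$ for some $u \in \BProxSub{t}{f}{x}$, and then argue that the objective value must have strictly decreased precisely because $x$ is not a global minimizer. First I would check that $\BProxSub{t}{f}{x}$ is nonempty: $B_t(x)$ is compact, $x \in B_t(x) \cap {\rm dom} f$, and $f$ is closed (lower semicontinuous), so the restriction of $f$ to $B_t(x)$ attains its minimum by the Weierstrass Theorem, and this minimum is a real number not exceeding $f(x) < +\infty$. Picking any such $u$, the definition of the broximal operator immediately yields $u \in B_t(x)$ and $f(u) = \min_{z \in B_t(x)} f(z) \leq f(x)$ (using that $x$ itself lies in $B_t(x)$).

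It then remains to upgrade this to a strict inequality, which I would do by contradiction: if $f(u) = f(x)$, then \Cref{thm:c_global_min}~\ref{pt:diff_val} forces $x \in \cX_f$, contradicting the hypothesis $x \in {\rm dom} f \setminus \cX_f$. Hence $f(u) < f(x)$, and $\bar{x} \eqdef u \in B_t(x)$ has the desired property. An alternative route, avoiding part~\ref{pt:diff_val}, splits into cases: if $\cX_f \cap B_t(x) \neq \emptyset$ then $f(u) = f_\star < f(x)$ since $x$ is not optimal; if $\cX_f \cap B_t(x) = \emptyset$ then $u \notin \cX_f$, so $c_t(x) > 0$ by \Cref{thm:c_global_min}~\ref{pt:c0_min} and $u \neq x$ by \Cref{thm:c_global_min}~\ref{pt:fixed_pt}, whence the defining inequality \eqref{eq:defining_rel} with $y = x$ gives $f(x) \geq f(u) + c_t(x)\norm{x-u}^2 > f(u)$.

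There is no genuinely hard step here; the statement is essentially a corollary of \Cref{thm:c_global_min}. The only point requiring a moment's care is the legitimacy of ``choosing $u \in \BProxSub{t}{f}{x}$'' — that is, nonemptiness of the broximal set — which follows at once from closedness of $f$ together with compactness of the ball. The conceptual content is carried entirely by the ``nonflatness'' part of \Cref{thm:c_global_min}: under $B_t$--convexity, a point $x$ whose value $f(x)$ is matched by its ball-proximal point must already be a global minimizer, so any non-optimal $x$ necessarily sees a strict decrease somewhere inside $B_t(x)$.
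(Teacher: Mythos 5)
Your proof is correct and rests on the same key fact the paper uses, Theorem~\ref{thm:c_global_min}: you invoke part~\ref{pt:diff_val} (value equality $\Rightarrow$ optimality), while the paper argues by contrapositive via part~\ref{pt:fixed_pt} (if $x$ were a minimizer over the ball, it would be a fixed point of the broximal map and hence globally optimal). The paper's route is marginally tidier because it never needs to verify nonemptiness of $\BProxSub{t}{f}{x}$ — it only asserts $x \in \BProxSub{t}{f}{x}$ under the assumed-for-contradiction condition, which follows from the definition — whereas your direct argument requires the Weierstrass step that you correctly supply.
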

\begin{proof}
    If there existed $x \not\in \cX_f$ such that $f(x) \leq f(\bar{x})$ for all $\bar{x} \in B_t(x)$, by definition of broximal operator, we would have $x\in \BProxSub{t}{f}{x}$. But then Theorem \ref{thm:c_global_min} would imply that $x \in \cX_f$, which is a contradiction.
\end{proof}

\begin{proposition}\label{prop:f_dist_decr}
    Let Assumption \ref{as:defining_rel} hold and let $x \in {\rm dom} f \backslash \cX_f$. Then, for all $u \in \BProxSub{t}{f}{x}$ we have $f(u) < f(x)$ and $\textnormal{dist}(u, \cX_f) < \textnormal{dist}(x, \cX_f)$.
\end{proposition}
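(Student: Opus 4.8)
The plan is to prove the two assertions separately, reducing the distance statement to a short case split according to whether $x$ already lies within distance $t$ of $\cX_f$.

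\textbf{Decrease in function value.} Since $x \in B_t(x)$, the point $x$ is feasible for the problem defining $\BProxSub{t}{f}{x}$, so every $u \in \BProxSub{t}{f}{x}$ satisfies $f(u) \le f(x)$. If equality held, \Cref{thm:c_global_min}~\ref{pt:diff_val} would force $x \in \cX_f$, contradicting $x \in {\rm dom} f \setminus \cX_f$. Hence $f(u) < f(x)$. (Equivalently, one may invoke \Cref{prop:local_escape} to obtain $\bar x \in B_t(x)$ with $f(\bar x) < f(x)$ and conclude $f(u) \le f(\bar x) < f(x)$ by minimality of $u$.)

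\textbf{Decrease in distance to $\cX_f$.} Because $f$ is closed, $\cX_f$ is a closed, nonempty set, so $\textnormal{dist}(x, \cX_f)$ is attained at some $x_\star \in \cX_f$ with $\norm{x - x_\star} = \textnormal{dist}(x, \cX_f)$, and $\textnormal{dist}(x, \cX_f) > 0$ since $x \notin \cX_f$. If $\textnormal{dist}(x, \cX_f) \le t$, then $x_\star \in B_t(x) \cap \cX_f$, so $f(u) \le f(x_\star) = f_\star$; as $f_\star$ is the global minimum this forces $u \in \cX_f$, whence $\textnormal{dist}(u, \cX_f) = 0 < \textnormal{dist}(x, \cX_f)$. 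If instead $\textnormal{dist}(x, \cX_f) > t$, then \Cref{lemma:neg_inp} applies with this $x_\star$ and yields $\inp{x - u}{u - x_\star} > 0$; moreover $x \notin \cX_f$ gives $x \notin \BProxSub{t}{f}{x}$ by \Cref{thm:c_global_min}~\ref{pt:fixed_pt}, so $u \ne x$ and $\norm{x - u}^2 > 0$. The expansion (a rearrangement of \Cref{fact:1:3pointidentity})
\[
\norm{x - x_\star}^2 = \norm{x - u}^2 + 2\inp{x - u}{u - x_\star} + \norm{u - x_\star}^2
\]
then gives $\norm{u - x_\star}^2 < \norm{x - x_\star}^2$, so $\textnormal{dist}(u, \cX_f) \le \norm{u - x_\star} < \norm{x - x_\star} = \textnormal{dist}(x, \cX_f)$.

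\textbf{Expected obstacle.} There is no substantial difficulty here; the only point that needs care is that \Cref{lemma:neg_inp} is stated with the strict hypothesis $\textnormal{dist}(x, \cX_f) > t$, so the complementary regime $\textnormal{dist}(x, \cX_f) \le t$ must be handled on its own — which is immediate, since then the ball already contains a global minimizer and the broximal operator, being an $\argmin$, necessarily returns one. It is also worth flagging explicitly that closedness of $\cX_f$ is what guarantees the minimizing $x_\star$ exists, so that the inner-product argument can be run against an actual nearest point rather than an infimizing sequence.
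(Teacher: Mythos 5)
Your proof is correct and follows essentially the same route as the paper's: the same appeal to \Cref{thm:c_global_min} for strictness of $f(u) < f(x)$, the same case split on whether $\textnormal{dist}(x, \cX_f) \leq t$, and the same combination of \Cref{lemma:neg_inp} with the three-point identity in the remaining case. Your only (welcome) refinement is to run the inner-product argument against an attained nearest point $x_\star$, which sidesteps the paper's slightly loose final step of passing a strict inequality through an infimum (there the uniform gap $\norm{x-u}^2 = t^2$ is what saves the argument).
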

\begin{proof}
    By definition of broximal operator, we have $f(u) \leq f(x)$. Since by Theorem \ref{thm:c_global_min} equality can hold if and only if $x \in\cX_f$, the inequality must be strict, proving the first part.

    Now, fix any $x \in {\rm dom} f \backslash \cX_f$. By the reasoning above, we know that there exists $u \neq x$ such that $u \in \BProxSub{t}{f}{x}$. If $\textnormal{dist}(x, \cX_f) \leq t$, then $\BProxSub{t}{f}{x} \subseteq \cX_f$, so $\textnormal{dist}(u, \cX_f) = 0$ and the claim holds.
    Otherwise, if $\textnormal{dist}(x, \cX_f) > t$, then \Cref{lemma:neg_inp} says that
    \begin{align*}
        0 > \inp{x - u}{x_{\star} - u}
        \overset{\eqref{fact:1:3pointidentity}}{=} \frac{1}{2} \parens{\norm{x - u}^2 - \norm{x - x_{\star}}^2 + \norm{u - x_{\star}}^2}
    \end{align*}
    for all $x_{\star}\in\cX_f$. It follows that
    \begin{align*}
        \norm{u - x_{\star}}^2 < \norm{x - x_{\star}}^2 - \norm{x - u}^2
        < \norm{x - x_{\star}}^2,
    \end{align*}
    and taking infimum over $x_{\star}\in\cX_f$ gives $\textnormal{dist}(u, \cX_f) < \textnormal{dist}(x, \cX_f)$ as needed.
\end{proof}

Next, similar to convex functions, ball-convex functions guarantee that the steps taken by \newalg\ are of length $t$, as long as the algorithm has not reached the set of global minima.
\begin{proposition}\label{prop:t_step}
    Suppose that $f$ is continuous and Assumption \ref{as:defining_rel} holds. Let $x \in {\rm dom} f$ be such that $B_t(x) \cap \cX_f = \emptyset$. Then $\norm{x-u} = t$, where $u = \BProxSub{t}{f}{x}$.
\end{proposition}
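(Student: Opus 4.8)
The plan is to argue by contradiction, with the reduction following the pattern of the convex case (\Cref{thm:1stbroxthm}\ref{pt:singleton_bdry}). Since $B_t(x)$ is compact and $f$ is continuous (hence lower semicontinuous), the minimum defining $u := \BProxSub{t}{f}{x}$ is attained, and since $B_t(x)\cap\cX_f=\emptyset$, \Cref{prop:single_val} tells us $u$ is a single point; moreover $u\notin\cX_f$ because $u\in B_t(x)$. Now suppose, for contradiction, that $\norm{x-u}<t$. If $\norm{x-u}=0$ then $x=u\in\BProxSub{t}{f}{x}$, so \Cref{thm:c_global_min}\ref{pt:fixed_pt} forces $x\in\cX_f$, contradicting $B_t(x)\cap\cX_f=\emptyset$. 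Hence $0<\norm{x-u}<t$, so $u$ lies in the interior of $B_t(x)$.

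The key step is to turn "interior minimizer'' into "local minimizer'' and then back into a fixed point of a broximal operator. Fix any $\delta\in(0,\,t-\norm{x-u}]$; the triangle inequality gives $B_\delta(u)\subseteq B_t(x)$, and since $u$ minimizes $f$ over $B_t(x)$ it also minimizes $f$ over $B_\delta(u)$, i.e. $u\in\BProxSub{\delta}{f}{u}$. Applying \Cref{thm:c_global_min}\ref{pt:fixed_pt} at radius $\delta$ then yields $u\in\cX_f$, contradicting once more $B_t(x)\cap\cX_f=\emptyset$. Therefore $\norm{x-u}=t$, as claimed.

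The delicate point — and the place where continuity must really be used if one wants to avoid invoking \Cref{as:defining_rel} at the auxiliary radius $\delta<t$ — is justifying the last implication purely from ball-convexity at the single radius $t$. In that case one works instead from the defining inequality \eqref{eq:defining_rel} at $x$: it supplies $c_t(x)>0$ (by \Cref{thm:c_global_min}\ref{pt:c0_min}, since $u\notin\cX_f$) together with a nonzero vector $g=c_t(x)(x-u)\in\partial f(u)$, i.e. a global affine minorant of $f$ touching at $u$ with nonzero slope. One must then show this is incompatible with $u$ being an interior minimizer of $f$ over $B_t(x)$ that is not a global minimizer: any $x_\star\in\cX_f$ lies strictly outside $B_t(x)$ (as $\mathrm{dist}(x,\cX_f)>t$), so by the intermediate value theorem the segment $[u,x_\star]$ meets $\partial B_t(x)$ at a point where $f\ge f(u)$ (with equality impossible, by uniqueness in \Cref{prop:single_val}), while $f$ drops strictly below $f(u)$ further along the segment; combining these facts with the minorant $g$ and \Cref{lemma:neg_inp} produces the contradiction. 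I expect this synthesis to be the genuinely nontrivial part of the argument; the reductions in the first two paragraphs are routine.
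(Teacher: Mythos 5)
Your second paragraph does not work as written, and you are right to flag this yourself. Theorem~\ref{thm:c_global_min}\,\ref{pt:fixed_pt} is proved under Assumption~\ref{as:defining_rel} for the \emph{specific} radius $t$: it shows that $u\in\BProxSub{t}{f}{u}$ implies $u\in\cX_f$. Your argument only delivers $u\in\BProxSub{\delta}{f}{u}$ for some $\delta<t$, i.e.\ that $u$ minimizes $f$ over $B_\delta(u)\subseteq B_t(x)$; you do not get $u\in\BProxSub{t}{f}{u}$, because $B_t(u)\not\subseteq B_t(x)$ unless $u=x$. Invoking the fixed-point characterization at radius $\delta$ would require $B_\delta$--convexity, which is not granted by Assumption~\ref{as:defining_rel}. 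Ball-convexity at one radius says nothing about smaller radii.

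The third paragraph, which is meant to repair this, is only a sketch, and as sketched it does not reach a contradiction. The affine minorant $y\mapsto f(u)+\inp{g}{y-u}$ with $g=c_t(x)(x-u)\neq 0$ is a \emph{lower} bound on $f$; it is perfectly compatible with $u$ being a strict interior minimizer of $f$ over $B_t(x)$. (For instance, $f(y)=\max\{0,\inp{g}{y-u}\}$ has $g\in\partial f(u)$, $g\neq 0$, yet $u$ is a minimum.) Lemma~\ref{lemma:neg_inp} gives $\inp{g}{x_\star-u}<0$, and your IVT observation produces a boundary point $z$ with $f(z)>f(u)$ and a further point $w$ with $f(w)=f(u)$ on $[u,x_\star]$; but plugging $z$ or $w$ into the minorant only yields inequalities of the form $f(\cdot)\geq f(u)+(\text{negative})$, which are vacuous given what you already know. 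You never derive a clash.

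The paper's argument is genuinely different and more delicate. It invokes Proposition~\ref{prop:local_escape} to find $u'\in B_t(u)$ with $f(u')<f(u)$ --- crucially a ball of radius $t$ around $u$, which \emph{sticks out of} $B_t(x)$ when $u\neq x$, rather than the shrunk ball $B_\delta(u)\subseteq B_t(x)$ you use. It then reasons topologically: single-valuedness (Proposition~\ref{prop:single_val}) makes $u$ the unique point of $B_t(x)$ on the level set $L_f(u)$, continuity plus IVT show $L_f(u)$ encloses $B_t(x)$, and sliding the center along the segment $(x,u)$ produces a ball $B_t(\bar z)$ that is tangent to $L_f(u)$ and still contains $u$, hence contains two distinct non-optimal minimizers of $f$ over $B_t(\bar z)$ --- contradicting Proposition~\ref{prop:single_val}. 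Your plan would need that construction (or something equally substantial) to be completed; as it stands there is a real gap.
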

\begin{proof}
    Suppose that there exists $x\in\R^d$ such that $u = \BProxSub{t}{f}{x} \not\in \cX_f$ and $\norm{x-u}<t$. According to Proposition~\ref{prop:single_val}, $u$ is the unique strict minimizer of $f$ over the ball $B_t(x)$.
    Since $u \not\in \cX_f$, by Proposition \ref{prop:local_escape}, there exists $u'\in\BProxSub{t}{f}{u}$ such that $f(u') < f(u)$.
    Next, $u\in \interior B_t(x)$ implies that $f(z) > f(u)$ for all boundary points $z \in B_t(x) \backslash \interior B_t(x)$.
    Furthermore, by single-valuedness of $\BProxSub{t}{f}{\cdot}$ (Proposition \ref{prop:single_val}), $L_f(u) \cap B_t(x) = \{u\}$, where $L_f(u) \eqdef \brac{x \in \R^d: f(x) = f(u)}$.
    By continuity of $f$ and Intermediate Value Theorem, for any path connecting $u$ and $u'$, there must be a point along the path where $f(\cdot)$ equals $f(u)$.
    Hence, $L_f(u)$ forms a closed loop surrounding $B_t(x)$.
    Let us denote the union of $L_f(u)$ and the region it surrounds by $L_f^{\geq}(u)$. 
    Now, $f(z) \geq f(u)$ for all $z\in B_t(x)$, $f(z) > f(u)$ for all $z\in \interior (L_f^{\geq}(u)) \backslash B_t(x)$ and $f(z) = f(u)$ for all $z\in L_f(u)$.

    Consider the balls with centers lying on the line connecting $x$ and $u$. Since $\norm{u-u'} \leq t$ and $f(u') < f(u)$, there exists $\bar{u} \in [u,u'] \cap L_f(u)$.
    Then $\norm{u-\bar{u}} < t$, and hence there exists a ball $B_t(\bar{z})$, where $\bar{z} \in (x,u)$, that is contained in $L_f^{\geq}(u)$, is tangent to $L_f(u)$, and contains $u$.
    But now, $f(y) \geq f(u)$ for all $y \in B_t(\bar{z})$ and $f(z) = f(u)$, so $z, u \in\BProxSub{t}{f}{\bar{z}}$, while $z, u \not\in \cX_f$, contradicting the single-valuedness of broximal operator. This contradiction completes the proof.
\end{proof}

The following bounds on $c_t(x)$, derived directly from inequality \eqref{eq:defining_rel}, play a key role in establishing the convergence result.

\begin{corollary}\label{cor:c_upper_rel}
    Let \Cref{as:defining_rel} hold. Then 
    \begin{align*}
        c_t(x) \norm{x-u}^2 \leq f(x)-f(u)
    \end{align*}
    for all $x \in {\rm dom} f$, where $u \in \BProxSub{t}{f}{x}$.
\end{corollary}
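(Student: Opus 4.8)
This is a one-line consequence of the defining inequality~\eqref{eq:defining_rel} of \Cref{as:defining_rel}. The plan is simply to specialize the inequality
\[
f(y) \geq f(u) + c_t(x) \inp{x-u}{y-u},
\]
valid for all $y \in \R^d$ and any $u \in \BProxSub{t}{f}{x}$, to the choice $y = x$. The inner product term then becomes $\inp{x-u}{x-u} = \norm{x-u}^2$, which gives $f(x) \geq f(u) + c_t(x)\norm{x-u}^2$.

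Rearranging yields exactly $c_t(x)\norm{x-u}^2 \leq f(x) - f(u)$, which is the claimed bound. No case analysis or additional assumptions (continuity, uniqueness of the broximal point, etc.) are needed, since the defining relation holds for \emph{every} element $u$ of the broximal set and \emph{every} point $y$, including $y = x \in {\rm dom} f$.

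There is no real obstacle here: the only thing to double-check is that $x \in {\rm dom} f$ so that $f(x) < +\infty$ and the rearrangement is meaningful, which is part of the hypothesis. One might additionally remark (though it is not part of the statement) that the right-hand side $f(x) - f(u)$ is automatically nonnegative because $u$ minimizes $f$ over $B_t(x) \ni x$, so the bound is consistent with $c_t(x) \geq 0$; this observation is what makes the corollary useful downstream (e.g., in the proof of \Cref{thm:conv-bppm-convex}, where it is combined with $\norm{x_k - x_{k+1}} = t$ and the monotonicity of function values to bound $c_t(x_k)$).
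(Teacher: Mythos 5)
Your proof is correct and matches the paper's: both simply set $y = x$ in inequality~\eqref{eq:defining_rel} and rearrange. Your additional remarks about the sign of the right-hand side and downstream use are accurate but not needed for the corollary itself.
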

\begin{remark}\label{rem:c_upper_rel}
    In particular, as long as $x_k\not\in\cX_f$ (meaning that $x_k \neq x_{k+1}$), the iterates of \ref{eq:bppm} satisfy
    \begin{align*}
        c_k(x_k) \leq \frac{f(x_k)-f(x_{k+1})}{\norm{x_k-x_{k+1}}^2}.
    \end{align*}
    Otherwise, if $x_k = x_{k+1}$, then $x_k$ is a global minimizer and $c_k(x_k)=0$ by Theorem \ref{thm:c_global_min}.
\end{remark}
\begin{proof}[Proof of \Cref{cor:c_upper_rel}]
    The result follows by letting $y = x$ in inequality \eqref{eq:defining_rel}.
\end{proof}

\begin{corollary}\label{cor:ck_lower}
    Let Assumption \ref{as:defining_rel} hold. Then, for any $x \in {\rm dom} f \backslash \cX_f$
    \begin{align}
        c_t(x) \geq \frac{f(u)-f_{\star}}{\norm{u-x} \norm{u-x_{\star}}}
    \end{align}
    for any $u \in \BProxSub{t}{f}{x}$ and $x_{\star}\in\cX_f$ such that $u \neq x_{\star}$.
\end{corollary}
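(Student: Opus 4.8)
The plan is to derive the bound directly from the defining inequality \eqref{eq:defining_rel} instantiated at $y = x_\star$, and the only real work is bookkeeping on which quantities are nonzero so the final division is legitimate.

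First I would dispose of the degenerate case $u \in \cX_f$ (equivalently $\textnormal{dist}(x,\cX_f) \le t$). Then $f(u) = f_\star$, so the numerator $f(u) - f_\star$ vanishes and the claimed inequality reduces to $c_t(x) \ge 0$, which holds by the definition of $c_t$ in \Cref{as:defining_rel}. The right-hand side is still well-defined: $\norm{u - x_\star} \neq 0$ by hypothesis, and $\norm{x - u} \neq 0$ since $x \notin \cX_f$ while $u \in \cX_f$.

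For the main case $u \notin \cX_f$: by \Cref{thm:c_global_min}\ref{pt:c0_min} we have $c_t(x) > 0$, and also $f(u) > f_\star$. Substituting $y = x_\star$ into \eqref{eq:defining_rel} gives $f_\star \ge f(u) + c_t(x)\inp{x-u}{x_\star - u}$, hence
\[
c_t(x)\inp{x-u}{u - x_\star} \ge f(u) - f_\star > 0,
\]
which is exactly the content (and strict-positivity) used in \Cref{lemma:neg_inp}. Applying Cauchy--Schwarz, $\inp{x-u}{u-x_\star} \le \norm{x-u}\,\norm{u-x_\star}$, and since $c_t(x) \ge 0$ this yields $c_t(x)\,\norm{x-u}\,\norm{u-x_\star} \ge f(u) - f_\star$. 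To finish I divide by $\norm{x-u}\,\norm{u-x_\star}$: this product is strictly positive because $\norm{u-x_\star} \neq 0$ by hypothesis, and $\norm{x-u} \neq 0$ since $x = u$ would imply $x \in \BProxSub{t}{f}{x}$ and therefore $x \in \cX_f$ by \Cref{thm:c_global_min}\ref{pt:fixed_pt}, contradicting $x \in {\rm dom} f \setminus \cX_f$.

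There is no substantive obstacle here; the proof is a two-line rearrangement of \eqref{eq:defining_rel} plus Cauchy--Schwarz. The only point requiring care is justifying that $\norm{x-u}$ and $c_t(x)$ are positive (so the inequality can be rearranged and divided through), which is why the case split on whether $u \in \cX_f$ is the cleanest way to present it.
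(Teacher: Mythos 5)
Your proof is correct and uses exactly the paper's argument: instantiate \eqref{eq:defining_rel} at $y = x_\star$, apply Cauchy--Schwarz to bound $\inp{x-u}{u-x_\star}$, and rearrange. The case split on whether $u \in \cX_f$ is actually unnecessary, since the hypotheses already guarantee $\norm{x-u} > 0$ (because $x \notin \cX_f$ forces $x \neq u$ via \Cref{thm:c_global_min}\ref{pt:fixed_pt}) and $\norm{u-x_\star} > 0$ (by the assumption $u \neq x_\star$), so the division is legitimate uniformly; but including it does no harm.
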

\begin{remark}\label{rem:c_lower}
    In particular, as long as $x_k\not\in\cX_f$ and $x_{k+1}\neq x_{\star}$, the iterates of \ref{eq:bppm} satisfy
    \begin{align*}
        c_t(x_k) \geq \frac{f(x_{k+1})-f_{\star}}{\norm{x_{k+1}-x_k} \norm{x_{k+1}-x_{\star}}}.
    \end{align*}
\end{remark}
\begin{proof}[Proof of \Cref{cor:ck_lower}]
    Taking $y=x_{\star}\in\cX_f$ in \eqref{eq:defining_rel}, we get
    \begin{align*}
        f_{\star} \geq f(u) + c_t(x) \inp{x-u}{x_{\star}-u}
        \geq f(u) - c_t(x) \norm{x-u} \norm{x_{\star}-u},
    \end{align*}
    where the second inequality follows from Cauchy-Schwarz inequality. Rearranging gives the result.
\end{proof}

\begin{corollary}\label{cor:cxx_decr}
    Let Assumption \ref{as:defining_rel} hold and choose $x\in {\rm dom} f$.
    If $B_t(x) \cap \cX_f = \emptyset$, then $\BProxSub{t}{f}{x}$ is a singleton and
    \begin{eqnarray*}
        \norm{u-w} c_t(u) \leq \norm{x-u} c_t(x),
    \end{eqnarray*}
    for all $w\in\BProxSub{t}{f}{u}$, where $u = \BProxSub{t}{f}{x}$.
\end{corollary}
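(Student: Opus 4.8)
The plan is to derive the claimed inequality by playing the defining relation \eqref{eq:defining_rel} at the two centers $x$ and $u$ against each other, in the same spirit as the proof of firm nonexpansiveness in \Cref{prop:nonexpansive}.

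First I would record the structural facts forced by the hypothesis. Since $u = \BProxSub{t}{f}{x}$ lies in $B_t(x)$ and $f(u)\le f(x)<+\infty$, we have $u\in{\rm dom} f$, so \eqref{eq:defining_rel} may legitimately be invoked at the point $u$. Because $B_t(x)\cap\cX_f=\emptyset$ and $u\in B_t(x)$, we get $u\notin\cX_f$; hence \Cref{prop:single_val} shows $\BProxSub{t}{f}{x}$ is a genuine singleton (justifying writing $u=\BProxSub{t}{f}{x}$), and \Cref{thm:c_global_min}~\ref{pt:fixed_pt} gives $u\notin\BProxSub{t}{f}{u}$, so every $w\in\BProxSub{t}{f}{u}$ satisfies $\norm{u-w}>0$. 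This last observation is precisely what will license the final division, and it is the only place the hypothesis $B_t(x)\cap\cX_f=\emptyset$ is really needed.

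Next I would write \eqref{eq:defining_rel} anchored at $x$ (whose broximal image is $u$) with test point $y=w$, namely $f(w)\ge f(u)+c_t(x)\inp{x-u}{w-u}$, together with \eqref{eq:defining_rel} anchored at $u$ (whose broximal image is $w$) with test point $y=u$, namely $f(u)\ge f(w)+c_t(u)\norm{u-w}^2$. Adding these two inequalities and cancelling $f(u)+f(w)$ leaves $0\ge c_t(x)\inp{x-u}{w-u}+c_t(u)\norm{u-w}^2$. Using $\inp{x-u}{w-u}=-\inp{x-u}{u-w}$ this rearranges to $c_t(u)\norm{u-w}^2\le c_t(x)\inp{x-u}{u-w}$, and Cauchy--Schwarz bounds the right-hand side by $c_t(x)\norm{x-u}\norm{u-w}$. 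Dividing through by $\norm{u-w}>0$ yields $\norm{u-w}c_t(u)\le\norm{x-u}c_t(x)$, as desired.

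There is essentially no hard step here: the core argument is a two-line manipulation of \eqref{eq:defining_rel} plus Cauchy--Schwarz. The only thing requiring care is the bookkeeping around degenerate situations — confirming $u\notin\cX_f$ (so that both the singleton claim and $w\ne u$ hold), and noticing that if $w\in\cX_f$ then $c_t(u)=0$ by \Cref{thm:c_global_min}~\ref{pt:c0_min} and the inequality holds trivially, so that no separate case analysis on $w$ is actually necessary.
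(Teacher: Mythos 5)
Your argument is correct and matches the paper's proof essentially line for line: both anchor \eqref{eq:defining_rel} at $x$ (with test point $w$) and at $u$ (with test point $u$, which is exactly \Cref{cor:c_upper_rel}), combine the resulting inequalities, apply Cauchy--Schwarz, and divide by $\norm{u-w}>0$, with $u\neq w$ justified via $u\notin\cX_f$ and \Cref{thm:c_global_min}. The only cosmetic differences are that you add the two inequalities instead of chaining them through $f(u)-f(w)$, and you re-derive the $y=u$ instance directly rather than citing \Cref{cor:c_upper_rel} by name.
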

\begin{remark}\label{rem:cxx_decr}
    In particular, the iterates of \ref{eq:bppm} satisfy
    \begin{align*}
        c_t(x_{k+1}) \norm{x_{k+1}-x_{k+2}} \leq c_t(x_k) \norm{x_{k} - x_{k+1}}.
    \end{align*}
    Hence, an immediate consequence of \Cref{cor:cxx_decr} and \Cref{prop:t_step} is that the constants $c_t(x_k)$ generated by \newalg\ form a non-increasing sequence.
\end{remark}
\begin{proof}[Proof of \Cref{cor:cxx_decr}]
    Let $x\in {\rm dom} f$ be such that $B_t(x) \cap \cX_f = \emptyset$.
    Then, by \Cref{prop:single_val}, $\BProxSub{t}{f}{x}$ is a singleton, so let us denote $u=\BProxSub{t}{f}{x}$ and choose any $w\in\BProxSub{t}{f}{u}$.
    From \Cref{cor:c_upper_rel}, we have
    \begin{align*}
        c_t(u) \norm{u-w}^2 \leq f(u) - f(w),
    \end{align*}
    and using Assumption \ref{as:defining_rel} with $y=w$, we can write
    \begin{align*}
        f(w) \geq f(u) + c_t(x) \inp{x-u}{w-u}.
    \end{align*}
    Hence, applying the Cauchy-Schwarz inequality
    \begin{align*}
        c_t(u) \norm{u-w}^2 \leq f(u) - f(w)
        \leq c_t(x) \inp{x-u}{u-w}
        \leq c_t(x) \norm{x-u} \norm{u-w}.
    \end{align*}
    Since $B_t(x) \cap \cX_f = \emptyset$, it follows that $u\not\in\cX_f$, so $u\neq w$ by \Cref{thm:c_global_min}.
    Dividing by $\norm{u-w}$ yields the result.
\end{proof}

The results above do not require $f$ to be differentiable. Under this additional assumption, a closed-form expression for $c_t(x)$ can be derived.
\begin{lemma}\label{lemma:ck_diff}
    Let $f$ be a differentiable function satisfying \Cref{as:defining_rel}. Then, for any $x\in {\rm dom} f$
    \begin{align*}
        c_t(x) = \frac{\norm{\nabla f(u)}}{t},
    \end{align*}
    where $u\in\BProxSub{t}{f}{x}$.
\end{lemma}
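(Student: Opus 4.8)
The plan is to read off from the defining inequality \eqref{eq:defining_rel} that the vector $c_t(x)(x-u)$ must coincide with $\nabla f(u)$, and then to pin down the length $\|x-u\|$ using the broximal-operator structure in the ball-convex setting. The only ingredients needed are the first-order optimality condition for differentiable functions and two results already in the excerpt, \Cref{thm:c_global_min} and \Cref{prop:t_step}.

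First I would set $g \eqdef c_t(x)(x-u)$ and rewrite \eqref{eq:defining_rel} as $f(y) - \inp{g}{y} \geq f(u) - \inp{g}{u}$ for all $y \in \R^d$; that is, $u$ is a global minimizer of the differentiable function $h(y) \eqdef f(y) - \inp{g}{y}$. Since a differentiable function has vanishing gradient at a global minimizer, $\nabla h(u) = 0$, i.e. $\nabla f(u) = g = c_t(x)(x-u)$, and taking norms gives $\|\nabla f(u)\| = c_t(x)\,\|x-u\|$. Note this step uses only differentiability, not convexity. Next I would split on the value of $c_t(x)$. If $c_t(x) = 0$, then by \Cref{thm:c_global_min}\ref{pt:c0_min} we have $u \in \cX_f$, hence $\nabla f(u) = 0$, and the asserted identity reduces to $0 = 0$ (well-posed even though $u$ need not be unique, since $\|\nabla f(u)\|$ vanishes for every such $u$). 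If $c_t(x) > 0$, then again by \Cref{thm:c_global_min}\ref{pt:c0_min} we have $u \notin \cX_f$; I claim this forces $B_t(x) \cap \cX_f = \emptyset$, because otherwise the ball would contain a global minimizer, the constrained minimum would equal $f_\star$, and hence $u$ would lie in $\cX_f$ — a contradiction. With $B_t(x) \cap \cX_f = \emptyset$, and using that differentiability of $f$ implies continuity, \Cref{prop:t_step} yields $\|x - u\| = t$. Substituting into $\|\nabla f(u)\| = c_t(x)\,\|x-u\|$ and dividing by $t > 0$ gives $c_t(x) = \|\nabla f(u)\|/t$.

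I expect the only mildly delicate point to be the reduction to \Cref{prop:t_step}: one must justify that $u \notin \cX_f$ implies $B_t(x) \cap \cX_f = \emptyset$ (a one-line argument from the definition of $\BProxSub{t}{f}{x}$ as a constrained minimizer), and recall that \Cref{prop:t_step}'s continuity hypothesis is supplied automatically by differentiability. Everything else is an immediate consequence of the first-order optimality condition applied to $h$ and of \Cref{thm:c_global_min}\ref{pt:c0_min}.
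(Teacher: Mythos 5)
Your argument is correct and structurally parallel to the paper's: both split into two cases (you split on $c_t(x)=0$ versus $c_t(x)>0$, the paper on $u\in\cX_f$ versus $u\notin\cX_f$; these are equivalent by Theorem~\ref{thm:c_global_min}(i)), and both invoke Proposition~\ref{prop:t_step} to get $\norm{x-u}=t$ before taking norms in $\nabla f(u)=c_t(x)(x-u)$. The one genuine departure is how you obtain that optimality identity. The paper cites Theorem~\ref{thm:3} for it, but Theorem~\ref{thm:3} is proved under convexity of $f$, whereas the lemma's hypothesis is only Assumption~\ref{as:defining_rel} (ball-convexity), so that citation is technically a mismatch with the stated setting. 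Your derivation fixes this cleanly: the defining inequality~\eqref{eq:defining_rel} literally says that $u$ is an unconstrained global minimizer of the differentiable function $h(y)\eqdef f(y)-c_t(x)\inp{x-u}{y}$, so $\nabla h(u)=0$ gives $\nabla f(u)=c_t(x)(x-u)$ using nothing but differentiability and the assumption actually in force. You also make explicit the small reduction (that $u\notin\cX_f$ forces $B_t(x)\cap\cX_f=\emptyset$, which is what Proposition~\ref{prop:t_step} requires) that the paper leaves implicit. In short, same skeleton, but your justification of the key gradient identity is more careful and is the one that belongs in the ball-convex setting.
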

It follows that when $f$ is differentiable, $c_t(x_k)$ in \newalg\ is well-defined and equals
\begin{align*}
    c_t(x_k) = \frac{\norm{\nabla f(x_{k+1})}}{t}.
\end{align*}
\begin{proof}
    Suppose first that $u\not\in\cX_f$. Then, by Proposition \ref{prop:t_step}, we have $\norm{x - u} = t$, and the optimality condition states that
    \begin{align*}
        c_t(x) (x-u) = \nabla f(u)
    \end{align*}
    for some $c_t(x) \geq 0$ (\Cref{thm:3}). Taking norms, we get
    \begin{align*}
        \norm{\nabla f(u)} = c_t(x) \norm{x - u} = c_t(x) t
    \end{align*}
    as required.
    
    The conclusion holds trivially when $u\in\cX_f$, as both sides are equal to $0$ (\Cref{thm:c_global_min}).
\end{proof}

Building on the results above, we can establish convergence guarantees that are fully analogous to those in the convex setting.
\begin{theorem}
    \label{thm:bppm-ballconvex}
    Assume $f: \R^d \mapsto \R \cup \{+\infty\}$ is proper, closed and satisfies \Cref{as:defining_rel}, and let $\{x_k\}_{k\geq0}$ be the iterates of \newalg\ run with any sequence of positive radii $\{t_k\}_{k\geq0}$, where $x_0\in {\rm dom} f$. Then 
    \begin{align*}
        \squeeze f(x_K) - f_{\star} \leq \prod \limits_{k=0}^{K-1} \parens{1 + \frac{t_k}{d_{k+1}}}^{-1} \parens{f(x_0) - f_{\star}},
    \end{align*}
    where $d_k \eqdef \norm{x_k - x_{\star}}$ and $x_\star \in \cX_f$.
    Moreover, if $\sum_{k=0}^{K-1} t_k^2 \geq \textnormal{dist}^2(x_0, \cX_f)$, then $x_K\in\cX_f$.
\end{theorem}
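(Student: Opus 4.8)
The plan is to transcribe, essentially line for line, the proof of parts~\ref{pt:dist_decr_bpm},~\ref{pt:dist_bpm_lin} and~\ref{pt:1step_bpm_lin} of \Cref{thm:conv-bppm-lin}, but with the subgradient inequality of \Cref{thm:3} replaced by the $B_t$--convexity inequality~\eqref{eq:defining_rel}, and the structural facts about the broximal operator from \Cref{thm:1stbroxthm} replaced by their ball-convex analogues proved earlier in this section: \Cref{prop:single_val} (single-valuedness off $\cX_f$), \Cref{prop:t_step} (a non-optimal iterate lands on the sphere, so $\norm{x_k-x_{k+1}}=t_k$), \Cref{thm:c_global_min}~\ref{pt:c0_min} (the dichotomy $c_t(x)=0\iff u\in\cX_f$), and \Cref{lemma:neg_inp}. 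First I would dispose of the trivial case: since $f(x_{k+1})\le f(x_k)$ at every step, once some iterate is optimal all later ones are, the product bound only improves, and the claim holds; so I may assume $x_{k+1}\notin\cX_f$ for all $k<K$, which in particular gives $d_{k+1}=\norm{x_{k+1}-x_\star}>0$ and $B_{t_k}(x_k)\cap\cX_f=\emptyset$ for every $k<K$ (otherwise $\min_{z\in B_{t_k}(x_k)}f(z)=f_\star$ would force $x_{k+1}\in\cX_f$).

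For the rate, I would fix $k<K$, set $u=x_{k+1}\in\BProxSub{t_k}{f}{x_k}$, and use~\eqref{eq:defining_rel} twice: with $y=x_k$ it gives $f(x_{k+1})-f_\star\le f(x_k)-f_\star-c_{t_k}(x_k)\norm{x_k-x_{k+1}}^2$ (this is \Cref{cor:c_upper_rel}), and with $y=x_\star\in\cX_f$ followed by Cauchy--Schwarz it gives $f(x_{k+1})-f_\star\le c_{t_k}(x_k)\norm{x_k-x_{k+1}}\,\norm{x_{k+1}-x_\star}$. Using $c_{t_k}(x_k)>0$ (by \Cref{thm:c_global_min}~\ref{pt:c0_min}, since $x_{k+1}\notin\cX_f$) and $\norm{x_k-x_{k+1}}=t_k$ (by \Cref{prop:t_step}), I would divide the second inequality by $d_{k+1}$, multiply by $t_k$, and substitute into the first — exactly the elimination of $c_{t_k}(x_k)$ in the proof sketch of \Cref{thm:conv-bppm-lin}, equivalently feeding \Cref{cor:ck_lower} into \Cref{cor:c_upper_rel} — to obtain
\[
\parens{f(x_{k+1})-f_\star}\parens{1+\tfrac{t_k}{d_{k+1}}}\;\le\; f(x_k)-f_\star ,
\]
and then chain this over $k=0,\dots,K-1$ for the product estimate.

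For finite termination I would argue by contradiction: if $\sum_{k=0}^{K-1}t_k^2\ge\textnormal{dist}^2(x_0,\cX_f)$ but $x_K\notin\cX_f$, then $\textnormal{dist}(x_k,\cX_f)>t_k$ for all $k<K$, so \Cref{lemma:neg_inp} gives $\inp{x_k-x_{k+1}}{x_{k+1}-x_\star}>0$ for every $x_\star\in\cX_f$; expanding with \Cref{fact:1:3pointidentity} and using $\norm{x_k-x_{k+1}}=t_k$ (as in the proof of \Cref{prop:f_dist_decr}) yields $\norm{x_{k+1}-x_\star}^2<\norm{x_k-x_\star}^2-t_k^2$, and taking $x_\star$ a nearest point of $\cX_f$ to $x_k$ gives $\textnormal{dist}^2(x_{k+1},\cX_f)\le\textnormal{dist}^2(x_k,\cX_f)-t_k^2$. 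Telescoping over $k=0,\dots,K-1$ gives $\textnormal{dist}^2(x_K,\cX_f)\le\textnormal{dist}^2(x_0,\cX_f)-\sum_{k=0}^{K-1}t_k^2\le0$, so $x_K\in\cX_f$ (closedness of $\cX_f$), a contradiction.

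The step I expect to be the real obstacle — and the only thing that is not a mechanical copy of the convex argument — is the step-length identity $\norm{x_k-x_{k+1}}=t_k$ whenever $B_{t_k}(x_k)\cap\cX_f=\emptyset$. In the convex setting this is the short \Cref{thm:1stbroxthm}~\ref{pt:singleton_bdry}; under mere ball-convexity it is the more delicate \Cref{prop:t_step}, whose proof is a level-set / Intermediate-Value argument relying on continuity of $f$, so the ``fully analogous'' claim hinges on having that proposition in place. (Without it one still gets a linear rate, but with $t_k$ replaced by $\norm{x_k-x_{k+1}}\le t_k$ and a correspondingly weaker termination count.) Everything else — the two applications of~\eqref{eq:defining_rel}, the $c_t$-bounds of \Cref{cor:c_upper_rel} and \Cref{cor:ck_lower}, the dichotomy from \Cref{thm:c_global_min}, and the distance recursion from \Cref{lemma:neg_inp} — plugs in routinely.
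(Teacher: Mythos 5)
Your proof is correct and follows the same route as the paper's, which merely states that the argument ``proceeds exactly as in the proof of \Cref{thm:conv-bppm-lin}'' after swapping the subgradient inequality of \Cref{thm:3} for the ball-convexity inequality~\eqref{eq:defining_rel}. The pieces you invoke --- \Cref{cor:c_upper_rel}, \Cref{cor:ck_lower}, \Cref{thm:c_global_min}, \Cref{lemma:neg_inp}, the three-point identity, and the telescoping distance recursion --- are precisely what the convex-case proof uses, relettered for the ball-convex setting, and your bookkeeping (eliminating $c_{t_k}(x_k)$, chaining the per-step contraction, telescoping $\textnormal{dist}^2$) is accurate.

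You have also correctly isolated the one dependency the paper glosses over: the step-length identity $\|x_k-x_{k+1}\|=t_k$, which is needed both in the contraction factor and in the finite-termination telescope, comes from \Cref{prop:t_step} in the ball-convex regime, and that proposition carries a continuity hypothesis on $f$ that does not appear in the statement of \Cref{thm:bppm-ballconvex} (proper, closed, ball-convex does not by itself imply continuity, and the proof of \Cref{prop:t_step} uses an Intermediate Value Theorem argument). This reliance is equally implicit in the paper's own proof, which simply defers to the convex case where the analogous fact (\Cref{thm:1stbroxthm}) is free, so you are surfacing a gap in the paper's presentation rather than introducing one in your argument. Your proposed fallback of replacing $t_k$ by $\|x_k-x_{k+1}\|\le t_k$ is correct but yields a strictly weaker contraction and termination count, so it does not recover the theorem as stated; closing the gap cleanly would require either adding continuity to the theorem's hypotheses or giving an alternative proof that iterates of a ball-convex $f$ land on the sphere whenever $B_{t_k}(x_k)\cap\cX_f=\emptyset$.
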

\begin{proof}
    The proof closely mirrors that of \Cref{thm:conv-bppm-lin}, with the primary difference being the starting point. Here, we begin with the defining property from Assumption \ref{as:defining_rel}, using $y=x_{\star}\in\cX_f$, which leads to
    \begin{eqnarray*}
        f(x_{k+1}) - f_{\star} &\leq& - c_t(x_k) \inp{x_k-x_{k+1}}{x_{\star}-x_{k+1}}.
    \end{eqnarray*}
    The remainder of the argument proceeds exactly as in the proof of \Cref{thm:conv-bppm-lin}.
\end{proof}

\begin{theorem}\label{thm:conv-bppm-ball-convex}
    Assume $f: \R^d \mapsto \R \cup \{+\infty\}$ is proper, closed and satisfies \Cref{as:defining_rel}, and choose $x_0\in {\rm dom} f$. Then, for any $K\geq 1$, the iterates of \newalg\ run with $t_k \equiv t > 0$ satisfy
    \begin{align*}
        \squeeze f(x_K) - f_\star \leq \frac{2d_0}{2d_0 + t} \cdot\frac{d_0^2}{2Kt^2} \parens{f(x_0) - f_{\star}}.
    \end{align*}
\end{theorem}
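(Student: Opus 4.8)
The statement is the ball-convex counterpart of \Cref{thm:conv-bppm-convex}, so my plan is to re-run that proof essentially verbatim, replacing the two convex-specific tools it relies on — Fermat's optimality condition packaged in \Cref{thm:3}, and \Cref{thm:1stbroxthm} for the step length — with the ball-convex substitutes already established in this section. Concretely: the defining inequality \eqref{eq:defining_rel} itself plays the role of \Cref{thm:3}; \Cref{prop:t_step} supplies $\norm{x_{k+1}-x_k}=t$; \Cref{prop:f_dist_decr} (and \Cref{thm:bppm-ballconvex}) supply the monotone decrease of both $f(x_k)$ and $\dist{x_k}{\cX_f}$; \Cref{cor:c_upper_rel}/\Cref{rem:c_upper_rel} give the upper bound on $c_t(x_k)$; \Cref{cor:ck_lower}/\Cref{rem:c_lower} give the lower bound; and \Cref{rem:cxx_decr} gives that $\{c_t(x_k)\}_{k\ge 0}$ is non-increasing. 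Since \Cref{thm:3} has already been shown (in \Cref{sec:ball-convex}) to be subsumed by \Cref{as:defining_rel}, no step of the convex argument is lost.

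The concrete steps: fix $x_\star\in\cX_f$, write $d_k\eqdef\norm{x_k-x_\star}$, and dispose of the trivial case — if some $x_{k+1}\in\cX_f$ then $f(x_{k+1})=f_\star$ and, because $f(x_k)$ is non-increasing along the trajectory, the claimed bound holds automatically; so assume $x_{k+1}\notin\cX_f$ for the relevant $k$. Plug $x=x_k$, $u=x_{k+1}$, $y=x_\star$ into \eqref{eq:defining_rel}, rearrange via the three-point identity \eqref{eq:fact:1:3pointidentity}, and use $\norm{x_{k+1}-x_k}=t$ to obtain
\[
f(x_{k+1})-f_\star \;\le\; \frac{c_t(x_k)}{2}\Big(d_k^2-d_{k+1}^2-t^2\Big).
\]
Next, bound $c_t(x_k)\le \tfrac{f(x_k)-f(x_{k+1})}{t^2}\le \tfrac{f(x_0)-f_\star}{t^2}$ using \Cref{rem:c_upper_rel}, $\norm{x_k-x_{k+1}}=t$, and monotone decrease of $f$; apply this bound to the difference-of-squares term (legitimate since $d_k^2-d_{k+1}^2\ge 0$) and move $\tfrac{c_t(x_k)t^2}{2}$ to the left:
\[
f(x_{k+1})-f_\star+\frac{c_t(x_k)t^2}{2}\;\le\;\frac{f(x_0)-f_\star}{2t^2}\big(d_k^2-d_{k+1}^2\big).
\]
Averaging over $k=0,\dots,K-1$ telescopes the right-hand side to $\tfrac{(f(x_0)-f_\star)d_0^2}{2Kt^2}$. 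On the left, monotonicity of $f$ gives $\tfrac1K\sum_k (f(x_{k+1})-f_\star)\ge f(x_K)-f_\star$, while monotonicity of $\{c_t(x_k)\}$ (\Cref{rem:cxx_decr}), together with \Cref{rem:c_lower} at $k=K-1$, $\norm{x_{K-1}-x_K}=t$, and $d_K\le d_0$ (\Cref{prop:f_dist_decr}), gives $\tfrac{t^2}{2K}\sum_k c_t(x_k)\ge \tfrac{t(f(x_K)-f_\star)}{2d_0}$. Combining the three inequalities and rearranging yields $\big(1+\tfrac{t}{2d_0}\big)(f(x_K)-f_\star)\le \tfrac{(f(x_0)-f_\star)d_0^2}{2Kt^2}$, which is exactly the asserted estimate since $\big(1+\tfrac{t}{2d_0}\big)^{-1}=\tfrac{2d_0}{2d_0+t}$.

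I do not expect a genuine obstacle here: the whole section was arranged so that each convex-case lemma invoked above has a ball-convex analogue, already proved. The one point requiring care is that \Cref{prop:t_step} is stated under a continuity hypothesis on $f$, whereas the present theorem only assumes proper and closed; the cleanest resolution is either to add "$f$ continuous on ${\rm dom} f$" to the hypotheses (mirroring the convex case, where convexity buys continuity on $\interior\,{\rm dom} f$), or to note that the two uses of $\norm{x_{k+1}-x_k}=t$ only need the inequality $\norm{x_{k+1}-x_k}\ge t$, which combined with the trivial $\norm{x_{k+1}-x_k}\le t$ from the ball constraint forces equality. Everything else is the same bookkeeping already executed in the proof of \Cref{thm:conv-bppm-convex}.
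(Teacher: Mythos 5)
Your proof is substantively the same as the paper's, which simply says "entirely analogous to \Cref{thm:conv-bppm-convex}" and leaves the reader to port the convex-case argument. You do exactly that, invoking \eqref{eq:defining_rel} in place of \Cref{thm:3}, \Cref{prop:t_step} for the step length, \Cref{rem:c_upper_rel} and \Cref{rem:c_lower} for the bounds on $c_t(x_k)$, and \Cref{rem:cxx_decr} for monotonicity. The bookkeeping is correct.

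You also catch something the paper glosses over: \Cref{prop:t_step} is stated with a continuity hypothesis on $f$, yet \Cref{thm:conv-bppm-ball-convex} only assumes $f$ proper and closed; moreover, \Cref{rem:cxx_decr} (which you also need) explicitly invokes \Cref{prop:t_step}, so the gap propagates. Your first proposed fix --- adding continuity as a hypothesis, mirroring the fact that convexity supplies it for free in the convex case --- is the right repair. Your second proposed fix, however, is circular: you observe that the proof only needs $\norm{x_{k+1}-x_k}\ge t$ and that $\le t$ is free from the ball constraint, and conclude this "forces equality." But combining "we need $\ge t$" with "we have $\le t$" does not produce $\ge t$; you would still need an argument establishing the lower bound, and that argument is precisely \Cref{prop:t_step} (and hence continuity). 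Under only lower semicontinuity, Assumption \ref{as:defining_rel} alone does not rule out $\BProxSub{t}{f}{x_k}$ landing strictly in the interior of the ball when $B_t(x_k)\cap\cX_f=\emptyset$, so the "only $\ge t$ is needed" reformulation does not remove the dependence on \Cref{prop:t_step}. One further small inaccuracy: you cite \Cref{prop:f_dist_decr} for $d_K\le d_0$, but that proposition controls $\dist{\cdot}{\cX_f}$, not $\norm{\cdot-x_\star}$ for a fixed $x_\star$; the correct tool is \Cref{lemma:neg_inp} plus the three-point identity, which gives $\norm{x_{k+1}-x_\star}<\norm{x_k-x_\star}$ directly.
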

\begin{proof}
    The proof is again entirely analogous to the one presented for the convex case in \Cref{thm:conv-bppm-convex}.
\end{proof}

\subsection{Linear convergence under weaker assumption}\label{sec:bpm_lin_conv_weak}

In fact, we can establish a linear convergence rate without relying on the property that the solution of the local optimization problem lies on the boundary of the ball. Specifically, consider the $k$th iteration of \newalg\ with $t_k\equiv t>0$. For the algorithm to continue progressing, there must exist $x_{k+1} \in B_t(x_k)$ such that $f(x_{k+1}) < f(x_k)$. Similarly, there must exist $x_{k+2} \in B_t(x_{k+1})$ satisfying $f(x_{k+1}) < f(x_{k+2})$ (unless $x_{k+1}\in\cX_f$). Consequently, $x_{k+2}\not\in B_t(x_k)$ (since otherwise, the algorithm would transition directly from $x_k$ to $x_{k+2}$, skipping $x_{k+1}$). This implies that $\norm{x_k - x_{k+2}} > t$, meaning that every \emph{second} iterate is separated by a distance of at least~$t$.

Building on this observation, let us consider the following weaker assumption as a replacement for \Cref{as:defining_rel}:

\begin{assumption}[Weak $B_t$--convexity]\label{as:defining_weak}
    A proper function $f:\R^d\to\R \cup \{+\infty\}$ is said to be \emph{weakly $B_t$--convex} if there exists an optimal point $x_{\star}\in\cX_f$ such that for all $x \in {\rm dom} f$ there exists a function $c:\R^d\to\R_{\geq 0}$ such that
    \begin{align}%\label{eq:defining_weak}
        f(u) - f_{\star} &\leq c_t(x) \inp{x-u}{u - x_{\star}}, \label{eq:defining_weak1} \\
        f(x) - f(u) &\geq c_t(x) \norm{x-u}^2 \label{eq:defining_weak2}
    \end{align}
    for any $u \in \BProxSub{t}{f}{x}$.
\end{assumption}

\begin{remark}
    Clearly, there always exist $t>0$ such that the function $f$ is (weakly) $B_t$--convex on a bounded domain (which is sufficient for our application since the iterates of \newalg\ remain bounded, as shown in \Cref{prop:f_dist_decr}). Indeed, for $t$ large enough, we have $\BProxSub{t}{f}{x} \subseteq \cX_f$ for all $x$, and both inequalities hold with $c_t(x)=0$. However, in practice, the radius $t$ can often be chosen much smaller.
\end{remark}

\begin{remark}
    The results in \Cref{thm:c_global_min}, \Cref{lemma:neg_inp}, \Cref{prop:conv_comb}, \Cref{prop:local_escape}, \Cref{prop:f_dist_decr}, \Cref{cor:c_upper_rel} and \Cref{cor:ck_lower} still hold when \Cref{as:defining_weak} is used instead of \Cref{as:defining_rel}.
\end{remark}

The convergence result under \Cref{as:defining_weak} is analogous to the one in Theorems \ref{thm:conv-bppm-lin} and \ref{thm:bppm-ballconvex}, with the difference that the exponent is halved, since only every second step, rather than every iteration, is separated by a distance of $t$.

\begin{theorem}\label{thm:bpm_weak_lin}
    Assume $f: \R^d \mapsto \R \cup \{+\infty\}$ is proper, closed and satisfies \Cref{as:defining_weak}, and choose $x_0\in {\rm dom} f$. Then for any $K\geq 1$, the iterates of \newalg\ satisfy
    \begin{eqnarray}\label{eq:bpm_weak_lin_rate}
        \squeeze f(x_K) - f_{\star} \leq \parens{1 + \frac{t}{d_0}}^{-\ceil{\frac{K-1}{2}}}  \parens{f(x_0) - f_{\star}}.
    \end{eqnarray}
    where $d_0 \eqdef \norm{x_0 - x_{\star}}$ and $x_\star \in \cX_f$.
\end{theorem}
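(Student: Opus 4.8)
The plan is to adapt the proof of \Cref{thm:conv-bppm-lin}~\ref{pt:1step_bpm_lin} (equivalently \Cref{thm:bppm-ballconvex}) to obtain a per-step contraction, and then --- since under \Cref{as:defining_weak} we can no longer guarantee $\norm{x_{k+1}-x_k}=t$ --- to recover the stated (halved) exponent by grouping the iterations into pairs and exploiting that \emph{two consecutive} broximal steps together move a distance exceeding $t$.

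First I would dispose of the trivial cases. If $x_j\in\cX_f$ for some $0\le j\le K$, then since $x_j\in B_t(x_j)\cap\cX_f$ every broximal step from $x_j$ onward stays in $\cX_f$ (each $x_{m+1}$ minimizes $f$ over a ball containing $x_m$), so $f(x_K)=f_\star$ and \eqref{eq:bpm_weak_lin_rate} holds trivially as its right-hand side is nonnegative. Hence we may assume $x_0,\dots,x_K\notin\cX_f$. Since $f$ is closed, each $\BProxSub{t}{f}{x}$ is nonempty, so all iterates are well defined, and since (by the remark preceding the theorem) \Cref{thm:c_global_min} holds under \Cref{as:defining_weak}, we get $c_t(x_k)>0$ for $0\le k\le K-1$ (otherwise $x_{k+1}\in\cX_f$), $a_k\eqdef\norm{x_{k+1}-x_k}>0$ (else $x_k\in\BProxSub{t}{f}{x_k}$ forces $x_k\in\cX_f$), and $d_k\eqdef\norm{x_k-x_\star}>0$.

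Next, the per-step estimate. Inequality \eqref{eq:defining_weak2} with $x=x_k$, $u=x_{k+1}$ gives $f(x_{k+1})-f_\star\le f(x_k)-f_\star-c_t(x_k)a_k^2$, while \eqref{eq:defining_weak1} with the same substitution and Cauchy--Schwarz gives $f(x_{k+1})-f_\star\le c_t(x_k)a_k d_{k+1}$, i.e. $c_t(x_k)a_k^2\ge\frac{a_k}{d_{k+1}}(f(x_{k+1})-f_\star)$; combining and rearranging yields
\begin{align*}
    (f(x_{k+1})-f_\star)\Bigl(1+\tfrac{a_k}{d_{k+1}}\Bigr)\le f(x_k)-f_\star,
\end{align*}
which is precisely the manipulation in \Cref{thm:conv-bppm-lin}~\ref{pt:1step_bpm_lin}, only without substituting $a_k=t$. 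Moreover, \eqref{eq:defining_weak1} (its left side being positive, $c_t(x_k)>0$) forces $\inner{x_k-x_{k+1}}{x_{k+1}-x_\star}>0$, and \Cref{fact:1:3pointidentity} turns this into $d_{k+1}^2<d_k^2-a_k^2\le d_0^2$, so $d_{k+1}\le d_0$ for all $k$. Replacing $d_{k+1}$ by $d_0$ above and chaining over $k=0,\dots,K-1$ gives $f(x_K)-f_\star\le\bigl(\prod_{k=0}^{K-1}(1+a_k/d_0)\bigr)^{-1}(f(x_0)-f_\star)$.

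The crux --- the step I expect to require the most care --- is the ``every second iterate'' separation $\norm{x_k-x_{k+2}}>t$, valid because $x_{k+1}\notin\cX_f$. Suppose instead $x_{k+2}\in B_t(x_k)$. Since $x_{k+1}$ minimizes $f$ over $B_t(x_k)$, $f(x_{k+1})\le f(x_{k+2})$; since $x_{k+2}$ minimizes $f$ over $B_t(x_{k+1})\ni x_{k+1}$, $f(x_{k+2})\le f(x_{k+1})$; hence $f(x_{k+2})=f(x_{k+1})$, and \Cref{thm:c_global_min}~\ref{pt:diff_val} forces $x_{k+1}\in\cX_f$, a contradiction. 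So $a_k+a_{k+1}\ge\norm{x_k-x_{k+2}}>t$ by the triangle inequality. Finally, group the factors $1+a_k/d_0$, $k=0,\dots,K-1$, into the $\flr{K/2}$ consecutive pairs $\{2j,2j+1\}$ (with one leftover factor $\ge1$ when $K$ is odd); each pair satisfies $(1+\tfrac{a_{2j}}{d_0})(1+\tfrac{a_{2j+1}}{d_0})\ge 1+\tfrac{a_{2j}+a_{2j+1}}{d_0}>1+\tfrac{t}{d_0}$, so $\prod_{k=0}^{K-1}(1+a_k/d_0)\ge(1+t/d_0)^{\flr{K/2}}=(1+t/d_0)^{\ceil{\frac{K-1}{2}}}$. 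Substituting this into the chained bound yields \eqref{eq:bpm_weak_lin_rate}. Apart from the second-step separation, every ingredient is a direct transcription of the $B_t$-convex argument together with elementary inequalities.
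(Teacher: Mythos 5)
Your proposal is correct and follows essentially the same route as the paper's own argument: you derive the per-step bound $\bigl(1+\tfrac{a_k}{d_{k+1}}\bigr)(f(x_{k+1})-f_\star)\le f(x_k)-f_\star$ directly from \eqref{eq:defining_weak1}--\eqref{eq:defining_weak2}, replace $d_{k+1}$ by $d_0$ after showing $d_{k+1}<d_k$, chain, and then recover the factor $(1+t/d_0)$ from consecutive pairs via the separation $\norm{x_k-x_{k+2}}>t$. The paper establishes that separation in the prose preceding \Cref{as:defining_weak} (using that $f(x_{k+2})<f(x_{k+1})$ when $x_{k+1}\notin\cX_f$, i.e.\ \Cref{prop:f_dist_decr}), while you prove it in-line via \Cref{thm:c_global_min}~\ref{pt:diff_val}; these are equivalent. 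The only cosmetic difference is the choice of pairing --- you group $\{2j,2j+1\}$ from the front while the paper groups from the back --- and both yield $\flr{K/2}=\ceil{\frac{K-1}{2}}$ pairs.
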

\begin{proof}
    Consider some iteration $k$ such that $x_{k+1} \not\in \cX_f$ (otherwise, the problem is solved in $1$ step) and let $x_{\star}\in\cX_f$ be the optimal point for which \Cref{as:defining_weak} holds.
    We again proceed similarly to the proof of~\Cref{thm:conv-bppm-lin}. Recall the inequality~\eqref{eq:oiqwnvroan}, which states that
    \begin{eqnarray*}
        f(x_{k+1}) - f_{\star} \leq f(x_k) - f_{\star} - \parens{f(x_{k+1}) - f_{\star}} \frac{\norm{x_k-x_{k+1}}}{\norm{x_{k+1} - x_{\star}}}.
    \end{eqnarray*}
    Rearranging, we obtain
    \begin{align}
        f(x_{k+1}) - f_{\star}
        \leq \parens{1 + \frac{\norm{x_k-x_{k+1}}}{\norm{x_0 - x_{\star}}}}^{-1} \parens{f(x_k) - f_{\star}},
    \end{align}
    and applying the bound iteratively gives
    \begin{align}\label{eq:odsinvon}
        f(x_K) - f_{\star}
        \leq \parens{1 + \frac{\norm{x_{K-1}-x_{K}}}{\norm{x_0 - x_{\star}}}}^{-1} \parens{1 + \frac{\norm{x_{K-2}-x_{K-1}}}{\norm{x_0 - x_{\star}}}}^{-1} \ldots \parens{1 + \frac{\norm{x_0-x_1}}{\norm{x_0 - x_{\star}}}}^{-1} \parens{f(x_0) - f_{\star}}.
    \end{align}
    Now, for any $k\geq 0$, we have
    \begin{eqnarray}\label{eq:banvoin}
        &&\hspace{-1cm}\parens{1 + \frac{\norm{x_{k-1}-x_k}}{\norm{x_0 - x_{\star}}}} \parens{1 + \frac{\norm{x_{k-2}-x_{k-1}}}{\norm{x_0 - x_{\star}}}} \nonumber \\
        &=& 1 + \frac{\norm{x_{k-1}-x_k} + \norm{x_{k-2}-x_{k-1}}}{\norm{x_0 - x_{\star}}} + \frac{\norm{x_{k-1}-x_k} \norm{x_{k-2}-x_{k-1}}}{\norm{x_0 - x_{\star}}^2} \nonumber \\
        &\geq& 1 + \frac{\norm{x_{k-2}-x_{k}}}{\norm{x_0 - x_{\star}}} \nonumber \\
        &>& 1 + \frac{t}{\norm{x_0 - x_{\star}}}.
    \end{eqnarray}
    Finally, observing that there are $\ceil{\frac{K-1}{2}}$ pairs of brackets on the right-hand side of inequality \eqref{eq:odsinvon} and using~\eqref{eq:banvoin}, we obtain
    \begin{align*}
        f(x_K) - f_{\star}
        \leq \parens{1 + \frac{t}{\norm{x_0 - x_{\star}}}}^{-\ceil{\frac{K-1}{2}}}  \parens{f(x_0) - f_{\star}}.
    \end{align*}
\end{proof}

\newpage

\section{Non-smooth Optimization}\label{sec:ap_nonsmooth}

\subsection{\texorpdfstring{\algname{PPM} reformulation}{PPM reformulation}}

We now focus on proving the results from \Cref{sec:nonsmooth}, first establishing the correspondence between the proximal and broximal operators.

\LEMMABROXPROX*
Therefore, \newalg\ is equivalent to \algname{PPM} with a specific choice of step size.
\begin{proof}
    Consider the algorithm
    \begin{align*}
        z_{k+1} = \ProxSub{\frac{t_k}{\norm{\nabla f(x_{k+1})}} f}{x_k} \eqdef \underset{z\in\R^d}{\arg\min} \brac{f(z) + \frac{\norm{\nabla f(x_{k+1})}}{2 t_k} \norm{z - x_k}^2}.
    \end{align*}
    Solving the local optimization problem leads to the update rule
    \begin{align*}
        z_{k+1} \overset{\eqref{eq:prox_impl}}{=} x_k - \frac{t_k}{\norm{\nabla f(x_{k+1})}} \nabla f(z_{k+1}),
    \end{align*}
    which shows that $z_{k+1}$ satisfies
    \begin{align}\label{eq:noawsu}
        u = x_k - \frac{t_k}{\norm{\nabla f(x_{k+1})}} \nabla f(u).
    \end{align}
    Now, according to \Cref{thm:alg_update_diff}, the iterates of \newalg\ satisfy
    \begin{align*}
        x_{k+1} = x_k - \frac{t_k}{\norm{\nabla f(x_{k+1})}} \cdot \nabla f(x_{k+1}),
    \end{align*}
    implying that $x_{k+1}$ is also a solution to the same fixed-point equation \eqref{eq:noawsu}. Since the optimization problem associated with the proximal operator is strongly convex, its minimizer is unique, meaning that $z_{k+1} = x_{k+1}$.
\end{proof}

\begin{corollary}\label{cor:prox_brox}
    Let $f:\R^d \to \R$ be a differentiable convex function. Then the iterates of \ref{eq:ppm_update} with $\gamma_k = \nicefrac{t_k}{\norm{\nabla f(\ProxSub{\gamma_k f}{x})}}$ satisfy
    \begin{align*}
        x_{k+1} = \ProxSub{\gamma_k f}{x_k} = \underset{y\in B_{t_k}(x_k)}{\arg\min} \brac{f(y)}.
    \end{align*}
\end{corollary}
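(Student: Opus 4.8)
The plan is to run the argument behind \Cref{lemma:brox_prox} and \Cref{thm:alg_update_diff} in reverse: start from the proximal point, show it satisfies the first-order optimality conditions for minimizing $f$ over the ball $B_{t_k}(x_k)$, and then invoke uniqueness of the broximal operator. Fix an iteration $k$ and set $u \eqdef \ProxSub{\gamma_k f}{x_k}$; I would first note that we may assume $u \notin \cX_f$, since otherwise $\norm{\nabla f(u)} = 0$ and the defining relation $\gamma_k = \nicefrac{t_k}{\norm{\nabla f(u)}}$ is vacuous. Then $\nabla f(u) \neq 0$, and the implicit characterization \eqref{eq:prox_impl} gives $u = x_k - \gamma_k \nabla f(u)$; substituting $\gamma_k = \nicefrac{t_k}{\norm{\nabla f(u)}}$ yields
\begin{align*}
    u = x_k - \frac{t_k}{\norm{\nabla f(u)}}\,\nabla f(u),
\end{align*}
so in particular $\norm{u - x_k} = t_k$, i.e., $u$ lies on the boundary of $B_{t_k}(x_k)$.

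Next I would verify that $u$ solves $\min_{z \in B_{t_k}(x_k)} f(z)$. Rearranging the displayed identity gives $-\nabla f(u) = \lambda(u - x_k)$ with $\lambda = \nicefrac{\norm{\nabla f(u)}}{t_k} \geq 0$. Since $\norm{u - x_k} = t_k$, \Cref{lemma:subdiff_id_ball} identifies $\cN_{B_{t_k}(x_k)}(u) = \R_{\geq 0}(u - x_k)$, so $-\nabla f(u) \in \cN_{B_{t_k}(x_k)}(u) = \partial \delta_{B_{t_k}(x_k)}(u)$, which by \Cref{fact:2:sum-rule-subdiff} is exactly $0 \in \partial\bigl(f + \delta_{B_{t_k}(x_k)}\bigr)(u)$. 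As $f + \delta_{B_{t_k}(x_k)}$ is convex, Fermat's rule makes this condition sufficient for global optimality of the ball problem, hence $u \in \BProxSub{t_k}{f}{x_k}$.

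To finish, I would upgrade membership to equality: because $u$ minimizes $f$ over $B_{t_k}(x_k)$ yet $u \notin \cX_f$, the ball $B_{t_k}(x_k)$ contains no minimizer of $f$ (any such point would force $\min_{z\in B_{t_k}(x_k)} f(z) = f_\star$ and hence $u \in \cX_f$), so \Cref{thm:1stbroxthm}\ref{pt:singleton_bdry} makes $\BProxSub{t_k}{f}{x_k}$ a singleton, giving $x_{k+1} = u = \BProxSub{t_k}{f}{x_k} = \argmin_{y \in B_{t_k}(x_k)} f(y)$. This argument is short, so there is no real obstacle; the only points needing care are the sign bookkeeping in the normal-cone step and the implicit assumption that some $\gamma_k$ solves the fixed-point equation $\gamma_k = \nicefrac{t_k}{\norm{\nabla f(\ProxSub{\gamma_k f}{x_k})}}$ — which holds because $\gamma \mapsto \norm{x_k - \ProxSub{\gamma f}{x_k}}$ is continuous and nondecreasing with range $[0, \textnormal{dist}(x_k, \cX_f))$, so the value $t_k$ is attained precisely when $t_k < \textnormal{dist}(x_k, \cX_f)$, i.e., when $x_{k+1}$ is not already optimal.
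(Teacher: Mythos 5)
Your proof is correct, but it takes a genuinely different route from the paper's. The paper disposes of this corollary in one line — ``The result follows from \Cref{lemma:brox_prox}'' — which silently uses the fact that the broximal iterate $z \eqdef \BProxSub{t_k}{f}{x_k}$ already provides one value of $\gamma_k$ (namely $\nicefrac{t_k}{\norm{\nabla f(z)}}$) satisfying the implicit defining relation, together with some unstated identification of ``the'' prox iterate with that particular solution. You instead argue in the reverse direction, from scratch: starting from an arbitrary $\gamma_k$ satisfying the fixed-point condition, you use \eqref{eq:prox_impl} to place $u = \ProxSub{\gamma_k f}{x_k}$ on the sphere $\bdry B_{t_k}(x_k)$, verify via \Cref{lemma:subdiff_id_ball} and \Cref{fact:2:sum-rule-subdiff} that $0 \in \partial(f + \delta_{B_{t_k}(x_k)})(u)$, conclude $u \in \BProxSub{t_k}{f}{x_k}$ by Fermat, and then upgrade to equality via the singleton property in \Cref{thm:1stbroxthm}\ref{pt:singleton_bdry}. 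The sign bookkeeping and normal-cone identification are all handled correctly. This buys you two things the paper's one-liner leaves implicit: (a) it shows that \emph{every} $\gamma_k$ solving the implicit relation produces the broximal point, not just the one extracted from \Cref{lemma:brox_prox}; and (b) your closing remark about the monotone continuous map $\gamma \mapsto \norm{x_k - \ProxSub{\gamma f}{x_k}}$ with range $[0,\textnormal{dist}(x_k,\cX_f))$ actually establishes \emph{existence} of such a $\gamma_k$ whenever $t_k < \textnormal{dist}(x_k,\cX_f)$, which the corollary statement presupposes without comment (it also omits the ``$x_{k+1}$ not optimal'' caveat that \Cref{lemma:brox_prox} carries; your opening reduction to $u \notin \cX_f$ fills this in). In short, your proof is a self-contained and somewhat more careful version of what the paper hand-waves.
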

\begin{proof}
    The result follows from \Cref{lemma:brox_prox}.
\end{proof}

\subsection{\texorpdfstring{\algname{$\|$GD$\|$} as a practical implementation}{||GD|| as a practical implementation}}

Each iteration of \newalg\ requires solving a constrained optimization problem 
\begin{align}\label{eq:obasbdv}
    \argmin \limits_{z \in B_{t_k}(x_k)} f(z),
\end{align}
the difficulty of which depends on the function $f$ and the step size $t_k$.
In practice, finding an exact solution is often infeasible.
To address this, we propose an implementable modification.

Suppose that $f$ is convex and differentiable. Then, the broximal operator can be expressed as
\begin{align*}
    \BProxSub{t}{f}{x} = x + \argmin_{\norm{u}\leq t} f(x+u) \overset{\eqref{thm:1stbroxthm}}{=} x + \argmin_{\norm{u} = t} f(x+u),
\end{align*}
which can be approximated as
\begin{align*}
    \BProxSub{t}{f}{x} = x + \argmin_{\norm{u} = t} f(x+u)
    \approx x + \argmin_{\norm{u} = t} \brac{f(x) + \inp{\nabla f(x)}{u}}
    = x + \argmin_{\norm{u} =t} \inp{\nabla f(x)}{u}.
\end{align*}
Now, by Cauchy-Schwarz inequality, we find that $\inp{\nabla f(x)}{u} \geq - \norm{\nabla f(x)} \norm{u} = - \norm{\nabla f(x)} t$, with equality achieved when $u = - t \frac{\nabla f(x)}{\norm{\nabla f(x)}}$. Hence
\begin{align*}
    \BProxSub{t}{f}{x} \approx x - t \frac{\nabla f(x)}{\norm{\nabla f(x)}}.
\end{align*}

Building on this idea, we propose an approximate version of \newalg: rather than minimizing $f$ directly, we minimize its linear approximation at the current iterate~$x_k$, replacing step \eqref{eq:obasbdv} by
\begin{align}
    \label{eq:min-linear}
    \argmin \limits_{z \in B_{t_k}(x_k)} \cbrac{f_k(z) \eqdef f(x_k) + \inner{\nabla f(x_k)}{z - x_k}},
\end{align}
resulting in the update rule
\begin{align}
    \label{eq:alg:linearized-BPPM}
    x_{k+1} = \BProxSub{t_k}{f_k}{x_k}. \tag{Linearized \newalg}
\end{align}
% For simplicity, in what follows we assume that $f$ is differentiable and convex.

\paragraph{Linearized \newalg\ as Normalized \algname{GD}.}
Unlike for the standard \newalg, the local optimization problems \eqref{eq:min-linear} of Linearized \newalg\ always have an explicit closed-form solution.
Indeed, as illustrated above and formalized in \Cref{thm:linearized-bppm}, a simple calculation demonstrates that \eqref{eq:min-linear} is equivalent to 
\begin{align}
    \label{eq:alg:NGD}
    x_{k+1} = x_k - t_k \cdot \frac{\nabla f(x_k)}{\norm{\nabla f(x_k)}} \tag{\algname{$\|$GD$\|$}}
\end{align}
This reformulation establishes that Linearized \newalg\ is exactly \algname{$\|$GD$\|$} applied to the same objective.
Unlike standard \algname{GD}, \algname{$\|$GD$\|$} ignores the gradient's magnitude while preserving its direction.

\THMLINEARPROX*
\begin{proof} 
    The function $f_k$ is linear, and hence convex, so the unique minimizer $x_{k+1}$ of the local problem must lie on the boundary of the ball $B_{t_k}(x_k)$ according to \Cref{thm:1stbroxthm}~\ref{pt:singleton_bdry}.
    Obviously, among those boundary points, $f_k$ is minimized by $x_k - t_k\cdot\frac{\nabla f(x_k)}{\norm{\nabla f(x_k)}}$.
    Consequently, we get
    \begin{align*}
        x_{k+1} = \BProxSub{t_k}{f_k}{x_k} = x_k - t_k \cdot \frac{\nabla f(x_k)}{\norm{\nabla f(x_k)}}.
    \end{align*}
\end{proof}

We establish two convergence guarantees for the linearized variant of \newalg. The first, presented in \Cref{thm:conv-linearized-bppm}, assumes \emph{constant} radii $t_k\equiv t$. Under this setting, the algorithm converges only to a neighborhood of the minimizer. However, this limitation is not fundamental and can be overcome by using \emph{adaptive} step sizes, as detailed in \Cref{thm:conv-linearized-bppm-ada}.
 
\begin{theorem}\label{thm:conv-linearized-bppm}
    Let $f:\R^d \mapsto \R$ be a differentiable convex function.
    Then, for any $K\geq1$, the iterates of Linearized \newalg\ (\algname{$\|$GD$\|$}) run with $t_k\equiv t>0$ satisfy
    \begin{align*}
        \Exp{f(\tilde{x}_K)} - f_\star \leq \frac{G}{2tK}\norm{x_0 - x_\star}^2 + \frac{Gt}{2},
    \end{align*}
    where $\tilde{x}_K$ is chosen randomly from the first $K$ iterates $\cbrac{x_0, x_1, \hdots, x_{K-1}}$ and $G = \sup_{k\in\cbrac{0, 1, \hdots, K-1}}\norm{\nabla f(x_k)}$.
\end{theorem}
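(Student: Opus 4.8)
The plan is to run the standard normalized-(sub)gradient-descent estimate, using only convexity of $f$ together with the uniform gradient bound $G$ along the trajectory. Write $g_k \eqdef \nabla f(x_k)$. If $g_k = 0$ for some $k < K$, then $x_k \in \cX_f$ by convexity, the update \eqref{eq:alg:NGD} at that step is vacuous, and since the averaged quantity below only involves nonnegative terms $f(x_k) - f_\star$, the claimed bound holds a fortiori (the same comment covers the degenerate case $G = 0$); so I may assume $g_k \neq 0$ for every $k < K$, in which case $x_{k+1} = x_k - t\, g_k / \norm{g_k}$.

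First I would fix $x_\star \in \cX_f$ and expand the squared distance to the optimum:
\begin{align*}
    \norm{x_{k+1} - x_\star}^2 = \norm{x_k - x_\star}^2 - \frac{2t}{\norm{g_k}} \inp{g_k}{x_k - x_\star} + t^2 .
\end{align*}
By convexity, $f(x_k) - f_\star \leq \inp{g_k}{x_k - x_\star}$, and since $\norm{g_k} \leq G$ this gives $\tfrac{1}{\norm{g_k}} \inp{g_k}{x_k - x_\star} \geq \tfrac{1}{G}\parens{f(x_k) - f_\star}$. Substituting and rearranging produces the one-step inequality
\begin{align*}
    \frac{2t}{G}\parens{f(x_k) - f_\star} \leq \norm{x_k - x_\star}^2 - \norm{x_{k+1} - x_\star}^2 + t^2 .
\end{align*}

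Next I would telescope over $k = 0, 1, \ldots, K-1$, discard the nonnegative term $\norm{x_K - x_\star}^2$, and divide through by $2tK/G$ to obtain
\begin{align*}
    \frac{1}{K}\sum_{k=0}^{K-1} \parens{f(x_k) - f_\star} \leq \frac{G}{2tK}\norm{x_0 - x_\star}^2 + \frac{Gt}{2}.
\end{align*}
Finally, since $\tilde x_K$ is drawn uniformly from $\cbrac{x_0, x_1, \ldots, x_{K-1}}$, the left-hand side equals $\Exp{f(\tilde x_K)} - f_\star$, which is exactly the claim.

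I do not expect a genuine obstacle here: this is the textbook analysis of normalized gradient descent, and the only point requiring a moment's care is the \emph{direction} of the bound on the normalization factor — one cannot lower-bound $1/\norm{g_k}$, but that is unnecessary; it is the inner product $\inp{g_k/\norm{g_k}}{x_k - x_\star}$ that must be bounded below, and for that the \emph{upper} estimate $\norm{g_k} \leq G$ is precisely what is needed. The remaining steps (the vacuous-update edge case and the identification of the uniform average with $\Exp{f(\tilde x_K)}$) are routine bookkeeping.
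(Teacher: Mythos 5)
Your proof is correct and takes essentially the same approach as the paper: expand $\norm{x_{k+1}-x_\star}^2$, use convexity to lower-bound the cross term, invoke the uniform gradient bound $G$, telescope, and read off the uniform average as $\Exp{f(\tilde x_K)} - f_\star$. The only (cosmetic) difference is that you apply the bound $\norm{g_k}\le G$ before rearranging while the paper factors out $\norm{\nabla f(x_k)}$ first and replaces it by the supremum at the summing stage; you are also slightly more explicit than the paper about the degenerate $\nabla f(x_k)=0$ case and about why the estimate $\norm{g_k}\le G$ points in the right direction, which is good hygiene but not a genuinely different argument.
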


\begin{remark}
    Note that Linearized \newalg\ (\algname{$\|$GD$\|$}) converges only to a neighborhood of the minimizer, with the size of this neighborhood determined by the step size $t>0$. In this case, as we are approximating the broximal operator via linearization, increasing the step size does not result in one-step convergence but instead leads to convergence to a larger neighborhood around the minimizer.
\end{remark}

\begin{proof}
    We start with the decomposition
    \begin{align*}
        \norm{x_{k+1} - x_\star}^2 = \norm{x_k - t \frac{\nabla f(x_k)}{\norm{\nabla f(x_k)}} - x_\star}^2
        =\norm{x_k - x_\star}^2 - 2t \frac{\inner{x_k - x_\star}{\nabla f(x_k)}}{\norm{\nabla f(x_k)}} + t^2.
    \end{align*}
    Rearranging the terms, we get 
    \begin{align*}
        \inner{x_k - x_\star}{\nabla f(x_k)} \leq \frac{\norm{\nabla f(x_k)}}{2t}\rbrac{\norm{x_k - x_\star}^2 - \norm{x_{k+1} - x_\star}^2 + t^2}.
    \end{align*}
    Now, notice that by convexity
    \begin{align*}
        f(x_k) - f_\star \leq \inner{\nabla f(x_k)}{x_k - x_\star}.
    \end{align*}
    As a result
    \begin{align*}
        f(x_k) - f_\star \leq \frac{\norm{\nabla f(x_k)}}{2t}\rbrac{\norm{x_k - x_\star}^2 - \norm{x_{k+1} - x_\star}^2 + t^2}.
    \end{align*}
    Summing up both sides for $k\in\cbrac{0, 1, \hdots, K-1}$, where $K$ is the total number of iterations, we get 
    \begin{align*}
        \sum_{k=0}^{K-1}\rbrac{f(x_k) - f_\star} \leq \frac{\sup_{k}\norm{\nabla f(x_k)}}{2t}\cdot \rbrac{\norm{x_0 - x_\star}^2 + Kt^2}.
    \end{align*}
    Lastly, dividing both sides by $K$ and letting $G = \sup_{k\in\cbrac{0, 1, \hdots, K-1}}\norm{\nabla f(x_k)}$ gives 
    \begin{align*}
        \Exp{f(\tilde{x}_K)} - f_\star \leq \frac{\sup_{k}\norm{\nabla f(x_k)}}{2tK}\cdot \rbrac{\norm{x_0 - x_\star}^2 + Kt^2}
        = \frac{G}{2tK}\norm{x_0 - x_\star}^2 + \frac{Gt}{2},
    \end{align*}
    where $\tilde{x}_K$ is chosen randomly from the first $K$ iterates $\cbrac{x_0, x_1, \hdots, x_{K-1}}$.
\end{proof}

\begin{theorem}\label{thm:conv-linearized-bppm-ada}
    Let $f: \R^d \mapsto \R \cup \{+\infty\}$ be proper, closed and convex, and let $\{x_k\}_{k\geq0}$ be the iterates of Linearized \newalg\ (\algname{$\|$GD$\|$}) run with a sequence of positive radii $\{t_k\}_{k\geq0}$ such that
    \begin{align}\label{eq:norm_gd_step_bd}
        t_k \leq \frac{\inp{\nabla f(x_k)}{x_k - x_\star}}{\norm{\nabla f(x_k)}},
    \end{align}
    where $x_0\in {\rm dom} f$ and $x_{\star}\in\cX_f$. Then
    \begin{eqnarray}\label{eq:norm_gd_step_decr}
        \norm{x_{k+1} - x_{\star}}^2 \leq \norm{x_k-x_{\star}}^2 - t_k^2.
    \end{eqnarray}
\end{theorem}
\begin{remark}
    \begin{enumerate}
        \item Condition \eqref{eq:norm_gd_step_bd} is satisfied, for example, by choosing the radius
        \begin{align*}
            t_k = \frac{f(x_k) - f_\star}{\norm{\nabla f(x_k)}}.
        \end{align*}
        For this choice of the radii, Linearized \newalg\ is equivalent to \algname{GD} with Polyak stepsize.

        \item The distance decrease result in \eqref{eq:norm_gd_step_decr} matches the guarantee of the standard \newalg\ in part \ref{pt:dist_decr_bpm} of \Cref{thm:conv-bppm-lin}. However, unlike in \Cref{thm:conv-bppm-lin}, the radii here are \emph{not} arbitrary, and must satisfy the upper bound given in \eqref{eq:norm_gd_step_bd}. This highlights a fundamental trade-off between the potential for arbitrarily fast convergence when minimizing a perfect model of the objective (i.e., the function $f$ itself, as done by \newalg), and \emph{computational feasibility}. While the exact \newalg\ offers strong convergence guarantees, it relies on the access to an exact broximal oracle. When we instead approximate this subproblem--e.g., via linearization, as in the Linearized \newalg--we must restrict the stepsize to ensure the model remains a sufficiently accurate surrogate for $f$; overly large radii would invalidate this approximation.
        
        \item \label{pt:aosbvr} Note that
        \begin{eqnarray*}
            \norm{x_{k+1} - x_{\star}}^2
            = \norm{x_k - t_k \frac{\nabla f(x_k)}{\norm{\nabla f(x_k)}} - x_{\star}}^2
            = \norm{x_k-x_{\star}}^2 - 2 t_k \frac{\inp{\nabla f(x_k)}{x_k - x_{\star}}}{\norm{\nabla f(x_k)}} + t_k^2.
        \end{eqnarray*}
        Hence, the upper bound in \eqref{eq:norm_gd_step_bd} maximizes the one-step decrease of $\norm{x_{k+1} - x_{\star}}^2$. 

        \item By convexity,
        \begin{align*}
            \inp{\nabla f(x_k)}{x_k - x_\star} \geq f(x_k) - f_\star > 0
        \end{align*}
        for $x_k \not\in \cX_f$, so the radii are positive unless the algorithm has already found the optimal solution.
    \end{enumerate}
\end{remark}
\begin{proof}
    Consider some iteration $k$ such that $\cX_f\cap \cB(x_k, t_k)=\emptyset$.
    Applying \Cref{thm:3}, we obtain
    \begin{align}\label{eq:aeorgba}
        f_k(x_{k+1}) - f_k(x_\star) \leq c_{t_k}(x_k) \inp{x_k - x_{k+1}}{x_{k+1} - x_\star},
    \end{align}
    where $c_{t_k}(x_k) = \frac{\norm{\nabla f_k(x_{k+1})}}{t_k} = \frac{\norm{\nabla f(x_k)}}{t_k} > 0$ (\Cref{lemma:ck_diff}).
    Now, note that
    \begin{align*}
        f_k(x_{k+1}) - f_k(x_\star) &= f(x_k) + \inp{\nabla f(x_k)}{x_{k+1} - x_k} - (f(x_k) + \inp{\nabla f(x_k)}{x_\star - x_k}) \\
        &= \inp{\nabla f(x_k)}{x_{k+1} - x_\star} \\
        &= \inp{\nabla f(x_k)}{x_k - x_\star} - t_k \norm{\nabla f(x_k)},
    \end{align*}
    and hence, if
    \begin{align*}
        t_k \leq \frac{\inp{\nabla f(x_k)}{x_k - x_\star}}{\norm{\nabla f(x_k)}},
    \end{align*}
    then $\inp{x_k - x_{k+1}}{x_{k+1} - x_\star} \geq 0$. This in turn means that
    \begin{eqnarray*}
        \norm{x_{k+1} - x_{\star}}^2 &=& \norm{x_k-x_{\star}}^2 - 2 \inp{x_k - x_{k+1}}{x_{k+1} - x_{\star}} - \norm{x_{k+1} - x_k}^2 \\
        &\leq& \norm{x_k-x_{\star}}^2 - t_k^2.
    \end{eqnarray*}
\end{proof}

\newpage

\section{Higher-order Proximal Methods and Acceleration}\label{sec:ap_accel}

This section provides a more detailed examination of the subject introduced in \Cref{sec:acceleration}.

\subsection{\texorpdfstring{\algname{PPM}${\green \sf ^p}$ reformulation}{PPMp reformulation}}

When $f$ is differentiable, it is possible to derive a closed-form update rule for \newalg.

\THMALGUPDIFF*
\begin{proof}
    Following the same reasoning as in the proof of Lemma \ref{lemma:ck_diff}, the optimality condition for the local optimization problem states that
    \begin{align*}
        \nabla f(x_{k+1}) = c_t(x_k) (x_k-x_{k+1})
        \overset{\eqref{lemma:ck_diff}}{=} \frac{\norm{\nabla f(x_{k+1})}}{t_k} (x_k-x_{k+1}).
    \end{align*}
    Rearranging gives the result.
\end{proof}

A doubly implicit update rule similar to the one in \eqref{eq:alg_update_diff} arises in $p$-th order proximal point methods. In particular, recall that the $p$-th order proximal operator is defined as
\begin{align*}
    \ProxPSub{\gamma f}{x}
    \eqdef \underset{z\in\R^d}{\arg\min} \brac{\gamma f(z) + \frac{1}{(p+1)} \cdot \norm{z - x}^{p+1}}.
\end{align*}

Using this definition, \ref{eq:ppmp_update} can be expressed in a more explicit form.

\begin{theorem}\label{thm:ppmpimplicit}
    Let $f:\R^d\to \R$ be a differentiable convex function. Then, the main step of \ref{eq:ppmp_update} can be written in the form
    \begin{align}\label{eq:boaqundf}
        x_{k+1} = x_k - \parens{\frac{\gamma}{\norm{\nabla f(x_{k+1})}^{p-1}}}^{\nicefrac{1}{p}} \nabla f(x_{k+1}).
    \end{align}
\end{theorem}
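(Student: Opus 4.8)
The plan is to read off the claimed update rule directly from the first-order optimality (Fermat) condition for the subproblem defining $\ProxPSub{\gamma f}{x_k}$, and then separately compute the step length from that same condition. First I would note that the objective $\Phi(z) \eqdef \gamma f(z) + \frac{1}{p+1}\norm{z - x_k}^{p+1}$ is proper, closed, and strictly convex (a sum of the convex $\gamma f$ and the strictly convex $z \mapsto \frac{1}{p+1}\norm{z-x_k}^{p+1}$, since $\norm{\cdot}^{q}$ is strictly convex on $\R^d$ for $q > 1$), so its minimizer $x_{k+1}$ exists and is unique; moreover $\Phi$ is differentiable everywhere, because $z \mapsto \norm{z-x_k}^{p+1}$ has the continuous gradient $(p+1)\norm{z-x_k}^{p-1}(z-x_k)$, which in particular vanishes at the kink $z = x_k$ when $p > 1$ (and reduces to the gradient of $\frac{p+1}{2}\norm{\cdot - x_k}^2$ when $p = 1$). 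Hence $\nabla\Phi(x_{k+1}) = 0$ reads
\begin{align*}
    \gamma \nabla f(x_{k+1}) + \norm{x_{k+1} - x_k}^{p-1}(x_{k+1} - x_k) = 0.
\end{align*}

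Next I would extract the step length. Rewriting the optimality condition as $\norm{x_{k+1}-x_k}^{p-1}(x_{k+1}-x_k) = -\gamma \nabla f(x_{k+1})$ and taking Euclidean norms of both sides gives $\norm{x_{k+1}-x_k}^{p} = \gamma\norm{\nabla f(x_{k+1})}$, i.e. $\norm{x_{k+1}-x_k} = \parens{\gamma\norm{\nabla f(x_{k+1})}}^{1/p}$, hence $\norm{x_{k+1}-x_k}^{p-1} = \parens{\gamma\norm{\nabla f(x_{k+1})}}^{(p-1)/p}$. Substituting this back into the optimality condition and solving for $x_{k+1}$ yields
\begin{align*}
    x_{k+1} = x_k - \frac{\gamma}{\parens{\gamma\norm{\nabla f(x_{k+1})}}^{(p-1)/p}}\,\nabla f(x_{k+1}) = x_k - \parens{\frac{\gamma}{\norm{\nabla f(x_{k+1})}^{p-1}}}^{1/p}\nabla f(x_{k+1}),
\end{align*}
which is exactly \eqref{eq:boaqundf}, after simplifying $\gamma^{1 - (p-1)/p} = \gamma^{1/p}$.

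Finally I would dispose of the degenerate case $\nabla f(x_{k+1}) = 0$: the optimality condition then forces $\norm{x_{k+1}-x_k}^{p-1}(x_{k+1}-x_k) = 0$, hence $x_{k+1} = x_k$, and $x_k$ already minimizes $f$; \eqref{eq:boaqundf} holds with its second term understood as $0$. I do not expect any real obstacle here — the proof is essentially a one-line optimality condition plus a norm computation; the only points deserving a sentence of care are the differentiability of $\norm{\cdot}^{p+1}$ at the origin and the uniqueness of the minimizer, both standard for $p \geq 1$.
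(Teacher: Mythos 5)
Your proof is correct and takes exactly the route the paper intends: the paper's own proof simply states ``the result follows directly from the definition of the $p$-th order proximal operator by solving the associated local optimization problem,'' and you have written out that computation in full (Fermat condition, take norms to get the step length, substitute back). The extra care you give to differentiability of $\norm{\cdot - x_k}^{p+1}$ at the kink, uniqueness of the minimizer, and the $\nabla f(x_{k+1})=0$ case are all sound and only make the argument more self-contained.
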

\begin{proof}
    The result follows directly from the definition of the $p$-th order proximal operator by solving the associated local optimization problem.
\end{proof}

\begin{theorem}\label{thm:bppmp_rate}
    Let $f:\R^d \to \R$ be a differentiable convex function, and let $\{x_k\}_{k\geq0}$ be the iterates of \ref{eq:bppm} with $t_k = \parens{\gamma \norm{\nabla f(x_{k+1})}}^{1/p}$. Then, the algorithm converges with $\cO(\nicefrac{1}{K^p})$ rate.
\end{theorem}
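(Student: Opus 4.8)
The plan is to show that, with the stated choice of radius, the iterates of \ref{eq:bppm} coincide exactly with those of the $p$-th order proximal point method \ref{eq:ppmp_update}, and then to invoke the known $\cO(\nicefrac{1}{K^p})$ rate of the latter for convex objectives.

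First I would dispose of the trivial case: if some iterate $x_{k+1}$ is a minimizer of $f$, the algorithm has already reached the optimum and the claimed rate holds vacuously. So assume throughout that $x_{k+1}$ is never optimal, hence $\nabla f(x_{k+1})\neq 0$ and $t_k=(\gamma\norm{\nabla f(x_{k+1})})^{1/p}>0$ is well defined. Since $f$ is convex and differentiable it satisfies \Cref{as:defining_rel} (by \Cref{thm:3}), so \Cref{thm:alg_update_diff} (equivalently \Cref{lemma:brox_prox}) applies and yields the explicit \newalg\ update
\[
x_{k+1}=x_k-\frac{t_k}{\norm{\nabla f(x_{k+1})}}\,\nabla f(x_{k+1}).
\]
Substituting $t_k=(\gamma\norm{\nabla f(x_{k+1})})^{1/p}$ and simplifying the scalar coefficient, $\frac{t_k}{\norm{\nabla f(x_{k+1})}}=\gamma^{1/p}\norm{\nabla f(x_{k+1})}^{(1-p)/p}=\parens{\gamma/\norm{\nabla f(x_{k+1})}^{p-1}}^{1/p}$, so the update becomes precisely \eqref{eq:boaqundf}.

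Next I would argue that \eqref{eq:boaqundf} uniquely characterizes $\ProxPSub{\gamma f}{x_k}$. The $p$-th order proximal subproblem $\min_z\brac{\gamma f(z)+\frac{1}{p+1}\norm{z-x_k}^{p+1}}$ has a strictly convex objective (the convex $f$ plus the strictly convex map $z\mapsto\frac{1}{p+1}\norm{z-x_k}^{p+1}$, since $p>0$), hence a unique minimizer, whose first-order optimality condition $\gamma\nabla f(z)+\norm{z-x_k}^{p-1}(z-x_k)=0$ rearranges exactly to \eqref{eq:boaqundf} --- this is the content of \Cref{thm:ppmpimplicit}. Therefore $x_{k+1}$, which satisfies this equation, must equal $\ProxPSub{\gamma f}{x_k}$; by induction on $k$ the two sequences coincide. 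It then follows that $\brac{x_k}$ is exactly the trajectory of \ref{eq:ppmp_update} applied to the convex function $f$, so it inherits the accelerated $\cO(\nicefrac{1}{K^p})$ convergence rate of higher-order proximal point methods \citep{nesterov2023inexact}, which completes the proof.

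The only mild subtlety --- and the main thing to handle carefully --- is the doubly implicit nature of the radius: $t_k$ is defined through $\norm{\nabla f(x_{k+1})}$, which in turn depends on $t_k$. I would resolve this by reading the construction in the reverse direction: first form $w\eqdef\ProxPSub{\gamma f}{x_k}$ (which exists and is unique by strict convexity of the subproblem), then set $t_k\eqdef(\gamma\norm{\nabla f(w)})^{1/p}$; by \Cref{thm:ppmpimplicit} we get $w=x_k-\frac{t_k}{\norm{\nabla f(w)}}\nabla f(w)$, so $\norm{w-x_k}=t_k$, and \Cref{thm:1stbroxthm} together with \Cref{thm:alg_update_diff} confirm $w=\BProxSub{t_k}{f}{x_k}$. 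Hence the \ref{eq:bppm} iteration with this radius is well posed and produces $w$, closing the loop.
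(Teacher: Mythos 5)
Your proposal is correct and follows essentially the same route as the paper: apply \Cref{thm:alg_update_diff} (valid since convexity implies \Cref{as:defining_rel} via \Cref{thm:3}) to obtain the explicit \newalg\ update, substitute $t_k=(\gamma\norm{\nabla f(x_{k+1})})^{1/p}$ to recognize it as the \algname{PPM}${\green \sf ^p}$ update \eqref{eq:boaqundf}, and invoke the $\cO(\nicefrac{1}{K^p})$ rate from \citet{nesterov2023inexact}. You go further than the paper's terse proof by spelling out why matching the first-order optimality condition forces the iterates to coincide (uniqueness via strict convexity of the $p$-th order subproblem) and by resolving the well-posedness of the doubly implicit radius, both of which the paper leaves implicit; these are welcome refinements of the same argument rather than a different approach.
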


\begin{proof}
    In this case, the algorithm iterates
    \begin{align}\label{eq:904nhasr}
        x_{k+1} = \textnormal{brox}_f^{t_k}(x) \overset{\eqref{thm:alg_update_diff}}{=} x_k - \frac{t_k}{\norm{\nabla f(x_{k+1})}} \nabla f(x_{k+1}).
    \end{align}
    Substituting $t_k = \parens{\gamma \norm{\nabla f(x_{k+1})}}^{1/p}$, \eqref{eq:904nhasr} becomes equivalent to \eqref{eq:boaqundf}. Consequently, the convergence rate is $\cO(\nicefrac{1}{K^p})$, as established in Theorem $1$ by \citet{nesterov2023inexact}.
\end{proof}

\subsection{\texorpdfstring{\algname{AGM} reformulation}{AGM reformulation}}

\THMAGMBRPOX*

\begin{proof}
    Using \Cref{lemma:brox_prox}, the update rule of \ref{eq:agm_bprox_update} can be rewritten as
    \begin{align*}
        x_{k+1} &= \BProxSub{t^x_{k+1}}{l_{y_k}}{x_k} = \ProxSub{\gamma_{k+1} l_{y_k}}{x_k}, \\
        y_{k+1} &= \BProxSub{t^y_{k+1}}{u_{y_k}}{x_{k+1}} = \ProxSub{\gamma_{k+1} u_{y_k}}{x_{k+1}},
    \end{align*}
    where $\gamma_k =\nicefrac{k}{2L}$. Hence, the result is a direct consequence of the analysis of \algname{AGM} by \citet{anh2020understanding}
    (Section $4.2$).
\end{proof}

\newpage

\section{Minimization of the Envelope Function}\label{sec:ap_envelope_gd}

\subsection{Properties of Ball Envelope}
\label{sec:ball-envelope:app}

The concept of the Moreau envelope \cite{moreau1965proximite} has been employed in many recent studies to analyze the (Stochastic) Proximal Point Method (\algname{(S)PPM}).
This is due to the property that solving the proximal minimization problem for the original objective function $f$ is equivalent to applying gradient-based methods to the envelope objective \cite{ryu2014stochastic,li2024power,li2024convergence}.
In this section, we elaborate on the topic introduced in \Cref{sec:moreau} and demonstrate that a similar analysis can be conducted for broximal algorithms.

Following the introduction of the \emph{ball envelope} in \Cref{def:ball-envelope}, we proceed to derive and analyze its key properties. First, the ball envelope offers a lower bound for the associated function $f$.
\begin{lemma}
    \label{lemma:N1}
    Let $f: \R^d \mapsto \R$. Then
    \begin{align*}
        \BMoreauSub{t}{f}{x} \leq f(x)
    \end{align*}
    for any $x \in \R^d$.
\end{lemma}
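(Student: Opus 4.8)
The claim is an immediate consequence of the fact that the center of the ball is a feasible point for the minimization defining the ball envelope. Concretely, I would first observe that $x \in B_t(x)$, since $\norm{x - x} = 0 \leq t$ for any $t > 0$. Hence $x$ is admissible in the optimization problem $\min_{z \in B_t(x)} f(z)$ that defines $\BMoreauSub{t}{f}{x}$, and the minimal value over $B_t(x)$ cannot exceed the value attained at the particular feasible point $z = x$. This gives
\begin{align*}
    \BMoreauSub{t}{f}{x} = \min_{z \in B_t(x)} f(z) \leq f(x),
\end{align*}
which is exactly the assertion.

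\textbf{On well-definedness.} The only point worth a sentence is that the $\min$ in $\BMoreauSub{t}{f}{x}$ is indeed attained, so that writing ``$\min$'' (rather than ``$\inf$'') is legitimate: $B_t(x)$ is compact and, under the blanket standing assumptions on $f$ (proper and closed, i.e.\ lower semicontinuous), the Weierstrass theorem guarantees that $f$ attains its infimum over $B_t(x)$ --- this is the same argument already used in \Cref{thm:1stbroxthm}\ref{pt:nonempty}. Even without invoking attainment, the inequality $\inf_{z \in B_t(x)} f(z) \leq f(x)$ holds verbatim because $x$ is feasible, so the bound is robust to how one reads the operator. I do not anticipate any genuine obstacle here; the lemma is essentially a restatement of feasibility of the center point, and its role is purely to record the ordering between $f$ and its ball envelope for later use (e.g.\ in the minimizer-set comparison $\cX_N = \cX_f + B_t(0)$).
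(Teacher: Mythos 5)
Your proof is correct and matches the paper's one-line argument exactly: both hinge on noting $x \in B_t(x)$ so that $\BMoreauSub{t}{f}{x} = \min_{z \in B_t(x)} f(z) \leq f(x)$. The extra remark on attainment via Weierstrass is a sensible clarification but not a departure in approach.
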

\begin{proof}
    The proof of the lemma is immediate once we notice that $\BMoreauSub{t}{f}{x} = \min_{z \in B_t(x)} f(z) \leq f(x)$.
\end{proof}

Similar to the Moreau envelope, the ball envelope can be expressed as an infimal convolution of two functions.
\begin{definition}[Infimal convolution]
    \label{def:infimal}
    The \emph{infimal convolution} of two proper functions $f, g: \R^d \mapsto \R\cup\cbrac{+\infty}$ is the function $\rbrac{f \square g}: \R^d \mapsto \R\cup\cbrac{\pm\infty}$ defined by
    \begin{equation*}
        \rbrac{f \square g}(x) = \min_{z \in \R^d}\cbrac{f(z) + g(x - z)}.
    \end{equation*}
\end{definition}

Using the definition of the ball envelope, we obtain
\begin{align*}
    \BMoreauSub{t}{f}{x} &= \min \cbrac{f(z) : \norm{z - x} \leq t} \\
    &= \min_{z \in \R^d}\cbrac{f(z) + \delta_{B_t(x)}(z)} \\
    &= \min_{z \in \R^d}\cbrac{f(z) + \delta_{B_t(0)}(x - z)}  = f \square \delta_{B_t(0)}\\
    &= \min_{u \in \R^d}\cbrac{\delta_{B_t(0)}(u) + f(x - u)} = \delta_{B_t(0)} \square f.
\end{align*}

The next two lemmas are consequences of the above reformulation.
\begin{lemma}
    \label{lemma:N2}
    % \peter{Check assumptions}
    Let $f: \R^d \mapsto \R \cup \{+\infty\}$ be proper, closed and convex.
    Then $\BMoreauSub{t}{f}{x}$ is convex.
\end{lemma}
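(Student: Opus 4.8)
The plan is to leverage the infimal-convolution representation of the ball envelope that has just been derived, namely $\BMoreauSub{t}{f}{x} = f \,\square\, \delta_{B_t(0)}$, together with the elementary principle that infimal convolutions of convex functions are convex. First I would record two routine facts: (i) $\delta_{B_t(0)}$ is proper, closed and convex, since $B_t(0)$ is a nonempty closed convex set; and (ii) $\BMoreauSub{t}{f}{\cdot}$ never takes the value $-\infty$, because a proper, closed, convex $f$ admits a continuous affine minorant $z \mapsto \inner{a}{z} + b$, whence $\BMoreauSub{t}{f}{x} = \min_{z \in B_t(x)} f(z) \geq \inner{a}{x} + b - t\norm{a} > -\infty$. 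Thus the envelope is a genuine extended-real-valued function, and it only remains to verify the convexity inequality.

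For that inequality I would argue directly rather than invoking an abstract theorem. Fix $x_1, x_2 \in \R^d$ and $\lambda \in [0,1]$. If $\BMoreauSub{t}{f}{x_1} = +\infty$ or $\BMoreauSub{t}{f}{x_2} = +\infty$ the inequality is trivial, so assume both are finite, i.e. each ball $B_t(x_i)$ meets ${\rm dom} f$. By the Weierstrass argument in \Cref{thm:1stbroxthm}~\ref{pt:nonempty} the defining minimum is attained, so pick $u_i \in \BProxSub{t}{f}{x_i}$, giving $\BMoreauSub{t}{f}{x_i} = f(u_i)$ and $\norm{u_i - x_i} \leq t$. Setting $x_\lambda \eqdef \lambda x_1 + (1-\lambda) x_2$ and $u_\lambda \eqdef \lambda u_1 + (1-\lambda) u_2$, the triangle inequality yields $\norm{u_\lambda - x_\lambda} \leq \lambda \norm{u_1 - x_1} + (1-\lambda)\norm{u_2 - x_2} \leq t$, so $u_\lambda$ is feasible for the subproblem at $x_\lambda$. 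Then, by definition of the ball envelope and convexity of $f$,
\[
    \BMoreauSub{t}{f}{x_\lambda} \leq f(u_\lambda) \leq \lambda f(u_1) + (1-\lambda) f(u_2) = \lambda \BMoreauSub{t}{f}{x_1} + (1-\lambda) \BMoreauSub{t}{f}{x_2},
\]
which is exactly convexity of $x \mapsto \BMoreauSub{t}{f}{x}$.

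There is no real obstacle here; the only points requiring a little care are the domain bookkeeping (the case where a ball misses ${\rm dom} f$, handled vacuously) and the fact that the infimum defining the envelope is actually attained so that the minimizers $u_i$ exist — both already supplied by \Cref{thm:1stbroxthm}. If one preferred to avoid attainment (e.g. for an $\inf$-version of the definition), the same computation runs verbatim with $\varepsilon$-minimizers $z_i$ satisfying $f(z_i) \leq \BMoreauSub{t}{f}{x_i} + \varepsilon$ and then letting $\varepsilon \downarrow 0$; alternatively, one could simply cite the standard fact that the infimal convolution of two proper convex functions is convex \citep{beck2017first}. I would keep the direct argument in the text since it is short and self-contained.
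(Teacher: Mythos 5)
Your proof is correct, and it is a more self-contained variant of what the paper does. The paper's own proof is a one-liner: having written $N^t_f = f \,\square\, \delta_{B_t(0)}$, it cites Theorem~2.19 of \citet{beck2017first} (infimal convolution of a proper convex function with a real-valued convex function is convex). You instead unfold that citation into a direct verification: pick $u_i \in \BProxSub{t}{f}{x_i}$, note that $u_\lambda \eqdef \lambda u_1 + (1-\lambda)u_2$ stays in $B_t(x_\lambda)$ by the triangle inequality, and use convexity of $f$ to bound $f(u_\lambda)$ — this is essentially Beck's proof specialized to $g = \delta_{B_t(0)}$. Your version buys two things the paper glosses over: it handles the extended-real bookkeeping explicitly (the affine-minorant argument ruling out $-\infty$, the vacuous $+\infty$ case), and it is robust to replacing the $\min$ in the definition by an $\inf$, since the $\varepsilon$-minimizer variant you sketch does not need attainment from \Cref{thm:1stbroxthm}. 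The paper's route is shorter; yours is more transparent and avoids a black-box reference. Both are fine.
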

\begin{proof}
    We have already shown that $N^t_{f} = \delta_{B_t(0)} \square f$, where $\delta_{B_t(0)}$ is a proper convex function and $f$ is a real-valued convex function.
    Hence, according to Theorem 2.19 of \citet{beck2017first}, $N^t_{f}$ is convex.
\end{proof}

\begin{lemma}
    \label{lemma:N3}
    Let $f: \R^d \mapsto \R$ be convex and $L$--smooth.
    Then $\BMoreauSub{t}{f}{x}$ is $L$--smooth and
    \begin{align*}
        \nabla \BMoreauSub{t}{f}{x} = \nabla f(u)
    \end{align*}
    for any $x\in \R^d$ and $u \in \BProxSub{t}{f}{x}$
\end{lemma}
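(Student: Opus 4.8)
The plan is to establish the two assertions — that $N^t_f$ is $L$--smooth and that $\nabla N^t_f(x)=\nabla f(u)$ for every $u\in\BProxSub{t}{f}{x}$ — by largely independent arguments, leaning on the infimal-convolution representation $N^t_f=\delta_{B_t(0)}\square f$ derived just above this lemma.

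\textbf{Gradient formula.} First observe that $N^t_f$ is finite everywhere (the minimum of the continuous function $f$ over the compact ball $B_t(x)$ is attained) and convex by \Cref{lemma:N2}, hence continuous, closed, and subdifferentiable at every point. Fix $x$ and $u\in\BProxSub{t}{f}{x}$, so that $N^t_f(x)=f(u)$ and $\norm{u-x}\le t$. For any $w\in\R^d$, the point $u+w$ lies in $B_t(x+w)$ since $\norm{(u+w)-(x+w)}=\norm{u-x}\le t$, and therefore
\[
N^t_f(x+w)\le f(u+w).
\]
Now take any $g\in\partial N^t_f(x)$. Applying the subgradient inequality at $y=x+w$ and combining with the display yields $f(u+w)\ge N^t_f(x+w)\ge N^t_f(x)+\inner{g}{w}=f(u)+\inner{g}{w}$ for all $w$, i.e. $g\in\partial f(u)=\{\nabla f(u)\}$. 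Hence $\partial N^t_f(x)\subseteq\{\nabla f(u)\}$, and since $\partial N^t_f(x)\neq\emptyset$ we conclude $\partial N^t_f(x)=\{\nabla f(u)\}$; by the standard characterization this means $N^t_f$ is differentiable at $x$ with $\nabla N^t_f(x)=\nabla f(u)$. As a byproduct, the value $\nabla f(u)$ is the same for every choice $u\in\BProxSub{t}{f}{x}$.

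\textbf{$L$--smoothness.} I would obtain the global Lipschitz bound on $\nabla N^t_f$ via conjugate duality. Conjugating the infimal convolution gives $(N^t_f)^*=(\delta_{B_t(0)})^*+f^*=t\norm{\cdot}+f^*$, using that the support function of $B_t(0)$ is $t\norm{\cdot}$. Since $f$ is convex and $L$--smooth on all of $\R^d$, its conjugate $f^*$ is $\tfrac{1}{L}$--strongly convex, and adding the convex function $t\norm{\cdot}$ preserves this, so $(N^t_f)^*$ is a closed, proper, $\tfrac{1}{L}$--strongly convex function. A closed proper $\tfrac{1}{L}$--strongly convex function has an $L$--smooth conjugate (finite on all of $\R^d$); as $N^t_f$ is closed proper convex, Fenchel--Moreau gives $N^t_f=\bigl((N^t_f)^*\bigr)^*$, so $N^t_f$ is $L$--smooth.

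\textbf{Main obstacle.} The gradient-formula step is elementary; the delicate part is the smoothness step, which rests on invoking the duality dictionary correctly — conjugate of an infimal convolution, the $L$--smooth $\leftrightarrow$ $\tfrac{1}{L}$--strongly-convex correspondence, and Fenchel--Moreau — with the ``everything finite on $\R^d$'' hypotheses genuinely in place. An alternative that avoids conjugates is to combine the gradient formula with (firm) nonexpansiveness of the broximal operator (\Cref{prop:nonexpansive}), giving $\norm{\nabla N^t_f(x)-\nabla N^t_f(y)}=\norm{\nabla f(u_x)-\nabla f(u_y)}\le L\norm{u_x-u_y}\le L\norm{x-y}$; but this requires extra care at points where $\BProxSub{t}{f}{\cdot}$ touches $\cX_f$ (where $N^t_f$ is flat and the operator may be multivalued), so the conjugate route is the cleaner way to cover all cases uniformly.
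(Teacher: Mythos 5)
Your proof is correct, and it takes a genuinely different route from the paper's. The paper simply identifies $N^t_f=\delta_{B_t(0)}\square f$ and then invokes Theorem~5.30 of \citet{beck2017first}, which is precisely the ``infimal convolution with an $L$--smooth real-valued convex function is $L$--smooth, with $\nabla(f\square g)(x)=\nabla f(x-u^\star)$'' result — a single citation delivers both claims simultaneously. You instead re-derive the two halves from first principles: the gradient formula via a direct subgradient sandwich (showing $\partial N^t_f(x)\subseteq\partial f(u)=\{\nabla f(u)\}$, which is nonempty since $N^t_f$ is real-valued convex, hence a singleton, hence a gradient), and the $L$--smoothness via the conjugate calculus $(N^t_f)^*=t\norm{\cdot}+f^*$ combined with the smooth $\leftrightarrow$ strongly-convex duality and Fenchel--Moreau. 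Both halves are sound; a nice feature of your subgradient argument is that it handles the possibly multivalued case $\BProxSub{t}{f}{x}\subseteq\cX_f$ for free, since every $u$ is forced to produce the same singleton. The trade-off: the paper's proof is one line at the cost of a black-box citation, while yours is self-contained and makes explicit exactly which structural facts (support function of the ball, Legendre duality of smoothness/strong convexity, Fenchel--Moreau for closed proper convex $N^t_f$) are doing the work — which is essentially how Beck's Theorem~5.30 is proved anyway. You also correctly flag that the alternative smoothness route through firm nonexpansiveness of $\BProxSub{t}{f}{\cdot}$ would need extra care near $\cX_f$; sticking with the conjugate route is the right call.
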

\begin{proof}
    We know that $N^t_f = \delta_{B_t(0)} \square f$, where $\delta_{B_t(0)}: \R^d \mapsto \R\cup\cbrac{+\infty}$ is proper, closed and convex.
    Since $f$ is convex and $L$--smooth, and the function $\delta_{B_t(0)} \square f$ is real valued, using Theorem $5.30$ of \citet{beck2017first}, we know that $N^t_f = \delta_{B_t(0)} \square f$ is $L$--smooth, and for any $x\in \R^d$ and $u$ that minimizes
    \begin{align*}
       \delta_{B_t(0)}(u) + f(x - u),
    \end{align*}
    we have 
    \begin{align*}
       \nabla \BMoreauSub{t}{f}{x} = \nabla f(x - u).
    \end{align*} 
    This means that $x - u = z$ minimizes $\BMoreauSub{t}{f}{x}$, and hence
    \begin{align*}
       \nabla \BMoreauSub{t}{f}{x} = \nabla f(x - (x - z)) = \nabla f(z).
    \end{align*}
\end{proof}

Unlike the Moreau envelope, which shares the same set of minimizers as the original objective $f$, the ball envelope does not preserve this property. Nevertheless, there exist a certain relationship between the two sets of minimizers.
\begin{lemma}
    \label{lemma:N4}
    Consider $f: \R^d \mapsto \R$ and denote the sets of minimizers of $f$ and $N^t_f$ as $\cX_f$ and $\cX_N$, respectively.
    Then $\cX_f \subset \cX_N$. In particular,
    \begin{align*}
        \cX_{N} = \cbrac{x: \dist{x}{\cX_f} \leq t} = \cX_f + B_t(0),
    \end{align*}
    where ``$+$'' denotes the Minkowski sum.
\end{lemma}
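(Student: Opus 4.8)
The plan is to pin down the minimum value of the ball envelope and then identify exactly where it is attained. First I would observe that $\BMoreauSub{t}{f}{x} \geq f_\star$ for every $x \in \R^d$, since $\BMoreauSub{t}{f}{x} = \min_{z \in B_t(x)} f(z)$ and $f(z) \geq f_\star$ for all $z$; moreover, taking any $x_\star \in \cX_f$ and using $x_\star \in B_t(x_\star)$ together with \Cref{lemma:N1} gives $\BMoreauSub{t}{f}{x_\star} \leq f(x_\star) = f_\star$. Hence $\min_x \BMoreauSub{t}{f}{x} = f_\star$, this value is attained, and therefore $\cX_N = \brac{x \in \R^d : \BMoreauSub{t}{f}{x} = f_\star}$.

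The next step is to characterize this set. For a fixed $x$, $\BMoreauSub{t}{f}{x} = f_\star$ holds if and only if the minimal value of $f$ over $B_t(x)$ equals $f_\star$, i.e., if and only if there exists $z \in B_t(x)$ with $f(z) = f_\star$, i.e., if and only if $B_t(x) \cap \cX_f \neq \emptyset$. Since $\cX_f$ is a nonempty closed set (a sublevel set of the closed function $f$), the condition $B_t(x) \cap \cX_f \neq \emptyset$ is equivalent to $\dist{x}{\cX_f} \leq t$: if the distance is at most $t$ it is attained at some $x_\star \in \cX_f$ with $\norm{x - x_\star} \leq t$, and conversely any such $x_\star$ certifies the intersection. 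Combining these equivalences yields $\cX_N = \brac{x : \dist{x}{\cX_f} \leq t}$, and the inclusion $\cX_f \subset \cX_N$ is then immediate, since $\dist{x}{\cX_f} = 0 \leq t$ for $x \in \cX_f$.

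It remains to rewrite this tube as a Minkowski sum. If $\dist{x}{\cX_f} \leq t$, I would pick $x_\star \in \cX_f$ attaining the distance and write $x = x_\star + (x - x_\star)$ with $\norm{x - x_\star} \leq t$, so $x \in \cX_f + B_t(0)$; conversely, if $x = x_\star + v$ with $x_\star \in \cX_f$ and $\norm{v} \leq t$, then $\dist{x}{\cX_f} \leq \norm{x - x_\star} = \norm{v} \leq t$. This gives $\brac{x : \dist{x}{\cX_f} \leq t} = \cX_f + B_t(0)$ and closes the argument.

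The only genuinely delicate points are the attainment claims --- that the minimum in the definition of $\BMoreauSub{t}{f}{x}$ is a true minimum and that $\dist{x}{\cX_f}$ is achieved --- but both follow from the standing assumptions that $f$ is proper and closed (so $f$ is lower semicontinuous, $\cX_f$ is closed, and the Weierstrass theorem applies on the compact ball $B_t(x)$), exactly as in the proof of \Cref{thm:1stbroxthm}. Thus the main obstacle here is bookkeeping rather than any substantive difficulty.
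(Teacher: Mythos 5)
Your proof is correct and follows essentially the same route as the paper's: both arguments identify $\inf N^t_f = f_\star$ and characterize $\cX_N$ as exactly those $x$ whose ball $B_t(x)$ meets $\cX_f$, equivalently $\dist{x}{\cX_f} \leq t$. You are slightly more explicit than the paper about the attainment issues (Weierstrass on the compact ball, closedness of $\cX_f$) and about the final Minkowski-sum rewriting, which the paper states but does not separately justify, so nothing is missing.
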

\begin{proof}
    Let us pick any $x_f \in \cX_f$. Then 
    \begin{align*}
        \BMoreauSub{t}{f}{x_f}
        \overset{\eqref{lemma:N1}}{\leq} f(x_f) = \inf f = \inf N^t_f,
    \end{align*}
    which implies that $x_f \in \cX_N$.
    Now, we prove that $\cX_{N} = \{x: \dist{x}{\cX_f} \leq t\}$.
    First, for every $x_N \in \{x: \dist{x}{\cX_f} \leq t\}$, there exists $x_f^\prime \in \cX_f$ such that $\|x_N - x_f^\prime\| \leq t$.
    Therefore
    \begin{align*}
        \BMoreauSub{t}{f}{x_N} \leq f(x_f^\prime) = \inf f,
    \end{align*} 
    which means that $x_N \in \cX_N$.
    On the other hand, for every $x_0 \notin \{x: \dist{x}{\cX_f} \leq t\}$, we know that $B_t(x_0) \cap \cX_f = \emptyset$, so $\BMoreauSub{t}{f}{x_0} > \inf f$.
\end{proof} 

Using the above lemmas, \newalg\ can be reformulated as \algname{GD} applied to the ball envelope function, as established in \Cref{thm:bppm-gd-relation} and discussed in the next section.

\subsection{\texorpdfstring{\algname{GD} reformulation}{GD reformulation}}

\algname{GD} is the cornerstone of modern machine learning and deep learning.
Its stochastic extension, the widely celebrated Stochastic Gradient Descent (\algname{SGD}) algorithm \cite{robbins1951stochastic}, remains a foundational tool in the field.
The significance of \algname{GD} is underscored by the vast array of variants, extending the algorithm to a wide range of settings.
Examples include compression \citep{alistarh2017qsgd, khirirat2018distributed, richtarik2021ef21, gruntkowska2023ef21p}, \algname{SGD} with momentum \citep{loizou2017linearly, liu2020improved}, variance reduction \citep{gower2020variance,johnson2013accelerating,gorbunov2021marina,tyurin2024dasha,li2023marina} or adaptive and matrix step sizes \citep{bach2019universal, malitsky2019adaptive, horvath2022adaptive, yang2023adaptive, li2023det}.

The existence of a link between \algname{GD} and \newalg\ is a promising sign for its potential.
For clarity, we restate the relevant result.
\THMBPPMGD*
This connection between \newalg\ and \algname{GD} opens the door to incorporating established techniques and analyses into \newalg.

\begin{proof}
    According to \Cref{lemma:N3}, we have $\nabla N^{t_k}_f(x_k) = \nabla f(x_{k+1})$.
    Since $f$ is $L$--smooth, it is differentiable, so \Cref{thm:3} gives
    \begin{align*}
        c_{t_k}(x_k)(x_k - x_{k+1}) = \nabla f(x_{k+1}) = \nabla N^{t_k}_f(x_k).
    \end{align*}
    Now, if $c_{t_k}(x_k)=0$, then $\nabla f(x_{k+1}) = \nabla N^{t_k}_f(x_k)=0$, so $x_k$ and $x_{k+1}$ are minimizers of $N^{t_k}_f$ and $f$, respectively, and the algorithm terminates. Otherwise, $x_{k+1}\not\in\cX_f$ by \Cref{thm:c_global_min}, and rearranging terms gives
    \begin{align*}
        x_{k+1} = x_k - \frac{1}{c_{t_k}(x_k)}\cdot \nabla N^{t_k}_f(x_k),
    \end{align*}
    which is exactly gradient descent on $N^{t_k}_f$ with a step size of
    \begin{align*}
        \frac{1}{c_{t_k}(x_k)}
        \overset{\eqref{lemma:ck_diff}}{=} \frac{t_k}{\norm{\nabla f(x_{k+1})}}
        \overset{\eqref{lemma:N3}}{=} \frac{t_k}{\norm{\nabla N^{t_k}_f(x_k)}}.
    \end{align*}
\end{proof}

\newpage

\section{Stochastic Case}\label{sec:ap_stochastic}

\begin{algorithm}[t]
    \caption{Stochastic Ball-Proximal Point Method (\algname{SBPM})}
    \begin{algorithmic}[1]\label{alg:sbpm}
    \STATE \textbf{Input:} radii $t_k > 0$ for $k \geq 0$, starting point $x_0 \in {\rm dom} f$
    \FOR{$k = 0,1,2,\dots$}
    \STATE Sample $\xi_k \sim \mathcal{U}\{1,\ldots,n\}$
    \STATE $x_{k+1} = \Pi\rbrac{x_k, \BProxSub{t_k}{f_{\xi_k}}{x_k}}$
    \ENDFOR
    \end{algorithmic}
\end{algorithm}

In this section, we extend \newalg\ to the stochastic setting.
Specifically, we consider the distributed optimization problem 
\begin{align*}
    \min_{x \in \R^d}\cbrac{f(x) \eqdef \frac{1}{n}\sum_{i=1}^{n}f_i(x)},
\end{align*}
where each function $f_i: \R^d \mapsto \R$, $i\in[n]$ is a local objective associated with the $i$th client. 
A natural extension of \newalg\ to the stochastic case would be
\begin{align}\label{eq:no_proj_upd}
    x_{k+1} \in \BProxSub{t_k}{f_{\xi_k}}{x_k},
\end{align}
where $\xi_k \in [n]$ is the index of the selected client, sampled uniformly at random.
However, due to the additional stochasticity, the algorithm fails to converge even in the simplest case when a sufficiently large constant step size $t$ is used.
This is demonstrated by the following example.
\begin{example}
    Consider the case where $n=2$, and both $f_1$ and $f_2$ are convex and smooth functions.
    Let $\cX_{f_1}$ and $\cX_{f_2}$ denote their respective sets of minimizers, and assume $\cX_{f_1} \cap \cX_{f_2} \neq \emptyset$.
    Suppose that algorithm \eqref{eq:no_proj_upd} is initialized at a point $x_0 \in \cX_{f_1} \backslash \cX_{f_2}$ with a sufficiently large step size $t$ such that $\cX_{f_1} \subseteq B_t(z)$ for any $z \in \cX_{f_2}$ and $\cX_{f_2} \subseteq B_t(z)$ for any $z \in \cX_{f_1}$.
    In this scenario, the next iterate is not uniquely defined, and the algorithm can alternate between $\cX_{f_2}$ and $\cX_{f_1}$ without converging.
\end{example}

Fortunately, this issue can be resolved with a simple modification. To handle the stochastic case, we propose the Stochastic Ball-Proximal Point Method (\Cref{alg:sbpm}), which iterates
\begin{align}
    \label{alg:bppm:stochastic}
    x_{k+1} = \Pi\rbrac{x_k, \BProxSub{t_k}{f_{\xi_k}}{x_k}}, \tag{\algname{SBPM}}
\end{align}
where $\xi_k \sim \mathcal{U}\{1,\ldots,n\}$ and $\Pi(\cdot, \cX)$ denotes the Euclidean projection onto the set $\cX$.
The projection step is crucial for handling discrepancies in the minimizer sets across different client objectives and managing the potential multi-valuedness of the broximal operator.

Before presenting the convergence result, we first introduce several essential lemmas. 
For the purpose of analyzing the algorithm, we assume each local objective function $f_i$ to be convex and $L_i$--smooth.
Hence, Theorems \ref{thm:1stbroxthm} and \ref{thm:3} hold directly.
However, the constant $c_t(x_k)$ in \Cref{thm:3} depends on both the current iterate $x_k$ and the function $f$, leading to variability across different client functions.
To reflect this dependency, we denote the constant associated with iterate $x_k$ and function $f_{\xi_k}$ as $c_t(x_k, \xi_k)$.

\begin{lemma}[Projection]
    \label{lemma:sameproj}
    Let $k\geq 0$ be an iteration of \ref{alg:bppm:stochastic} such that $B_t(x_k) \cap \cX_{f_{\xi_k}} \neq \emptyset$.
    Then
    \begin{align*}
        x_{k+1} = \Pi\rbrac{x_k, \BProxSub{t}{f_{\xi_k}}{x_k}} = \Pi\rbrac{x_k, B_t(x_k) \cap \cX_{f_{\xi_k}}} = \Pi\rbrac{x_k, \cX_{f_{\xi_k}}}.
    \end{align*}
\end{lemma}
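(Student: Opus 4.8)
The plan is to prove the two claimed equalities in turn, both resting on \Cref{thm:1stbroxthm} together with elementary facts about Euclidean projections onto closed convex sets.

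First I would identify the broximal operator with the intersection set. Since $f_{\xi_k}$ is convex (hence proper and closed, being real-valued on $\R^d$), \Cref{thm:1stbroxthm}~\ref{pt:nonempty} applies: the standing hypothesis $B_t(x_k) \cap \cX_{f_{\xi_k}} \neq \emptyset$ forces $\BProxSub{t}{f_{\xi_k}}{x_k}$ to be a nonempty subset of $\cX_{f_{\xi_k}}$. Unwinding \Cref{def:ballprox}, this says that the minimal value of $f_{\xi_k}$ over $B_t(x_k)$ equals its global minimum, and therefore the argmin set is exactly $\{z \in B_t(x_k): f_{\xi_k}(z) = \min f_{\xi_k}\} = B_t(x_k) \cap \cX_{f_{\xi_k}}$. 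Applying $\Pi(x_k, \cdot)$ to both sides yields the first equality $\Pi(x_k, \BProxSub{t}{f_{\xi_k}}{x_k}) = \Pi(x_k, B_t(x_k) \cap \cX_{f_{\xi_k}})$.

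For the second equality, I would use that $\cX_{f_{\xi_k}}$, as the set of minimizers of a proper, closed, convex function, is itself nonempty, closed and convex, so $p \eqdef \Pi(x_k, \cX_{f_{\xi_k}})$ is well-defined and unique. The key observation is that $p \in B_t(x_k)$: by hypothesis there exists $q \in B_t(x_k) \cap \cX_{f_{\xi_k}}$, and since $p$ is the closest point of $\cX_{f_{\xi_k}}$ to $x_k$, we get $\norm{p - x_k} \leq \norm{q - x_k} \leq t$. Hence $p \in B_t(x_k) \cap \cX_{f_{\xi_k}}$; moreover every element of this intersection also lies in $\cX_{f_{\xi_k}}$, so $p$ minimizes $\norm{\cdot - x_k}$ over the intersection as well. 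As $B_t(x_k) \cap \cX_{f_{\xi_k}}$ is closed and convex, its projection is unique, so $\Pi(x_k, B_t(x_k) \cap \cX_{f_{\xi_k}}) = p = \Pi(x_k, \cX_{f_{\xi_k}})$, which closes the chain of equalities.

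The argument is short and elementary, so there is no real obstacle; the only point demanding care is verifying that $p$ lands inside $B_t(x_k)$ — precisely where the assumption $B_t(x_k) \cap \cX_{f_{\xi_k}} \neq \emptyset$ is used — and noting throughout that all the sets involved ($\cX_{f_{\xi_k}}$, the ball, and their intersection) are closed and convex, so that each projection is single-valued and the equalities are between genuine points rather than sets.
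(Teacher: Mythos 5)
Your proof is correct and takes essentially the same approach as the paper's: first identify $\BProxSub{t}{f_{\xi_k}}{x_k}$ with $B_t(x_k)\cap\cX_{f_{\xi_k}}$, then compare projections onto nested closed convex sets. The only stylistic difference is that you argue directly that $\Pi(x_k,\cX_{f_{\xi_k}})$ lies within the ball and hence also projects onto the intersection, whereas the paper reaches the same conclusion by contradiction.
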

\begin{proof}
    First, suppose that $B_t(x_k) \cap \cX_{f_{\xi_k}} \neq \emptyset$. Using the definition of $\BProxSub{t}{f_{\xi_k}}{x_k}$, it is obvious that 
    \begin{align*}
        \BProxSub{t}{f_{\xi_k}}{x_k} = B_t(x_k) \cap \cX_{f_{\xi_k}}.
    \end{align*}
    Now, assume that $x^\prime_{k+1} \eqdef \Pi(x_k, \cX_{f_{\xi_k}}) \neq x_{k+1}$. Then $x^\prime_{k+1} \notin \BProxSub{t}{f_{\xi_k}}{x_k}$, since otherwise one would have $\Pi(x_k, \BProxSub{t}{f_{\xi_k}}{x_k}) = x^\prime_{k+1}$, in which case $x_{k+1} = x^\prime_{k+1}$.
    However, if $x^\prime_{k+1} \notin \BProxSub{t}{f_{\xi_k}}{x_k}$, then $\norm{x_{k+1} - x_k} \leq t < \norm{x^\prime_{k+1} - x_k}$.
    Since $x_{k+1} \in \cX_{f_{\xi_k}}$, this contradicts the fact that $x^\prime_{k+1}$ is a projection.
\end{proof}

The above lemma allows us to rewrite \ref{alg:bppm:stochastic} as
\begin{align*}
    x_{k+1} = \begin{cases*}
        \BProxSub{t}{f_{\xi_k}}{x_k} & \text{  if $B_t(x_k) \cap \cX_{f_{\xi_k}} = \emptyset$,}\\
        \Pi\rbrac{x_k, \cX_{f_{\xi_k}}} & \text{  otherwise}.
    \end{cases*}
\end{align*}
Note that when $B_t(x_k) \cap \cX_{f_{\xi}} \neq \emptyset$, we have $\Pi(x_k, \cX_{f_{\xi_k}}) \in \BProxSub{t}{f_{\xi_k}}{x_k}$, and hence many existing tools developed for the single-node case remain applicable in the distributed setting.

The extra projection enables us to establish additional properties that guarantee convergence of the method.

\begin{lemma}[Descent lemma I]
    \label{lemma:descent:1}
    Let each local objective function $f_i: \R^d \mapsto \R$ be convex and $L_i$--smooth.
    Then, the iterates of \ref{alg:bppm:stochastic} satisfy
    \begin{align*}
        -c_t(x_k, \xi_k)\inner{x_{k+1} - x_\star}{x_k - x_{k+1}} \leq \rbrac{f_{\xi_k}(x_\star) - f_{\xi_k}\rbrac{\SBProjProxSub{t}{f_{\xi_k}}{x_k}}},
    \end{align*}
    where $x_\star$ is any minimizer of $f_{\xi_k}$.
\end{lemma}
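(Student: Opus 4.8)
The plan is to reduce the statement directly to the ``second brox theorem'' (\Cref{thm:3}), applied not to $f$ but to the sampled component $f_{\xi_k}$. Since each $f_i$ is convex and $L_i$--smooth, it is in particular proper, closed and convex, so the entire preliminary theory of \Cref{sec:ap_convex} (in particular \Cref{thm:1stbroxthm} and \Cref{thm:3}) applies verbatim to $f_{\xi_k}$, with the associated constant written $c_t(x_k,\xi_k)$ exactly as in the paragraph preceding the lemma.

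The one structural fact I would establish first is that $x_{k+1} = \SBProxSub{t}{f_{\xi_k}}{x_k}$ (recall this denotes $\SBProjProxSub{t}{f_{\xi_k}}{x_k}$) is itself an element of $\BProxSub{t}{f_{\xi_k}}{x_k}$. Indeed, $\BProxSub{t}{f_{\xi_k}}{x_k} = \argmin_{z\in B_t(x_k)} f_{\xi_k}(z)$ is nonempty by the Weierstrass theorem and closed (it is the intersection of a level set of the continuous convex function $f_{\xi_k}$ with the closed ball $B_t(x_k)$), so the Euclidean projection of $x_k$ onto it is well-defined and lands in the set; in the case $B_t(x_k)\cap\cX_{f_{\xi_k}}=\emptyset$ it is moreover a singleton by \Cref{thm:1stbroxthm}\ref{pt:singleton_bdry}. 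Either way, $x_{k+1}\in\BProxSub{t}{f_{\xi_k}}{x_k}$, which is precisely the regime in which \Cref{thm:3} guarantees some $c_t(x_k,\xi_k)\geq 0$ with $f_{\xi_k}(y) - f_{\xi_k}(x_{k+1}) \geq c_t(x_k,\xi_k)\inner{x_k - x_{k+1}}{y - x_{k+1}}$ for all $y\in\R^d$.

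It then remains to specialize $y = x_\star$ for $x_\star$ a minimizer of $f_{\xi_k}$, use the elementary identity $\inner{x_k - x_{k+1}}{x_\star - x_{k+1}} = -\inner{x_{k+1} - x_\star}{x_k - x_{k+1}}$, and recall $f_{\xi_k}(x_{k+1}) = f_{\xi_k}\rbrac{\SBProjProxSub{t}{f_{\xi_k}}{x_k}}$; rearranging yields the claim. The argument is thus essentially a one-line corollary of \Cref{thm:3}, and the \emph{only} point needing care --- the main obstacle --- is the multi-valued/projection bookkeeping: making sure the projected point genuinely lies in the broximal set (so \Cref{thm:3} applies to it) and that the constant in the lemma is read as the one \Cref{thm:3} produces for that specific $u = x_{k+1}$. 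No case split on whether $B_t(x_k)\cap\cX_{f_{\xi_k}}$ is empty is actually needed, although one may note for intuition that in the nonempty case $x_{k+1}$ is itself a minimizer of $f_{\xi_k}$, so the right-hand side is $0$ and the bound holds with $c_t(x_k,\xi_k)=0$.
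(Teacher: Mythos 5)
Your proposal is correct and essentially the same as the paper's argument. Where the paper invokes \Cref{thm:3}(i) to write $c_t(x_k,\xi_k)(x_k-x_{k+1}) = \nabla f_{\xi_k}(x_{k+1})$ (using differentiability) and then applies the convexity inequality, you invoke \Cref{thm:3}(ii) directly with $y = x_\star$ --- which just bundles those two steps, since (ii) is itself derived from (i) plus the subgradient inequality. Your explicit bookkeeping that $\Pi(x_k,\BProxSub{t}{f_{\xi_k}}{x_k}) \in \BProxSub{t}{f_{\xi_k}}{x_k}$ (the broximal set being nonempty, closed and convex, so the projection is well-defined and lands in it) is a point the paper takes for granted; it is a harmless and slightly more careful addition.
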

\begin{proof}
    According to \Cref{thm:3}, we have 
    \begin{align*}
        c_t(x_k, \xi_k)\rbrac{x_k - x_{k+1}} = \nabla f_{\xi_k}(x_{k+1}).
    \end{align*}
    Therefore, by convexity of $f_{\xi_k}$,
    \begin{align*}
        - c_t(x_k, \xi_k) \inner{x_{k+1} - x_\star}{x_k - x_{k+1}} &=  \inner{x_\star - x_{k+1}}{\nabla f_{\xi_k}(x_{k+1})} \\
        &\leq f_{\xi_k}(x_\star) - f_{\xi_k}(x_{k+1}) \\
        &= f_{\xi_k}(x_\star) - f_{\xi_k}\rbrac{\SBProjProxSub{t}{f_{\xi_k}}{x_k}}
    \end{align*}
    as needed.
\end{proof}

\begin{lemma}[Descent lemma II]
    \label{lemma:descent:2}
    Let each local objective function $f_i: \R^d \mapsto \R$ be convex and $L_i$--smooth.
    Then, the iterates of \ref{alg:bppm:stochastic} satisfy
    \begin{align*}
        \norm{x_{k} - x_{k+1}}^2 \geq \frac{1}{\frac{L_{\xi_k}}{2} + c_t(x_k, \xi_k)}\rbrac{f_{\xi_k}(x_k) - f_{\xi_k}(x_{k+1})}.
    \end{align*}
\end{lemma}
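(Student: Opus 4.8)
The plan is to combine two facts: the $L_{\xi_k}$--smoothness of $f_{\xi_k}$ and the first-order optimality condition for the broximal subproblem, exactly as in the proof of \Cref{lemma:descent:1}. First I would observe that since $x_{k+1} = \Pi\rbrac{x_k, \BProxSub{t_k}{f_{\xi_k}}{x_k}}$ is obtained by Euclidean projection onto the set of broximal minimizers, it necessarily lies in that set, i.e. $x_{k+1} \in \BProxSub{t_k}{f_{\xi_k}}{x_k}$. Hence \Cref{thm:3} applies with $x = x_k$, $u = x_{k+1}$ and $f = f_{\xi_k}$, and because $f_{\xi_k}$ is differentiable this yields the exact identity $\nabla f_{\xi_k}(x_{k+1}) = c_t(x_k,\xi_k)\rbrac{x_k - x_{k+1}}$ (the same computation underlying \Cref{lemma:ck_diff}).

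Next I would write the upper bound coming from $L_{\xi_k}$--smoothness,
\begin{align*}
    f_{\xi_k}(x_k) - f_{\xi_k}(x_{k+1}) \leq \inner{\nabla f_{\xi_k}(x_{k+1})}{x_k - x_{k+1}} + \tfrac{L_{\xi_k}}{2}\norm{x_k - x_{k+1}}^2,
\end{align*}
and substitute the identity for $\nabla f_{\xi_k}(x_{k+1})$ into the inner product. Since $\inner{c_t(x_k,\xi_k)\rbrac{x_k-x_{k+1}}}{x_k-x_{k+1}} = c_t(x_k,\xi_k)\norm{x_k-x_{k+1}}^2$, the right-hand side collapses to $\bigl(\tfrac{L_{\xi_k}}{2} + c_t(x_k,\xi_k)\bigr)\norm{x_k-x_{k+1}}^2$, and dividing through by $\tfrac{L_{\xi_k}}{2} + c_t(x_k,\xi_k)$ gives the claimed bound.

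The argument is essentially routine; the only point needing care is the division at the end, which requires $\tfrac{L_{\xi_k}}{2} + c_t(x_k,\xi_k) > 0$. This holds whenever $L_{\xi_k} > 0$, and in the degenerate case $L_{\xi_k} = 0$ (an affine $f_{\xi_k}$, which must then be constant since it is assumed to attain a minimizer) one has $x_{k+1} = x_k$ and both sides vanish, so the inequality holds trivially. I therefore expect no substantive obstacle beyond checking that the projected iterate still belongs to the image of the broximal operator, which is immediate from the definition of projection onto a set and is consistent with the reformulation following \Cref{lemma:sameproj}.
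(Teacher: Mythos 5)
Your proof is correct and follows the same route as the paper's: invoke \Cref{thm:3} to obtain $\nabla f_{\xi_k}(x_{k+1}) = c_t(x_k,\xi_k)(x_k - x_{k+1})$, take the inner product with $x_k - x_{k+1}$, combine with the $L_{\xi_k}$--smoothness inequality, and rearrange. The brief remark on the degenerate case $L_{\xi_k}=0$ is a reasonable sanity check (the paper simply assumes $L_i>0$ implicitly) but does not change the substance.
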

\begin{proof}
    Since $f_{\xi_k}$ is $L_{\xi_k}$--smooth, we have 
    \begin{align*}
        f_{\xi_k}(x_k) - f_{\xi_k}(x_{k+1}) - \inner{\nabla f_{\xi_k}(x_{k+1})}{x_k - x_{k+1}} \leq \frac{L_{\xi_k}}{2}\norm{x_k - x_{k+1}}^2.
    \end{align*}
    Next, by \Cref{thm:3}, it holds that
    \begin{align*}
        \inner{\nabla f_{\xi_k}(x_{k+1})}{x_k - x_{k+1}} = c_t(x_k, \xi_k)\norm{x_k - x_{k+1}}^2,
    \end{align*}
    which implies
    \begin{align*}
        f_{\xi_k}(x_k) - f_{\xi_k}(x_{k+1}) \leq \rbrac{\frac{L_{\xi_k}}{2} + c_t(x_k, \xi_k)}\norm{x_k - x_{k+1}}^2.
    \end{align*}
    Rearranging the terms gives the result.
\end{proof}

\begin{lemma}[Descent lemma III]
    \label{lemma:descent:3}
    Let each local objective function $f_i: \R^d \mapsto \R$ be convex and $L_i$--smooth.
    Then, the iterates of \ref{alg:bppm:stochastic} satisfy
    \begin{align*}
        \norm{x_{k+1} - x_\star}^2 \leq \norm{x_k - x_\star}^2 - \frac{1}{\frac{L_{\xi_k}}{2} + c_t\rbrac{x_k, \xi_k}}\rbrac{f_{\xi_k}\rbrac{x_k} - f_{\xi_k}\rbrac{x_\star}},
    \end{align*}
    where $x_\star$ is any minimizer of $f_{\xi_k}$.
\end{lemma}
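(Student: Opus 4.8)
The plan is to work from the Pythagorean-type three-point identity and then plug in the two descent lemmas already established. After multiplying the claimed inequality through by the positive quantity $\tfrac{L_{\xi_k}}{2}+c_t(x_k,\xi_k)$, it becomes equivalent to
\[
  \rbrac{\tfrac{L_{\xi_k}}{2}+c_t(x_k,\xi_k)}\rbrac{\norm{x_k-x_\star}^2-\norm{x_{k+1}-x_\star}^2}\;\geq\;f_{\xi_k}(x_k)-f_{\xi_k}(x_\star).
\]
By \Cref{fact:1:3pointidentity}, applied to $x_k$, $x_{k+1}$, $x_\star$, one has $\norm{x_k-x_\star}^2-\norm{x_{k+1}-x_\star}^2=\norm{x_k-x_{k+1}}^2+2\inner{x_k-x_{k+1}}{x_{k+1}-x_\star}$, and \Cref{lemma:descent:2} bounds the squared-norm part via $\rbrac{\tfrac{L_{\xi_k}}{2}+c_t(x_k,\xi_k)}\norm{x_k-x_{k+1}}^2\geq f_{\xi_k}(x_k)-f_{\xi_k}(x_{k+1})$. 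Hence the whole statement reduces to the single inequality
\[
  2\rbrac{\tfrac{L_{\xi_k}}{2}+c_t(x_k,\xi_k)}\inner{x_k-x_{k+1}}{x_{k+1}-x_\star}\;\geq\;f_{\xi_k}(x_{k+1})-f_{\xi_k}(x_\star).
\]

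To establish this I would combine two facts. First, \Cref{lemma:descent:1}, after using symmetry of the inner product, reads $c_t(x_k,\xi_k)\inner{x_k-x_{k+1}}{x_{k+1}-x_\star}\geq f_{\xi_k}(x_{k+1})-f_{\xi_k}(x_\star)$. Second, I claim the sign condition $\inner{x_k-x_{k+1}}{x_{k+1}-x_\star}\geq 0$. Granting the second fact, split the coefficient as $2\rbrac{\tfrac{L_{\xi_k}}{2}+c_t(x_k,\xi_k)}=c_t(x_k,\xi_k)+\rbrac{L_{\xi_k}+c_t(x_k,\xi_k)}$: the $c_t(x_k,\xi_k)$-part contributes at least $f_{\xi_k}(x_{k+1})-f_{\xi_k}(x_\star)$ by the first fact, while the $\rbrac{L_{\xi_k}+c_t(x_k,\xi_k)}$-part contributes a nonnegative amount because $L_{\xi_k}+c_t(x_k,\xi_k)>0$ and the inner product is nonnegative. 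Adding these two contributions yields the reduced inequality, and therefore the lemma.

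It remains to verify the sign condition $\inner{x_k-x_{k+1}}{x_{k+1}-x_\star}\geq 0$, where I would distinguish two cases according to whether $B_t(x_k)\cap\cX_{f_{\xi_k}}$ is empty. If it is nonempty, then \Cref{lemma:sameproj} gives $x_{k+1}=\Pi(x_k,\cX_{f_{\xi_k}})$; since $\cX_{f_{\xi_k}}$ is closed and convex (being the minimizer set of a convex function) and $x_\star\in\cX_{f_{\xi_k}}$, the variational characterization of the Euclidean projection yields $\inner{x_k-x_{k+1}}{x_\star-x_{k+1}}\leq 0$, which is exactly the claim. If it is empty, then $x_{k+1}\in B_t(x_k)\setminus\cX_{f_{\xi_k}}$, so $c_t(x_k,\xi_k)>0$ by \Cref{thm:c_global_min} (valid here since $f_{\xi_k}$ is convex), and dividing the rewritten \Cref{lemma:descent:1} by $c_t(x_k,\xi_k)$ gives $\inner{x_k-x_{k+1}}{x_{k+1}-x_\star}\geq\rbrac{f_{\xi_k}(x_{k+1})-f_{\xi_k}(x_\star)}/c_t(x_k,\xi_k)\geq 0$, since $x_\star$ minimizes $f_{\xi_k}$. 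The only delicate point is precisely the degenerate sub-case $c_t(x_k,\xi_k)=0$, which coincides with the nonempty-intersection case: there \Cref{lemma:descent:1} carries no information and one genuinely needs the projection step hard-wired into \algname{SBPM}. This is the main (and rather mild) obstacle; everything else is routine algebra with \Cref{lemma:descent:1} and \Cref{lemma:descent:2}.
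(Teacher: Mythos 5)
Your proof is correct and follows essentially the same route as the paper's: a three-point decomposition combined with Lemmas~\ref{lemma:descent:1} and~\ref{lemma:descent:2}, plus a case split on whether $c_t(x_k,\xi_k)$ vanishes (equivalently, whether $B_t(x_k)\cap\cX_{f_{\xi_k}}$ is empty), with the second projection theorem closing the degenerate case. The only cosmetic difference is that your coefficient split $2\rbrac{\tfrac{L_{\xi_k}}{2}+c_t(x_k,\xi_k)}=c_t(x_k,\xi_k)+\rbrac{L_{\xi_k}+c_t(x_k,\xi_k)}$ plays the same role as the paper's observation that $\min\cbrac{\tfrac{2}{c_t(x_k,\xi_k)},\tfrac{1}{L_{\xi_k}/2+c_t(x_k,\xi_k)}}=\tfrac{1}{L_{\xi_k}/2+c_t(x_k,\xi_k)}$.
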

\begin{proof}
    We start with the simple decomposition 
    \begin{align}
        \label{eq:noname1}
        \norm{x_{k+1} - x_\star}^2 = \norm{x_k - x_\star}^2 - \norm{x_k - x_{k+1}}^2 - 2\inner{x_{k+1} - x_\star}{x_k - x_{k+1}}.
    \end{align}
    Now, let us consider two cases.
    \paragraph{Case 1: $c_t\rbrac{x_k, \xi_k} > 0$.} In this case, combining \Cref{lemma:descent:1} and \Cref{lemma:descent:2} gives
    \begin{align*}
        \norm{x_{k+1} - x_\star}^2 \leq \norm{x_k - x_\star}^2 - \frac{1}{\frac{L_{\xi_k}}{2} + c_t(x_k, \xi_k)}\rbrac{f_{\xi_k}(x_k) - f_{\xi_k}(x_{k+1})}
        - \frac{2}{c_t(x_k, \xi_k)}\rbrac{f_{\xi_k}(x_{k+1}) - f_{\xi_k}\rbrac{x_{\star}}}.
    \end{align*}
    Now, notice that 
    \begin{align*}
        \min\cbrac{\frac{2}{c_t\rbrac{x_k, \xi_k}}, \frac{1}{\frac{L_{\xi_k}}{2} + c_t\rbrac{x_k, \xi_k}}} = \frac{1}{\frac{L_{\xi_k}}{2} + c_t\rbrac{x_k, \xi_k}},
    \end{align*}
    and by the definition of broximal operator, it holds that
    \begin{align*}
        f_{\xi_k}(x_k) - f_{\xi_k}(x_{k+1}) \geq 0.
    \end{align*}
    Moreover, since $x_\star$ is a minimizer of $f_{\xi_k}$, it is obvious that
    \begin{align*}
        f_{\xi_k}(x_{k+1}) - f_{\xi_k}(x_\star) \geq 0.
    \end{align*}
    Combining the above inequalities gives
    \begin{align*}
        \norm{x_{k+1} - x_\star}^2 \leq \norm{x_k - x_\star}^2 - \frac{1}{\frac{L_{\xi_k}}{2} + c_t\rbrac{x_k, \xi_k}}\rbrac{f_{\xi_k}\rbrac{x_k} - f_{\xi_k}\rbrac{x_\star}}.
    \end{align*}

    \paragraph{Case 2: $c_t\rbrac{x_k, \xi_k} = 0$.}
    The condition $c_t(x_k, \xi_k) = 0$ implies that $x_{k+1} \in \cX_{f_{\xi_k}}$ (\Cref{thm:c_global_min}), so $x_{k+1} = \Pi(x_k, \BProxSub{t}{f_{\xi_k}}{x_k}) \in \cX_{f_{\xi_k}}$.
    By \Cref{lemma:sameproj}, we know that $x_{k+1} = \Pi(x_k, \cX_{f_{\xi_k}})$, which implies that $\inner{x_{k+1} - x_{\star}}{x_k - x_{k+1}} \geq 0$ by the second projection theorem (Theorem 6.14 of \citep{beck2017first}).
    Hence, using \Cref{lemma:descent:2}, inequality \eqref{eq:noname1} simplifies to
    \begin{align*}
        \norm{x_{k+1} - x_\star}^2 &\leq \norm{x_k - x_\star}^2 - \frac{1}{\frac{L_{\xi_k}}{2} + c_t(x_k, \xi_k)}\rbrac{f_{\xi_k}(x_k) - f_{\xi_k}(x_{k+1})} \\
        &= \norm{x_k - x_\star}^2 - \frac{1}{\frac{L_{\xi_k}}{2} + c_t\rbrac{x_k, \xi_k}}\rbrac{f_{\xi_k}\rbrac{x_k} - f_{\xi_k}\rbrac{x_\star}},
    \end{align*}
    which finishes the proof.
\end{proof}

Finally, one can prove the following convergence guarantee:
\begin{theorem}
    \label{thm:sbppm}
    Let each local objective function $f_i: \R^d \mapsto \R$ be convex  and $L_i$--smooth, and assume that there exists $x_\star$ such that $\nabla f_i(x_\star) = 0$ for all $i\in[n]$.
    Then, for any $K\geq 1$, the iterates of \ref{alg:bppm:stochastic} with $t_k \equiv t > 0$ satisfy
    \begin{align*}
        \Exp{f(\tilde{x}_K)} - f_\star \leq L_{\max}\rbrac{1 + \frac{\norm{x_0 - x_\star}^2}{t^2}}\cdot \frac{\norm{x_0 - x_\star}^2}{2K}.
    \end{align*}
    where $\tilde{x}_K$ is chosen uniformly at random from the first $K$ iterates $\cbrac{x_0, \hdots, x_{K-1}}$ and $L_{\max} \eqdef \max_{i \in [n]} L_i$.
\end{theorem}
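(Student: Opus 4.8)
The plan is to run the standard ``one-step distance contraction $\Rightarrow$ telescoping $\Rightarrow$ online-to-batch'' argument with \Cref{lemma:descent:3} as the engine; the only genuine work is to replace the iteration- and sample-dependent coefficient $\frac{L_{\xi_k}}{2} + c_t\rbrac{x_k, \xi_k}$ appearing in the denominator of that lemma by a single deterministic constant.

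First I would record two preliminary facts. Since $\nabla f_i(x_\star) = 0$ for every $i$, each $f_i$ is minimized at $x_\star$; in particular $f(x_\star) = f_\star$ and $f_{\xi_k}(x_k) \geq f_{\xi_k}(x_\star)$, so the subtracted term in \Cref{lemma:descent:3} is nonnegative, hence $\norm{x_{k+1} - x_\star} \leq \norm{x_k - x_\star}$ pathwise, giving $\norm{x_k - x_\star} \leq \norm{x_0 - x_\star}$ for all $k$. Next I would bound $c_t\rbrac{x_k, \xi_k}$ by a case split. If $B_t(x_k) \cap \cX_{f_{\xi_k}} \neq \emptyset$, the broximal point lies in $\cX_{f_{\xi_k}}$ (\Cref{thm:1stbroxthm}~\ref{pt:nonempty}), so $c_t\rbrac{x_k, \xi_k} = 0$ by \Cref{thm:c_global_min}~\ref{pt:c0_min}. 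If $B_t(x_k) \cap \cX_{f_{\xi_k}} = \emptyset$, then \Cref{thm:1stbroxthm}~\ref{pt:singleton_bdry} places the unique broximal point $x_{k+1}$ on the sphere $\norm{x_{k+1} - x_k} = t$, so \Cref{cor:c_upper_rel} yields $c_t\rbrac{x_k, \xi_k}\, t^2 \leq f_{\xi_k}(x_k) - f_{\xi_k}(x_{k+1}) \leq f_{\xi_k}(x_k) - f_{\xi_k}(x_\star)$. In both cases $L_{\xi_k}$-smoothness together with $\nabla f_{\xi_k}(x_\star) = 0$ gives $f_{\xi_k}(x_k) - f_{\xi_k}(x_\star) \leq \tfrac{L_{\xi_k}}{2}\norm{x_k - x_\star}^2 \leq \tfrac{L_{\max}}{2}\norm{x_0 - x_\star}^2$, so that
\[
    \frac{L_{\xi_k}}{2} + c_t\rbrac{x_k, \xi_k} \;\leq\; \frac{L_{\max}}{2}\rbrac{1 + \frac{\norm{x_0 - x_\star}^2}{t^2}},
\]
uniformly in $k$; call the right-hand side $C$.

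With $C$ in hand, \Cref{lemma:descent:3} reads $\norm{x_{k+1} - x_\star}^2 \leq \norm{x_k - x_\star}^2 - \tfrac{1}{C}\rbrac{f_{\xi_k}(x_k) - f_{\xi_k}(x_\star)}$. Taking expectation conditioned on $x_k$ and using $\ExpCond{f_{\xi_k}(x_k)}{x_k} = f(x_k)$ and $\Exp{f_{\xi_k}(x_\star)} = f(x_\star) = f_\star$, then applying the tower rule, summing over $k = 0, \dots, K-1$, telescoping, and discarding the nonnegative term $\Exp{\norm{x_K - x_\star}^2}$, I obtain $\tfrac{1}{C}\sum_{k=0}^{K-1}\rbrac{\Exp{f(x_k)} - f_\star} \leq \norm{x_0 - x_\star}^2$. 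Dividing by $K$ and using $\Exp{f(\tilde x_K)} = \tfrac{1}{K}\sum_{k=0}^{K-1}\Exp{f(x_k)}$ (since $\tilde x_K$ is uniform over the first $K$ iterates) gives $\Exp{f(\tilde x_K)} - f_\star \leq \tfrac{C}{K}\norm{x_0 - x_\star}^2$, which is precisely the claimed bound once $C$ is substituted.

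I expect the denominator bound to be the only delicate point. Without the interpolation hypothesis $\nabla f_i(x_\star) = 0$ the iterates need not remain in a fixed ball around $x_\star$ and $f_{\xi_k}(x_k) - f_{\xi_k}(x_\star)$ could be uncontrolled, while without the boundary property \Cref{thm:1stbroxthm}~\ref{pt:singleton_bdry} of the broximal operator one could only divide the inequality of \Cref{cor:c_upper_rel} by the potentially tiny $\norm{x_{k+1} - x_k}^2$ rather than by $t^2$, so $c_t\rbrac{x_k, \xi_k}$ might blow up. Everything downstream of establishing $C$ mirrors the bookkeeping in the proofs of \Cref{thm:conv-bppm-convex} and \Cref{thm:conv-linearized-bppm}.
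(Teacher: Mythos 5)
Your proposal is correct and follows essentially the same route as the paper: both start from Lemma~\ref{lemma:descent:3}, bound the random denominator $\tfrac{L_{\xi_k}}{2}+c_t(x_k,\xi_k)$ by the deterministic constant $\tfrac{L_{\max}}{2}\bigl(1+\tfrac{\|x_0-x_\star\|^2}{t^2}\bigr)$ via Corollary~\ref{cor:c_upper_rel}, the boundary property, $L_i$--smoothness, and the monotone decrease of $\|x_k-x_\star\|$, and then telescope. The only (cosmetic, and arguably cleaner) difference is that you establish the uniform bound on the denominator before telescoping, whereas the paper telescopes with the path-dependent constant and bounds it afterward.
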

\begin{remark}[Semi-adaptivity]
    An observation from \Cref{thm:sbppm} is that the algorithm converges regardless of the step size $t$.
    Notably, similar to \algname{SPPM}, smoothness is not required to determine the step size.
    A smaller $t$ results in a slower convergence rate, but it simplifies the local subproblems, making them easier to solve.
    Conversely, a larger $t$ improves the convergence rate, but increases the complexity of each subproblem, thereby requiring more local computation.
\end{remark}

\begin{proof}[Proof of \Cref{thm:sbppm}]
    Let $x_\star$ be a common minimizer of all client functions.
    We start with the inequality from \Cref{lemma:descent:3}
    \begin{align*}
        \norm{x_{k+1} - x_\star}^2 \leq \norm{x_k - x_\star}^2 - \frac{1}{\frac{L_{\xi_k}}{2} + c_t\rbrac{x_k, \xi_k}}\rbrac{f_{\xi_k}\rbrac{x_k} - f_{\xi_k}\rbrac{x_\star}}.
    \end{align*}
    Taking expectation conditional on $x_k$, we have 
    \begin{align*}
        \ExpCond{\norm{x_{k+1} - x_\star}^2}{x_k} &\leq \norm{x_k - x_\star}^2 - \frac{1}{n}\sum_{i=1}^{n}\frac{1}{\frac{L_i}{2} + c_t(x_k, i)}\rbrac{f_i(x_k) - f_i\rbrac{x_\star}}. 
    \end{align*}
    We can further simplify the recursion as 
    \begin{align*}
        \ExpCond{\norm{x_{k+1} - x_\star}^2}{x_k} \leq \norm{x_k - x_\star}^2 - \frac{1}{\frac{L_{\max}}{2} + \cmax{k}}\rbrac{f(x_k) - f(x_\star)},
    \end{align*}
    where $\cmax{k} = \max_{i \in [n]}c_t(x_k, i)$.
    Taking expectation again and using the tower property gives 
    \begin{align*}
        \Exp{\norm{x_{k+1} - x_\star}^2} \leq \Exp{\norm{x_k - x_\star}^2} - \Exp{\frac{1}{\frac{L_{\max}}{2} + \cmax{k}}\rbrac{f(x_k) - f(x_\star)}},
    \end{align*}
    and hence, unrolling the recurrence,
    \begin{align*}
        \sum_{k=0}^{K-1}\frac{1}{\frac{L_{\max}}{2} + \cmax{k}}\Exp{f(x_k) - f_\star} \leq \norm{x_0 - x_\star}^2.
    \end{align*}
    Denoting $c_{\max} = \sup_{k \in \cbrac{0, 1, \hdots, K-1}}\cmax{k}$, we obtain 
    \begin{align}
        \label{eq:temp-res-1}
        \Exp{f(\tilde{x}_K)} - f_\star \leq \rbrac{\frac{L_{\max}}{2} + c_{\max}} \cdot \frac{\norm{x_0 - x_\star}^2}{K},
    \end{align}
    where $\tilde{x}_K$ is sampled randomly from the first $K$ iterates $\cbrac{x_0, x_1, \hdots, x_{K-1}}$.

    Now, we proceed to obtain an upper bound on~$c_{\max}$.
    Consider some client function $f_{i}$, $i\in[n]$. Since, by definition, $c_t(x_k, i) \geq 0$, it suffices to consider the case $c_t(x_k, i) \neq 0$. From \Cref{thm:3} we know that $\SBProjProxSub{t}{f_{i}}{x_k} \notin \cX_{f_{i}}$, so $\norm{x_k - \SBProjProxSub{t}{f_{i}}{x_k}} = t$ (\Cref{prop:t_step}). Using \Cref{cor:c_upper_rel}, we can deduce that
    \begin{align*}
        c_t(x_k, i) \leq \frac{f_i(x_k) - f_i\rbrac{\SBProjProxSub{t}{f_i}{x_k}}}{\norm{x_k - \SBProjProxSub{t}{f_i}{x_k}}^2} = \frac{f_i(x_k) - f_i\rbrac{\SBProjProxSub{t}{f_i}{x_k}}}{t^2}
        \leq \frac{f_i(x_k) - f_i(x_\star)}{t^2}.
    \end{align*}
    Using the $L_i$--smoothness of $f_i$, we get 
    \begin{align*}
        c_t(x_k, i) \leq \frac{L_i\norm{x_k - x_\star}^2}{2t^2},
    \end{align*}
    and consequently 
    \begin{align*}
        \cmax{k} \leq \frac{L_{\max}\norm{x_k - x_\star}^2}{2t^2}. 
    \end{align*}
    Now, since 
    \begin{align*}
        \norm{x_{k+1} - x_\star}^2 = \norm{x_k - x_\star}^2 - \norm{x_k - x_{k+1}}^2 - 2\inner{x_{k+1} - x_\star}{x_k - x_{k+1}},
    \end{align*}
    and by \Cref{lemma:descent:1} 
    \begin{align*}
        \inner{x_{k+1} - x_\star}{x_k - x_{k+1}} \geq 0,
    \end{align*}
    it follows that $\norm{x_{k+1} - x_\star}^2 \leq \norm{x_k - x_\star}^2$.
    As a result, we have 
    \begin{align*}
        \cmax{k} \leq \frac{L_{\max}\norm{x_0 - x_\star}^2}{2t^2}.
    \end{align*}
    Plugging this back to \eqref{eq:temp-res-1} gives
    \begin{align*}
        \Exp{f(\tilde{x}_K)} - f_\star \leq L_{\max}\rbrac{1 + \frac{\norm{x_0 - x_\star}^2}{t^2}}\cdot \frac{\norm{x_0 - x_\star}^2}{2K}.
    \end{align*}
\end{proof}

\newpage

\section{Bregman Broximal Point Method}\label{sec:bregman}
A natural extension of \newalg\ is to replace the ball constraint with a more general one.
In this section, we propose a generalization based on the Bregman divergence.
\begin{definition}[Bregman divergence]
    \label{def:bregman}
    Let $h: \R^d \mapsto \R$ be a continuously differentiable function. 
    The \emph{Bregman divergence} between two points $x, y \in \R^d$ associated with $h$ is the mapping $\R^d\times\R^d\to\R$ defined as
    \begin{align*}
        \breg{h}{x}{y} = h(x) - h(y) - \inner{\nabla h(y)}{x - y}.
    \end{align*}
\end{definition}
The Bregman divergence can be intuitively understood by fixing a point  $x_0 \in \R^d$ and interpreting $\breg{h}{x}{x_0}$ as the difference between the function $h$ and its linear approximation at $x_0$, evaluated at $x$.
When $h(x) = \norm{x}^2$, the Bregman divergence simplifies to $\breg{h}{x}{y} = \norm{x - y}^2 = \breg{h}{y}{x}$.
In general, however, the Bregman divergence is not symmetric.

In this section, we address the minimization problem \eqref{eq:main-prob}, assuming that $f:\R^d\to \R$ is a differentiable convex function.
We propose the following algorithm:
\begin{align}
    \label{eq:alg:bregmanbppm}
    x_{k+1} \in \BBProxSub{t}{f}{h}{x_k} \eqdef \argmin_{z \in \R^d}\cbrac{f(z): \breg{h}{z}{x_k} \leq t^2}. \tag{\algname{BregBPM}}
\end{align}
We refer to $\BBProxSub{t}{f}{h}{\cdot}$ as the \emph{Bregman Broximal Operator}, and name the corresponding algorithm the \emph{Bregman Ball-Proximal Method} (\algname{BregBPM}).

At each iteration, \algname{BregBPM} minimizes $f$ within a localized region around $x_k$, defined by the constraint $\cH_k \eqdef \{z: \breg{h}{z}{x_k} \leq t^2\}$. This translates to solving the constrained optimization problem
\begin{align*}
    \min\limits_{z \in \cH_k} f(z) \quad \Leftrightarrow \quad \min\limits_{z \in \R^d} f(z) + \delta_{\cH_k}(z).
\end{align*}
In this case, the optimality condition states that
\begin{align*}
    0 \in \partial \rbrac{f + \delta_{\cH_k}}(x_{k+1}).
\end{align*}
The function $f$ is differentiable and convex, and the indicator function of a closed convex set is proper, closed and convex. One can also show using an argument similar to that in the proof of \Cref{thm:3} that $\ri(\cH_k) \cap \ri({\rm dom}(f)) \neq \emptyset$. Therefore, according to \Cref{fact:2:sum-rule-subdiff} we have
\begin{align*}
    0 \in \nabla f(x_{k+1}) + \partial \delta_{\cH_k}(x_{k+1}),
\end{align*}
which implies 
\begin{align*}
    - \nabla f(x_{k+1}) \in \partial \delta_{\cH_k}(x_{k+1}).
\end{align*}

To proceed with the analysis, we first establish several essential results.
For analytical convenience, we assume that $h$ is strictly convex, thus ensuring that $\breg{h}{x}{y}$ is strictly convex with respect to its first argument, as established in \Cref{lemma:strict-convexity-breg}.
\begin{lemma}
    \label{lemma:strict-convexity-breg}
    Let $h: \R^d \mapsto \R$ be a continuously differentiable and strictly convex function.
    Then, for any fixed $y \in \R^d$, the Bregman divergence $\breg{h}{x}{y}$ is strictly convex with respect to $x$.
\end{lemma}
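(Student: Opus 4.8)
The plan is to observe that, for a fixed $y$, the map $x \mapsto \breg{h}{x}{y}$ differs from $h$ only by an affine term, and affine perturbations preserve strict convexity. Concretely, I would fix $y \in \R^d$ and define the affine function $\ell_y : \R^d \to \R$ by $\ell_y(x) \eqdef - h(y) - \inner{\nabla h(y)}{x - y}$, so that by \Cref{def:bregman} we have $\breg{h}{x}{y} = h(x) + \ell_y(x)$ for every $x \in \R^d$. This decomposition isolates the only part of the Bregman divergence that depends nontrivially on $x$, namely $h$ itself.

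Next I would verify strict convexity directly from the definition. Take any $x_1, x_2 \in \R^d$ with $x_1 \neq x_2$ and any $\lambda \in (0,1)$, and set $x_\lambda \eqdef \lambda x_1 + (1-\lambda) x_2$. Strict convexity of $h$ gives
\begin{align*}
    h(x_\lambda) < \lambda h(x_1) + (1-\lambda) h(x_2),
\end{align*}
while affinity of $\ell_y$ gives the exact identity
\begin{align*}
    \ell_y(x_\lambda) = \lambda \ell_y(x_1) + (1-\lambda) \ell_y(x_2).
\end{align*}
Adding these two relations and using $\breg{h}{x}{y} = h(x) + \ell_y(x)$ yields
\begin{align*}
    \breg{h}{x_\lambda}{y} < \lambda \breg{h}{x_1}{y} + (1-\lambda) \breg{h}{x_2}{y},
\end{align*}
which is exactly strict convexity of $\breg{h}{\cdot}{y}$.

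There is essentially no main obstacle here: the argument is a one-line reduction once the affine splitting is written down, and the only thing to be careful about is bookkeeping — making sure the term $-h(y) + \inner{\nabla h(y)}{y}$ is treated as a constant (hence part of the affine piece) rather than something depending on $x$. If one prefers, the same conclusion can be reached in the twice-differentiable case by noting $\nabla^2_x \breg{h}{x}{y} = \nabla^2 h(x) \succ 0$, but the first-definition argument above is cleaner since it needs only continuous differentiability, matching the hypotheses of the lemma.
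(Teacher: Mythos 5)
Your proof is correct and follows essentially the same route as the paper: the paper also verifies the strict-convexity inequality directly for distinct $x_1,x_2$ and $\lambda\in(0,1)$, using strict convexity of $h$ on the $h$-term and exact linearity of the remaining terms. Your explicit splitting $\breg{h}{x}{y}=h(x)+\ell_y(x)$ with $\ell_y$ affine is just a cleaner packaging of the same computation (and, as a bonus, avoids the paper's minor bookkeeping slip of silently dropping the constant $\inner{\nabla h(y)}{y}$ on both sides of its chain of (in)equalities).
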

\begin{proof}
    For any two distinct points $x_1, x_2 \in \R^d$ and $\lambda \in (0, 1)$, we have 
    \begin{align*}
        \breg{h}{\lambda x_1 + \rbrac{1 - \lambda} x_2}{y} &= h\rbrac{\lambda x_1 + \rbrac{1 - \lambda} x_2} - h(y) - \lambda\inner{\nabla h(y)}{x_1} - \rbrac{1 - \lambda}\inner{\nabla h(y)}{x_2} \\
        &< \lambda\rbrac{h(x_1) - h(y) - \inner{\nabla h(y)}{x_1}} + \rbrac{1 - \lambda}\rbrac{h(x_2) - h(y) - \inner{\nabla h(y)}{x_2}} \\
        &= \lambda\breg{h}{x_1}{y} + \rbrac{1 - \lambda}\breg{h}{x_2}{y}
    \end{align*}
    as needed.
\end{proof}

The following lemma demonstrates that strict convexity ensures that the Bregman broximal operator is single-valued, possibly except for the last iteration of the algorithm.
\begin{lemma}
    \label{lemma:breg-singleton}
    Let $h: \R^d \mapsto \R$ be a continuously differentiable and strictly convex function.
    If $\BBProxSub{t}{f}{h}{x}\not\subseteq \cX_f$, then the mapping $x \mapsto \BBProxSub{t}{f}{h}{x}$ is single-valued and $u = \BBProxSub{t}{f}{h}{x}\in\bdry\cH$, where $\cH \eqdef \cbrac{z: \breg{h}{z}{x} \leq t^2}$.
\end{lemma}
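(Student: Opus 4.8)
The plan is to transplant the argument of \Cref{thm:1stbroxthm}~\ref{pt:singleton_bdry} from the Euclidean ball $B_t(x)$ to the Bregman sublevel set $\cH \eqdef \cbrac{z : \breg{h}{z}{x} \leq t^2}$. The first step is to record the geometry of $\cH$. By \Cref{lemma:strict-convexity-breg} the map $z \mapsto \breg{h}{z}{x}$ is continuous (as $h$ is $C^1$) and strictly convex, and it attains its unique minimum value $0 < t^2$ at $z = x$. Consequently $\cH$ is a closed convex set, $x \in \interior \cH \neq \emptyset$, $\interior \cH = \cbrac{z : \breg{h}{z}{x} < t^2}$, and $\bdry \cH = \cbrac{z : \breg{h}{z}{x} = t^2}$; the last identity follows because along any ray from $x$ the convex function $\breg{h}{\cdot}{x}$ has strictly positive directional derivative at a point where it equals $t^2$, hence strictly exceeds $t^2$ immediately beyond it. I would also observe right away that the hypothesis $\BBProxSub{t}{f}{h}{x} \not\subseteq \cX_f$ forces $\cH \cap \cX_f = \emptyset$: otherwise every minimizer of $f$ over $\cH$ would attain $f_\star$ and therefore lie in $\cX_f$.

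The second step is the boundary claim. Take $u \in \BBProxSub{t}{f}{h}{x}$ and suppose, for contradiction, that $u \in \interior \cH$. Since $\cH \cap \cX_f = \emptyset$ we have $u \notin \cX_f$, so there is $x_\star \in \cX_f$ with $f(x_\star) < f(u)$ and $x_\star \notin \cH$. Because $u$ is interior, $z_\lambda \eqdef \lambda u + (1-\lambda) x_\star \in \cH$ for all $\lambda$ sufficiently close to $1$, and convexity of $f$ gives $f(z_\lambda) \leq \lambda f(u) + (1-\lambda) f(x_\star) < f(u)$, contradicting minimality of $u$ over $\cH$. Hence every element of $\BBProxSub{t}{f}{h}{x}$ lies on $\bdry \cH$, i.e. satisfies $\breg{h}{\cdot}{x} = t^2$.

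The third step gives single-valuedness. Suppose $u_1, u_2 \in \BBProxSub{t}{f}{h}{x}$ with $u_1 \neq u_2$; set $m \eqdef \min_{z \in \cH} f(z)$, so $f(u_1) = f(u_2) = m$ and, by the previous step, $\breg{h}{u_1}{x} = \breg{h}{u_2}{x} = t^2$. By convexity of $\cH$ the segment $[u_1,u_2]$ lies in $\cH$, and by convexity of $f$ it satisfies $f \equiv m$ on it; in particular the midpoint $w \eqdef \tfrac{1}{2}(u_1 + u_2)$ is again a minimizer of $f$ over $\cH$. But strict convexity of $\breg{h}{\cdot}{x}$ (\Cref{lemma:strict-convexity-breg}) yields $\breg{h}{w}{x} < \tfrac{1}{2}\breg{h}{u_1}{x} + \tfrac{1}{2}\breg{h}{u_2}{x} = t^2$, so $w \in \interior \cH$ — contradicting the boundary claim. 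Therefore $u_1 = u_2$, and $\BBProxSub{t}{f}{h}{x}$ is a singleton $u$ with $u \in \bdry \cH$.

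The argument is largely routine once the picture of $\cH$ is set up. The only place that genuinely uses the strict convexity and $C^1$ assumptions on $h$ — and the main point to get right — is the characterization $\interior \cH = \cbrac{\breg{h}{\cdot}{x} < t^2}$ and $\bdry \cH = \cbrac{\breg{h}{\cdot}{x} = t^2}$ (together with $\interior \cH \neq \emptyset$), since the two contradiction arguments pivot on a minimizer being interior exactly when its Bregman divergence from $x$ is strictly below $t^2$. With that in hand, everything else is the Euclidean proof with "$B_t(x)$'' replaced by "$\cH$'' and "$\norm{z-x}^2 \leq t^2$'' by "$\breg{h}{z}{x} \leq t^2$''.
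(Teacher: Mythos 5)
Your proof is correct and follows essentially the same two-step structure as the paper's: first show any minimizer lies on $\bdry\cH$ by combining it with an external optimal point of $f$, then derive a contradiction from strict convexity of $z\mapsto \breg{h}{z}{x}$ if two distinct minimizers exist. One small improvement over the paper's write-up: the paper's boundary step invokes ``strict convexity of $f$,'' whereas (as you correctly observe) ordinary convexity of $f$ together with $f(x_\star) < f(u)$ suffices, since $f$ is only assumed convex throughout \Cref{sec:bregman}.
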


\begin{remark}
    For $z \in \bdry\cH_k$, we always have $\breg{h}{z}{x_k} = t^2$, which means that $\bdry \cH_k$ is a level set of $\breg{h}{z}{x_k}$.
\end{remark}

\begin{proof}
    Suppose that there exists $u \in \BBProxSub{t}{f}{h}{x}$ such that $u \in \interior\cH$ and take any $x_\star \in \cX_f$. Since $\cH \cap \cX_f = \emptyset$, we have $x_\star \notin \cH$.
    Hence, the line segment connecting $x_\star$ and $u$ must intersect $\bdry\cH$ at a point $\tilde{u} \eqdef \lambda u + \rbrac{1 - \lambda}x_\star$ for some $\lambda \in (0, 1)$.
    Using strict convexity of $f$, we obtain 
    \begin{align*}
        f(\tilde{u}) < \lambda f(u) + (1 - \lambda)f(x_\star) < f(u),
    \end{align*}
    which contradicts the fact that $u$ minimizes $f$ on $\cH$.
    As a result, $u$ must lie on the boundary of $\cH$.
    
    Now, suppose that there exist two distinct points $u_1, u_2 \in \BBProxSub{t}{f}{h}{x}$.
    The strict convexity of $\breg{h}{z}{x}$ in its first argument guarantees that $\cH$ is strictly convex as well.
    Hence, the line segment connecting $u_1$ and $u_2$ lies in the interior of $\cH$, and for any $\alpha \in(0,1)$
    \begin{align*}
        f\rbrac{\alpha u_1 + \rbrac{1 - \alpha} u_2} \leq \alpha f(u_1) + \rbrac{1 - \alpha}f(u_2) = f(u_1) = f(u_2),
    \end{align*} 
    which implies that $\alpha u_1 + (1 - \alpha) u_2 \in \interior \cH$ is also a minimizer of $f$ on $\cH$. This contradicts the fact that a minimizer must lie on the boundary. 
\end{proof}

\begin{lemma}
    \label{lemma:normal-cone-convex}
    Let $\phi: \R^d \mapsto \R$ be a differentiable convex function, $c > \inf \phi$ be a constant, and denote $\cC = \cbrac{x \in \R^d: \phi(x) \leq c}$.
    Then for any $x \in \bdry \cC$, it holds that
    \begin{align*}
        \cN_{\cC}(x) = \cbrac{\lambda\nabla \phi(x), \lambda \geq 0}.
    \end{align*}  
\end{lemma}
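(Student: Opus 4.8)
The plan is to prove the two inclusions separately, after recording three preliminary facts about the sublevel set $\cC = \{x : \phi(x) \le c\}$. First, $\cC$ is closed and convex, being a sublevel set of a continuous convex function. Second, since $c > \inf\phi$ there is a Slater point $\bar z$ with $\phi(\bar z) < c$, so $\bar z \in \interior\cC$ and consequently every boundary point $x \in \bdry\cC$ satisfies $\phi(x) = c$ (by continuity of $\phi$ and convexity of $\cC$). Third, at such a point $\nabla\phi(x) \neq 0$: otherwise $x$ would be a global minimizer of the convex function $\phi$, forcing $\phi(x) = \inf\phi < c$, a contradiction. In particular the right-hand side $\{\lambda\nabla\phi(x) : \lambda \ge 0\}$ is a genuine ray.

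The inclusion $\{\lambda\nabla\phi(x):\lambda\ge0\}\subseteq\cN_\cC(x)$ is immediate from the gradient inequality: for any $z\in\cC$, convexity of $\phi$ gives $\inner{\nabla\phi(x)}{z-x}\le\phi(z)-\phi(x)\le c-c=0$, and multiplying by $\lambda\ge0$ shows $\lambda\nabla\phi(x)\in\cN_\cC(x)$.

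For the reverse inclusion, fix $g\in\cN_\cC(x)$; by definition $x$ maximizes the linear functional $z\mapsto\inner{g}{z}$ over $\cC$, i.e.\ $x$ solves the convex program $\min_z\{\inner{-g}{z} : \phi(z)\le c\}$. Because the Slater point $\bar z$ exists, this program satisfies a constraint qualification, so the Karush--Kuhn--Tucker (KKT) conditions are necessary for optimality: there is a multiplier $\mu\ge0$ with $-g+\mu\nabla\phi(x)=0$, i.e.\ $g=\mu\nabla\phi(x)$. Setting $\lambda=\mu\ge0$ finishes this direction (the case $g=0$ being covered by $\lambda=0$), and hence the proof.

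The only delicate point is the existence of the KKT multiplier, which rests entirely on the Slater condition guaranteed by $c>\inf\phi$; this is the step I would be most careful about. If one prefers to avoid invoking KKT directly, the same conclusion follows by an elementary first-order argument: any direction $d$ with $\inner{\nabla\phi(x)}{d}<0$ is a feasible direction at $x$, since $\phi(x+td)<c$ for small $t>0$, so $\inner{g}{d}\le0$; a limiting argument using $d_\varepsilon=d-\varepsilon\nabla\phi(x)$ extends this to all $d$ with $\inner{\nabla\phi(x)}{d}\le0$, giving the cone inclusion $\{d:\inner{\nabla\phi(x)}{d}\le0\}\subseteq\{d:\inner{g}{d}\le0\}$; taking polars (and using $\nabla\phi(x)\neq0$) yields $g\in\{\lambda\nabla\phi(x):\lambda\ge0\}$.
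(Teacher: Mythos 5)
Your proof is correct, and in substance it parallels the paper's, though your primary route to the reverse inclusion invokes a higher-level theorem. Both proofs get the easy inclusion $\{\lambda\nabla\phi(x):\lambda\ge0\}\subseteq\cN_\cC(x)$ from the gradient inequality. For the converse, the paper argues directly by contradiction: if $v\in\cN_\cC(x)$ is not a nonnegative multiple of $\nabla\phi(x)$, it produces a direction $h$ with $\langle\nabla\phi(x),h\rangle<0$ and $\langle v,h\rangle>0$, then uses the first-order expansion of $\phi$ to exhibit $x+\varepsilon h\in\cC$ contradicting the normal-cone inequality. Your primary argument instead identifies $x$ as the solution of the convex program $\min\{\langle -g,z\rangle:\phi(z)\le c\}$ and reads the multiplier representation off the KKT conditions under Slater's condition; this is cleaner to state but shifts the burden to a general theorem. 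Your elementary fallback (feasible directions plus polar cones) is essentially the dual statement of the paper's construction, so the two proofs are really doing the same geometry. Your version also makes explicit two facts the paper uses silently --- that $\phi(x)=c$ on $\bdry\cC$ and that $\nabla\phi(x)\ne 0$ there, both consequences of $c>\inf\phi$ --- which the paper's forward-inclusion step and its assertion ``$\nabla\phi(x)\ne0$'' implicitly require; supplying these tightens the argument.
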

\begin{proof}
    Let $z \in \bdry \cC$ and denote  
    \begin{align*}
        \cH(z) &\eqdef \cbrac{y \in \R^d: \phi(z) + \inner{\nabla \phi(z)}{y - z} = \phi(z)} \\
        &= \cbrac{y \in \R^d: \inner{\nabla \phi(z)}{y - z} = 0}.
    \end{align*}
    Then, $\cH(z)$ is a supporting hyperplane of the convex set $\cC$, and $\nabla \phi(x)$ is a normal vector to this hyperplane.
    Now, recall the definition of the normal cone
    \begin{align*}
        \cN_{\cC}(x) = \cbrac{y \in \R^d : \inner{y}{z - x} \leq 0 \quad \forall z \in \cC}.
    \end{align*}
    For any $z \in \cC$, using convexity, we have 
    \begin{align*}
        \phi(z) \geq \phi(x) + \inner{\nabla \phi(x)}{z - x},
    \end{align*}
    which indicates that
    \begin{align*}
        \inner{\lambda\nabla \phi(x)}{z - x} \leq 0 \quad \forall \lambda \geq 0,
    \end{align*}
    for all $z \in \cC$, implying that $\lambda\nabla \phi(x) \in \cN_{\cC}(x)$.

    Now, assume that there exists $v \in \cN_{\cC}(x) \neq \lambda\nabla \phi(x)$ for any $\lambda \geq 0$.
    Since $\nabla \phi(x) \neq 0$ and $v \neq 0$, there exists $h \in \R^d$ such that 
    \begin{align*}
        \inner{\nabla \phi(x)}{h} < 0 \text{ and } \inner{v}{h} > 0.
    \end{align*}
    Let $\varepsilon > 0$ and consider a point $x + \varepsilon h$. Since $f(x)$ is differentiable, for $\varepsilon$ small enough, we have
    \begin{align*}
        \phi(x + \varepsilon h) = \phi(x) + \varepsilon\inner{\nabla \phi(x)}{h} + r(\varepsilon h),
    \end{align*}
    where $r(\varepsilon h)$ satisfies $\lim_{\varepsilon \rightarrow 0} \frac{r(\varepsilon h)}{\varepsilon \cdot \norm{h}} = 0$.
    Therefore, $\phi(x + \varepsilon h) < \phi(x)$, and hence $x + \varepsilon h \in \cC$.
    However, we also have
    \begin{align*}
        \inner{v}{x + \varepsilon h - x} = \varepsilon\inner{v}{h} > 0, 
    \end{align*}
    so $v \notin \cN_{\cC}(x)$.
    This contradiction shows that there are no directions other than $\lambda\nabla\phi(x)$, $\lambda \geq 0$ in $\cN_{\cC}(x)$.
\end{proof}

The following corollary is a direct consequence of \Cref{lemma:normal-cone-convex}:
\begin{corollary}
    \label{cor:gradient-k+1}
    Let $f: \R^d \mapsto \R$ be a differentiable convex function and $h: \R^d \mapsto \R$ be a continuously differentiable strictly convex function. Then
    \begin{align*}
        \partial \delta_{\cH_k}(x_k) = \cbrac{\lambda\rbrac{\nabla h(x_{k+1}) - \nabla h(x_k)}: \lambda \geq 0},
    \end{align*}
    where $\{x_k\}_{k\geq0}$ are the iterates generated by \ref{eq:alg:bregmanbppm}.
    Hence, there exists a function $c_{t, h}: \R^d\to\R_{\geq 0}$ such that 
    \begin{align*}
        \nabla f(x_{k+1}) = c_{t, h}(x_k)\rbrac{\nabla h(x_k) - \nabla h(x_{k+1})}.
    \end{align*}
\end{corollary}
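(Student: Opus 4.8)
The plan is to apply \Cref{lemma:normal-cone-convex} to the convex function defining the constraint set $\cH_k$, and then to combine the resulting description of the normal cone with the first-order optimality condition $-\nabla f(x_{k+1}) \in \partial \delta_{\cH_k}(x_{k+1})$ established just above the corollary.

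First I would fix the center $x_k$ and set $\phi(z) \eqdef \breg{h}{z}{x_k}$, viewed as a function of $z$. By \Cref{lemma:strict-convexity-breg}, $\phi$ is strictly convex, and since $h$ is continuously differentiable so is $\phi$, with $\nabla \phi(z) = \nabla h(z) - \nabla h(x_k)$. Moreover $\inf_z \phi(z) = \phi(x_k) = \breg{h}{x_k}{x_k} = 0 < t^2$, so the hypothesis $c > \inf \phi$ of \Cref{lemma:normal-cone-convex} is met with $c = t^2$ and $\cC = \cH_k = \cbrac{z : \phi(z) \leq t^2}$.

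Next I would treat the nontrivial case $\BBProxSub{t}{f}{h}{x_k} \not\subseteq \cX_f$. Then \Cref{lemma:breg-singleton} gives that $x_{k+1}$ is the unique element of $\BBProxSub{t}{f}{h}{x_k}$ and lies on $\bdry \cH_k$. Since $h$ is strictly convex, $\nabla h$ is injective, so $\nabla \phi(x_{k+1}) = \nabla h(x_{k+1}) - \nabla h(x_k) = 0$ would force $x_{k+1} = x_k \in \interior \cH_k$, contradicting $x_{k+1} \in \bdry \cH_k$; hence $\nabla \phi(x_{k+1}) \neq 0$. \Cref{lemma:normal-cone-convex} then yields
\[
  \partial \delta_{\cH_k}(x_{k+1}) = \cN_{\cH_k}(x_{k+1}) = \cbrac{\lambda \rbrac{\nabla h(x_{k+1}) - \nabla h(x_k)} : \lambda \geq 0}.
\]
Combining this with $-\nabla f(x_{k+1}) \in \partial \delta_{\cH_k}(x_{k+1})$ produces some $\lambda \geq 0$ with $-\nabla f(x_{k+1}) = \lambda \rbrac{\nabla h(x_{k+1}) - \nabla h(x_k)}$, i.e.\ $\nabla f(x_{k+1}) = c_{t,h}(x_k) \rbrac{\nabla h(x_k) - \nabla h(x_{k+1})}$ after setting $c_{t,h}(x_k) \eqdef \lambda$; this $\lambda$ is uniquely determined because $\nabla \phi(x_{k+1}) \neq 0$ and $x_{k+1}$ is a function of $x_k$, so $c_{t,h}$ is well-defined and nonnegative. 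In the remaining case $\BBProxSub{t}{f}{h}{x_k} \subseteq \cX_f$ one has $\nabla f(x_{k+1}) = 0$ for every selected $x_{k+1}$, and the identity holds trivially with $c_{t,h}(x_k) = 0$.

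I expect the only real care is in checking the hypotheses of \Cref{lemma:normal-cone-convex} — namely that $x_{k+1}$ sits on $\bdry \cH_k$ with $\nabla \phi(x_{k+1}) \neq 0$ — and in cleanly separating the degenerate case where \algname{BregBPM} has already reached $\cX_f$; the algebra once the normal cone is identified is immediate.
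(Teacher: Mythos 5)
Your proof takes essentially the same route as the paper's: apply \Cref{lemma:normal-cone-convex} to $\phi(z) \eqdef \breg{h}{z}{x_k}$, use \Cref{lemma:breg-singleton} to place $x_{k+1}$ on $\bdry\cH_k$, identify $\cN_{\cH_k}(x_{k+1})$, and combine with the optimality condition $-\nabla f(x_{k+1}) \in \cN_{\cH_k}(x_{k+1})$. You additionally verify the hypotheses of \Cref{lemma:normal-cone-convex} explicitly (that $\inf\phi = 0 < t^2$ and that $\nabla\phi(x_{k+1}) \neq 0$) and dispatch the degenerate case $\BBProxSub{t}{f}{h}{x_k}\subseteq\cX_f$ separately, details the paper leaves implicit.
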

\begin{remark}
    A similar result applies when $f: \R^d \mapsto \R \cup \cbrac{+\infty}$. In this case, the subdifferential is given by
    \begin{align*}
        \partial f(x_{k+1}) = \cbrac{\lambda\rbrac{\nabla h(x_k) - \nabla h(x_{k+1})}: \lambda \geq 0}.
    \end{align*}
    In both scenarios, convexity ensures that
    \begin{align}
        \label{eq:ineq:breg}
        f(y) \geq f(x_{k+1}) + c_{t, h}(x_k) \inner{\nabla h(x_k) - \nabla h (x_{k+1})}{y - x_{k+1}}
    \end{align}
    for some $c_{t, h}(x_k) \geq 0$ and any $y \in \R^d$.
    Consequently, $c_{t, h}(x_k)$ can be bounded above as follows:
    \begin{align}
        \label{eq:upperbound-c-1}
        c_{t, h}(x_k) \leq \frac{f(x_k) - f(x_{k+1})}{\inner{\nabla h(x_k) - \nabla h(x_{k+1})}{x_k - x_{k+1}}} \leq \frac{f(x_0) - f(x_\star)}{\breg{h}{x_k}{x_{k+1}} + \breg{h}{x_{k+1}}{x_k}} \leq \frac{f(x_0) - f(x_\star)}{t^2}.
    \end{align}
\end{remark}
\begin{proof}[Proof of \Cref{cor:gradient-k+1}]
    Using Example $3.5$ of \citep{beck2017first}, we know that $\delta_{\cH_k}(x_{k+1}) = \cN_{\cH_k}(x_{k+1})$.
    By \Cref{lemma:breg-singleton}, $\BBProxSub{t}{f}{h}{x_k}$ is a singleton and $x_{k+1} = \BBProxSub{t}{f}{h}{x_k} \in \bdry \cH_k$.
    Next, invoking \Cref{lemma:normal-cone-convex}, the Bregman divergence $\breg{h}{z}{x_k}$ is differentiable and convex in its first argument, with $\nabla_z \breg{h}{z}{x_k} = \nabla h(z) - \nabla h(x_{k+1})$.
    Hence, 
    \begin{align*}
        \delta_{\cH_k}(x_{k+1}) = \cN_{\cH_k}(x_k)
        = \cbrac{\lambda \rbrac{\nabla h (x_{k+1}) - \nabla h(x_k)}: \lambda \geq 0}.
    \end{align*}
    Since by the optimality condition $-\nabla f(x_{k+1}) \in \delta_{\cH_k}(x_{k+1})$, we conclude that there exists $c_{t, h}(x_k) \geq 0$ such that 
    \begin{align*}
        \nabla f(x_{k+1}) = c_{t, h}(x_k)\rbrac{\nabla h(x_k) - \nabla h(x_{k+1})}.
    \end{align*}
\end{proof}

Equipped with the necessary analytical tools, we now derive the convergence guarantee for \ref{eq:alg:bregmanbppm}.
\begin{theorem}
    \label{thm:bregbppm}
    Let $f: \R^d \mapsto \R \cup \cbrac{+\infty}$ be proper, closed and convex, and $h: \R^d \mapsto \R$ be continuously differentiable and strictly convex.
    Then, for any $K\geq 1$, the iterates of \ref{eq:alg:bregmanbppm} satisfy 
    \begin{align*}
        f(x_K) - f(x_\star) \leq \frac{\rbrac{f(x_0) - f(x_\star)}\breg{h}{x_\star}{x_0}}{Kt^2}.
    \end{align*}
\end{theorem}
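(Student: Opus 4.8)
The plan is to follow the template of the Euclidean analysis (the proof of \Cref{thm:conv-bppm-convex}), substituting squared norms by Bregman divergences and the three‑point identity by the four‑point identity \Cref{lemma:four-point-identity}. First I would handle the trivial case: since $\breg{h}{x_k}{x_k} = 0 \le t^2$ we have $x_k \in \cH_k$, so $f(x_{k+1}) \le f(x_k)$ for every $k$, i.e. $\{f(x_k)\}$ is non‑increasing; hence if $x_{j+1} \in \cX_f$ for some $j < K$ then $f(x_K) = f(x_\star)$ and the bound is immediate. So I may assume $x_{k+1} \notin \cX_f$ for all $k < K$. Then \Cref{lemma:breg-singleton} guarantees each $x_{k+1}$ is uniquely defined and lies on $\bdry \cH_k$, which means $\breg{h}{x_{k+1}}{x_k} = t^2$, while \Cref{cor:gradient-k+1} (and its Remark) gives $c_{t, h}(x_k)(\nabla h(x_k) - \nabla h(x_{k+1})) \in \partial f(x_{k+1})$; in particular $c_{t, h}(x_k) > 0$, since $c_{t, h}(x_k) = 0$ would force $0 \in \partial f(x_{k+1})$, i.e. $x_{k+1} \in \cX_f$.

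Next I would plug $y = x_\star$ into the subgradient inequality \eqref{eq:ineq:breg}, giving
\[
f(x_{k+1}) - f(x_\star) \le c_{t, h}(x_k)\inner{\nabla h(x_k) - \nabla h(x_{k+1})}{x_{k+1} - x_\star},
\]
and rewrite the inner product via \Cref{lemma:four-point-identity} with $\phi = h$, $a = x_k$, $b = x_{k+1}$, $c = x_{k+1}$, $d = x_\star$; using $\breg{h}{x_{k+1}}{x_{k+1}} = 0$ and $\breg{h}{x_{k+1}}{x_k} = t^2$ this becomes $\breg{h}{x_\star}{x_k} - t^2 - \breg{h}{x_\star}{x_{k+1}}$. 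Dividing the resulting inequality by $c_{t, h}(x_k) > 0$ and using $f(x_{k+1}) - f(x_\star) \ge 0$ yields $\breg{h}{x_\star}{x_{k+1}} \le \breg{h}{x_\star}{x_k} - t^2$, so in particular $\breg{h}{x_\star}{x_k} - \breg{h}{x_\star}{x_{k+1}} \ge 0$. Substituting back, discarding the nonnegative term $c_{t, h}(x_k) t^2$, and bounding $0 \le c_{t, h}(x_k) \le (f(x_0) - f(x_\star))/t^2$ by \eqref{eq:upperbound-c-1}, I obtain the telescoping one‑step bound
\[
f(x_{k+1}) - f(x_\star) \le \frac{f(x_0) - f(x_\star)}{t^2}\,\bigl(\breg{h}{x_\star}{x_k} - \breg{h}{x_\star}{x_{k+1}}\bigr).
\]

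Finally, summing over $k = 0, \dots, K-1$, the right‑hand side telescopes to at most $\frac{f(x_0) - f(x_\star)}{t^2}\breg{h}{x_\star}{x_0}$ (discarding $\breg{h}{x_\star}{x_K} \ge 0$), while the left‑hand side is at least $K\bigl(f(x_K) - f(x_\star)\bigr)$ by monotonicity of the function values; dividing by $K$ gives the claim. I do not expect a genuine conceptual obstacle here: the two substantive ingredients — that $x_{k+1}$ is the unique point on $\bdry \cH_k$, and the upper bound \eqref{eq:upperbound-c-1} on $c_{t, h}(x_k)$ — are already established, so the main care is bookkeeping, namely tracking signs when discarding $c_{t, h}(x_k) t^2$ and when replacing $c_{t, h}(x_k)$ by its upper bound, and instantiating the four‑point identity so that $\breg{h}{x_{k+1}}{x_k}$ (which equals $t^2$) enters with a minus sign.
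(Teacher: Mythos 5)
Your proof is correct and follows essentially the same route as the paper's: substitute $y = x_\star$ into \eqref{eq:ineq:breg}, rewrite the inner product via the four-point identity, discard the $c_{t,h}(x_k)\breg{h}{x_{k+1}}{x_k}$ term, apply the upper bound \eqref{eq:upperbound-c-1} on $c_{t,h}(x_k)$, and telescope using the monotonicity of $f(x_k)$. You add two small but welcome clarifications the paper glosses over: you dispose of the trivial case where some iterate enters $\cX_f$ before step $K$, and you explicitly verify $\breg{h}{x_\star}{x_k} - \breg{h}{x_\star}{x_{k+1}} \ge 0$ before replacing $c_{t,h}(x_k)$ by its upper bound.
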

\begin{proof}
    Let us consider some iteration $k$ such that $x_{k+1} \not\in \cX_f$.
    Taking $y = x_\star \in \cX_f$ in \eqref{eq:ineq:breg} and rearranging the terms, we have
    \begin{align*}
        f(x_{k+1}) - f_{\star} \leq c_{t, h}(x_k)\inner{\nabla h(x_k) - \nabla h(x_{k+1})}{x_{k+1} - x_\star}.
    \end{align*}
    Now, using the four point identity (\Cref{lemma:four-point-identity}) gives
    \begin{align*}
        f(x_{k+1}) - f_{\star} &\leq c_{t, h}(x_k)\rbrac{\breg{h}{x_{k+1}}{x_{k+1}} + \breg{h}{x_\star}{x_k} - \breg{h}{x_{k+1}}{x_k} - \breg{h}{x_\star}{x_{k+1}}} \notag \\
        &= c_{t, h}(x_k)\rbrac{\breg{h}{x_\star}{x_k} - \breg{h}{x_{k+1}}{x_k} - \breg{h}{x_\star}{x_{k+1}}}.
    \end{align*} 
    Rearranging, we have 
    \begin{align*}
        f(x_{k+1}) - f_{\star}
        \leq f(x_{k+1}) - f_{\star} + c_{t, h}(x_k)\breg{h}{x_{k+1}}{x_k}
        \leq c_{t, h}(x_k)\rbrac{\breg{h}{x_\star}{x_k} - \breg{h}{x_\star}{x_{k+1}}},
    \end{align*}
    and hence, applying the bound in \eqref{eq:upperbound-c-1} gives
    \begin{align*}
        f(x_{k+1}) - f_{\star} \leq \frac{f(x_0) - f_{\star}}{t^2}\rbrac{\breg{h}{x_\star}{x_k} - \breg{h}{x_\star}{x_{k+1}}}.
    \end{align*}
    Finally, averaging over $k\in\{0, 1, \hdots, K-1\}$ and noticing that the function values are decreasing, we obtain
    \begin{align*}
        f(x_K) - f_{\star}
        \leq \frac{1}{K} \sum_{k=0}^{K-1} f(x_{k+1}) - f_{\star}
        \leq \frac{\rbrac{f(x_0) - f_{\star}}\breg{h}{x_\star}{x_0}}{Kt^2}.
    \end{align*}
\end{proof}

\begin{remark}
    By following the same steps, one can establish a convergence guarantee for a general proper, closed and convex function $f: \R^d \to \R \cup \cbrac{+\infty}$.
\end{remark}

\begin{remark}
    For $h = \norm{\cdot}^2$, the convergence guarantee becomes 
    \begin{align*}
        f(x_K) - f(x_\star) \leq \frac{\rbrac{f(x_0) - f(x_\star)}\norm{x_0 - x_\star}^2}{2Kt^2},
    \end{align*}
    which matches the result from \Cref{thm:conv-bppm-convex} up to a constant factor.
    The discrepancy arises due to the asymmetry of the Bregman divergence.
\end{remark}

%%%%%%%%%%%%%%%%%%%%%%%%%%%%%%%%%%%%%%%%%%%%%%%%%%%%%%%%%%%%%%%%%%%%%%%%%%%%%%%
%%%%%%%%%%%%%%%%%%%%%%%%%%%%%%%%%%%%%%%%%%%%%%%%%%%%%%%%%%%%%%%%%%%%%%%%%%%%%%%

\clearpage

\section{Notation}
\label{app:notation}

\bgroup
\def\arraystretch{1.3}
\begin{table}[H]
\label{table:unbalanced}
	\small
	%\footnotesize
	\centering
	\begin{tabular}{|c|p{12cm}|}
	\hline
	\multicolumn{2}{|c|}{Notation} \\
	\hline
	    $x_k$ & $k$-th iterate of an algorithm \\
        $\norm{\cdot}$ & Standard Euclidean norm \\
        $\inp{\cdot}{\cdot}$ & Standard Euclidean inner product \\
        $[k]$ & $\eqdef \{1,\ldots,k\}$ \\
        $d$ & Dimensionality of the problem \\
        $n$ & Number of clients (\Cref{sec:ap_stochastic}) \\
        $B_t(x)$ & $\eqdef \brac{z \in \R^d : \norm{z - x} \leq t}$ \\
        $\nabla f(x)$ & Gradient of function $f$ at $x$ \\
        $\partial f(x)$ & Subdifferential of function $f$ at $x$ \\
        $\cX_f$ & $\eqdef \{x\in\R^d: \nabla f(x) = 0\}$ \\
        $f_\star$ & Minimum of $f$ \\
        $\inf f$ & Infimum of $f$ \\
        $h_k$ & $\eqdef f(x_k) - f_\star$ \\
        $d_k$ & $\eqdef \norm{x_k - x_\star}$ for a given minimizer $x_\star\in \cX_f$ \\
        $\BProxSub{t}{f}{x}$ & Broximal operator associated with function $f$ with radius $t>0$ \\
        $\BMoreauSub{t}{f}{x}$ & Ball envelope function associated with function $f$ with radius $t$ \\
        $\breg{f}{x}{y}$ & The Bregman divergence associated with $f$ at $(x, y)$ \\
        $\Pi(\cdot, \cX)$ & Projection onto a set $\cX$ \\
        $\delta_{\cX}(y)$ & $=\begin{cases} 0, & y\in \cX \\ +\infty, & y\notin \cX \end{cases}$ \\
        $\textnormal{dist}(x, \cX)$ & $\eqdef \inf_{z\in\cX} \norm{x-z}$ \\
        $\interior(\cX)$ & Interior of the set $\cX$ \\
        $\ri(\cX)$ & Relative interior of the set $\cX$ \\
        $\bdry \cX$ & Boundary of the set $\cX$ \\
        $\textnormal{Fix}(\mathbb{A})$ & The set of fixed points of operator $\mathbb{A}$ \\
        $\cN_{\cX}(x)$ & $ \eqdef \brac{g\in\R^d : \inp{g}{z - x} \leq 0 \,\forall z \in \cX}$ -- the normal cone of $\cX$ at $x$ \\
       $ \R_{\geq0}(z)$ & $\eqdef \{\lambda z \;:\; \lambda \geq 0\}$ \\
        \hline
	\end{tabular}
	\caption{Frequently used notation.}
	\label{table:notation}
\end{table}
\egroup

\end{document}